\documentclass[11pt,reqno,twoside]{article}
\usepackage[margin=1in]{geometry}
\usepackage{amsmath,amssymb,amsfonts,amsthm,mathtools,mathrsfs,amscd}
\usepackage{graphicx}
\usepackage[all]{xy}
\usepackage{booktabs,blkarray,array}
\usepackage{microtype}
\usepackage{xcolor}
\usepackage{tikz}
\usepackage{tikz-cd}
\usetikzlibrary{arrows.meta,calc,decorations.markings,positioning}
\usepackage{hyperref}
\hypersetup{
  hidelinks,
  pdftitle={Affine G-multibiquandle modules for colored links I: representability of coloring functors and color-profile polynomials},
  pdfauthor={Sang Youl Lee},
  pdfkeywords={colored link, multibiquandle, associated biquandle, group ring, universal coloring, representing module, color-profile polynomial}
}
\numberwithin{equation}{section}
\tikzset{
  oriented/.style={postaction={decorate},decoration={markings,mark=at position .58 with {\arrow{Stealth}}}},
  link/.style={line width=.8pt},
  overpass/.style={white,line width=4.2pt,preaction={draw=white,line width=4.2pt}}
}

\newcommand{\Hom}{\operatorname{Hom}}
\newcommand{\coker}{\operatorname{coker}}
\newcommand{\im}{\operatorname{Im}}

\newcommand{\Spec}{\operatorname{Spec}}
\newcommand{\Fitt}{\operatorname{Fitt}}

\newcommand{\id}{\operatorname{id}}
\newcommand{\Ext}{\operatorname{Ext}}
\theoremstyle{plain}
\newtheorem{theorem}{Theorem}[section]
\newtheorem{lemma}[theorem]{Lemma}
\newtheorem{proposition}[theorem]{Proposition}
\newtheorem{corollary}[theorem]{Corollary}

\theoremstyle{definition}
\newtheorem{definition}[theorem]{Definition}
\newtheorem{example}[theorem]{Example}
\theoremstyle{remark}
\newtheorem{remark}[theorem]{Remark}
\newtheorem*{remark*}{Remark}
\usepackage{enumitem}

\theoremstyle{plain}
\newtheorem{introtheorem}{Theorem}

\begin{document}

\title{Affine $G$-multibiquandle modules for colored links I:
representability of coloring functors and color-profile polynomials}

\author{Sang Youl Lee\\[2pt]
\small\itshape Department of Mathematics and Institute of Mathematical Sciences\\
\small\itshape Pusan National University, Busan 46241, Republic of Korea\\
\small\ttfamily sangyoul@pusan.ac.kr}

\renewcommand\leftmark{\footnotesize S. Y. Lee}
\renewcommand\rightmark{\footnotesize Affine $G$-multibiquandle modules for colored links I}
\date{}
\maketitle

\begin{abstract}
Let $R$ be a commutative ring with identity, let $G$ be an abelian group,
and fix $u\in R[G]^\times$. For each component-labeled oriented link
diagram $(D,\sigma)$, we construct a finitely presented $R[G]$-module
$M_u(D,\sigma;R[G])$ with a universal affine coloring $U_{D,\sigma}$.
Every affine coloring with values in an $R[G]$-module factors uniquely
through $U_{D,\sigma}$ by a module homomorphism. Thus the module
represents the affine coloring functor. Each specified colored
Reidemeister move induces a unique module isomorphism compatible with
coloring transport, and the module isomorphism class is a colored-link
invariant. The representing modules and universal colorings commute
with arbitrary scalar extension, without a flatness assumption.
For classical links, simultaneous rescaling of the component labels
removes the common deformation unit.

For any multibiquandle, associated-biquandle colorings decompose into
fibers indexed by component labelings. For finite targets and palettes,
we define a color-profile polynomial recording the joint distribution of fiber sizes and
numbers of palette labels used, and prove its invariance
under oriented link isotopy. In the affine case, the representing
modules determine these fibers, and their specialized dimensions
determine the coloring counts over finite fields.
The color-profile polynomial is not determined, in general, by the
full unweighted distribution of coloring-fiber sizes. It therefore
retains information beyond the associated-biquandle coloring count.
\end{abstract}

\noindent\textit{Mathematics Subject Classification 2020}: Primary 57K10, 57K12; Secondary 16S34, 13C10.

\noindent\textit{Keywords}: colored link, multibiquandle, associated-biquandle, group ring, universal coloring, representing module, color-profile polynomial.


\section{Introduction}\label{intro}

Quandle and biquandle colorings encode the local relations of a link
diagram in algebraic operations whose identities ensure Reidemeister
invariance \cite{Jo,Ma,FJK,EN}. Affine operations connect these
diagrammatic relations with linear algebra and module presentations.
For example, Traldi's multivariate Alexander colorings admit a
module-homomorphism description \cite{TraldiColor}. Group-indexed
operations and multiquandles extend the algebraic setting to links
with component labels \cite{IIJO,LS,KLS1,KLS2,Tu}, so that the
operation used at a crossing can depend on the label of its
over-component.

Within this setting, component labels and semiarc colors play
different roles. A component label selects an operation, whereas
a semiarc color is a value satisfying the resulting crossing
relations. Associated di(bi)quandles for two labels \cite{KLS2}
combine these two kinds of data in one ordinary algebraic coloring.
Representation-fiber enhancements for $G$-families \cite[Definition~6.2]{IN}
record the sizes of fibers over group representations.
On the common domain of finite $G$-families of biquandles with
$G$ abelian, the unweighted profile considered below agrees with
the classical-link restriction of that enhancement.
The present formulation allows arbitrary palettes and also records
the number of palette labels used.

These constructions suggest two questions. First, how can all affine
colorings for one fixed component labeling be represented by a module,
with the universal coloring, Reidemeister transport, and changes of
coefficients treated compatibly? Second, when the labelings are
collected into a finite profile, can recording the number of palette
labels used distinguish links with the same full unweighted
distribution of fiber sizes?
The first question concerns the coloring functor for every module
target and every homomorphism of targets. It differs from fixing
a $G$-Alexander biquandle and forming a cocycle enhancement
\cite{GAB}. The second concerns the information retained when
the labeling data are summarized numerically.

This paper answers these questions by constructing universal affine
colorings and studying the finite color profiles obtained from their
represented fibers. Let $R$ be a commutative ring with identity,
let $G$ be an abelian group, put $A=R[G]$, and fix $u\in A^\times$.
For an oriented diagram $D$ with a component labeling
$\sigma:\pi_0(D)\to G$, we construct the finitely presented module
$M_D=M_u(D,\sigma;R[G])$ from the presentation
\eqref{modl-pres-1}, together with the universal affine coloring
$U_{D,\sigma}$ of Definition~\ref{def:universal-affine-coloring}.
Every affine coloring $C$ of $(D,\sigma)$ with values in an
$A$-module $N$ factors uniquely as $C=f_C\circ U_{D,\sigma}$
through an $A$-linear map $f_C:M_D\to N$.
This identifies all such colorings with homomorphisms from $M_D$,
naturally in the target $N$.
The universal property also determines the unique module isomorphism
compatible with each prescribed colored Reidemeister move.
For the colored link $(L,\sigma)$ represented by $(D,\sigma)$,
the resulting invariant isomorphism class is denoted by
$M_u(L,\sigma;R[G])$ and is called its $(G,u)$-deformed Alexander
module (Definition~\ref{def:Gu-module}).

We prove that $M_D$ and $U_{D,\sigma}$ are compatible with arbitrary
scalar extension. The universal Laurent deformation
$\mathscr M_z(D,\sigma)$ over $A[z^{\pm1}]$, for an indeterminate $z$
(Definition~\ref{def:universal-deformation}), realizes changes of
coefficients and all unit choices of the common deformation
parameter through a single presentation.
For classical links, simultaneous rescaling of the component labels
removes that common unit, giving an explicit linear realization
of the biquandle-to-quandle reduction \cite{IT,Murao}.

For a nonempty palette $\Omega$ and an $\Omega$-multibiquandle $X$
(Definition~\ref{def-n-dibiqdle}), we then identify the colorings
by the associated biquandle $\operatorname{As}_\Omega(X)$ with
fibers indexed by component labelings $\sigma:\pi_0(L)\to\Omega$
(Theorem~\ref{thm:associated-biquandle-fibers}).
For finite $X$ and $\Omega$, we define the color-profile polynomial
$\Phi_X^\Omega(L;\alpha,\xi)$
(Definition~\ref{def:color-profile-polynomial}).
It records coloring-fiber sizes together with the numbers of
palette labels used and is an invariant of uncolored oriented links.
In the affine case, the modules $M_u(L,\tau\circ\sigma;A)$,
where $\tau:\Omega\to G$ assigns coefficient actions to palette
labels, represent the individual fibers
(Proposition~\ref{prop:associated-affine-fibers}).
Their specialized dimensions determine the finite-field coloring
counts (Corollary~\ref{cor:finite-field-color-profile}).
The polynomial $\Phi_X^\Omega(L;\alpha,\xi)$ is not determined,
in general, by its unweighted specialization
$\Phi_X^\Omega(L;1,\xi)$, and therefore retains information beyond
the associated-biquandle coloring count.
We now state the structural results in the order in which they
are established in the paper.

With $A=R[G]$ and $u\in A^\times$ as above, the affine operations
on an $A$-module $N$ are
\begin{equation}\label{eq:intro-affine-operations}
 x\mathbin{\underline\triangleright_g}y=gx+(u-g)y,
 \qquad
 x\mathbin{\overline\triangleright_g}y=ux
 \qquad(g\in G).
\end{equation}
These are the operations in \eqref{eq1-mdl-gpring};
Theorem~\ref{thm-multibqdl-mdl} proves that they make $N$ an affine
$G$-multibiquandle.
The index at a crossing is the label of its over-component.
We write $\operatorname{Col}_N^G(D,\sigma)$ for the corresponding
set of affine colorings, using the coloring convention of
Definition~\ref{def-mbqd-coloring}.

The module $M_D=M_u(D,\sigma;A)$ is generated by the semiarcs
with the two affine relations at each crossing.
Each crossingless component contributes one free generator.
The universal affine coloring is the assignment
$U_{D,\sigma}(x)=[x]$ of each semiarc to its class in $M_D$.
Its defining property is the following.

\begin{introtheorem}[Universal affine colorings]
\label{thm:intro-universal-coloring}
The module $M_D$ is finitely presented, and composition with
$U_{D,\sigma}$ gives a bijection
\begin{equation}\label{eq:intro-representability}
 \Hom_A(M_D,N)\xrightarrow{\;\cong\;}
 \operatorname{Col}_N^G(D,\sigma),
 \qquad f\longmapsto f\circ U_{D,\sigma},
\end{equation}
natural in every $A$-module $N$.
Equivalently, every affine coloring $C$ factors uniquely as
$C=f_C\circ U_{D,\sigma}$ through an $A$-linear map
$f_C:M_D\to N$.
The pair $(M_D,U_{D,\sigma})$ is unique up to a unique isomorphism
preserving the universal coloring.

For each specified colored Reidemeister move
$r:(D,\sigma)\to(D',\sigma')$, the coloring extension maps determine
a unique isomorphism $\mu_r:M_{D'}\to M_D$ satisfying
$\mu_r\circ U_{D',\sigma'}
=\Phi^{r,\mathrm{aff}}_{M_D}(U_{D,\sigma})$,
where $\Phi^{r,\mathrm{aff}}$ denotes the affine coloring transport
associated with $r$.
Consequently, the isomorphism class of $M_u(D,\sigma;A)$ is an
invariant of the colored oriented link.
\end{introtheorem}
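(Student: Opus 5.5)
The plan is to treat $M_D$ as the module defined by generators and relations and to read representability off the universal property of a presentation. Finite presentation comes first. Let $S(D)$ be the finite set of semiarcs, with one formal generator for each crossingless component, and let $F=A^{(S(D))}$ be the free module on them. Each crossing $c$ contributes two elements of $F$, obtained by substituting the generators into the two coloring equations of Definition~\ref{def-mbqd-coloring} for the affine operations \eqref{eq:intro-affine-operations}. The over-index is $g=\sigma(\text{over-component})$. For instance, $gx+(u-g)y-z$ and $uy-w$ for the appropriate incoming and outgoing semiarcs $x,y,z,w$. Writing $K\subset F$ for the submodule they span, $M_D=F/K$ is finitely presented, with finitely many generators and at most $2\cdot\#\{\text{crossings}\}$ relations. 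This is exactly \eqref{modl-pres-1}.

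For the bijection \eqref{eq:intro-representability}, I use that $\Hom_A(F,N)\cong N^{S(D)}$ and that $\Hom_A(F/K,N)$ consists of the maps killing $K$. A function $C:S(D)\to N$ is an affine coloring precisely when it satisfies the crossing equations, because the operations are $A$-linear in each variable. Those equations say exactly that the induced $\tilde C:F\to N$ vanishes on the relators. Hence $f\mapsto f\circ U_{D,\sigma}$ is a bijection, and injectivity holds because the $[x]$ generate $M_D$. Naturality in $N$ is immediate: for $h:N\to N'$ we have $h\circ(f\circ U)=(h\circ f)\circ U$, and affine colorings are preserved by $A$-linear maps since the operations are $A$-linear. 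Uniqueness of $(M_D,U_{D,\sigma})$ up to unique isomorphism is the Yoneda argument. Two universal pairs give mutually inverse maps $M\rightleftarrows M'$ that are compatible with the colorings, and the composites equal the identity by the uniqueness clause.

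For the Reidemeister part, I take the affine coloring transport $\Phi^{r,\mathrm{aff}}_N:\operatorname{Col}^G_N(D,\sigma)\to\operatorname{Col}^G_N(D',\sigma')$ as given. It is the bijection that keeps colors away from the disk and extends uniquely inside. It exists because $N$ is a $G$-multibiquandle by Theorem~\ref{thm-multibqdl-mdl}, and colorings are invariant under colored moves. The key point is that this transport is natural in $N$. The extension inside the disk is given by formulas built from the operations and their inverses, which are $A$-linear (for example $y=u^{-1}w$, and the inverse of $\underline\triangleright_g$ uses $g^{-1}$). Hence these formulas commute with every $A$-linear $h$. Applying the transport with $N=M_D$ to $U_{D,\sigma}$ yields a coloring of $(D',\sigma')$ in $M_D$. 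By universality of $M_{D'}$ there is a unique $\mu_r$ with $\mu_r\circ U_{D',\sigma'}=\Phi^{r,\mathrm{aff}}_{M_D}(U_{D,\sigma})$. Symmetrically, the inverse move $r^{-1}$ yields $\nu:M_D\to M_{D'}$. Consider $\nu\circ\mu_r\circ U_{D'}$. By naturality it equals $\Phi_{M_{D'}}^{r}(\nu\circ U_D)=\Phi^r\Phi^{r^{-1}}(U_{D'})=U_{D'}$. Uniqueness then forces $\nu\mu_r=\id$, and likewise $\mu_r\nu=\id$. Chaining moves shows that the isomorphism class is a colored link invariant; crossingless components are handled by the free generators, consistently with the moves.

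I expect the main obstacle to be verifying the naturality and inverse relations of the transport $\Phi^{r,\mathrm{aff}}$ across all the oriented colored moves, including the mixed-label R3 and the R2 variants. Concretely, this means checking that the unique extension inside each disk is a natural transformation of functors on $A$-modules. I would first reduce this to the observation that the extension is characterized by equations that are linear over $A$ in the boundary colors, so any $A$-linear map transports solutions to solutions. By uniqueness of the extension, it then commutes with the transport.
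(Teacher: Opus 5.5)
Your proposal is correct and follows the paper's argument. You get the Hom--coloring bijection from the universal property of the quotient presentation, you get naturality of the Reidemeister transport from uniqueness of local extensions, and you obtain $\mu_r$ from the universal property of $(M_{D'},U_{D',\sigma'})$, which is the paper's initial-object/Yoneda step. The only cosmetic difference is in how invertibility is shown: you build the inverse directly from $r^{-1}$, while the paper applies its Yoneda criterion to the natural isomorphism $\Theta_{D',\sigma'}^{-1}\circ\Phi^{r,\mathrm{aff}}\circ\Theta_{D,\sigma}$ and then deduces $\mu_{r^{-1}}=\mu_r^{-1}$.
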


Theorems~\ref{thm:universal-affine-coloring} and
\ref{thm:reidemeister-representing-module} establish this statement.
Naturality includes every homomorphism of target modules.
For a fixed sequence of moves, the representing isomorphisms compose
as specified in Proposition~\ref{prop:move-sequence-compatibility}.
The uniqueness assertion is relative to the prescribed coloring
transport.
Appendix~\ref{app:matrix-reductions} supplies a second proof of module
invariance: explicit R1, R2, and R3 row and column operations preserve
the exterior presentation and eliminate local generators using unit
pivots. Neither proof requires the differences $u-g$ to be invertible.

The coefficient ring may have torsion or zero divisors.
The next result describes changes of coefficients and the simultaneous
specialization of the common deformation parameter.
Let $z$ be an indeterminate, put $A_z=A[z^{\pm1}]$, and write
$\mathscr M_z(D,\sigma)=M_z(D,\sigma;A_z)$ for the universal Laurent
deformation of Definition~\ref{def:universal-deformation},
obtained by replacing $u$ by $z$ in the crossing relations.

\begin{introtheorem}[Scalar extension and universal evaluation]
\label{thm:intro-base-change}
For every homomorphism $\varphi:A\to S$ of commutative rings,
there is a canonical isomorphism
\begin{equation}\label{eq:intro-base-change}
 M_u(D,\sigma;A)\otimes_{A,\varphi}S
 \cong M_{\varphi(u)}(D,\sigma_S;S),
 \qquad [x]\otimes s\longmapsto s[x]_S,
\end{equation}
where $\sigma_S=\varphi\circ\sigma$, with labels interpreted as
units of $S$. No flatness assumption is required.

For every $v\in S^\times$, let
$\widetilde\varphi_v:A_z\to S$ be the unique homomorphism extending
$\varphi$ and sending $z$ to $v$. Then
\begin{equation}\label{eq:intro-universal-evaluation}
 \mathscr M_z(D,\sigma)
 \otimes_{A_z,\widetilde\varphi_v}S
 \cong M_v(D,\sigma_S;S).
\end{equation}
Both isomorphisms preserve the universal affine colorings and
commute with the transport associated with each specified colored
Reidemeister move.
\end{introtheorem}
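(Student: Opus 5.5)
The plan is to work directly from the finite presentation \eqref{modl-pres-1} and use right exactness of tensor products, together with the universal property of Theorem~\ref{thm:intro-universal-coloring}.

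\textbf{Presentation.} Let $\mathcal S$ be the set of semiarcs of $D$, together with one generator for each crossingless component. The module $M_u(D,\sigma;A)$ is the cokernel of an $A$-linear map
\[
 \partial_u:A^{\mathcal C}\to A^{\mathcal S}.
\]
Each crossing contributes two rows. The entries of these rows are $\pm g$, $\pm(u-g)$, $\pm u$ and $\pm1$, where $g=\sigma(\text{over-component})$.

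\textbf{First isomorphism \eqref{eq:intro-base-change}.}
\begin{enumerate}
\item Since $-\otimes_{A,\varphi}S$ is right exact and commutes with finite direct sums, $M_u(D,\sigma;A)\otimes_A S$ is the cokernel of $\partial_u\otimes S$. This holds with no flatness hypothesis.
\item The matrix of $\partial_u\otimes S$ is obtained by applying $\varphi$ entrywise. The entries become $\varphi(g)$, $\varphi(u)-\varphi(g)$, $\varphi(u)$ and $\pm1$.
\item This is precisely the presentation matrix defining $M_{\varphi(u)}(D,\sigma_S;S)$, with labels $\varphi(g)\in S^\times$. The identification sends the basis element $x\otimes 1$ to $[x]_S$, which gives the stated formula $[x]\otimes s\mapsto s[x]_S$.
\end{enumerate}
I expect the main care to be needed here, in checking that $M_{\varphi(u)}(D,\sigma_S;S)$ is defined for labels that are units of $S$ rather than elements of a group ring. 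I will regard the affine operations of Theorem~\ref{thm-multibqdl-mdl} with $g$ replaced by any unit of $S$. The relations are literally the same formal expressions, so the presentation is the same.

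\textbf{Second isomorphism \eqref{eq:intro-universal-evaluation}.} This is the special case of the first. Take the base ring $A_z$, the deformation unit $z$, and the ring map $\widetilde\varphi_v$. Then $\widetilde\varphi_v(z)=v$, and $\widetilde\varphi_v\circ\sigma=\varphi\circ\sigma=\sigma_S$. Existence and uniqueness of $\widetilde\varphi_v$ follow from the universal property of Laurent polynomial rings, because $v$ is a unit.

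\textbf{Compatibility.} Preservation of universal colorings is immediate, since $U(x)\otimes1=[x]\otimes1\mapsto[x]_S=U_S(x)$. For a colored Reidemeister move $r$, both $\mu_r\otimes S$ and $\mu_r^S$ are isomorphisms $M_{D'}\otimes S\to M_D\otimes S$, after the identifications above. By Theorem~\ref{thm:intro-universal-coloring}, the isomorphism $\mu_r$ is characterized by sending $U_{D'}$ to the transported coloring $\Phi^{r,\mathrm{aff}}(U_D)$. It therefore suffices to show that the transport commutes with applying the coloring map $M_D\to M_D\otimes S$, $m\mapsto m\otimes1$.
\begin{enumerate}
\item Affine transport $\Phi^{r,\mathrm{aff}}$ assigns to the new semiarcs fixed $A$-linear combinations of old colors, determined by the local crossing relations.
\item These combinations are natural under $A$-linear maps of targets; this is the naturality statement in Theorem~\ref{thm:intro-universal-coloring}.
\item After scalar extension, the coefficients of these combinations map under $\varphi$ to the coefficients used for the transport over $S$.
\end{enumerate}
It then follows that both maps send $U_{D',S}$ to the same coloring. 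Uniqueness of the factorization through $U_{D',S}$ forces the two maps to coincide.
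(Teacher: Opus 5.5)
Your proposal is correct and follows essentially the same route as the paper. The first isomorphism comes from right exactness plus entrywise specialization of the local blocks (Theorem~\ref{thm:base-change}). The Laurent case is the instance $\widetilde\varphi_v$ over $A_z\cong R[z^{\pm1}][G]$ (Corollary~\ref{cor:universal-evaluation}). Transport compatibility follows by showing that $m\mapsto m\otimes1$ intertwines the affine extension maps and then using uniqueness of factorization through the universal coloring (Proposition~\ref{prop:base-change-reidemeister}). The only thing to cite more precisely is that naturality of $\Phi^{r,\mathrm{aff}}$ in the target comes from Theorem~\ref{thm-n-mbqd-cinv}. The paper also phrases your step (3) equivalently: an $S$-module regarded as an $A$-module via $\varphi$ carries literally the same affine operations, so the two extension problems coincide.
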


The scalar-extension statement is Theorem~\ref{thm:base-change},
with transport compatibility in
Proposition~\ref{prop:base-change-reidemeister}.
Corollary~\ref{cor:universal-evaluation} proves the evaluation
statement. In particular, taking $S=A$, $\varphi=\operatorname{id}_A$,
and $v=u$ recovers $M_u(D,\sigma;A)$ by evaluating $z$ at $u$.
Thus changes of coefficients and all unit choices of the common
parameter arise from one Laurent presentation, with the universal
colorings retained.

For classical links, the common deformation unit can also be
removed by changing the component labels simultaneously.
For a commutative ring $S$, a unit $v\in S^\times$, and a labeling
$\lambda:\pi_0(L)\to S^\times$, the notation $M_v(L,\lambda;S)$
uses the same affine relations with unit-valued component labels.
Its diagram independence follows from the scalar-extension
construction in Subsection~\ref{subsec:base-change}.

\begin{introtheorem}[Removal of the common deformation unit]
\label{thm:intro-normalization}
Let $S$ be a commutative ring, let $v\in S^\times$, and let
$\lambda:\pi_0(L)\to S^\times$ label a classical oriented link.
There is an isomorphism
\begin{equation}\label{eq:intro-unit-removal}
 M_v(L,\lambda;S)\cong M_1(L,v^{-1}\lambda;S).
\end{equation}
For a link with $\mu$ components labeled by independent variables
$a_1,\ldots,a_\mu$, the substitutions $t_i=a_i/z$ express the
universal deformation over
$R[a_1^{\pm1},\ldots,a_\mu^{\pm1},z^{\pm1}]$
as Laurent scalar extension of the parameter-$1$ module from
$R[t_1^{\pm1},\ldots,t_\mu^{\pm1}]$.
If $L$ is nonempty, $M_v(L,\lambda;S)$ has a free rank-one
direct summand.
\end{introtheorem}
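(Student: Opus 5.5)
The plan is to realize the isomorphism \eqref{eq:intro-unit-removal} as a diagonal change of generators $x\mapsto v^{w(x)}x$. Here $w$ is an integer-valued weight on the semiarcs of a classical diagram $D$ of $L$. Whether the statement is well defined at the level of isomorphism classes is settled by Theorem~\ref{thm:intro-universal-coloring} and the scalar-extension construction, so it suffices to work with one diagram. By Theorem~\ref{thm:intro-universal-coloring} it is also enough to produce a natural bijection $\operatorname{Col}_N(D,\lambda;v)\cong\operatorname{Col}_N(D,v^{-1}\lambda;1)$ for every $S$-module $N$. The representing modules are then canonically isomorphic.

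\textbf{The local computation.} At a positive crossing, let the over-label be $\lambda$, the over-arc run $x_{\rm in}\to x_{\rm out}$, and the under-arc run $y_{\rm in}\to y_{\rm out}$. The relations are $x_{\rm out}=v\,x_{\rm in}$ and $y_{\rm out}=\lambda y_{\rm in}+(v-\lambda)x_{\rm in}$. Substitute $x=v^{w(x)}x'$. The parameter-$1$ relations $x'_{\rm out}=x'_{\rm in}$ and $y'_{\rm out}=(\lambda/v)y'_{\rm in}+(1-\lambda/v)x'_{\rm in}$ then hold exactly when two conditions are met. First, $w$ increases by $1$ from incoming to outgoing semiarc on both strands. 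Second, the two incoming semiarcs receive equal weights, $w(x_{\rm in})=w(y_{\rm in})$. At negative crossings the same computation with the inverse operations asks for the analogous conditions with the roles of incoming and outgoing exchanged. So the whole proof reduces to finding a weight $w$ satisfying these linear conditions at every crossing.

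\textbf{The main obstacle: existence of $w$.} Running once around a component, the prescribed increments must sum to zero, and naive counting shows this can fail. The first thing I would try is the Alexander numbering of the complementary regions of the planar diagram. I would define $w$ on a semiarc from the Alexander numbers of its two adjacent regions, corrected by a constant on each component, and then check the crossing conditions case by case. Conceptually, the conditions form a $1$-cocycle problem on the diagram graph, and the obstruction lies in a cohomology group that vanishes for diagrams on $S^2$. This is exactly where classicality enters, and it matches the biquandle-to-quandle reduction of \cite{IT,Murao}. If the exact normalization fails, I would retreat to allowing the rescaling exponent to be $w(x)$ plus a component-wise constant. I would then absorb the discrepancy using a unit multiple of each component's generators.

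\textbf{The Laurent statement and the free summand.} The second assertion follows by applying the first over $S=R[a_i^{\pm1},z^{\pm1}]$ with $v=z$. This identifies $\mathscr M_z$ with $M_1(L,(a_i/z)_i;S)$. By Theorem~\ref{thm:intro-base-change} the latter is the base change of $M_1$ over $R[t_i^{\pm1}]$ along $t_i\mapsto a_i/z$, and the ring map $R[t_i^{\pm1}][z^{\pm1}]\to S$ is an isomorphism. For the free summand it suffices, by the first assertion, to treat parameter $1$. The constant assignment of $1\in S$ to every semiarc is a coloring, because $1=\lambda'\cdot1+(1-\lambda')\cdot1$. It therefore gives $\varepsilon:M_1\to S$. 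If $L$ is nonempty there is a generator $[x_0]$, either a semiarc or a crossingless component. The map $s\mapsto s[x_0]$ is then a section of $\varepsilon$, so $S$ splits off as a direct summand.
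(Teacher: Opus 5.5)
\paragraph{The missing step is the existence of $w$.}
The step you leave open is the one that carries the proof, and with the crossing conditions you wrote down no such $w$ exists in general. You read the relations by orientation: $x_{\mathrm{out}}=v\,x_{\mathrm{in}}$ on the over-arc at a positive crossing, and incoming and outgoing exchanged at a negative crossing. Your conditions then make $w$ change by $+\epsilon_c$ along the orientation on \emph{both} strands at each crossing $c$. The total change around a component $K$ is then $2\,\mathrm{wr}(K)+2\sum_{K'\neq K}\mathrm{lk}(K,K')$, which is already $\pm6$ for the standard trefoil diagram. This is presumably the failure your ``naive counting'' detects.

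Neither the Alexander numbering nor your fallback can repair this. Adding a constant on a component, or rescaling all of its generators by one unit, leaves the increment around that component unchanged.

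\paragraph{The paper's relations give different conditions.}
In Figure~\ref{fig-qd-rels-0} the relations are read from the source region, and they have the same form for both signs. If $a$ (under) and $b$ (over) are the two semiarcs bordering the source region, the other two semiarcs are $ha+(v-h)b$ and $vb$. In orientation terms, the over-arc's input at a positive crossing is its \emph{outgoing} end. In Lemma~\ref{lem:braid-closure} this is the relation $Y=uP$, with the overstrand running from $Y$ to $P$. So the correct requirement at every crossing is that the two source-adjacent semiarcs share a weight $c$ and the other two have weight $c+1$. With these conditions, $w$ changes by $+\epsilon_c$ when passing under and by $-\epsilon_c$ when passing over. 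The total around a component is $\mathrm{lk}-\mathrm{lk}=0$.

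\paragraph{Completing your route.}
With the correct conditions, the choice of $w$ is explicit and needs no component constants. Let $w(s)$ be the Alexander number (the winding number of $D$) of the region to the right of $s$, i.e.\ behind its co-orientation arrow. The source region is the quadrant to the right of both strands. If it has number $k$, the two side quadrants have number $k+1$. Hence the source-adjacent semiarcs get weight $k$ and the other two get weight $k+1$. The substitution $[x]\mapsto v^{w(x)}[x]$ then sends each crossing relator of $M_v(D,\lambda;S)$ to $v^{k+1}$ times the corresponding relator of $M_1(D,v^{-1}\lambda;S)$. This gives the isomorphism directly on any diagram, without the detour through colorings.

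Completed this way, your argument genuinely differs from the paper's. The paper passes to a closed braid and conjugates each transition block by $\operatorname{diag}(1,u,\ldots,u^{n-1})$. Its exponent $i-1$ at position $i$ is exactly this weight: the Alexander number of the region to the right of a downward strand. Your version would avoid Alexander's theorem and the closure lemma. Your Laurent reduction and the augmentation splitting agree with the paper's and are fine.
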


Theorem~\ref{thm:all-color-reduction} proves this normalization
explicitly. On an $n$-braid, the diagonal matrix
$\operatorname{diag}(1,v,\ldots,v^{n-1})$ conjugates every crossing
transition to its parameter-$1$ form with all component labels
divided by $v$. The same change of coordinates respects the braid
closure. The rescaled labels are units of $S$, even when they
do not belong to the originally chosen subgroup $G$.
Thus the common unit parametrizes the normalized classical-link
theory through simultaneous rescaling of the labels.
At parameter $1$, the relators have coefficient sum zero, so the
augmentation $\varepsilon:M_1\to S$, defined by
$\varepsilon([x])=1$ on each semiarc class, is well-defined.
For a nonempty link, choosing one semiarc class $[x]$ gives a section
$s\mapsto s[x]$, and hence the free rank-one direct summand.

We next collect the coloring fibers while retaining their
component-label data.
Let $\Omega$ be a nonempty palette and let $X$ be an
$\Omega$-multibiquandle, with operations denoted by
$\underline\triangleright_i$ and $\overline\triangleright_i$.
The associated biquandle $\operatorname{As}_\Omega(X)$ is defined
on $X\times\Omega$ by the operations
$(x,i)\underline\triangleright(y,j)
=(x\underline\triangleright_j y,i)$ and
$(x,i)\overline\triangleright(y,j)
=(x\overline\triangleright_j y,i)$;
see \eqref{eq:associated-biquandle-operations}.

\begin{introtheorem}[Associated biquandle and color-profile polynomial]
\label{thm:intro-i-color-profiles}
These operations define a biquandle $\operatorname{As}_\Omega(X)$,
and there is a bijection, natural in $X$,
\begin{equation}\label{eq:intro-i-fiber-decomposition}
 \operatorname{Col}_{\operatorname{As}_\Omega(X)}(D)
 \cong
 \coprod_{\sigma:\pi_0(D)\to\Omega}
 \operatorname{Col}_X^\Omega(D,\sigma).
\end{equation}
If $X$ and $\Omega$ are finite, the color-profile polynomial
\begin{equation}\label{eq:intro-i-profile}
 \Phi_X^\Omega(L;\alpha,\xi)
 =
 \sum_{\sigma:\pi_0(L)\to\Omega}
 \alpha^{|\operatorname{im}\sigma|}
 \xi^{|\operatorname{Col}_X^\Omega(L,\sigma)|}
\end{equation}
of Definition~\ref{def:color-profile-polynomial} is an invariant
of the uncolored oriented link.
Its specialization $\Phi_X^\Omega(L;1,\xi)$ records the full
unweighted distribution of coloring-fiber sizes, and
$\left.
\partial\Phi_X^\Omega(L;1,\xi)/\partial\xi
\right|_{\xi=1}$
is the associated-biquandle coloring count.
\end{introtheorem}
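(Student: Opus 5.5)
We split the statement into four parts: the biquandle axioms for $\operatorname{As}_\Omega(X)$, the fiber decomposition, the invariance of the profile, and the two specializations. For the first part we check the axioms of Definition~\ref{def-n-dibiqdle} on $X\times\Omega$. The operations act on the first coordinate by the operations of $X$ indexed by the label of the second argument, and they leave the first argument's label unchanged. Each biquandle axiom for $\operatorname{As}_\Omega(X)$ therefore reduces to a multibiquandle axiom for $X$ at the relevant indices, and the second coordinates match trivially:
\begin{itemize}
\item idempotency reduces to the index-$i$ idempotency for $X$;
\item invertibility of right actions and of the sideways map follow from those of $\underline\triangleright_j$, $\overline\triangleright_j$ and the $j$-indexed sideways map on $X$, with $\Omega$ simply carried along;
\item the exchange laws at labels $(i,j,k)$ are exactly the mixed exchange laws of an $\Omega$-multibiquandle with indices $j,k$.
\end{itemize}

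For the bijection \eqref{eq:intro-i-fiber-decomposition}, take a coloring $\widetilde C$ of $D$ by $\operatorname{As}_\Omega(X)$ and write $\widetilde C(x)=(C(x),\sigma(x))$ on semiarcs. At a crossing, the crossing relations show that the second coordinate passes unchanged along the under strand, and also along the over strand, since $\overline\triangleright$ preserves the first label. So $\sigma$ is constant along each component and defines $\sigma:\pi_0(D)\to\Omega$. Crossingless components impose no relation, and $\sigma$ is still well defined on them. The first coordinates then satisfy exactly the relations of Definition~\ref{def-mbqd-coloring}, with the index at each crossing equal to the over-component's label. Hence $C\in\operatorname{Col}_X^\Omega(D,\sigma)$. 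The inverse map is $(\sigma,C)\mapsto(C,\sigma)$. Naturality in $X$ is immediate, because a multibiquandle morphism $f$ induces $f\times\operatorname{id}_\Omega$ and acts fiberwise.

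Invariance is the step we expect to be the main obstacle. The key is that the fiber decomposition is compatible with Reidemeister moves. We would argue as follows:
\begin{itemize}
\item For a move $D\to D'$, the standard biquandle transport on $\operatorname{Col}_{\operatorname{As}_\Omega(X)}$ is a bijection.
\item This transport preserves the label coordinate on the components not affected by the move.
\item Components correspond bijectively under the move, so the label coordinate maps $\sigma$ to its image $\sigma'$.
\item Hence the transport restricts to bijections $\operatorname{Col}_X^\Omega(D,\sigma)\cong\operatorname{Col}_X^\Omega(D',\sigma')$.
\item Since $\sigma\mapsto\sigma'$ is a bijection of labelings preserving $|\operatorname{im}\sigma|$, each monomial $\alpha^{|\operatorname{im}\sigma|}\xi^{|\operatorname{Col}|}$ is matched, and $\Phi$ is invariant.
\end{itemize}
The care needed is to verify that the extension of colorings across the local picture respects labels. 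This holds because inside the local disk the second coordinate is forced by constancy along strands, and those strands exit the disk on the same components.

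Finally, $\Phi_X^\Omega(L;1,\xi)=\sum_\sigma\xi^{|\operatorname{Col}_X^\Omega(L,\sigma)|}$ records, for each $n$, the number of labelings whose fiber has size $n$; this is the full unweighted distribution of fiber sizes. Differentiating and setting $\xi=1$ gives $\sum_\sigma|\operatorname{Col}_X^\Omega(L,\sigma)|$. By \eqref{eq:intro-i-fiber-decomposition} this sum equals $|\operatorname{Col}_{\operatorname{As}_\Omega(X)}(L)|$, the associated-biquandle coloring count.
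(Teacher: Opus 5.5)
Your check of \textbf{BQ3} for $\operatorname{As}_\Omega(X)$ relies on the wrong hypothesis. The switch is $S((x,i),(y,j))=\bigl((y\mathbin{\overline{\triangleright}_i}x,\,j),\,(x\mathbin{\underline{\triangleright}_j}y,\,i)\bigr)$. The over-operation is indexed by the label $i$ of the first input, the under-operation by the label $j$ of the second, and the output labels are swapped. So $\Omega$ is not ``simply carried along.'' On the label block $(i,j)$, the first coordinates are transformed by the mixed switch $H_{ij}$. Hence $S$ is bijective exactly when every $H_{ij}$ is bijective, which is \textbf{MBQ2}. The per-index switches $H_{jj}$ supplied by \textbf{MBQ1} do not suffice. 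For example, on $\mathbb Z_3$ take the two biquandles $x\mathbin{\underline{\triangleright}_1}y=x$, $x\mathbin{\overline{\triangleright}_1}y=2x-y$ (Example~\ref{exa-3-pall}) and $x\mathbin{\underline{\triangleright}_2}y=2y-x$, $x\mathbin{\overline{\triangleright}_2}y=x$. Both $H_{11}$ and $H_{22}$ are bijective, yet $H_{12}(x,y)=(2y-x,2y-x)$ is not injective, so $S$ fails on the block $(1,2)$. The paper uses exactly \textbf{MBQ2} at this step: if $H_{ij}^{-1}(r,s)=(x,y)$, then $S^{-1}((r,j),(s,i))=((x,i),(y,j))$.

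Two smaller corrections. First, \textbf{BQ1} is diagonal compatibility, not idempotency; the affine targets satisfy $x\mathbin{\underline{\triangleright}_g}x=ux$. It reduces to \textbf{BQ1} at index $i$. Second, in a coloring the over-operation at a crossing is indexed by the under-strand's label, not the over-component's. Your conclusion that the first coordinates satisfy Definition~\ref{def-mbqd-coloring} is still right, because it follows directly from the formulas. With these repairs, your axiom check, fiber decomposition, and specializations coincide with the paper's proofs of Theorem~\ref{thm:associated-biquandle-fibers} and Corollary~\ref{cor:color-profile-invariance}.

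Your invariance argument is valid (once $\operatorname{As}_\Omega(X)$ is known to be a biquandle) but takes a different route from the paper. The paper argues fiberwise. An isotopy matches components, so $\sigma\mapsto\sigma'$ is a bijection of labelings preserving $|\operatorname{im}\sigma|$. The colored Reidemeister theorem (Theorem~\ref{thm-equi-nchr}) together with Theorem~\ref{thm-n-mbqd-cinv}, already built into Definition~\ref{def:color-profile-polynomial}, then gives $|\operatorname{Col}_X^\Omega(L,\sigma)|=|\operatorname{Col}_X^\Omega(L',\sigma')|$.

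You instead transport $\operatorname{As}_\Omega(X)$-colorings by ordinary biquandle invariance along uncolored moves, and check that the transport respects the label map. This uses only the uncolored Reidemeister theorem. It also shows that the fiberwise bijections are restrictions of a single biquandle transport; by uniqueness of local extensions they agree with the multibiquandle ones.

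The label check must cover the components that meet the move disk, not only the unaffected ones in your first bullet. Your closing remark supplies this: every strand in the disk exits it, the transported coloring agrees with the original outside the disk, and labels are constant on components. Finally, the inverse move's transport sends the fiber over $\sigma'$ back into the fiber over $\sigma$, so the restrictions are bijections.
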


Theorem~\ref{thm:associated-biquandle-fibers} and
Corollary~\ref{cor:color-profile-invariance} prove these statements.
For a finite palette $\Omega$, a map $\tau:\Omega\to G$, and a
finite affine $A$-module target $N$, the fiber over $\sigma$ is
naturally $\Hom_A(M_u(L,\tau\circ\sigma;A),N)$
(Proposition~\ref{prop:associated-affine-fibers}).
Thus each such fiber is obtained from the universal affine coloring
for the corresponding component labeling.
The map $\tau$ need not be injective: the exponent
$|\operatorname{im}\sigma|$ is computed in $\Omega$, so distinct
palette elements remain distinct labels even when they specify
the same coefficient action.

If $\lambda:A\to k$ specializes to a finite field of order $Q$,
then the fiber with target $k$ has cardinality
$Q^{d_{\lambda,\sigma}}$, where
$d_{\lambda,\sigma}
=\dim_k\bigl(M_u(L,\tau\circ\sigma;A)
\otimes_{A,\lambda}k\bigr)$.
Consequently, the finite-field Betti profile
$\mathcal B_\lambda^\Omega(L;\alpha,t)
=\sum_\sigma
\alpha^{|\operatorname{im}\sigma|}t^{d_{\lambda,\sigma}}$
of Corollary~\ref{cor:finite-field-color-profile} determines the
corresponding color-profile polynomial through the termwise
transformation $\alpha^r t^d\mapsto\alpha^r\xi^{Q^d}$.
For fixed $Q$, this transformation is injective, so the two
polynomials determine each other.

The color-profile polynomial retains information not determined,
in general, by the full unweighted distribution of coloring-fiber
sizes. The following pair demonstrates this strict refinement
for a fixed finite target and palette.

Take $X=\mathbb F_3$ and $\Omega=\{0,1\}$, with operations
$x\underline\triangleright_0y=x$,
$x\underline\triangleright_1y=2x+2y$, and
$x\overline\triangleright_i y=x$ for $i=0,1$.
Let $L_1$ and $L_2$ be the closures of the downward oriented braids
$\sigma_1^4$ and $\sigma_1^3\sigma_2^2$, respectively, using the
crossing convention of \eqref{eq:braid-transitions}.
Both links have two components, and
\begin{equation}\label{eq:intro-i-strict-profiles}
 \begin{aligned}
 \Phi_X^\Omega(L_1;\alpha,\xi)
  &=\alpha(\xi^9+\xi^3)+2\alpha^2\xi^9,\qquad
 \Phi_X^\Omega(L_2;\alpha,\xi)
  =2\alpha\xi^9+\alpha^2(\xi^3+\xi^9).
 \end{aligned}
\end{equation}
Setting $\alpha=1$ gives $\xi^3+3\xi^9$ in both cases,
and the associated-biquandle coloring counts are both
$3+3\cdot9=30$.
However, the coefficients of $\alpha\xi^3$ in the two weighted
profiles are $1$ and $0$, respectively.
For $L_1$, the unique fiber of size $3$ comes from a labeling
using one palette element; for $L_2$, it comes from a labeling
using two. This identifies precisely the information lost on
passing to the unweighted distribution.
Example~\ref{ex:strict-color-profile} gives every transition matrix,
closure matrix, and coloring kernel needed for the computation.

Sections~\ref{sec-bql} and~\ref{multi-biqdl} develop the indexed
algebra and its examples.
Section~\ref{sec-fundbql-dl} proves the colored Reidemeister extension
statements, and Section~\ref{sect-abq} constructs the representing
module and its universal coloring.
Section~\ref{sec:parameters} proves the coefficient and parameter
results, using the braid-closure presentation of
Lemma~\ref{lem:braid-closure}.
Section~\ref{sec:profiles} proves the fiber decomposition and
profile statements and gives the strict-refinement calculation.
Section~\ref{sec:series-i-conclusion} discusses the interpretation
of the representing modules and the information retained or lost
in their finite color profiles.
Appendix~\ref{app:matrix-reductions} contains the independent matrix
proof of Reidemeister invariance.
Part~II~\cite{LeeSeriesII} studies incidence assembly and the integral
comparison with Alexander modules.
All assertions in the four main statements above are proved here.


\section{Quandles and multiquandles}\label{sec-bql}

In this section, we review basic terminology that will be used in the subsequent sections, introduce the notion of a $G$-multiquandle for a group $G$, and present several classes of $G$-multiquandles. We begin with the definition of a quandle.

By a binary operation $\triangleright$ on a set $X$, we mean an arbitrary mapping $\triangleright : X \times X \to X$. The image of a pair $(x, y) \in X \times X$ under this binary operation is denoted $x\triangleright y$. 

\begin{definition}\label{def-qdl}\cite{Jo,Ma}
A {\it quandle} is a set $X$ equipped with a binary operation $\rhd$ on $X$ satisfying the following conditions:
\begin{description}
\item[Q1.] For all $x \in X$, $x \rhd x = x$.
\item[Q2.] For all $x, y \in X$, there exists a unique element $z \in X$ such that $x = z \rhd y$. Equivalently, there exists a binary operation $\rhd^{-1}$ on $X$ such that for all $x, y \in X$,
\((x\rhd y) \rhd^{-1} y = x\) and \((x \rhd^{-1} y) \rhd y = x\).
\item[Q3.] For all $x, y, z \in X$, $(x \rhd y )\rhd z = (x \rhd z)\rhd (y \rhd z)$.
\end{description}
\end{definition}

A quandle $X$ with a binary operation $\rhd$ is denoted by a pair $(X, \rhd)$.  
In \textbf{Q2}, the operation $\rhd^{-1}$ is called the {\it dual operation} of $\rhd$, and $x = z\rhd y$ if and only if $z = x\rhd^{-1} y$.  
A \emph{rack} satisfies \textbf{Q2}--\textbf{Q3}, without requiring \textbf{Q1}.
We note that $(X,\rhd^{-1})$ is also a quandle, called the {\it dual quandle} of $(X,\rhd)$.  
A quandle with self-dual operation, i.e., $\rhd^{-1} = \rhd$, is called a {\it kei} (or an {\it involutory quandle}).  
We refer the reader to \cite{EN,Tu} for further details.  
Here we recall two fundamental examples of quandles.

\begin{example}\label{eg1-qdles}
($n$-conjugation quandle)  
For any group $G$ and each integer $n$, $G$ forms a quandle under the binary operation $x\triangleright_n y = y^{-n}xy^n$ for all $x, y \in G$, with the dual operation $x\triangleright^{-1}_n y = y^{n}xy^{-n}$. 

\noindent(Alexander quandle)  
Any $\mathbb Z[t^{\pm 1}]$-module $M$ is a quandle under the binary operation $x\triangleright y = tx + (1-t)y$ for all $x, y \in M$, with the dual operation $x\triangleright^{-1} y = t^{-1}x + (1-t^{-1})y$.
\end{example}

A binary operation $\triangleright$ on $X$ is said to be {\it non-degenerate} if for any $y\in X$, the map $S_y:X\to X$ defined by $S_y(x)=x\triangleright y$ is bijective, or equivalently, $\triangleright$ satisfies condition \textbf{Q2} above.  
The inverse map of $S_y$ is given by $S_y^{-1}(x) = x\triangleright^{-1} y$ for all $x\in X$.  

In \cite{Tu}, Turaev introduced a multirack and a multiquandle.  
Precisely, a multirack is defined to be a pair consisting of a nonempty set $U$ and a collection $\{\triangleright_s\}_{s\in S}$ of non-degenerate binary operations on $U$ such that for any $s,t\in S$ and $u,v,w \in U$,
\(
(u\triangleright_s v)\triangleright_t w = (u\triangleright_t w)\triangleright_s (v \triangleright_t w).
\)
A multirack $(U, \{\triangleright_s\}_{s\in S})$ is a multiquandle if $u\triangleright_s u = u$ for all $u\in U$ and $s\in S$.  
Multiquandles are algebraic structures associated with the fundamental groups of arbitrary pairs consisting of a path-connected topological space and its path-connected subspace \cite{Tu}.  
Throughout this paper, we write a multirack and a multiquandle without hyphens, and for convenience, we reintroduce their definitions below, followed by the notion of a {\it group $G$-multiquandle}.

\begin{definition}\label{def-n-mqdle}
Let $X$ and $\Omega$ be nonempty sets, and let $\mathcal Q(\Omega)=\{\triangleright_i\mid i\in \Omega\}$ be an indexed family of non-degenerate binary operations $\triangleright_i : X\times X \to X$.  
A {\it $\Omega$-multiquandle} (resp. {\it $\Omega$-multirack}) is a triple $(X,\Omega,\mathcal Q(\Omega))$ satisfying:
\begin{description}  
\item[MQ1.] For each $i\in \Omega$, $(X,\triangleright_i)$ is a quandle (resp. rack).
\item[MQ2.] For any $x,y,z\in X$ and $i,j\in \Omega$,
\(
(x\triangleright_i y)\triangleright_j z
=(x\triangleright_j z)\triangleright_i (y\triangleright_j z).
\)
\end{description}
We call $X$ the {\it underlying set} and $\Omega$ the {\it set of colors}.  
In particular, if $\Omega=\{1,2,\ldots,n\}$ ($n \geq 1$), then the structure is called an {\it $n$-multiquandle} (resp. {\it $n$-multirack}).  
We write $(X,\{\triangleright_i\}_{i\in\Omega})$ for an $\Omega$-multiquandle, and $(X,\{\triangleright_i\}_{i=1}^n)$ for an $n$-multiquandle (resp. $n$-multirack). 
\end{definition}

If $\Omega'$ is a nonempty subset of $\Omega$, restricting every
index in \textbf{MQ1} and \textbf{MQ2} from $\Omega$ to $\Omega'$ leaves the
same identities and bijections. Hence
$(X,\{\triangleright_i\}_{i\in\Omega'})$ is an
$\Omega'$-multiquandle.  
A $1$-multiquandle is a quandle, and a $2$-multiquandle $(X,\{\triangleright_i\}_{i=1}^2)$ is a diquandle \cite{LS}. 

\begin{example}\label{exp-00}
(Trivial multiquandle)  
Let $X$ and $\Omega$ be nonempty sets. For each $i\in\Omega$, define $\triangleright_i$ on $X$ by $x\triangleright_i y= x$ for all $x, y \in X$. Then $(X,\{\triangleright_i\}_{i\in\Omega})$ is an $\Omega$-multiquandle.

\noindent(Quandle multiquandle)  
Let $(X,\triangleright)$ be a quandle. For each $i\in\Omega$, define $\triangleright_i$ on $X$ by $x\triangleright_i y= x\triangleright y$. Then $(X,\{\triangleright_i\}_{i\in\Omega})$ is an $\Omega$-multiquandle.

\noindent($N$-conjugation multiquandle)  
Let $G$ be a group. For each integer $n$, define $\triangleright_n$ on $G$ by $x\triangleright_n y= y^{-n}xy^n$. Then $(G,\{\triangleright_n\}_{n\in\mathbb Z})$ is a $\mathbb Z$-multiquandle \cite{Tu}.
Indeed, $x\triangleright_n x=x$ and the inverse translation is
$x\triangleright_n^{-1}y=y^nxy^{-n}$. For all $n,m\in\mathbb Z$,
\[
\begin{aligned}
(x\triangleright_n y)\triangleright_m z
 &=z^{-m}y^{-n}xy^nz^m,\\
(x\triangleright_m z)\triangleright_n(y\triangleright_m z)
 &=(z^{-m}yz^m)^{-n}(z^{-m}xz^m)(z^{-m}yz^m)^n
 =z^{-m}y^{-n}xy^nz^m.
\end{aligned}
\]
These identities prove \textbf{MQ1} and \textbf{MQ2}.
\end{example}

\begin{definition}\label{def-G-mqdle}
Let $X$ and $G$ be nonempty sets. A $G$-multiquandle (resp. $G$-multirack) $(X,\{\triangleright_g\}_{g\in G})$ is called a {\it group $G$-multiquandle} (resp. {\it group $G$-multirack}) if $G$ is a group.
\end{definition}

\begin{theorem}\label{thm-propt}
Let $G$ be a group and let
$(X,\{\triangleright_g\}_{g\in G})$ be a group-indexed
$G$-multiquandle.  If $H\le G$, then
$(X,\{\triangleright_h\}_{h\in H})$ is an $H$-multiquandle.
\end{theorem}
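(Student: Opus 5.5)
This is a direct restriction argument, essentially the remark after Definition~\ref{def-n-mqdle} specialized to a subgroup. The $H$-multiquandle axioms are universally quantified statements over the index set. Shrinking the index set from $G$ to $H$ can only remove instances of them, never add new ones.

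The proof has three steps.

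\textbf{Step 1: the family is well formed.} Since $H\le G$, the subgroup $H$ is nonempty, because it contains the identity of $G$. Each $\triangleright_h$ with $h\in H$ is one of the given operations $\triangleright_g$ on $X$. Non-degeneracy of a single operation is a property of that operation alone, so each $\triangleright_h$ is still non-degenerate. Hence $\{\triangleright_h\}_{h\in H}$ is an $H$-indexed family of non-degenerate binary operations on the nonempty set $X$, as Definition~\ref{def-n-mqdle} requires.

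\textbf{Step 2: the axioms MQ1 and MQ2.} For \textbf{MQ1}, each $(X,\triangleright_h)$ is a quandle because $h\in G$ and $(X,\triangleright_g)$ is a quandle for every $g\in G$. For \textbf{MQ2}, take $h,k\in H$ and $x,y,z\in X$. Since $h,k\in G$, the $G$-version of \textbf{MQ2} gives
\[
(x\triangleright_h y)\triangleright_k z=(x\triangleright_k z)\triangleright_h(y\triangleright_k z).
\]

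\textbf{Step 3: the group-indexed conclusion.} Because $H$ is itself a group, Definition~\ref{def-G-mqdle} shows that the result is in fact a group $H$-multiquandle.

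I expect no real obstacle. The subgroup hypothesis is used only to guarantee that $H$ is nonempty and is a group. The only point requiring care is to note explicitly that non-degeneracy and the dual operations $\triangleright_h^{-1}$ are inherited unchanged from the $G$-structure.
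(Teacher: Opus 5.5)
Your proposal is correct and follows essentially the same route as the paper, which simply observes that the quandle and mixed distributivity axioms indexed by $H$ are among those already indexed by $G$. Your additional checks, namely that $H$ is nonempty, that non-degeneracy is inherited, and that the result is in fact a group $H$-multiquandle, are harmless elaborations of that one-line restriction argument.
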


\begin{proof}
The quandle and mixed distributivity axioms for indices in $H$ are among
the corresponding axioms for indices in $G$.
\end{proof}

\begin{remark}\label{rem:indexed-not-group-law}
The adjective ``group'' in a group $G$-multiquandle refers to the indexing
set.  In general one cannot define a group law on the set of operations by
$\triangleright_g\triangleright_h=\triangleright_{gh}$: two distinct
indices may determine the same operation while their products do not.
Such a law is available only after an additional compatibility hypothesis
(for example, when equality of indexed operations is a congruence on
$G$).  No such hypothesis is part of Definition~\ref{def-G-mqdle}.
\end{remark}

\begin{example}\label{exa-conj-mq}
(Conjugation $\mathbb Z$-multiquandle)  
Let $G$ be a group. For each integer $n\in\mathbb Z$, define $\triangleright_n$ on $G$ by $x\triangleright_n y= y^{-n}xy^n$. Then $(G,\{\triangleright_n\}_{n\in\mathbb Z})$ is a group $\mathbb Z$-multiquandle by Example \ref{exp-00}. 
\end{example}

\begin{definition}\cite{IIJO}\label{defn-gfam-q}
Let $G$ be a group with identity $e$.  
A {\it $G$-family of quandles} is a pair $(X, \{\triangleright^g\}_{g\in G})$ consisting of a nonempty set $X$ and a family $\{\triangleright^g\}_{g\in G}$ of binary operations $\triangleright^g$ on $X$ satisfying:
\begin{itemize}
\item[(i)] For any $x \in X$ and $g \in G$, $x \triangleright^g x = x$.
\item[(ii)] For any $x, y \in X$ and $g,h \in G$,
\(
x\triangleright^{gh}y=(x\triangleright^g y)\triangleright^h y, \quad x \triangleright^e y = x.
\)
\item[(iii)] For any $x, y, z \in X$ and $g, h \in G$,
\(
(x\triangleright^g y)\triangleright^{h}z = (x \triangleright^h z)\triangleright^{h^{-1}gh} (y\triangleright^h z).
\)
\end{itemize}
\end{definition}

\begin{theorem}\label{thm-m-qdl}
Let $G$ be a commutative group and let $(X,\{\triangleright^g\}_{g\in G})$ be a $G$-family of quandles.  
Then $(X,\{\triangleright_g\}_{g\in G})$ is a group $G$-multiquandle such that $\mathcal Q(G)=\{\triangleright_g\mid g\in G\}$ is a commutative group, where $\triangleright_g=\triangleright^g$.
\end{theorem}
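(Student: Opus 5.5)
We have to check \textbf{MQ1} and \textbf{MQ2} for the family $\triangleright_g=\triangleright^g$, and then say in what sense $\mathcal Q(G)$ is a commutative group. Everything should come from the three axioms of Definition~\ref{defn-gfam-q} together with commutativity of $G$.

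\textbf{Step 1 (\textbf{MQ1}).} Fix $g\in G$. Idempotency \textbf{Q1} is axiom (i).

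For non-degeneracy \textbf{Q2}, apply (ii) with $h=g^{-1}$:
\[
(x\triangleright^g y)\triangleright^{g^{-1}}y=x\triangleright^{e}y=x ,
\]
and symmetrically $(x\triangleright^{g^{-1}} y)\triangleright^{g}y=x$. So $S_y$ for $\triangleright_g$ is bijective, with inverse given by $\triangleright_{g^{-1}}$.

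Self-distributivity \textbf{Q3} is axiom (iii) with $h=g$, because $g^{-1}gg=g$. Hence each $(X,\triangleright_g)$ is a quandle.

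\textbf{Step 2 (\textbf{MQ2}).} For arbitrary $g,h$, axiom (iii) reads
\[
(x\triangleright_g y)\triangleright_h z=(x\triangleright_h z)\triangleright_{h^{-1}gh}(y\triangleright_h z).
\]
Since $G$ is commutative, $h^{-1}gh=g$, and this is exactly \textbf{MQ2}. Together with Step~1, $(X,\{\triangleright_g\}_{g\in G})$ is a group $G$-multiquandle in the sense of Definition~\ref{def-G-mqdle}.

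\textbf{Step 3 (group structure on $\mathcal Q(G)$).} This is where care is needed, given Remark~\ref{rem:indexed-not-group-law}. Here the composition law is built into axiom (ii). I would view each operation $\triangleright_g$ as the family of right translations $(S^g_y)_{y\in X}$, with $S^g_y(x)=x\triangleright_g y$. Composition is then taken pointwise in $y$: the product of $\triangleright_g$ and $\triangleright_h$ is the operation $x\mapsto (x\triangleright_g y)\triangleright_h y$.

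By (ii) this product equals $\triangleright_{gh}$, so $\mathcal Q(G)$ is closed under composition. This law is well defined on operations themselves, not just on indices. Indeed, if $\triangleright_g=\triangleright_{g'}$, then
\[
(x\triangleright_g y)\triangleright_h y=(x\triangleright_{g'}y)\triangleright_h y ,
\]
so the product depends only on the operations. The same argument works in the other factor, because (ii) expresses the composite directly as a composition of maps.

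The group axioms then follow directly:
\begin{itemize}
\item associativity holds because composition of maps is associative;
\item the identity is $\triangleright_e$, the trivial operation $x\triangleright_e y=x$;
\item the inverse of $\triangleright_g$ is $\triangleright_{g^{-1}}$, by Step~1;
\item commutativity holds because $\triangleright_g\triangleright_h=\triangleright_{gh}=\triangleright_{hg}=\triangleright_h\triangleright_g$.
\end{itemize}
Equivalently, $g\mapsto\triangleright_g$ is a surjective homomorphism from $G$ onto $\mathcal Q(G)$, which is therefore a quotient of an abelian group and so abelian.

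I expect the only real subtlety to be this well-definedness point in Step~3. Steps 1 and 2 are direct specializations of the axioms.
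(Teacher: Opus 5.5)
Your proposal is correct and follows the paper's proof. Both obtain \textbf{MQ1} and \textbf{MQ2} from axioms (i)--(iii) in the same way: $h=g^{-1}$ in (ii) for \textbf{Q2}, $h=g$ in (iii) for \textbf{Q3}, and $h^{-1}gh=g$ for \textbf{MQ2}. In Step~3 the paper defines $\triangleright_g\triangleright_h=\triangleright_{gh}$ on indices and uses (ii) to check well-definedness, whereas you define the product as pointwise composition of right translations, so well-definedness is automatic and the same identity from (ii) gives closure; the two laws coincide.
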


\begin{proof}
We verify that $(X,\{\triangleright_g\}_{g\in G})$ satisfies \textbf{MQ1} and \textbf{MQ2} of Definition \ref{def-n-mqdle}.  
For fixed $g$, \textbf{Q1} of Definition~\ref{def-qdl} is
Definition~\ref{defn-gfam-q}(i). For \textbf{Q2}, condition (ii) gives
$(x\triangleright^g y)\triangleright^{g^{-1}}y=x$ and the reverse
composition identity. Condition (iii) with $h=g$ gives \textbf{Q3},
since $g^{-1}gg=g$. Thus \textbf{MQ1} holds; compare
\cite[Prop.~2.1]{IIJO}.  
If $G$ is commutative, then by Definition \ref{defn-gfam-q}(iii), for any $x, y, z \in X$ and $g, h \in G$,
\begin{align*}
(x\triangleright_g y)\triangleright_{h}z &= (x\triangleright^g y)\triangleright^{h}z =(x \triangleright^h z)\triangleright^{h^{-1}gh} (y\triangleright^h z)\\
&= (x \triangleright^h z)\triangleright^{g} (y\triangleright^h z)=(x \triangleright_h z)\triangleright_{g} (y\triangleright_h z).
\end{align*}
Hence \textbf{MQ2} holds, and thus $(X,\{\triangleright_g\}_{g\in G})$ forms a group $G$-multiquandle.  
It remains to justify the final assertion, which uses the additional
$G$-family axiom.  If $\triangleright_g=\triangleright_{g'}$ and
$\triangleright_h=\triangleright_{h'}$, then
Definition~\ref{defn-gfam-q}(ii) gives
\(x\triangleright^{gh}y =(x\triangleright^g y)\triangleright^h y =(x\triangleright^{g'}y)\triangleright^{h'}y =x\triangleright^{g'h'}y\).
Thus multiplication of indexed operations is well defined.  Associativity,
the identity $\triangleright_e$, and the inverse
$\triangleright_{g^{-1}}$ follow from the corresponding facts in $G$;
commutativity follows because $G$ is abelian.
\end{proof}

\begin{theorem}\label{thm:multiquandle-to-quandle-family}
Let $G$ be a group with identity $e$ and let $(X,\{\triangleright_g\}_{g\in G})$ be a group $G$-multiquandle satisfying, for any $x, y \in X$ and $g,h \in G$,  
\begin{equation}\label{g-fam-cond2}
(x\triangleright_g y)\triangleright_h y=x\triangleright_{gh}y, \qquad x \triangleright_e y = x.
\end{equation}  
Then $(X,\{\triangleright^g\}_{g\in G})$ is a $G$-family of quandles, where $\triangleright^g=\triangleright_g$.
\end{theorem}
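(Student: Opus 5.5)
We verify the three axioms (i)--(iii) of Definition~\ref{defn-gfam-q} directly for $\triangleright^g=\triangleright_g$, using only \textbf{MQ1}, \textbf{MQ2}, and the hypothesis \eqref{g-fam-cond2}.

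Axiom (i) is immediate. By \textbf{MQ1}, each $(X,\triangleright_g)$ is a quandle, and \textbf{Q1} gives $x\triangleright^g x=x\triangleright_g x=x$. Axiom (ii) is the hypothesis \eqref{g-fam-cond2} read with upper indices: $x\triangleright^{gh}y=(x\triangleright^g y)\triangleright^h y$ and $x\triangleright^e y=x$.

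The only substantive step is axiom (iii), which asks for
\[
(x\triangleright^g y)\triangleright^{h}z=(x\triangleright^h z)\triangleright^{h^{-1}gh}(y\triangleright^h z).
\]
\textbf{MQ2} already gives
\[
(x\triangleright_g y)\triangleright_h z=(x\triangleright_h z)\triangleright_g(y\triangleright_h z),
\]
with outer index $g$ rather than $h^{-1}gh$. So it suffices to show $a\triangleright_g b=a\triangleright_{h^{-1}gh} b$ for all $a,b$ of the form $a=x\triangleright_h z$ and $b=y\triangleright_h z$; it is cleaner to prove this for all $a,b\in X$. Put $w=b$. By \eqref{g-fam-cond2},
\[
a\triangleright_{h^{-1}gh}w=\bigl((a\triangleright_{h^{-1}}w)\triangleright_g w\bigr)\triangleright_h w .
\]
Here $\triangleright_{h^{-1}}$ and $\triangleright_h$ are mutually inverse translations by $w$, since $(a\triangleright_{h^{-1}}w)\triangleright_h w=a\triangleright_e w=a$. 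To commute $\triangleright_g w$ past them, apply \textbf{MQ2} with $y=z=w$:
\[
(c\triangleright_g w)\triangleright_h w=(c\triangleright_h w)\triangleright_g(w\triangleright_h w)=(c\triangleright_h w)\triangleright_g w,
\]
using \textbf{Q1} for $\triangleright_h$. Thus the right translations $S^g_w$ and $S^h_w$ commute for every $w$. Hence
\[
a\triangleright_{h^{-1}gh}w=S^h_w S^g_w S^{h^{-1}}_w(a)=S^g_w S^h_w S^{h^{-1}}_w(a)=a\triangleright_g w,
\]
so $\triangleright_{h^{-1}gh}=\triangleright_g$ as operations.

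Substituting this equality into \textbf{MQ2} yields (iii), and the theorem follows. The main obstacle is exactly this conjugation-invariance step, $\triangleright_{h^{-1}gh}=\triangleright_g$ for a possibly nonabelian $G$. The key observation is that \textbf{MQ2} specialized to $y=z$, together with idempotency, forces translations by a fixed element under different indices to commute; the multiplicativity in \eqref{g-fam-cond2} then makes conjugate indices act identically.
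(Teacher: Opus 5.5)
Your proof is correct and follows essentially the paper's route: both use \textbf{MQ2} with $y=z$ together with \textbf{Q1} to show that translations by a fixed element under different indices commute (the paper records this as $\triangleright_{gh}=\triangleright_{hg}$), and then use multiplicativity \eqref{g-fam-cond2} to get $\triangleright_{h^{-1}gh}=\triangleright_g$, from which (iii) follows by \textbf{MQ2}. The only difference is cosmetic: the paper rewrites $g=h(h^{-1}g)$ and swaps the factors to obtain $(h^{-1}g)h$, whereas you cancel $S^h_wS^{h^{-1}}_w$ directly.
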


\begin{proof}
Since $(X,\triangleright_g)$ is a quandle for all $g\in G$, we have $x\triangleright^g x=x$ for all $x\in X$ and $g\in G$, satisfying (i) of Definition \ref{defn-gfam-q}.  
Condition (ii) follows from \eqref{g-fam-cond2} with $\triangleright^g=\triangleright_g$.  
To verify (iii), note from \eqref{g-fam-cond2} that for any $x, y \in X$ and $g, h \in G,$
\(
x\triangleright_{gh}y=(x\triangleright_g y)\triangleright_h y=(x\triangleright_h y)\triangleright_g (y\triangleright_h y)=(x\triangleright_h y)\triangleright_g y=x\triangleright_{hg}y.
\)
Hence, for all $x, y, z \in X$ and $g, h \in G$,
\begin{align*}
(x\triangleright^g y)\triangleright^{h}z &= (x\triangleright_g y)\triangleright_{h}z=(x \triangleright_h z)\triangleright_{g} (y\triangleright_h z)\\
&=(x \triangleright_h z)\triangleright_{(hh^{-1})g} (y\triangleright_h z)
=(x \triangleright_h z)\triangleright_{h(h^{-1}g)} (y\triangleright_h z)\\
&=(x \triangleright_h z)\triangleright_{(h^{-1}g)h} (y\triangleright_h z)
=(x \triangleright^h z)\triangleright^{h^{-1}gh} (y\triangleright^h z).
\end{align*}
The penultimate equality uses the commutativity of the indexed operations
proved in the preceding display, not commutativity of the group $G$.
This completes the proof.
\end{proof}

\begin{example}
Let $(X,\triangleright)$ be a quandle.  
For each $y\in X$, let $S_y : X \to X$ be the bijection defined by $S_y(x) = x\triangleright y$.  
For any integer $n\in \mathbb Z$, define the binary operation $\triangleright^n : X \times X \to X$ by $x \triangleright^0 y=x$ and, for $n\neq 0$, $x \triangleright^n y = S^n_y(x)$.  
Then $(X,\{\triangleright^n\}_{n\in \mathbb Z})$ is a $\mathbb Z$-family of quandles.
Indeed, $S_x^n(x)=x$ and $S_y^{n+k}=S_y^kS_y^n$ give the diagonal
and composition axioms for all integers $n,k$. The quandle
distributivity law makes $S_z$ an automorphism. Its inverse and every
integer power are also automorphisms, so
\(S_z^kS_yS_z^{-k}=S_{S_z^k(y)}\) and \(S_z^kS_y^nS_z^{-k}=S_{S_z^k(y)}^n\).
Consequently,
$(x\triangleright^n y)\triangleright^k z
=S_z^kS_y^n(x)
=S_{S_z^k(y)}^nS_z^k(x)
=(x\triangleright^k z)\triangleright^n(y\triangleright^k z)$,
which is the remaining family axiom.
If there exists a positive integer $m$ such that $S^m_y ={\rm id}_X$ for all $y \in X$, then $(X,\{\triangleright^i\}_{i\in \mathbb Z_m})$ is a $\mathbb Z_m$-family of quandles \cite[Prop.~2.3]{IIJO}.
Here $S_y^{n+m}=S_y^n$ shows directly that the operations and the
identities descend modulo $m$.  
Hence, by Theorem \ref{thm-m-qdl}, $(X,\{\triangleright_n\}_{n\in \mathbb Z})$ with $\triangleright_n=\triangleright^n$ is a group $\mathbb Z$-multiquandle and $(X,\{\triangleright_i\}_{i\in \mathbb Z_m})$ with $\triangleright_i=\triangleright^i$ is a group $\mathbb Z_m$-multiquandle, where $\mathbb Z_m = \mathbb Z/m\mathbb Z$.
\end{example}

\section{Biquandles and multibiquandles}\label{multi-biqdl}

In this section, we introduce the notions of an $\Omega$-multibirack and an $\Omega$-multibiquandle for a set $\Omega$, and the notions of a $G$-multibirack and a $G$-multibiquandle for a group $G$. Then we give general constructions for several special classes of these algebraic structures along with specific examples. We begin by recalling the definition of a biquandle.

\begin{definition}\label{def-biqdle}
A {\it biquandle} is a set $X$ equipped with two binary operations $\underline{\triangleright}$ and $\overline{\triangleright}$ on $X$ satisfying the following conditions:
\begin{description}  
\item[BQ1.] For any $x\in X,$ $x~\underline{\triangleright}~x=x~\overline{\triangleright}~x$.
\item[BQ2.] Both operations $\underline{\triangleright}$ and $\overline{\triangleright}$ are right invertible. Equivalently, there exist binary operations $\underline{\triangleright}^{-1}$ and $\overline{\triangleright}^{-1}$ on $X$ such that for all $x, y\in X$,
\[
({x}~\underline{\triangleright}~{y})~\underline{\triangleright}^{-1}y=x,\quad
({x}~\overline{\triangleright}~{y})~\overline{\triangleright}^{-1}y=x,\quad
{(x~\underline{\triangleright}^{-1}y)}~\underline{\triangleright}~{y}=x,\quad
{(x~\overline{\triangleright}^{-1}y)}~\overline{\triangleright}~{y}=x.
\]
\item[BQ3.] The map $H:X\times X\to X\times X$ defined by $H(x,y)= ({y}~\overline{\triangleright}~{x},{x}~\underline{\triangleright}~{y})$ is bijective.
\item[BQ4.] For any $x,y,z\in X,$ 
\begin{align*}
&{({x}~\underline{\triangleright}~{y})}~\underline{\triangleright}~{({z}~\underline{\triangleright}~{y})}={({x}~\underline{\triangleright}~{z})}~\underline{\triangleright}~{({y}~\overline{\triangleright}~{z})},\\
&{({x}~\underline{\triangleright}~{y})}~\overline{\triangleright}~{({z}~\underline{\triangleright}~{y})}={({x}~\overline{\triangleright}~{z})}~\underline{\triangleright}~{({y}~\overline{\triangleright}~{z})},\\
&{({x}~\overline{\triangleright}~{y})}~\overline{\triangleright}~{({z}~\overline{\triangleright}~{y})}={({x}~\overline{\triangleright}~{z})}~\overline{\triangleright}~{({y}~\underline{\triangleright}~{z})}.
\end{align*}
\end{description}
We denote this biquandle by $(X,\underline{\triangleright}, \overline{\triangleright})$ or simply by $X$.
We call \textbf{BQ1} diagonal compatibility, \textbf{BQ2} right
invertibility, \textbf{BQ3} switch bijectivity, and \textbf{BQ4} the
exchange laws; the proofs below cite these axiom numbers explicitly.
Here a \emph{birack} satisfies \textbf{BQ2}--\textbf{BQ4}, without
requiring \textbf{BQ1}; compare \cite{FJK}. 
\end{definition}

If the operation $\overline{\triangleright}$ is trivial, i.e., $x~\overline{\triangleright}~y=x$ for any $x,y\in X$, then a biquandle $(X,\underline{\triangleright}, \overline{\triangleright})$ may be regarded as a quandle $(X,\underline{\triangleright})$.

\begin{example}\label{exp-3}
(Wada biquandles)
Let $G$ be a group. Define $\underline{\triangleright},\overline{\triangleright}:G\times G\to G$ by 
$x~\underline{\triangleright}~y=y^{-1}x^{-1}y$ and $x~\overline{\triangleright}~y=y^{-2}x$ for any $x,y\in G$. Then $(G,\underline{\triangleright},\overline{\triangleright})$ is a biquandle.
For \textbf{BQ1}, both diagonal values are $x^{-1}$.
The equations $w=y^{-1}x^{-1}y$ and $w=y^{-2}x$ give
$x=yw^{-1}y^{-1}$ and $x=y^2w$, respectively, proving
\textbf{BQ2}. For \textbf{BQ3}, write
$H(x,y)=(r,s)=(x^{-2}y,y^{-1}x^{-1}y)$.
Then $y=x^2r$ and $s=r^{-1}x^{-1}r$, whence
$H^{-1}(r,s)=(rs^{-1}r^{-1},rs^{-2})$.
For \textbf{BQ4}, the two sides of the three exchange laws simplify as follows:
\begin{align*}
(x\underline{\triangleright}y)
 \underline{\triangleright}(z\underline{\triangleright}y)
 &=(y^{-1}zy)(y^{-1}xy)(y^{-1}z^{-1}y)
 =y^{-1}zxz^{-1}y,\\
(x\underline{\triangleright}z)
 \underline{\triangleright}(y\overline{\triangleright}z)
 &=(y^{-1}z^2)(z^{-1}xz)(z^{-2}y)
 =y^{-1}zxz^{-1}y,\\
(x\underline{\triangleright}y)
 \overline{\triangleright}(z\underline{\triangleright}y)
 &=(y^{-1}z^2y)(y^{-1}x^{-1}y)
 =y^{-1}z^2x^{-1}y,\\
(x\overline{\triangleright}z)
 \underline{\triangleright}(y\overline{\triangleright}z)
 &=(y^{-1}z^2)(x^{-1}z^2)(z^{-2}y)
 =y^{-1}z^2x^{-1}y,\\
(x\overline{\triangleright}y)
 \overline{\triangleright}(z\overline{\triangleright}y)
 &=(z^{-1}y^2)(z^{-1}y^2)y^{-2}x
 =z^{-1}y^2z^{-1}x,\\
(x\overline{\triangleright}z)
 \overline{\triangleright}(y\underline{\triangleright}z)
 &=(z^{-1}y^2z)z^{-2}x
 =z^{-1}y^2z^{-1}x.
\end{align*}

\noindent(Alexander biquandles)
Let $X$ be a module over a commutative ring $R$ and let $s,t\in R^\times$. Define \({x}~\underline{\triangleright}~{y}=tx+(s-t)y\) and 
\({x}~\overline{\triangleright}~{y}=sx.\) Then $(X,\underline{\triangleright},\overline{\triangleright})$ is a biquandle. In particular, any $R=\mathbb Z[s^{\pm 1}, t^{\pm 1}]$-module yields a biquandle.

\noindent(Dihedral biquandles)
Let $\mathbb Z_n=\{0,1,\ldots,n-1\}$ and $\mathbb Z_n^*=\{s\in \mathbb Z_n\mid \gcd(s,n)=1\}$. For $s\in\mathbb Z_n^*$ define, for $x,y\in\mathbb Z_n$,
\(
x~\underline{\triangleright}_s~y=(s+1)y-x\) and \(x~\overline{\triangleright}_s~y=sx.
\)
Then $(\mathbb Z_n,\underline{\triangleright}_s,\overline{\triangleright}_s)$ is a biquandle with dual operations $x~\underline{\triangleright}_s^{-1}~y=(s+1)y-x$ and $x~\overline{\triangleright}_s^{-1}~y=s^{-1}x.$
\end{example}

\begin{definition}\label{def-n-dibiqdle}
Let $X$ and $\Omega$ be nonempty sets. An {\it $\Omega$-multibiquandle} (resp. {\it $\Omega$-multibirack}) is a quadruple $(X,\Omega,\underline{\mathcal B}(\Omega),\overline{\mathcal B}(\Omega))$ consisting of $X, \Omega$ and two collections $\underline{\mathcal B}(\Omega)=\{\underline{\triangleright}_i\mid i \in \Omega\}$ and $\overline{\mathcal B}(\Omega)=\{\overline{\triangleright}_i\mid i \in \Omega\}$ of binary operations on $X$ satisfying:
\begin{description}  
\item[MBQ1.] For each $i\in \Omega$, $(X,\underline{\triangleright}_i,\overline{\triangleright}_i)$ is a biquandle (resp. birack).
\item[MBQ2.] For each $(i,j) \in \Omega\times \Omega$, the map $H_{ij}:X\times X\rightarrow X\times X$ defined by $H_{ij}(x,y)=({y}~\overline{\triangleright}_i~{x},{x}~\underline{\triangleright}_j~{y})$ is bijective.
\item[MBQ3.] For any $x,y,z\in X$ and $i,j\in \Omega$,
\begin{align*}
&{({x}~\underline{\triangleright}_i~{y})}~\underline{\triangleright}_j~{({z}~\underline{\triangleright}_i~{y})}={({x}~\underline{\triangleright}_j~{z})}~\underline{\triangleright}_i~{({y}~\overline{\triangleright}_j~{z})},\\
&{({x}~\underline{\triangleright}_i~{y})}~\overline{\triangleright}_j~{({z}~\underline{\triangleright}_i~{y})}={({x}~\overline{\triangleright}_j~{z})}~\underline{\triangleright}_i~{({y}~\overline{\triangleright}_j~{z})},\\
&{({x}~\overline{\triangleright}_i~{y})}~\overline{\triangleright}_j~{({z}~\overline{\triangleright}_i~{y})}={({x}~\overline{\triangleright}_j~{z})}~\overline{\triangleright}_i~{({y}~\underline{\triangleright}_j~{z})}.
\end{align*}
\end{description}
When $\Omega=\{1,2,\ldots,n\}$, we refer to $(X,\Omega,\underline{\mathcal B}(\Omega),\overline{\mathcal B}(\Omega))$ as an {\it $n$-multibiquandle} (resp. {\it $n$-multibirack}). For convenience, we write an $\Omega$-multibiquandle (resp. $\Omega$-multibirack) as $(X,\{\underline{\triangleright}_i,\overline{\triangleright}_i\}_{i\in\Omega})$, and an $n$-multibiquandle (resp. $n$-multibirack) as $(X,\{\underline{\triangleright}_i,\overline{\triangleright}_i\}_{i=1}^n)$.
\end{definition}

\begin{lemma}[Equivalent indexing of the mixed exchange laws]
\label{lem:mbq3-reindexing}
For an indexed family of pairs of binary operations, \textbf{MBQ3} is
equivalent to the following three identities, required for every ordered
pair $i,j\in\Omega$ and all $x,y,z\in X$:
\begin{align*}
&(x\mathbin{\underline{\triangleright}_i}y)
 \mathbin{\underline{\triangleright}_j}
 (z\mathbin{\overline{\triangleright}_i}y)
 =(x\mathbin{\underline{\triangleright}_j}z)
 \mathbin{\underline{\triangleright}_i}
 (y\mathbin{\underline{\triangleright}_j}z),\\
&(x\mathbin{\overline{\triangleright}_i}y)
 \mathbin{\underline{\triangleright}_j}
 (z\mathbin{\overline{\triangleright}_i}y)
 =(x\mathbin{\underline{\triangleright}_j}z)
 \mathbin{\overline{\triangleright}_i}
 (y\mathbin{\underline{\triangleright}_j}z),\\
&(x\mathbin{\overline{\triangleright}_i}y)
 \mathbin{\overline{\triangleright}_j}
 (z\mathbin{\overline{\triangleright}_i}y)
 =(x\mathbin{\overline{\triangleright}_j}z)
 \mathbin{\overline{\triangleright}_i}
 (y\mathbin{\underline{\triangleright}_j}z).
\end{align*}
\end{lemma}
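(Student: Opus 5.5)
The plan is to compare the two systems identity by identity. The key observation is that each system is quantified over \emph{all} ordered pairs $(i,j)\in\Omega\times\Omega$ and all triples $(x,y,z)\in X^3$. Consequently, any bijective relabeling of these variables turns a universally quantified identity into an equivalent one. No biquandle axiom (\textbf{MBQ1} or \textbf{MBQ2}) will be used; the lemma is purely a statement about renaming bound variables. This is why it is phrased for an arbitrary indexed family of pairs of binary operations.

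Label the three identities of \textbf{MBQ3} as (A), (B), (C) and the three displayed identities of the lemma as (A$'$), (B$'$), (C$'$), in the order written.

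First, I will note that (C$'$) is literally the third identity (C) of \textbf{MBQ3}, so nothing needs to be done there.

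Second, I will apply the simultaneous substitution $i\leftrightarrow j$, $y\leftrightarrow z$ to (A). This gives
\[
(x\mathbin{\underline\triangleright_j}z)\mathbin{\underline\triangleright_i}(y\mathbin{\underline\triangleright_j}z)
=(x\mathbin{\underline\triangleright_i}y)\mathbin{\underline\triangleright_j}(z\mathbin{\overline\triangleright_i}y),
\]
which is (A$'$) with its two sides exchanged. Applying the same substitution to (B) gives
\[
(x\mathbin{\underline\triangleright_j}z)\mathbin{\overline\triangleright_i}(y\mathbin{\underline\triangleright_j}z)
=(x\mathbin{\overline\triangleright_i}y)\mathbin{\underline\triangleright_j}(z\mathbin{\overline\triangleright_i}y),
\]
which is (B$'$) with its two sides exchanged.

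Third, I will justify the equivalence of the quantified statements. The map $(i,j,x,y,z)\mapsto(j,i,x,z,y)$ is an involutive bijection of $\Omega^2\times X^3$. Hence ``(A) for all $i,j,x,y,z$'' holds if and only if ``(A$'$) for all $i,j,x,y,z$'' holds, and likewise for (B) and (B$'$). Since the same involution carries (A$'$) and (B$'$) back to (A) and (B), both implications follow at once.

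There is no real obstacle here. The only point requiring care is the bookkeeping: the substitution must be applied to indices and variables simultaneously, and it must send $\overline\triangleright$ to $\overline\triangleright$ and $\underline\triangleright$ to $\underline\triangleright$ in exactly the positions shown. I will therefore write out the substituted forms of (A) and (B) explicitly, as above, rather than assert the match.
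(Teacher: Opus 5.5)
Your proposal is correct and is essentially the paper's proof. The paper uses the same involution $(i,j,x,y,z)\mapsto(j,i,x,z,y)$, applied in the opposite direction (from the displayed identities to \textbf{MBQ3}) and followed by reversing the equality. Like you, it observes that the third identity coincides verbatim and gets both directions from the fact that all ordered pairs and triples are quantified. Your explicitly substituted forms of the first two \textbf{MBQ3} identities are accurate.
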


\begin{proof}
Apply the first displayed identity with the ordered pair $(j,i)$ and the
variables $(x,z,y)$, and then reverse the equality.  The result is the
first identity in \textbf{MBQ3}.  The same reindexing and interchange of
$y,z$ converts the second displayed identity into the second identity in
\textbf{MBQ3}.  The third displayed identity is already the third identity
in \textbf{MBQ3}.  Since $(i,j)$ ranges over all ordered pairs, the
argument is reversible.
\end{proof}

\begin{remark}\label{rmk1}
Let $(X,\{\underline{\triangleright}_i,\overline{\triangleright}_i\}_{i\in\Omega})$ be an $\Omega$-multibiquandle. If $\varnothing\ne\Omega'\subset \Omega$, then $(X,\{\underline{\triangleright}_i,\overline{\triangleright}_i\}_{i\in\Omega'})$ is an $\Omega'$-multibiquandle. A $1$-multibiquandle is a biquandle. A $2$-multibiquandle $(X,\{\underline{\triangleright}_i,\overline{\triangleright}_i\}_{i=1}^2)$ is a dibiquandle $(X,\underline{\triangleright}_1,\underline{\triangleright}_2,\overline{\triangleright}_1,\overline{\triangleright}_2)$ \cite{KLS2}. If $(X,\{\underline{\triangleright}_i,\overline{\triangleright}_i\}_{i\in\Omega})$ has trivial $\overline{\triangleright}_i$ (i.e., $x~\overline{\triangleright}_i~y=x$ for all $x,y\in X$), then $x~\underline{\triangleright}_i~x=x~\overline{\triangleright}_i~x=x$ for all $x\in X$ and $i\in \Omega$, so \textbf{Q1} holds. The first \textbf{MBQ3} identity, with
$(i,j,y,z)$ replaced by $(j,i,z,y)$ and its equality reversed, is
\textbf{MQ2} of Definition~\ref{def-n-mqdle}. Its case $i=j$ gives
\textbf{Q3}, while \textbf{BQ2} supplies \textbf{Q2}; hence we obtain an
$\Omega$-multiquandle. Conversely, adjoin
$x~\overline{\triangleright}_i~y=x$ to an $\Omega$-multiquandle.
Then $H_{ij}(x,y)=(y,x\underline{\triangleright}_j y)$ has inverse
$H_{ij}^{-1}(r,s)=(s\underline{\triangleright}_j^{-1}r,r)$, proving
\textbf{MBQ2}. The first \textbf{MBQ3} identity is the same reindexing
of \textbf{MQ2}; its other two identities reduce to identical expressions.
Taking $i=j$ gives \textbf{BQ3}--\textbf{BQ4}, and
\textbf{Q1}--\textbf{Q2} give \textbf{BQ1}--\textbf{BQ2}. Thus
\textbf{MBQ1} also holds.
\end{remark}

A {\it homomorphism} of $\Omega$-multibiquandles (resp. $\Omega$-multibiracks) $f:(X,\{\underline{\triangleright}_i,\overline{\triangleright}_i\}_{i\in\Omega})\to (Y,\{\underline{\ast}_i,\overline{\ast}_i\}_{i\in\Omega})$ is a map such that for all $x,y\in X$ and $i\in\Omega$,
\(f(x~\underline{\triangleright}_i~y)=f(x)~\underline{\ast}_{i}~f(y)\) and \(f(x~\overline{\triangleright}_i~y)=f(x)~\overline{\ast}_{i}~f(y)\).
A bijective homomorphism is an {\it isomorphism}. These objects form a category under composition.

\begin{theorem}\label{exp-2}
For each $s \in \mathbb Z^*_n,$ define operations on $\mathbb Z_n$ by
\begin{equation}\label{eq-dhd-bq}
x~\underline{\triangleright}_s~y=(s+1)y-x,\qquad x~\overline{\triangleright}_s~y=sx\quad (x,y\in \mathbb Z_n).
\end{equation} 
Then $(\mathbb Z_n,\{\underline{\triangleright}_s,\overline{\triangleright}_s\}_{s\in \mathbb Z^*_n})$ is a $\mathbb Z^*_n$-multibiquandle.
\end{theorem}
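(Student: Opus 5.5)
We verify \textbf{MBQ1}--\textbf{MBQ3} of Definition~\ref{def-n-dibiqdle} directly; every operation is affine over $\mathbb Z_n$, so each axiom is an identity between linear expressions in $x,y,z$. Fix $s,t\in\mathbb Z_n^*$ throughout.

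For \textbf{MBQ1}, the single-index structure $(\mathbb Z_n,\underline{\triangleright}_s,\overline{\triangleright}_s)$ is the dihedral biquandle of Example~\ref{exp-3}. We check its axioms rather than quote them.
\begin{itemize}
\item \textbf{BQ1}: $x\underline{\triangleright}_s x=sx=x\overline{\triangleright}_s x$.
\item \textbf{BQ2}: $x\mapsto(s+1)y-x$ is an involution, and $x\mapsto sx$ is invertible because $s$ is a unit.
\item \textbf{BQ3}: this is the case $t=s$ of the bijectivity of $H_{st}$ proved in the next paragraph.
\item \textbf{BQ4}: this is the case $i=j$ of the exchange identities checked in the last step.
\end{itemize}
So \textbf{MBQ1} reduces to \textbf{MBQ2} and \textbf{MBQ3}.

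For \textbf{MBQ2}, compute
\[
H_{st}(x,y)=\bigl(y\overline{\triangleright}_s x,\;x\underline{\triangleright}_t y\bigr)=\bigl(sy,\;(t+1)y-x\bigr).
\]
This is a linear map with matrix $\begin{pmatrix}0&s\\-1&t+1\end{pmatrix}$, whose determinant is $s$, a unit. An explicit inverse is $(r,q)\mapsto\bigl((t+1)s^{-1}r-q,\;s^{-1}r\bigr)$.

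For \textbf{MBQ3}, expand each of the three identities with indices $i=s$, $j=t$.
\begin{itemize}
\item First identity. The left side is $(t+1)\bigl((s+1)y-z\bigr)-\bigl((s+1)y-x\bigr)=x+t(s+1)y-(t+1)z$. The right side is $(s+1)ty-\bigl((t+1)z-x\bigr)$, which is the same.
\item Second identity. The left side is $t\bigl((s+1)y-x\bigr)$. The right side is $(s+1)ty-tx$, which is the same.
\item Third identity. The left side is $t\,sx$ and the right side is $s\,tx$. These agree because $\mathbb Z_n$ is commutative.
\end{itemize}

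None of these steps is a real obstacle. The one place needing care is matching the operation indices correctly in the first exchange law, since a mismatch there would produce a spurious failure such as $(s+1)t$ versus $(t+1)s$. The unit hypothesis on $s$ is used only for the invertibility of $\overline{\triangleright}_s$ and of $H_{st}$.
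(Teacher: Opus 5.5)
Your proof is correct and follows the paper's argument. It uses the same explicit inverse $(r,q)\mapsto((t+1)s^{-1}r-q,\,s^{-1}r)$ for $H_{st}$ and the same expansion of the three exchange laws. The only difference is that you verify \textbf{MBQ1} directly by specializing \textbf{MBQ2}--\textbf{MBQ3} to $t=s$, whereas the paper cites the dihedral biquandle of Example~\ref{exp-3}, which it states without proof; your version is therefore slightly more self-contained.
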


\begin{proof}
{\bf MBQ1} follows from Example \ref{exp-3}. For {\bf MBQ2}, for $i,j \in \mathbb Z_n^*$ define $G_{ij}:\mathbb Z_n^2 \to \mathbb Z_n^2$ by
\(
G_{ij}(x,y)=\big((j+1)i^{-1}x-y,\ i^{-1}x\big).
\)
Then
\begin{align*}
(H_{ij}\circ G_{ij})(x,y)&=H_{ij}((j+1)i^{-1}x-y,i^{-1}x)\\
&=(i(i^{-1}x), (j+1)i^{-1}x-((j+1)i^{-1}x-y))=(x,y),\\
(G_{ij}\circ H_{ij})(x,y)&=G_{ij}(iy,(j+1)y-x)\\
&=((j+1)i^{-1}(iy)-((j+1)y-x),i^{-1}(iy))=(x,y).
\end{align*}
Hence $G_{ij}=H_{ij}^{-1}$, so $H_{ij}$ is bijective. 
For {\bf MBQ3}, direct computations give (in $\mathbb Z_n$):
\begin{align*}
{({x}~\underline{\triangleright}_i~{y})}~\underline{\triangleright}_j~{({z}~\underline{\triangleright}_i~{y})}
&=(j+1)((i+1)y-z)-((i+1)y-x)=ijy+jy-jz-z+x,\\
{({x}~\underline{\triangleright}_j~{z})}~\underline{\triangleright}_i~{({y}~\overline{\triangleright}_j~{z})}
&=(i+1)(jy)-((j+1)z-x)=ijy+jy-jz-z+x,\\
{({x}~\underline{\triangleright}_i~{y})}~\overline{\triangleright}_j~{({z}~\underline{\triangleright}_i~{y})}
&=j((i+1)y-x)=ijy+jy-jx,\\
{({x}~\overline{\triangleright}_j~{z})}~\underline{\triangleright}_i~{({y}~\overline{\triangleright}_j~{z})}
&=jx~\underline{\triangleright}_i~jy=(i+1)jy-jx=ijy+jy-jx,\\
{({x}~\overline{\triangleright}_i~{y})}~\overline{\triangleright}_j~{({z}~\overline{\triangleright}_i~{y})}
&=j(ix)=jix,\\
{({x}~\overline{\triangleright}_j~{z})}~\overline{\triangleright}_i~{({y}~\underline{\triangleright}_j~{z})}
&=i(jx)=ijx.
\end{align*}
Thus {\bf MBQ3} holds.
\end{proof}

We call $(\mathbb Z_n,\{\underline{\triangleright}_s,\overline{\triangleright}_s\}_{s\in \mathbb Z^*_n})$ the {\it dihedral $\mathbb Z^*_n$-multibiquandle of order $n$}.

\begin{definition}\label{def:finite-multibiquandle}
Let $(X,\{\underline{\triangleright}_i,\overline{\triangleright}_i\}_{i\in\Omega})$ be an $\Omega$-multibiquandle. If $X$ and $\Omega$ are both finite, then $(X,\{\underline{\triangleright}_i,\overline{\triangleright}_i\}_{i\in\Omega})$ is called a {\it finite $\Omega$-multibiquandle of order $|X|$}.
\end{definition}

The dihedral $\mathbb Z^*_n$-multibiquandle in Theorem \ref{exp-2} is finite of order $n$.

\bigskip

Let $(X,\{\underline{\triangleright}_k,\overline{\triangleright}_k\}_{k=1}^n)$ be a finite $n$-multibiquandle of order $m$, where $X=\{x_1,\ldots,x_m\}$. The {\it operation table} of $(X,\{\underline{\triangleright}_k,\overline{\triangleright}_k\}_{k=1}^n)$ is the block matrix
\[
\left[\begin{array}{c|c|c|c|c|c}
\underline{M}^1&\cdots&\underline{M}^n&\overline{M}^1&\cdots&\overline{M}^n
\end{array} \right],
\]
where $\underline{M}^k=(\underline{a}^k_{ij})$ and $\overline{M}^k=(\overline{a}^k_{ij})$ are $m\times m$ matrices with entries
\(
\underline{a}^k_{ij}=x_i\underline{\triangleright}_k x_j\) and
\(\overline{a}^k_{ij}=x_i\overline{\triangleright}_k x_j.
\)

\begin{example}\label{example2.6}
Let $\mathbb Z_3=\{0,1,2\}$. Then $\mathbb Z^*_3=\{1,2\}$ and from \eqref{eq-dhd-bq},
\[
\begin{aligned}
&s=1:\quad x~\underline{\triangleright}_1~y=2y-x,\quad x~\overline{\triangleright}_1~y=x\quad (x,y\in \mathbb Z_3),\\
&s=2:\quad x~\underline{\triangleright}_2~y=-x,\qquad\ \ x~\overline{\triangleright}_2~y=2x\quad (x,y\in \mathbb Z_3).
\end{aligned}
\]
Hence the dihedral $\mathbb Z^*_3$-multibiquandle $(\mathbb Z_3,\{\underline{\triangleright}_s,\overline{\triangleright}_s\}_{s\in \mathbb Z^*_3})$ is a $2$-multibiquandle of order $3$ with operation table:
\[
\left[\begin{array}{ccc|ccc|ccc|ccc} 
0 & 2 & 1 & 0 & 0 & 0 & 0 & 0& 0 & 0 & 0 & 0 \\ 
2 & 1 & 0 & 2 & 2 & 2 & 1 & 1 & 1 & 2 & 2 & 2 \\ 
1 & 0 & 2 & 1 & 1 & 1 & 2 & 2& 2 & 1 & 1 & 1
\end{array} \right].
\]
\end{example}

\begin{definition}\cite{IN}\label{defn-n-pall-bql}
Let $(X,\underline{\triangleright},\overline{\triangleright})$ be a biquandle. For $n \geq 0$, the {\it $n$-parallel biquandle} of $X$ is $(X,\underline{\triangleright}^{[n]},\overline{\triangleright}^{[n]})$, where $\underline{\triangleright}^{[n]}$ and $\overline{\triangleright}^{[n]}$ are recursively defined by
\[
\begin{aligned}
&a~\underline{\triangleright}^{[0]}~b=a,\qquad a~\underline{\triangleright}^{[n]}~b=(a~\underline{\triangleright}^{[n-1]}~b)~\underline{\triangleright}~(b~\underline{\triangleright}^{[n-1]}~b),\\ 
&a~\overline{\triangleright}^{[0]}~b=a,\qquad a~\overline{\triangleright}^{[n]}~b=(a~\overline{\triangleright}^{[n-1]}~b)~\overline{\triangleright}~(b~\overline{\triangleright}^{[n-1]}~b).
\end{aligned}
\]
Here the recursive formulas apply for $n\ge1$.
\end{definition}

\begin{theorem}\label{thm-m-bqdl}
Let $(X,\underline{\triangleright},\overline{\triangleright})$ be a biquandle and let $(X,\underline{\triangleright}^{[n]},\overline{\triangleright}^{[n]})$ be its $n$-parallel biquandle for each $n\geq 0$. Then $(X,\{\underline{\triangleright}_{n},\overline{\triangleright}_{n}\}_{n\in \Omega})$ is an $\Omega$-multibiquandle, where $\Omega=\{n\in \mathbb Z\mid n\geq 0\}$, $\underline{\triangleright}_{n}=\underline{\triangleright}^{[n]}$, and $\overline{\triangleright}_{n}=\overline{\triangleright}^{[n]}$.
\end{theorem}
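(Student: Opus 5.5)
The approach is to interpret the $n$-parallel operations as the colorings induced by an $n$-cable of a strand. This turns the mixed axioms \textbf{MBQ2} and \textbf{MBQ3} into statements about composites of ordinary biquandle switches. \textbf{MBQ1} itself is the Ishii--Nelson statement that each $(X,\underline{\triangleright}^{[n]},\overline{\triangleright}^{[n]})$ is a biquandle \cite{IN}. If I want the argument self-contained, I will reprove it as the case $m=n$ of what follows. The degenerate indices are handled separately: $\underline{\triangleright}^{[0]}$ and $\overline{\triangleright}^{[0]}$ are the trivial operations, and every axiom involving index $0$ reduces to an identity of the form $x=x$ or to \textbf{BQ2}.

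\emph{Step 1 (diagonal sequences).} For $b\in X$, I set $b_0=b$ and $b_{k}=b_{k-1}\mathbin{\underline{\triangleright}}b_{k-1}$. By \textbf{BQ1}, also $b_k=b_{k-1}\mathbin{\overline{\triangleright}}b_{k-1}$. I first check by induction that $b\mathbin{\underline{\triangleright}^{[k]}}b=b_k=b\mathbin{\overline{\triangleright}^{[k]}}b$. Then the recursion of Definition~\ref{defn-n-pall-bql} reads
\[
a\mathbin{\underline{\triangleright}^{[n]}}b=(\cdots(a\mathbin{\underline{\triangleright}}b_0)\mathbin{\underline{\triangleright}}b_1\cdots)\mathbin{\underline{\triangleright}}b_{n-1},
\]
and similarly for $\overline{\triangleright}^{[n]}$. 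So $a\mathbin{\underline{\triangleright}^{[n]}}b$ is the color obtained when a strand colored $a$ passes under $n$ parallel strands colored $b_0,\dots,b_{n-1}$. This is exactly the $n$-cable of a strand colored $b$ with blackboard framing, where the kinks are absorbed by \textbf{BQ1}.

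\emph{Step 2 (key lemma; expected main obstacle).} I need to show that passing a diagonal tuple $(b_0,\dots,b_{n-1})$ through a crossing with a single strand, or with another diagonal tuple $(c_0,\dots,c_{m-1})$, produces again diagonal tuples. The outputs should be generated by $b\mathbin{\overline{\triangleright}^{[m]}}c$, respectively $c\mathbin{\underline{\triangleright}^{[n]}}b$, in the correct order. I will first try induction on $n$ with $m=1$, using the three exchange laws \textbf{BQ4} together with the diagonal identity $b_k\mathbin{\underline{\triangleright}}b_k=b_{k+1}$. The inductive step is precisely an instance of the third or second identity of \textbf{BQ4} with two of the three variables equal. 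I will then induct on $m$ by stacking single strands. This lemma is where the genuine computation lies. Everything after it is bookkeeping of Reidemeister III moves.

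\emph{Step 3 (MBQ3 and MBQ2).} For indices $(i,j)$, each side of the three identities in \textbf{MBQ3} is the output color of a triple crossing among cables of widths $\cdot$, $i$ and $j$. I first use the lemma so that the intermediate colors on each cable remain diagonal tuples; this is what makes the parallel operations on the two sides match. The two sides are then related by $ij+i+j$-type sequences of ordinary triangle moves, each of which is justified by \textbf{BQ4}. Equivalently, I can write this as a double induction on $(i,j)$ using the single-strand identities from Step~2.

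For \textbf{MBQ2}, $H_{ij}$ is the restriction to diagonal data of the cable crossing map. That map is a composite of $ij$ switches $H$ on $X^{i+j}$, each bijective by \textbf{BQ3}. By the lemma it carries the image of $(x,y)\mapsto(\text{diagonals of }x,y)$ onto itself, and the inverse composite does the same. Since $b$ is recovered as the first entry of its diagonal tuple, $H_{ij}$ is bijective.
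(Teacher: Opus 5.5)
Your route differs from the paper's. The paper does no computation at all. It quotes \cite[Lemmas~3.4--3.6]{IN} for \textbf{MBQ1}, identifies $H_{mn}$ with $S_{m,n}$ of \cite[Lemma~3.6(ii)]{IN} for \textbf{MBQ2}, and notes that the identities of \cite[Lemma~3.5]{IN} are the reindexed laws of Lemma~\ref{lem:mbq3-reindexing}. Your cabling argument is a self-contained substitute that also explains why the identities hold, and most of it works.

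Step~1 is correct. The single-strand lemma is the second exchange law with two variables equal, e.g.\ $(b\mathbin{\underline{\triangleright}}b)\mathbin{\overline{\triangleright}}(a\mathbin{\underline{\triangleright}}b)=(b\mathbin{\overline{\triangleright}}a)\mathbin{\underline{\triangleright}}(b\mathbin{\overline{\triangleright}}a)$, iterated. For two cables you need both single-strand versions, the tuple over a strand and the tuple under a strand. Sweeping the crossing grid column by column gives the over-outputs, and sweeping it row by row gives the under-outputs. Index the tuple from the source side of the co-orientation; this ordering is consistent whether a cable is over or under.

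In Step~3, say explicitly that the \textbf{MBQ3} variables are source-region colors. In the first identity, $y$ is internal on one side of the move and reaches the boundary only through $y\mathbin{\overline{\triangleright}}x$, as in the proof of Theorem~\ref{thm-n-mbqd-cinv}. Parametrize that side by diagonal tuples. Then use uniqueness of the thin coloring to see that the boundary tuple which is an input on the other side is also diagonal. With that, thin R3 invariance plus your lemma gives all three identities.

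\textbf{The gap is surjectivity in MBQ2.} Your lemma only shows that the bijection $\mathcal H$ of $X^{i+j}$ sends diagonal inputs to diagonal outputs, which gives injectivity of $H_{ij}$. The claim that $\mathcal H^{-1}$ also preserves diagonal data is equivalent to surjectivity of $H_{ij}$, and it does not follow from the forward lemma. You need a converse lemma, proved as follows.

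Write $\pi(w)=w\mathbin{\underline{\triangleright}}w$. Let one under-strand $a$ pass under $(b_0,\dots,b_{n-1})$, with $a_0=a$ and $a_{k+1}=a_k\mathbin{\underline{\triangleright}}b_k$. Suppose the over-outputs satisfy $b_k\mathbin{\overline{\triangleright}}a_k=\pi^k(b_0\mathbin{\overline{\triangleright}}a)$. If $b_l=\pi^l(b_0)$ for all $l\le k$, then $a_{k+1}=a\mathbin{\underline{\triangleright}^{[k+1]}}b_0$. The forward lemma then gives $b_{k+1}\mathbin{\overline{\triangleright}}a_{k+1}=\pi^{k+1}(b_0\mathbin{\overline{\triangleright}}a)=\pi^{k+1}(b_0)\mathbin{\overline{\triangleright}}a_{k+1}$. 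Hence $b_{k+1}=\pi^{k+1}(b_0)$ by \textbf{BQ2}.

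The case of an under-tuple beneath one over-strand is symmetric, cancelling $\mathbin{\underline{\triangleright}}$ instead. Now sweep the grid backwards: column by column for the over-cable and row by row for the under-cable. This shows that diagonal outputs force diagonal inputs, so $\mathcal H^{-1}$ preserves diagonal data. That completes \textbf{MBQ2}, and for $i=j$ it also supplies \textbf{BQ3} in your self-contained proof of \textbf{MBQ1}.
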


\begin{proof}
Since $(X,\underline{\triangleright}^{[n]},\overline{\triangleright}^{[n]})$ is a biquandle (see \cite[Lemmas~3.4–3.6]{IN}), {\bf MBQ1} holds. {\bf MBQ2} follows from \cite[Lemma~3.6(ii)]{IN} with $H_{mn}=S_{m,n}$.  The three identities in \cite[Lemma~3.5]{IN} are the reindexed form displayed in Lemma~\ref{lem:mbq3-reindexing}; hence they imply {\bf MBQ3}.
\end{proof}

Let $n\ge 1$ and put $\Omega_n=\{0,1,\ldots,n\}\subset\Omega$.
Theorem~\ref{thm-m-bqdl} yields an $\Omega_n$-multibiquandle; its
restriction to $\{1,\ldots,n\}$ is called the {\it $n$-parallel
multibiquandle of $(X,\underline{\triangleright},\overline{\triangleright})$}.

\begin{example}\label{exa-alx-bqdl}
Let $A$ be an abelian group and let $s,t:A\to A$ be commuting automorphisms. Define \(x~\underline{\triangleright}~y=t(x-y)+s(y)\) and 
\(x~\overline{\triangleright}~y=s(x).\)
Then $(A,\underline{\triangleright}, \overline{\triangleright})$ is a biquandle: the commuting automorphisms make $A$ a module over
$\mathbb Z[s^{\pm1},t^{\pm1}]$, so Example~\ref{exp-3} applies.
For $n\ge 0$, define
\(
x~\underline{\triangleright}^{[n]}~y=t^n(x-y)+s^n(y)\) and \(x~\overline{\triangleright}^{[n]}~y=s^n(x).
\)
Then $(A,\underline{\triangleright}^{[n]},\overline{\triangleright}^{[n]})$ is the $n$-parallel biquandle of $(A,\underline{\triangleright},\overline{\triangleright})$ \cite[Prop.~3.8]{IN}.
To verify the formulas directly, the case $n=0$ is the identity
operation. If the formulas hold at $n$, their diagonal values are
$s^n(y)$, and Definition~\ref{defn-n-pall-bql} gives
\[
\begin{aligned}
x\underline{\triangleright}^{[n+1]}y
 &=\bigl(t^n(x-y)+s^n(y)\bigr)
   \underline{\triangleright}s^n(y)
 =t\bigl(t^n(x-y)+s^n(y)-s^n(y)\bigr)+s^{n+1}(y)\\
 &=t^{n+1}(x-y)+s^{n+1}(y),\\
x\overline{\triangleright}^{[n+1]}y
 &=s(s^n(x))=s^{n+1}(x).
\end{aligned}
\]
Induction proves the formulas for every $n\ge0$.
With $\Omega=\{n\in\mathbb Z\mid n\ge 0\}$, setting $\underline{\triangleright}_{n}=\underline{\triangleright}^{[n]}$ and $\overline{\triangleright}_{n}=\overline{\triangleright}^{[n]}$ gives an $\Omega$-multibiquandle; restricting to $\{1,\ldots,n\}$ gives the $n$-parallel multibiquandle.
\end{example}

\begin{example}\label{exa-3-pall}
Let $X=\mathbb Z_3$ and define
\(
 x\mathbin{\underline{\triangleright}}y=x\) and
 \(x\mathbin{\overline{\triangleright}}y=2x-y
 \quad(x,y\in\mathbb Z_3).
\)
We check \textbf{BQ1}--\textbf{BQ4} of Definition~\ref{def-biqdle}.
For \textbf{BQ1}, both diagonal values are $x$.
For \textbf{BQ2}, the under-translation is the identity and the
over-translation is its own inverse, since $2(2x-y)-y=x$ in $\mathbb Z_3$.
For \textbf{BQ3}, the switch $H(x,y)=(2y-x,x)$ has inverse
$H^{-1}(r,s)=(s,2r+2s)$.
For \textbf{BQ4}, the two sides of its three identities simplify,
respectively, to $x$, $2x-z$, and $x+2y+z$ in $\mathbb Z_3$.
The diagonal value of the over-operation is $y$, and
$(2x-y)\mathbin{\overline{\triangleright}}y=x$.  Hence its parallel
operations are
\[
 x\mathbin{\underline{\triangleright}^{[n]}}y=x
 \quad(n\ge0),
 \qquad
 x\mathbin{\overline{\triangleright}^{[n]}}y=
 \begin{cases}
 x,&n\text{ even},\\
 2x-y,&n\text{ odd}.
 \end{cases}
\]
In particular, with
$\underline{\triangleright}_n=\underline{\triangleright}^{[n]}$ and
$\overline{\triangleright}_n=\overline{\triangleright}^{[n]}$ for
$n=1,2,3$, we obtain a $3$-parallel multibiquandle in which
\(
 x\mathbin{\underline{\triangleright}_n}y=x\,(n=1,2,3),
 \,
 x\mathbin{\overline{\triangleright}_2}y=x\) and
 \(
 x\mathbin{\overline{\triangleright}_1}y
 =x\mathbin{\overline{\triangleright}_3}y=2x-y.
\)
\end{example}

\begin{definition}\label{def-G-mbqdle}
Let $X$ and $G$ be nonempty sets. A $G$-multibiquandle (resp. $G$-multibirack) $(X,\{\underline{\triangleright}_g,\overline{\triangleright}_g\}_{g\in G})$ is called a {\it group $G$-multibiquandle} (resp. {\it group $G$-multibirack}) if $G$ is a group.
\end{definition}

The class of group $G$-multibiquandles (resp. group $G$-multibiracks) forms a category under composition of homomorphisms. If $H\le G$ and $(X,\{\underline{\triangleright}_g,\overline{\triangleright}_g\}_{g\in G})$ is a group $G$-multibiquandle, then $(X,\{\underline{\triangleright}_h,\overline{\triangleright}_h\}_{h\in H})$ is a group $H$-multibiquandle. If all $\overline{\triangleright}_g$ are trivial, then $(X,\{\underline{\triangleright}_g\}_{g\in G})$ is a group $G$-multiquandle; conversely, adjoining trivial $\overline{\triangleright}_g$ yields a group $G$-multibiquandle.

\begin{remark}\label{rmk-g-biqdle}
In Definition~\ref{def-G-mbqdle}, as in
Remark~\ref{rem:indexed-not-group-law}, the families
$\{\underline{\triangleright}_g\}_{g\in G}$ and
$\{\overline{\triangleright}_g\}_{g\in G}$ are indexed by $G$, but do not
automatically inherit a well-defined group law from $G$ when indexed
operations repeat.
\end{remark}

\begin{example}\label{exp-4}
Let $G$ be a group. For $n\in\mathbb Z$, define $\underline{\triangleright}_n$ on $G$ by $x~\underline{\triangleright}_n~y= y^{-n}xy^n$. Then $(G,\{\underline{\triangleright}_n\}_{n\in\mathbb Z})$ is a group $\mathbb Z$-multiquandle (cf. Example \ref{exa-conj-mq}), and $(G,\{\underline{\triangleright}_n,\overline{\triangleright}_n\}_{n\in\mathbb Z})$ is a group $\mathbb Z$-multibiquandle if we set $x~\overline{\triangleright}_n~y=x$ for all $x,y\in G$.
\end{example}

\begin{definition}\cite{IN}\label{defn-gfam-biq}
Let $G$ be a group with identity $e$. A {\it $G$-family of biquandles} is a triple $(X,\{\underline{\triangleright}^g\}_{g\in G}, \{\overline{\triangleright}^g\}_{g\in G})$ consisting of a nonempty set $X$ and two families of operations on $X$ satisfying:
\begin{itemize}
\item[(i)] $x~\underline{\triangleright}^g~x=x~\overline{\triangleright}^g~x$ for any $g\in G$ and $x\in X$.
\item[(ii)] For each $g\in G$ and $y\in X$, the maps $\underline{\triangleright}^g y, \overline{\triangleright}^g y:X\to X$, $\underline{\triangleright}^g y(x)=x~\underline{\triangleright}^g~y$, $\overline{\triangleright}^g y(x)=x~\overline{\triangleright}^g~y$, are bijections.
\item[(iii)] For $g,h\in G$, the map $S_{g,h}:X^2\to X^2$, $S_{g,h}(x,y)=(y~\overline{\triangleright}^g~x, x~\underline{\triangleright}^h~y)$, is a bijection.
\item[(iv)] For any $g,h\in G$ and $x,y,z \in X$,
\begin{align*}
&(x~\underline{\triangleright}^g~y)\ \underline{\triangleright}^h\ (z~\overline{\triangleright}^g~y)=(x~\underline{\triangleright}^h~z)\ \underline{\triangleright}^{h^{-1}gh}\ (y~\underline{\triangleright}^h~z),\\
&(x~\overline{\triangleright}^g~y)\ \underline{\triangleright}^h\ (z~\overline{\triangleright}^g~y)=(x~\underline{\triangleright}^h~z)\ \overline{\triangleright}^{h^{-1}gh}\ (y~\underline{\triangleright}^h~z),\\
&(x~\overline{\triangleright}^g~y)\ \overline{\triangleright}^h\ (z~\overline{\triangleright}^g~y)=(x~\overline{\triangleright}^h~z)\ \overline{\triangleright}^{h^{-1}gh}\ (y~\underline{\triangleright}^h~z).
\end{align*}
\item[(v)] For any $g,h\in G$ and $x,y \in X$,
\(
x~\underline{\triangleright}^{gh}~y=(x~\underline{\triangleright}^g~y)\ \underline{\triangleright}^{h}\ (y~\underline{\triangleright}^g~y)\) and \(
x~\overline{\triangleright}^{gh}~y=(x~\overline{\triangleright}^g~y)\ \overline{\triangleright}^{h}\ (y~\overline{\triangleright}^g~y).
\)
\end{itemize}
\end{definition}

\begin{theorem}\label{thm-gfam-m-bqdle}
Let $G$ be a commutative group and let $(X,\{\underline{\triangleright}^g\}_{g\in G}, \{\overline{\triangleright}^g\}_{g\in G})$ be a $G$-family of biquandles. Then $(X,\{\underline{\triangleright}_{g},\overline{\triangleright}_{g}\}_{g\in G})$ is a group $G$-multibiquandle, where $\underline{\triangleright}_{g}=\underline{\triangleright}^{g}$ and $\overline{\triangleright}_{g}=\overline{\triangleright}^{g}$.
\end{theorem}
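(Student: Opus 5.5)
The plan is to check the three axioms \textbf{MBQ1}--\textbf{MBQ3} of Definition~\ref{def-n-dibiqdle} directly from the $G$-family axioms (i)--(v) of Definition~\ref{defn-gfam-biq}. Commutativity of $G$ is used only to replace each conjugate $h^{-1}gh$ by $g$. The argument follows the pattern of Theorem~\ref{thm-m-qdl}.

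For \textbf{MBQ1}, fix $g\in G$ and check that $(X,\underline{\triangleright}_g,\overline{\triangleright}_g)$ is a biquandle.
\begin{itemize}
\item \textbf{BQ1} is axiom (i).
\item \textbf{BQ2} is axiom (ii): the translations $x\mapsto x\,\underline{\triangleright}^g\,y$ and $x\mapsto x\,\overline{\triangleright}^g\,y$ are bijections, and their inverses define $\underline{\triangleright}_g^{-1}$ and $\overline{\triangleright}_g^{-1}$.
\item \textbf{BQ3} is axiom (iii) with $h=g$, since $S_{g,g}=H$.
\item \textbf{BQ4} comes from axiom (iv) with $h=g$, where $g^{-1}gg=g$. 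Axiom (iv) is written in the reindexed form of Lemma~\ref{lem:mbq3-reindexing}, so with $i=j=g$ it is equivalent to the three exchange laws of \textbf{BQ4}. This is the $i=j$ case of the lemma's equivalence.
\end{itemize}
Axiom (v) is not needed for any of this.

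For \textbf{MBQ2}, the map $H_{ij}(x,y)=(y\,\overline{\triangleright}_i\,x,\;x\,\underline{\triangleright}_j\,y)$ is exactly $S_{i,j}$. Axiom (iii) says this map is a bijection for every ordered pair $(i,j)\in G\times G$.

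For \textbf{MBQ3}, take arbitrary $g,h\in G$. Since $G$ is commutative, $h^{-1}gh=g$. Axiom (iv) then reads as the three displayed identities of Lemma~\ref{lem:mbq3-reindexing} with $i=g$ and $j=h$. Because these hold for every ordered pair, the lemma gives \textbf{MBQ3}.

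The only delicate step is matching the index and variable conventions of (iv) with those of Lemma~\ref{lem:mbq3-reindexing}. I will check each of the three lines term by term: in (iv) the first index $g$ acts on $y$ and the second index $h$ acts on $z$, and this must agree with the lemma's $i$ and $j$ placements. If the order turns out reversed, I will apply (iv) with $g$ and $h$ interchanged, which is allowed because (iv) is required for all pairs. Finally, $G$ is a group, so the structure is a group $G$-multibiquandle by Definition~\ref{def-G-mbqdle}.
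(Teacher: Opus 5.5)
Your proposal is correct and matches the paper's proof essentially step for step. \textbf{MBQ2} comes from axiom (iii) via $H_{gh}=S_{g,h}$. \textbf{MBQ3} comes from axiom (iv) after replacing $h^{-1}gh$ by $g$ and applying Lemma~\ref{lem:mbq3-reindexing}; the indices match directly with $i=g$, $j=h$, so no interchange is needed. \textbf{MBQ1} comes from axioms (i)--(iii) together with the diagonal case $h=g$ of (iv), and axiom (v) is not used there either.
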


\begin{proof}
We check {\bf MBQ1}–{\bf MBQ3}. Definition
\ref{defn-gfam-biq}(iii) says that $S_{g,h}$ is bijective for every
$g,h$. Since $H_{gh}=S_{g,h}$, this proves {\bf MBQ2}. If $G$ is commutative, then $h^{-1}gh=g$, and Definition \ref{defn-gfam-biq}(iv) yields, for $x,y,z\in X$ and $g,h\in G$,
\begin{equation}\label{eqn-gfam-biq}
\begin{split}
&(x~\underline{\triangleright}^g~y) \underline{\triangleright}^h (z~\overline{\triangleright}^g~y)=(x~\underline{\triangleright}^h~z) \underline{\triangleright}^{g} (y~\underline{\triangleright}^h~z),\\
&(x~\overline{\triangleright}^g~y) \underline{\triangleright}^h (z~\overline{\triangleright}^g~y)=(x~\underline{\triangleright}^h~z) \overline{\triangleright}^{g} (y~\underline{\triangleright}^h~z),\\
&(x~\overline{\triangleright}^g~y) \overline{\triangleright}^h (z~\overline{\triangleright}^g~y)=(x~\overline{\triangleright}^h~z) \overline{\triangleright}^{g} (y~\underline{\triangleright}^h~z).
\end{split}
\end{equation}
These are the reindexed mixed exchange laws in
Lemma~\ref{lem:mbq3-reindexing}; hence they are equivalent to {\bf MBQ3}
with $\underline{\triangleright}_{g}=\underline{\triangleright}^{g}$ and
$\overline{\triangleright}_{g}=\overline{\triangleright}^{g}$.

Next, for fixed $g$, {\bf BQ1} is Definition \ref{defn-gfam-biq}(i). For {\bf BQ2}, define $\underline{\triangleright}_{g}^{-1},\overline{\triangleright}_{g}^{-1}$ by $x\underline{\triangleright}_{g}^{-1}y=(\underline{\triangleright}^gy)^{-1}(x)$ and $x\overline{\triangleright}_{g}^{-1}y=(\overline{\triangleright}^gy)^{-1}(x)$; Definition \ref{defn-gfam-biq}(ii) implies the required identities. {\bf BQ3} follows from (iii) with $H=S_{g,g}$.
For {\bf BQ4}, set both indices equal to $g$ in
\eqref{eqn-gfam-biq} and apply
Lemma~\ref{lem:mbq3-reindexing}; this gives the three ordinary biquandle
exchange laws.  Hence {\bf MBQ1} holds and we have a group
$G$-multibiquandle.
\end{proof}

\begin{definition}[Type of a biquandle]\label{def:biquandle-type}
Let $(X,\underline{\triangleright},\overline{\triangleright})$ be a biquandle. The \emph{type of $X$}, denoted ${\rm type}\,X$, is
\[
{\rm type}\,X=\min\{n>0\mid a~\underline{\triangleright}^{[n]}~b=a=a~\overline{\triangleright}^{[n]}~b\ \text{for all }a,b\in X\}.
\]
If the displayed set is empty, put ${\rm type}\,X=\infty$.
If $X$ is finite then ${\rm type}\,X<\infty$ \cite[Prop.~4.6]{IN}.
\end{definition}

\begin{example}
Let $(X,\underline{\triangleright},\overline{\triangleright})$ be a biquandle with ${\rm type}\,X<\infty$ and let $\mathbb Z_{n}$ be cyclic of order $n={\rm type}\,X$. Then $(X,\{\underline{\triangleright}^{[k]}\}_{k\in \mathbb Z_{n}},\{\overline{\triangleright}^{[k]}\}_{k\in \mathbb Z_{n}})$ is a $\mathbb Z_{n}$-family of biquandles \cite[Thm.~4.5]{IN}. By Theorem \ref{thm-gfam-m-bqdle}, $(X,\{\underline{\triangleright}_{k},\overline{\triangleright}_{k}\}_{k\in \mathbb Z_{n}})$ is a group $\mathbb Z_{n}$-multibiquandle with $\underline{\triangleright}_{k}=\underline{\triangleright}^{[k]}$ and $\overline{\triangleright}_{k}=\overline{\triangleright}^{[k]}$.

In particular, let $X=\mathbb Z_5$ with $x\underline{\triangleright}y=2x+y$ and $x\overline{\triangleright}y=3x$.
Example~\ref{exa-alx-bqdl} gives
$x\underline{\triangleright}^{[n]}y=2^nx+(3^n-2^n)y$ and
$x\overline{\triangleright}^{[n]}y=3^nx$.
For $n=0,1,2,3,4$, the powers in $\mathbb Z_5$ are
$(2^n)=(1,2,4,3,1)$ and $(3^n)=(1,3,4,2,1)$; hence
$(3^n-2^n)=(0,1,0,4,0)$. Then, for $x,y\in X$,
\[
\begin{aligned}
&x\underline{\triangleright}^{[0]}y=x,\quad
x\underline{\triangleright}^{[1]}y=2x+y,\quad
x\underline{\triangleright}^{[2]}y=4x,\quad
x\underline{\triangleright}^{[3]}y=3x+4y,\quad
x\underline{\triangleright}^{[4]}y=x,\\
&x\overline{\triangleright}^{[0]}y=x,\quad
x\overline{\triangleright}^{[1]}y=3x,\quad
x\overline{\triangleright}^{[2]}y=4x,\quad
x\overline{\triangleright}^{[3]}y=2x,\quad
x\overline{\triangleright}^{[4]}y=x.
\end{aligned}
\]
Both operations are the identity at $n=4$. For $n=1,2,3$, the
over-operation sends $1$ to $3,4,2$, respectively, so it is not the
identity. Hence ${\rm type}\,X=4$ and the resulting
$(\mathbb Z_5,\{\underline{\triangleright}_{n},
\overline{\triangleright}_{n}\}_{n\in \mathbb Z_{4}})$ is a group
$\mathbb Z_{4}$-multibiquandle.
\end{example}

\begin{theorem}\label{thm-gmbql-gfbqdl}
Let $G$ be a commutative group with identity $e$ and let $(X,\{\underline{\triangleright}_g, \overline{\triangleright}_g\}_{g\in G})$ be a group $G$-multibiquandle satisfying, for all $x,y \in X$ and $g,h \in G$, 
\[
x~\underline{\triangleright}_{gh}~y=(x~\underline{\triangleright}_g~y) \underline{\triangleright}_{h} (y~\underline{\triangleright}_g~y),\qquad
x~\overline{\triangleright}_{gh}~y=(x~\overline{\triangleright}_g~y) \overline{\triangleright}_{h} (y~\overline{\triangleright}_g~y).
\]
Then $(X,\{\underline{\triangleright}^g\}_{g\in G}, \{\overline{\triangleright}^g\}_{g\in G})$ is a $G$-family of biquandles, where $\underline{\triangleright}^g=\underline{\triangleright}_g$ and $\overline{\triangleright}^g=\overline{\triangleright}_g$.
\end{theorem}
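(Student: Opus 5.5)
We verify conditions (i)--(v) of Definition~\ref{defn-gfam-biq} one at a time, following the pattern of Theorem~\ref{thm:multiquandle-to-quandle-family}. Conditions (i)--(iii) come directly from the multibiquandle axioms. For fixed $g$, \textbf{MBQ1} says that $(X,\underline{\triangleright}_g,\overline{\triangleright}_g)$ is a biquandle. Hence \textbf{BQ1} gives (i), and \textbf{BQ2} gives the bijectivity of the right translations $\underline{\triangleright}^g y$ and $\overline{\triangleright}^g y$ required in (ii). For (iii), the map $S_{g,h}(x,y)=(y\,\overline{\triangleright}_g\,x,\;x\,\underline{\triangleright}_h\,y)$ is exactly $H_{gh}$ of \textbf{MBQ2}, so it is bijective. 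Condition (v) is precisely the displayed hypothesis of the theorem.

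The only real content is (iv). Since $G$ is commutative, $h^{-1}gh=g$ for all $g,h\in G$. So the three identities in (iv) become the three identities of \eqref{eqn-gfam-biq}, with $\triangleright^g=\triangleright_g$. We then invoke Lemma~\ref{lem:mbq3-reindexing}. The identities in \eqref{eqn-gfam-biq}, required for all ordered pairs $(g,h)$, are exactly the reindexed mixed exchange laws of that lemma with $(i,j)=(g,h)$. The lemma states that these are equivalent to \textbf{MBQ3*}, which holds because $X$ is a group $G$-multibiquandle. This gives (iv).

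I expect no serious obstacle. The only step needing care is matching the index pattern in (iv) with the reindexed laws of Lemma~\ref{lem:mbq3-reindexing}. In particular, the first identity of (iv) has $z\,\overline{\triangleright}^g\,y$ in the second argument on the left, and $y\,\underline{\triangleright}^h\,z$ on the right. This agrees with the first reindexed identity when $i=g$ and $j=h$, and the other two identities match in the same way. Commutativity of $G$ is used only to replace $h^{-1}gh$ by $g$. Unlike Theorem~\ref{thm:multiquandle-to-quandle-family}, we need not derive commutativity of the indexed operations, because it is assumed outright.
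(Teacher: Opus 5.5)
Your proof is correct and follows the paper's argument essentially verbatim. You get (i)--(iii) from \textbf{MBQ1}--\textbf{MBQ2} with $S_{g,h}=H_{gh}$, (iv) from $h^{-1}gh=g$ together with Lemma~\ref{lem:mbq3-reindexing} at $(i,j)=(g,h)$, and (v) from the hypothesis, exactly as the paper does.
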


\begin{proof}
{\bf (i)} follows from {\bf MBQ1}. For {\bf (ii)}, for each $g\in G$
and $y\in X$, the maps
$x\mapsto x~\underline{\triangleright}^g~y$ and
$x\mapsto x~\overline{\triangleright}^g~y$ are bijections with inverses
given by the dual operations in {\bf BQ2}. Condition {\bf (iii)} is
{\bf MBQ2} with $S_{g,h}=H_{gh}$. For {\bf (iv)}, abelianness gives
$h^{-1}gh=g$, and Lemma~\ref{lem:mbq3-reindexing} converts {\bf MBQ3}
to the three identities in Definition~\ref{defn-gfam-biq}(iv). Finally,
{\bf (v)} is the hypothesis. Hence we obtain a $G$-family of
biquandles.
\end{proof}

From now on we introduce the notion of multibiquandle modules over a commutative ring.

\begin{definition}\label{def:multibiquandle-module}
Let $G$ be a set and $R$ a ring.
A {\it $G$-multibiquandle $R$-module} is an $R$-module $M$ equipped with collections $\{\underline{\triangleright}_g\mid g\in G\}$ and $\{\overline{\triangleright}_g\mid g\in G\}$ of binary operations $M\times M\to M$ such that $(M,\{\underline{\triangleright}_g, \overline{\triangleright}_g\}_{g\in G})$ is a $G$-multibiquandle. If $G$ is a group, then $M$ is called a {\it group $G$-multibiquandle $R$-module}.
A \emph{$G$-multiquandle $R$-module} is defined similarly using a $G$-multiquandle.
\end{definition}

Let $G$ be a group. The {\it center} of $G$ is $Z(G)=\{c\in G\mid cg=gc\ \text{for all }g\in G\}$. For a ring $R$ with identity $1_{\rm R}$, the {\it center} of $R$ is $Z(R)=\{c\in R\mid cr=rc\ \text{for all }r\in R\}$. Elements of $Z(G)$ and $Z(R)$ are referred to as {\it central elements} of $G$ and $R$, respectively. From this point onward, $R$ is a commutative ring with identity, $G$ is an abelian multiplicative group with identity $e$, and
\(A:=R[G]\)
is the resulting commutative group ring.  We use the standard notation
$A^\times$ for its group of units.  Every $g\in G$ is identified with the
unit $1_Rg\in A^\times$.

Let $M_{R[G]}$ be a (left) $R[G]$-module and let $u\in A^\times$. For each $g\in G$, define affine operations
\begin{equation}\label{eq1-mdl-gpring}
x~\underline{\triangleright}_g~y = gx + (u-g)y,\qquad 
x~\overline{\triangleright}_g~y = ux
\end{equation}
for any $x, y \in M_{R[G]}$.

\begin{lemma}\label{lem-multibqdl-mdl}
For each $g\in G$,
$(M_{R[G]},\underline{\triangleright}_g,\overline{\triangleright}_g)$ is a biquandle with dual operations
\begin{equation}\label{eq2-mdl-gpring}
x~\underline{\triangleright}_g^{-1}~y = g^{-1}x + (1_A-g^{-1}u)y,\qquad 
x~\overline{\triangleright}_g^{-1}~y = u^{-1}x.
\end{equation}
\end{lemma}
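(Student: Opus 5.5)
The plan is to check \textbf{BQ1}--\textbf{BQ4} of Definition~\ref{def-biqdle} directly for the operations \eqref{eq1-mdl-gpring} with $g\in G$ fixed. The only structural input is that $A=R[G]$ is commutative and that $g$ and $u$ are units of $A$. Alternatively, one could observe that this is exactly the Alexander biquandle of Example~\ref{exp-3} with $t=g$ and $s=u$, both units of the commutative ring $A$. That example would already give the biquandle structure, so the remaining content is the explicit form of the dual operations \eqref{eq2-mdl-gpring}. I will nevertheless sketch the direct verification, since Example~\ref{exp-3} is stated without proof.

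\textbf{BQ1}: both sides equal $ux$, because $gx+(u-g)x=ux$.

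\textbf{BQ2}: I will substitute the proposed inverses. First,
\[
(gx+(u-g)y)\mathbin{\underline\triangleright}_g^{-1}y
= g^{-1}\bigl(gx+(u-g)y\bigr)+(1_A-g^{-1}u)y
= x+(g^{-1}u-1_A)y+(1_A-g^{-1}u)y
= x.
\]
The reverse composition
\[
\bigl(g^{-1}x+(1_A-g^{-1}u)y\bigr)\mathbin{\underline\triangleright}_g y
= x+(g-u)y+(u-g)y
= x
\]
is computed in the same way. The over-operation is multiplication by the unit $u$, so its inverse is multiplication by $u^{-1}$ and does not depend on $y$.

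\textbf{BQ3}: the switch $H(x,y)=(uy,\,gx+(u-g)y)$ is an $A$-linear map given by the matrix $\begin{pmatrix}0&u\\ g&u-g\end{pmatrix}$. Its determinant is $-ug$, which is a unit. Hence $H$ is bijective, with explicit inverse $H^{-1}(r,s)=\bigl(g^{-1}s-g^{-1}(u-g)u^{-1}r,\;u^{-1}r\bigr)$.

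\textbf{BQ4}: I will expand both sides of each exchange law as $A$-linear combinations of $x,y,z$ and compare coefficients. In the first law, both sides become $g^2x+g(u-g)z+(u-g)uy$ after expansion. For example, the left side is $g(gx+(u-g)y)+(u-g)(gz+(u-g)y)$, and its $y$-coefficient is $(u-g)(g+u-g)=u(u-g)$. In the second law, both sides become $ugx+u(u-g)y$. In the third law, both sides become $u^2x$. Each comparison uses only commutativity of $A$.

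The main obstacle is just bookkeeping in \textbf{BQ4}, where the coefficients must be collected carefully. Since no step requires $u-g$ to be invertible, the proof holds for arbitrary $u\in A^\times$.
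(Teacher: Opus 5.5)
Your proposal is correct and is essentially the paper's proof: a direct check of \textbf{BQ1}--\textbf{BQ4} with the same dual operations, the same inverse switch (your $-g^{-1}(u-g)u^{-1}r$ simplifies to the paper's $(u^{-1}-g^{-1})r$), and the same three coefficient expansions in \textbf{BQ4}. The only differences are cosmetic: you verify \textbf{BQ2} by substituting the proposed inverses rather than solving for $x$, and you note the unit determinant $-ug$ before writing $H^{-1}$ explicitly.
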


\begin{proof}
We verify \textbf{BQ1}--\textbf{BQ4} of Definition~\ref{def-biqdle}.
For \textbf{BQ1},
$x~\underline{\triangleright}_g~x=gx+(u-g)x=ux
=x~\overline{\triangleright}_g~x$.
For \textbf{BQ2}, solving $w=gx+(u-g)y$ for $x$ gives
$x=g^{-1}w+(1-g^{-1}u)y$, and solving $w=ux$ gives $x=u^{-1}w$.
These unique solutions are the inverse translations in
\eqref{eq2-mdl-gpring}, proving both composition identities for each operation.
For \textbf{BQ3}, the equation
$H(x,y)=(uy,gx+(u-g)y)=(r,s)$ has the unique solution
$y=u^{-1}r$ and $x=g^{-1}s+(u^{-1}-g^{-1})r$.
Thus $H$ is bijective with the stated inverse. 
For {\bf BQ4}, using $ug=gu$ gives
\begin{align*}
(x\underline{\triangleright}_g y)
 \underline{\triangleright}_g
 (z\underline{\triangleright}_g y)
 &=g^2x+u(u-g)y+g(u-g)z
 =(x\underline{\triangleright}_g z)
 \underline{\triangleright}_g
 (y\overline{\triangleright}_g z),\\
(x\underline{\triangleright}_g y)
 \overline{\triangleright}_g
 (z\underline{\triangleright}_g y)
 &=ugx+u(u-g)y
 =(x\overline{\triangleright}_g z)
 \underline{\triangleright}_g
 (y\overline{\triangleright}_g z),\\
(x\overline{\triangleright}_g y)
 \overline{\triangleright}_g
 (z\overline{\triangleright}_g y)
 &=u^2x
 =(x\overline{\triangleright}_g z)
 \overline{\triangleright}_g
 (y\underline{\triangleright}_g z).
\end{align*}
\end{proof}

\begin{theorem}\label{thm-multibqdl-mdl}
Let $R$ be a commutative ring, let $G$ be an abelian group, put
$A=R[G]$, and let $M$ be an $A$-module.  For $u\in A^\times$, the
operations in \eqref{eq1-mdl-gpring} make
$(M,\{\underline{\triangleright}_g,
\overline{\triangleright}_g\}_{g\in G})$ a $G$-multibiquandle $A$-module
in the sense of Definition~\ref{def:multibiquandle-module}.
In particular, if $u=1_A$, then
$(M,\{\underline{\triangleright}_g\}_{g\in G})$ is a
$G$-multiquandle module.
\end{theorem}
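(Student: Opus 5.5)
We verify the three axioms \textbf{MBQ1}--\textbf{MBQ3} of Definition~\ref{def-n-dibiqdle} for the family of operations \eqref{eq1-mdl-gpring}. \textbf{MBQ1} is exactly Lemma~\ref{lem-multibqdl-mdl}, applied separately for each $g\in G$, so nothing new is needed there.

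For \textbf{MBQ2}, fix $g,h\in G$. The map $H_{gh}(x,y)=(y\mathbin{\overline\triangleright_g}x,\;x\mathbin{\underline\triangleright_h}y)=(uy,\;hx+(u-h)y)$ is $A$-linear. Since $u$ and $h$ are units of $A$, the system $(uy,\;hx+(u-h)y)=(r,s)$ can be solved uniquely. This gives $y=u^{-1}r$ and $x=h^{-1}s+(u^{-1}-h^{-1})r$, exactly as in the proof of \textbf{BQ3} in Lemma~\ref{lem-multibqdl-mdl} with $g$ replaced by $h$ in the under-operation. Hence $H_{gh}$ is bijective.

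For \textbf{MBQ3}, I would expand both sides of each of the three identities as $A$-linear combinations of $x,y,z$, using commutativity of $A$ (so $ug=gu$, $gh=hg$). The computations are as follows.

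\begin{itemize}
\item First identity. The left side is $j(ix+(u-i)y)+(u-j)(iz+(u-i)y)$, which simplifies to $ijx+u(u-i)y+i(u-j)z$. The right side is $i(jx+(u-j)z)+(u-i)uy$, which is the same element.
\item Second identity. Both sides equal $uix+u(u-i)y$.
\item Third identity. Both sides equal $u^2x$.
\end{itemize}

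Here $i,j$ play the roles of $g,h\in G$. The only point requiring care, and the one I expect to be the main (if mild) obstacle, is the first identity. There one must check that the mixed coefficients on $y$ coincide; this uses commutativity of $A$, and hence that $G$ is abelian.

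For the final assertion, set $u=1_A$. Then $x\mathbin{\overline\triangleright_g}y=x$ is trivial for every $g$. Remark~\ref{rmk1} shows that a multibiquandle with trivial over-operations is the same as an $\Omega$-multiquandle in its under-operations $x\mathbin{\underline\triangleright_g}y=gx+(1-g)y$. Therefore $(M,\{\underline\triangleright_g\}_{g\in G})$ is a $G$-multiquandle module in the sense of Definition~\ref{def:multibiquandle-module}.
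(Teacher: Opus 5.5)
Your proposal is correct and follows the paper's proof essentially step for step: \textbf{MBQ1} comes from Lemma~\ref{lem-multibqdl-mdl}; \textbf{MBQ2} comes from inverting $H_{gh}(x,y)=(uy,\,hx+(u-h)y)$, and your solution $y=u^{-1}r$, $x=h^{-1}s+(u^{-1}-h^{-1})r$ is exactly the paper's explicit inverse $G_{gh}$; \textbf{MBQ3} comes from expanding the six sides using commutativity of $A$; and the $u=1_A$ case follows from Remark~\ref{rmk1}. One minor correction: commutativity is needed not only for the $y$-coefficients in the first identity but also for its $x$- and $z$-coefficients ($ji=ij$, $iu=ui$) and in the second identity, which is harmless because $A$ is commutative.
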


\begin{proof}
By Lemma \ref{lem-multibqdl-mdl}, {\bf MBQ1} holds. For $(g,h)\in G^2$, define
\(
H_{gh}(x,y)=(uy,\; hx+(u-h)y)\) and \(
G_{gh}(x,y)=((u^{-1}-h^{-1})x+h^{-1}y,\;u^{-1}x).
\)
Then $H_{gh}\circ G_{gh}=G_{gh}\circ H_{gh}=\mathrm{id}$, so {\bf MBQ2} holds. For {\bf MBQ3}, substitute the two affine operations into the six
sides of the three identities. Using commutativity of $G$ and centrality
of $u$ gives
\begin{align*}
({x}~\underline{\triangleright}_g~{y})~\underline{\triangleright}_h~({z}~\underline{\triangleright}_g~{y})
&=hgx+(u^2-ug)y+(ug-hg)z,\\
({x}~\underline{\triangleright}_h~{z})~\underline{\triangleright}_g~({y}~\overline{\triangleright}_h~{z})
&=ghx+(u^2-gu)y+(gu-gh)z,\\
({x}~\underline{\triangleright}_g~{y})~\overline{\triangleright}_h~({z}~\underline{\triangleright}_g~{y})
&=ugx+(u^2-ug)y,\\
({x}~\overline{\triangleright}_h~{z})~\underline{\triangleright}_g~({y}~\overline{\triangleright}_h~{z})
&=gux+(u^2-gu)y,\\
({x}~\overline{\triangleright}_g~{y})~\overline{\triangleright}_h~({z}~\overline{\triangleright}_g~{y})
&=u^2x,\\
({x}~\overline{\triangleright}_h~{z})~\overline{\triangleright}_g~({y}~\underline{\triangleright}_h~{z})
&=u^2x,
\end{align*}
which match \textbf{MBQ3} of Definition~\ref{def-n-dibiqdle}.
If $u=1_A$, then $\overline{\triangleright}_g$ is trivial and
Remark~\ref{rmk1} applies. Explicitly, the mixed distributivity identity is
\(
(x\underline{\triangleright}_g y)\underline{\triangleright}_h z
=hgx+h(1-g)y+(1-h)z
=(x\underline{\triangleright}_h z)
 \underline{\triangleright}_g
 (y\underline{\triangleright}_h z),
\)
using $gh=hg$. Hence the under-operations form a $G$-multiquandle module.
\end{proof}

\begin{remark}
The parameter in Theorem~\ref{thm-multibqdl-mdl} must be a unit.  Hence an
additional assumption that $u$ is idempotent forces $u=1_A$ and produces
only the quandle specialization already stated in the theorem.
\end{remark}

\begin{corollary}\label{thm-A-multibqdl}
Let $G=\langle s,t_1,\dots,t_n\rangle\cong \mathbb Z^{n+1}$ be free abelian, $R=\mathbb Z$ or $\mathbb Z_m$, and $M_{R[G]}$ any $R[G]$-module. For $x,y\in M_{R[G]}$ and $k=1,\dots,n$, define 
\begin{equation}\label{asdq-ops}
x~\underline{\triangleright}_k~y = t_kx + (s-t_k)y,\qquad 
x~\overline{\triangleright}_k~y = sx.
\end{equation}
Then $(M_{R[G]},\{\underline{\triangleright}_k,\overline{\triangleright}_k\}_{k=1}^n)$ is an $n$-multibiquandle $\Lambda_n$-module with dual operations 
\begin{equation}\label{asdq-ops-1}
x~\underline{\triangleright}_k^{-1}~y = t_k^{-1}x + (1-st_k^{-1})y,\qquad 
x~\overline{\triangleright}_k^{-1}~y = s^{-1}x\quad (x,y \in M_{R[G]}),
\end{equation}
where $\Lambda_n=R[s^{\pm 1}, t_1^{\pm 1},\ldots,t_n^{\pm 1}]$ is the Laurent polynomial ring.  Moreover, let
\(q:\Lambda_n\longrightarrow \Lambda_n^{(1)}:=R[t_1^{\pm1},\ldots,t_n^{\pm1}]\)
be the $R$-algebra homomorphism defined by $q(s)=1$ and $q(t_i)=t_i$,
and put
$M^{(1)}=M_{R[G]}\otimes_{\Lambda_n,q}\Lambda_n^{(1)}$.
The induced operation
\(
x\mathbin{\underline{\triangleright}_k}y
=t_kx+(1-t_k)y
\, (k=1,\ldots,n)
\)
makes $M^{(1)}$ an $n$-multiquandle $\Lambda_n^{(1)}$-module.
\end{corollary}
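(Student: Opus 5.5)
The first assertion is a direct specialization of Theorem~\ref{thm-multibqdl-mdl}. I would take the abelian group $G=\langle s,t_1,\dots,t_n\rangle$, so that $R[G]=\Lambda_n$, and set $u=s$, which is a unit of $\Lambda_n$. The operations \eqref{asdq-ops} are then exactly the operations \eqref{eq1-mdl-gpring} for the indices $g=t_k$. Theorem~\ref{thm-multibqdl-mdl} makes $M_{R[G]}$ a $G$-multibiquandle $\Lambda_n$-module.

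To pass to $\Omega=\{1,\dots,n\}$, I would restrict along the map $k\mapsto t_k$. Only \textbf{MBQ1}--\textbf{MBQ3} for the indices $t_1,\dots,t_n$ are needed, and these are among the $G$-indexed axioms. This is the same restriction principle as Remark~\ref{rmk1}, applied to the subset $\{t_1,\dots,t_n\}\subset G$ relabelled by $k$. Since the $t_k$ are distinct, no identification of indices occurs. The dual operations \eqref{asdq-ops-1} come from Lemma~\ref{lem-multibqdl-mdl} with $g=t_k$ and $u=s$: there $1_A-g^{-1}u$ becomes $1-st_k^{-1}$. Nothing beyond commutativity of $\Lambda_n$ is used, so $R=\mathbb Z$ or $\mathbb Z_m$ plays no role.

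For the second assertion, $M^{(1)}$ is a $\Lambda_n^{(1)}$-module. I would regard it as a module over $R[G']$ with $G'=\langle t_1,\dots,t_n\rangle$, so that $R[G']=\Lambda_n^{(1)}$. Applying Theorem~\ref{thm-multibqdl-mdl} with $u=1$ gives a $G'$-multiquandle module under $x\mapsto t_kx+(1-t_k)y$, and restricting to the indices $t_1,\dots,t_n$ as before yields an $n$-multiquandle. The one point to check is that these are the operations \emph{induced} from $M_{R[G]}$. On pure tensors, $(x\otimes a)\underline{\triangleright}_k(y\otimes b)$ should correspond to $q$ applied to $t_kx+(s-t_k)y$, and $q(s-t_k)=1-t_k$. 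The operations are $\Lambda_n$-bilinear combinations with coefficients $t_k$ and $s-t_k$, so they descend through $-\otimes_{\Lambda_n,q}\Lambda_n^{(1)}$ with their coefficients mapped by $q$.

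The main obstacle is only this compatibility with base change, namely making precise what ``induced operation'' means. I would define it as the operation on $M^{(1)}$ given by the same affine formula with coefficients $q(t_k)$ and $q(s-t_k)$. This formula agrees with $q_*$ of the original operation on elements $x\otimes1$, and these elements generate $M^{(1)}$ as a $\Lambda_n^{(1)}$-module. Given that definition, the multiquandle axioms follow from Theorem~\ref{thm-multibqdl-mdl} without any further computation.
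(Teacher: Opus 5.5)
Your proposal is correct and follows the paper's proof: set $u=s$ in Theorem~\ref{thm-multibqdl-mdl}, restrict to the indices $\{t_1,\dots,t_n\}$ as in Remark~\ref{rmk1}, read off the dual operations from Lemma~\ref{lem-multibqdl-mdl}, and for $M^{(1)}$ use $q(s-t_k)=1-t_k$ together with the $u=1$ clause of the same theorem applied over $\Lambda_n^{(1)}$. Your check that the affine formula on $M^{(1)}$ agrees with the base change of the original operation on the elements $x\otimes 1$ makes explicit the meaning of ``induced operation,'' which the paper covers in a single line.
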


\begin{proof}
Since $R[G]\cong \Lambda_n$, set $u=s$ in \eqref{eq1-mdl-gpring}. Theorem \ref{thm-multibqdl-mdl} yields a group $G$-multibiquandle $\Lambda_n$-module. With $\Omega=\{t_1,\ldots,t_n\}\subset G$, Remark \ref{rmk1} implies $(M_{R[G]},\{\underline{\triangleright}_{t_k},\overline{\triangleright}_{t_k}\}_{k=1}^n)$ is an $n$-multibiquandle; the formulas \eqref{asdq-ops}–\eqref{asdq-ops-1} are instances of \eqref{eq1-mdl-gpring}–\eqref{eq2-mdl-gpring}.  Base change along $q$ sends the over-operation to $x\mathbin{\overline{\triangleright}_k}y=x$ and the under-operation to $t_kx+(1-t_k)y$.  The final assertion now follows from the last clause of Theorem~\ref{thm-multibqdl-mdl}, applied over $\Lambda_n^{(1)}$.
\end{proof}

\begin{remark}[Comparison with $G$-Alexander families]
For a fixed index $h$, the operations above have the usual
Alexander-biquandle form. In the conventional $G$-Alexander
construction \cite{IN,GAB}, a homomorphism
$\varphi:G\to Z(G)$ gives, on a right $R[G]$-module,
\[
(m,g)\mathbin{\underline *}(n,h)
=(mh+n(\varphi(h)-h),h^{-1}gh),\qquad
(m,g)\mathbin{\overline *}(n,h)=(m\varphi(h),g).
\]
For abelian $G$, comparison on the regular module with the same full
indexing requires $\varphi(h)=u$ for every $h$, hence $u=1_A$.
More precisely, on a specified $A$-module $M$, the common-unit family
satisfies the $G$-family axioms if and only if $(u-1_A)M=0$:
Definition~\ref{defn-gfam-biq}(v) gives $ux=u^2x$, and conversely an
identity action of $u$ gives the usual $G$-family of Alexander quandles.
This distinguishes the literal indexed families, not their invariants
under changes of labels or coefficients. The general classical-link
quandle reduction \cite{IT,Murao} is realized by the explicit module
isomorphism in Theorem~\ref{thm:all-color-reduction} below.
\end{remark}

We now turn to multibiquandle colorings of component-colored links.
The common-unit affine family constructed above will later provide the
coefficient multibiquandles from which the corresponding
module-valued invariants are obtained.

\section{Multibiquandle counting invariants for colored links}\label{sec-fundbql-dl}

We fix the conventions for $\Omega$-colored and $G$-colored links and their multibiquandle colorings. Finite-color counting constructions appear in~\cite{KLS2,LeeSolid}; the naturality statement below supplies the module-valued construction in Section~\ref{sect-abq}.

\begin{definition}\label{def-colored-link}
Let $\Omega$ be a nonempty set. 
An {\it $\Omega$-colored link} (or {\it colored link by $\Omega$}) is an oriented link 
$
L = K_1 \cup \cdots \cup K_\mu \subset \mathbb R^3 \ \text{or}\ S^3
$
together with a map $\sigma : \pi_0(L) \to \Omega$. (Here, $\pi_0(L)$ denotes the set of all path-connected components of $L\subset S^3$ and hence $\pi_0(L)=\{K_1, \ldots, K_\mu\}$.)
We denote such a link by $(L,\sigma)$, or simply by $L$ when $\sigma$ is clear from the context. 
Each smoothly embedded circle $K_i$ is called a {\it component} of $L$, and $\sigma(K_i) \in \Omega$ is called its {\it color} (or {\it label}). 
Different components of an $\Omega$-colored link may share the same color. 
When $\Omega = \{1,2,\ldots,n\}$, an $\Omega$-colored link is called an {\it $n$-colored link}; in particular, a $2$-colored link is sometimes referred to as a {\it dichromatic link}.
\end{definition}

\begin{definition}\label{def-colored-equivalence}
Two $\Omega$-colored links $(L,\sigma)$ and $(L',\sigma')$ in $S^3$ are
said to be {\it equivalent} if there is an ambient isotopy
$H:S^3\times[0,1]\to S^3$ with $H_0=\mathrm{id}_{S^3}$ such that
$H_1(L)=L'$, the restriction of $H_1$ preserves the orientation of every
component, and \(\sigma(K)=\sigma'(H_1(K))\,(K\in\pi_0(L)).\)
\end{definition}

An $\Omega$-colored link $(L,\sigma)$ can be represented by a {\it diagram} 
$D = D_1 \cup \cdots \cup D_\mu \subset \mathbb R^2,$
where each $D_i$ corresponds to $K_i$ and is assigned the color $\sigma(K_i)$. 
We denote such a diagram by $(D,\sigma)$, or simply by $D$ when $\sigma$ is clear from the context. 
The following is the colored form of the Reidemeister theorem.

\begin{theorem}\label{thm-equi-nchr}
Let $(D,\sigma)$ and $(D',\sigma')$ be two $\Omega$-colored link diagrams in $\mathbb R^2$ representing $\Omega$-colored links $(L,\sigma)$ and $(L',\sigma')$ in $S^3$, respectively. 
Then $(L,\sigma)$ and $(L',\sigma')$ are equivalent if and only if $(D,\sigma)$ and $(D',\sigma')$ can be transformed into each other by a finite sequence of {\it generalized Reidemeister moves for $\Omega$-colored links} that preserve the color of each component (see Figure~\ref{fig-ormoves}), and ambient isotopies of $\mathbb R^2$.
\end{theorem}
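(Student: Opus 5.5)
The plan is to reduce the statement to the classical Reidemeister theorem for oriented links, and to keep track of the component labels $\sigma$ separately. The key observation is that a labeling $\sigma:\pi_0(L)\to\Omega$ is simply extra data attached to the set of components. Ambient isotopies of $S^3$ and planar isotopies or Reidemeister moves on diagrams both induce canonical bijections on components. So the only thing to verify, beyond the uncolored theorem, is that these induced bijections match the labels.

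For the ``only if'' direction, let $H$ be an ambient isotopy as in Definition~\ref{def-colored-equivalence}. I first forget the colors. By the classical oriented Reidemeister theorem, $D$ and $D'$ are related by a finite sequence of oriented Reidemeister moves and planar isotopies. The standard proof gives more than this: the sequence can be chosen so that it tracks the isotopy $H_t$ through generic projections. Concretely, one takes a generic path of projections of $H_t(L)$ and reads off a move at each codimension-one event; each move happens inside a disk and preserves the component correspondence. Hence the bijection $\pi_0(D)\to\pi_0(D')$ induced by the sequence equals the one induced by $H_1$. I then transport labels along the sequence, giving each intermediate diagram the labeling inherited from $\sigma$. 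Each step is a move that preserves the color of every component, i.e.\ a generalized move of Figure~\ref{fig-ormoves}. At the end the labeling is $\sigma\circ H_1^{-1}=\sigma'$, by the hypothesis $\sigma(K)=\sigma'(H_1(K))$.

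For the ``if'' direction, each colored move or planar isotopy is realized by an ambient isotopy of $S^3$ supported near a ball. This isotopy preserves orientations, and it sends each component to the corresponding component with the same label, because the move preserves colors. Composing these finitely many isotopies gives an isotopy $H$ with $H_1(L)=L'$ and $\sigma'(H_1(K))=\sigma(K)$ for every component $K$.

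The main obstacle is the compatibility claim in the first direction. The classical theorem, stated purely as an existence of a move sequence, does not by itself say that the induced component bijection agrees with $H_1$. For example, a link with symmetric components might be related by a different sequence that permutes those components. I will handle this by invoking the constructive form of the classical proof, via a generic PL or smooth isotopy projected to the plane. In that form every local move is the shadow of a segment of $H$, so component identities are carried along continuously.
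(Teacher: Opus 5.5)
Your proposal is correct and follows essentially the same route as the paper. In the \emph{if} direction you realize each colored move by a local ambient isotopy that keeps every label on its own component. In the \emph{only if} direction you forget the labels, apply the oriented Reidemeister theorem, and give each intermediate strand the fixed label of its component.

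Your appeal to the constructive, generic-projection form of that theorem, which guarantees that the move sequence induces the same component bijection as $H_1$, spells out what the paper compresses into the phrase ``after identifying components by the given isotopy.''
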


\begin{proof}
Forgetting the labels reduces a colored Reidemeister sequence to an
ordinary oriented Reidemeister sequence, which realizes an ambient
isotopy of the represented link.  Because every local move preserves the
components of the strands meeting its disk, the labels remain constant
on components throughout the sequence.

Conversely, suppose the colored links are ambient isotopic.  Apply the
oriented Reidemeister theorem to the underlying oriented diagrams, after
identifying components by the given isotopy.  Every strand occurring in a
move belongs to the same component before and after that move; assign it
the fixed label of that component.  The resulting sequence is therefore
a sequence of color-preserving generalized Reidemeister moves and planar
isotopies.
\end{proof}

\begin{figure}[ht!]
\centering
\includegraphics[width=.65\textwidth]{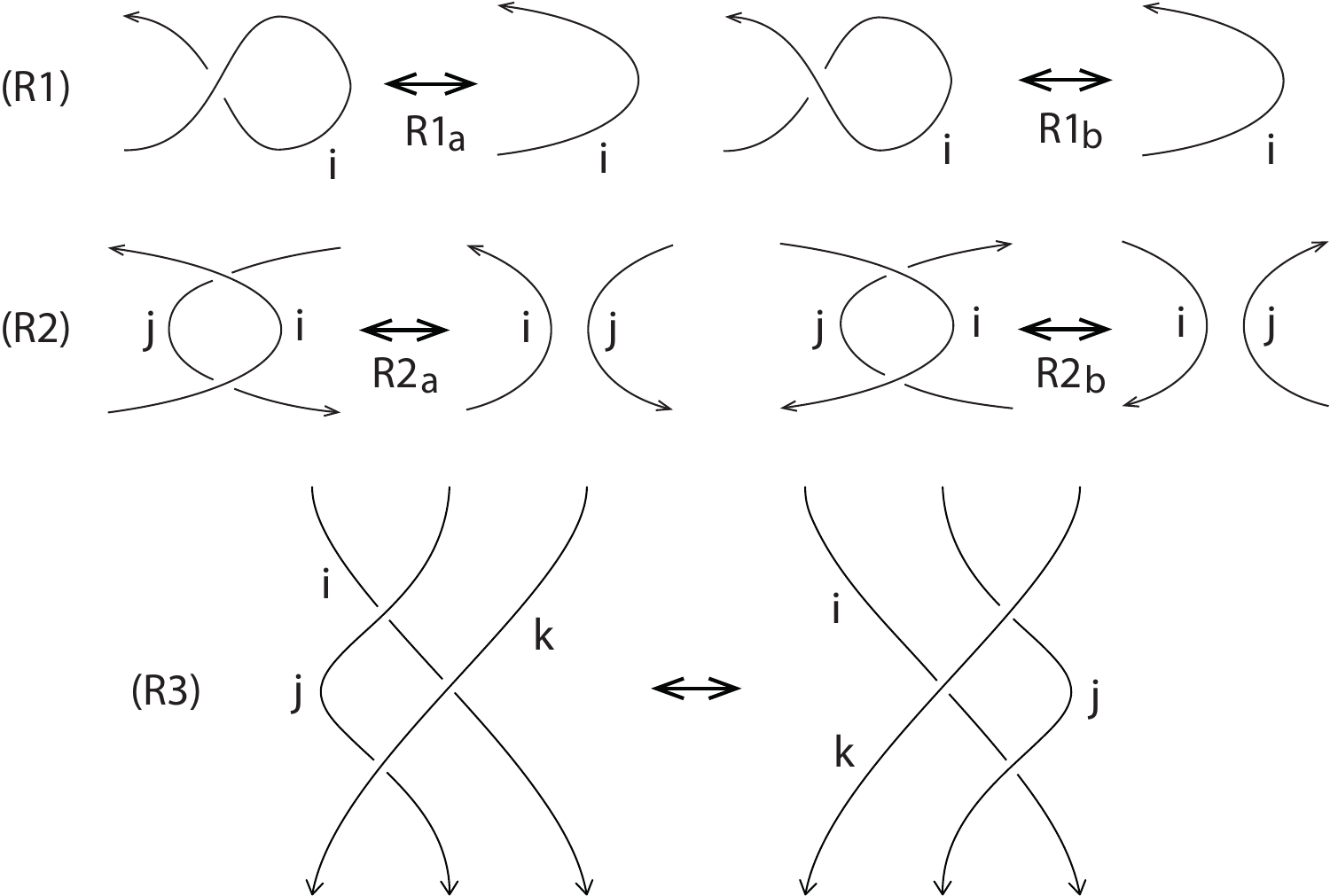}
\caption{Generalized Reidemeister moves for $\Omega$-colored links ($i,j,k \in \Omega$)}\label{fig-ormoves}
\end{figure}

Let $(D,\sigma)$ be an $\Omega$-colored link diagram in $\mathbb R^2$. 
We divide $(D,\sigma)$ at every crossing point (viewed as a 4-valent vertex) to obtain a collection $\mathcal S(D)$ of {\it semiarcs}. 
The {\it co-orientation} of a semiarc $s$ of $(D,\sigma)$ is a normal vector $\mathbf{n}$, indicated by a small arrow whose initial point lies on $s$, such that the ordered pair (orientation of $s$, $\mathbf{n}$) agrees with the right-handed orientation of $\mathbb R^2$ (cf.~Figure~\ref{fig-dibiquandle-col}). 
An $\Omega$-colored link diagram $(D,\sigma)$ in $\mathbb R^2$ is said to be {\it co-oriented} if each semiarc of $(D,\sigma)$ is equipped with a co-orientation. 
Throughout this paper, unless otherwise specified, every $\Omega$-colored link diagram is assumed to be co-oriented. 

At a crossing, if the ordered pair consisting of the co-orientation of the over-arc and that of the under-arc agrees with the right-handed orientation of $\mathbb R^2$, then the crossing is called {\it positive}; otherwise, it is called {\it negative}. 
In Figure~\ref{fig-dibiquandle-col}, the crossing on the left is positive, while that on the right is negative. 
Among the four quadrants around a crossing $c$, the unique quadrant from which both co-orientations of the adjacent arcs point outward is called the {\it source region} of $c$, and is marked with the symbol $\circledS$, if necessary.

\begin{figure}[ht!]
\centerline{
\xy (-10,10);(-2,2) **@{-},(10,-10);(2,-2)**@{-} **@{-} ?<*\dir{<}, 
(10,10);(-10,-10) **@{-} **@{-} ?>*\dir{>},(12,-13) *{x~\underline{\triangleright}_j~y},
(-13,-12) *{y},(-13,12) *{x}, (13,12) *{y~\overline{\triangleright}_i~x},(-8.5,4) *{i},(3.5,7) *{j},(-3,9);(-6,6)**@{-} ?<*\dir{<},(0,4.5) *{c},
(-3,-9);(-6,-6)**@{-} ?<*\dir{<},(-7,0)*{\circledS},
(9,-3);(6,-6)**@{-} ?<*\dir{<},(9,3);(6,6)**@{-} ?<*\dir{<},
\endxy\qquad\qquad\qquad
\xy (10,-10);(-10,10) **@{-} **@{-} ?<*\dir{<},(10,10);(2,2) **@{-},
(-2,-2);(-10,-10) **@{-} **@{-} ?>*\dir{>},(12,-13) *{y~\overline{\triangleright}_i~x}, (-13,12) *{y},(-12,-13) *{x},(13,12) *{x~\underline{\triangleright}_j~y},(-8.5,4) *{j},(3.5,7) *{i},(-3,9);(-6,6)**@{-} ?<*\dir{<},(0,4.5) *{c},
(-3,-9);(-6,-6)**@{-} ?<*\dir{<},(-7,0)*{\circledS},
(9,-3);(6,-6)**@{-} ?<*\dir{<},(9,3);(6,6)**@{-} ?<*\dir{<},
\endxy}
\vskip.1cm
\caption{$\Omega$-multibiquandle coloring condition at a crossing $c$ ($i,j \in \Omega$)}\label{fig-dibiquandle-col}
\end{figure}

\begin{definition}\label{def-mbqd-coloring}
Let $(D,\sigma)$ be an $\Omega$-colored link diagram and let 
$(X,\{\underline{\triangleright}_i,\overline{\triangleright}_i\}_{i\in\Omega})$ 
be an $\Omega$-multibiquandle. 
An {\it $\Omega$-multibiquandle coloring} of $(D,\sigma)$ by $X$, or simply an {\it $\Omega$-multibiquandle $X$-coloring}, is a map 
$\mathcal{C}_\sigma: \mathcal{S}(D) \longrightarrow X$
such that at each crossing the $\Omega$-multibiquandle coloring condition shown in Figure~\ref{fig-dibiquandle-col} is satisfied. 
For a semiarc $s$, the element $\mathcal{C}_\sigma(s)\in X$ is called the {\it color} of $s$. 
The resulting {\it $X$-colored link diagram} is denoted by $(D,\sigma;\mathcal{C}_\sigma)$, or simply by $(D;\mathcal{C})$ when $\sigma$ is clear from the context. 
The set of all $\Omega$-multibiquandle $X$-colorings of $(D,\sigma)$ is denoted by ${\rm Col}_X^\Omega(D,\sigma)$, or ${\rm Col}_X^\Omega(D)$.
\end{definition}

Condition {\bf MBQ1} in Definition~\ref{def-n-dibiqdle} corresponds to the generalized Reidemeister moves ${\bf R1_a}$, ${\bf R1_b}$, ${\bf R2_a}$, ${\bf R2_b}$, and {\bf R3} with $i=j=k$. 
Similarly, {\bf MBQ2} and {\bf MBQ3} correspond to the $X$-colored generalized Reidemeister moves ${\bf R2_a}$, ${\bf R2_b}$, and {\bf R3}, respectively, with $i,j,k \in \Omega$, as illustrated in Figure~\ref{fig-dibiqdl-rm}.

\begin{definition}
Let \((D,\sigma)\) be an \(\Omega\)-colored link diagram. Each pair of local diagrams displayed in Figure~\ref{fig-dibiqdl-rm} is understood to lie
in a move disk \(B\), with the diagrams and their \(\Omega\)-labels
identified outside \(B\). Let \(C\in{\rm Col}_X^\Omega(D,\sigma)\). A coloring
\(C'\in{\rm Col}_X^\Omega(D',\sigma')\) is called a
\emph{local extension of \(C\) across the move} if \(C\) and \(C'\)
agree on the identified parts outside \(B\). 
\end{definition}

Equivalently, the two colorings \(C\) and \(C'\) assign the same colors to the corresponding strands meeting \(\partial B\), while the colors on the strands inside \(B\) satisfy the local \(\Omega\)-multibiquandle coloring conditions in Figure~\ref{fig-dibiquandle-col}. Thus only the colors on the strands created or modified inside the move disk are to be determined.

Let \(X\) and \(Y\) be \(\Omega\)-multibiquandles and let
\(f\colon X\longrightarrow Y\)
be an \(\Omega\)-multibiquandle homomorphism. For each coloring
\(C\colon\mathcal S(D)\longrightarrow X\) in \({\rm Col}_X^\Omega(D,\sigma),\)
define
\begin{equation}\label{eq:ind-color-map}
f_*^{D,\sigma}\colon {\rm Col}_X^\Omega(D,\sigma) \longrightarrow {\rm Col}_Y^\Omega(D,\sigma)~\text{by}~f_*^{D,\sigma}(C)=f\circ C.
\end{equation}
Equivalently, for every semiarc \(x\in\mathcal S(D)\),
\(\bigl(f_*^{D,\sigma}(C)\bigr)(x)=f\bigl(C(x)\bigr).\)
Since \(f\) preserves all the indexed multibiquandle operations,
\(f\circ C\) satisfies the coloring conditions at every crossing; hence
\(f_*^{D,\sigma}\) is well defined. Moreover,
\((\id_X)_*^{D,\sigma}= \id_{{\rm Col}_X^\Omega(D,\sigma)}\) and \((k\circ h)_*^{D,\sigma} =k_*^{D,\sigma}\circ h_*^{D,\sigma}\),
where \(h\colon X\to Y\) and \(k\colon Y\to Z\) are \(\Omega\)-multibiquandle homomorphisms. We use this notation throughout.

\begin{theorem}[Reidemeister extension bijection]
\label{thm-n-mbqd-cinv}
Let \((D,\sigma)\) and \((D',\sigma')\) differ by one generalized
Reidemeister move in Figure~\ref{fig-ormoves}. For every \(\Omega\)-multibiquandle \(X\) and every coloring \(C\in{\rm Col}_X^\Omega(D,\sigma),\)
there exists a unique local extension \(C'\in{\rm Col}_X^\Omega(D',\sigma')\) of \(C\) across the move. Consequently, the assignment
\begin{equation}\label{eq-re-bij}
\Phi_X\colon
{\rm Col}_X^\Omega(D,\sigma)
\longrightarrow
{\rm Col}_X^\Omega(D',\sigma'),
\qquad
C\longmapsto\Phi_X(C)=C',
\end{equation}
is a bijection.
Moreover, these bijections are natural with respect to homomorphisms
of \(\Omega\)-multibiquandles. That is, for every homomorphism
\(f\colon X\to Y\), \(\Phi_Y\circ f_*^{D,\sigma}=f_*^{D',\sigma'}\circ\Phi_X:\)
\[
\begin{tikzcd}[row sep=1.8em, column sep=3.6em]
{\rm Col}_X^\Omega(D,\sigma)
  \arrow[r, "{\Phi_X}"]
  \arrow[d, "{f_*^{D,\sigma}}"']
& {\rm Col}_X^\Omega(D',\sigma')
  \arrow[d, "{f_*^{D',\sigma'}}"]
\\
{\rm Col}_Y^\Omega(D,\sigma)
  \arrow[r, "{\Phi_Y}"]
& {\rm Col}_Y^\Omega(D',\sigma').
\end{tikzcd}
\]
Consequently, the cardinal number
\(
\bigl|{\rm Col}_X^\Omega(D,\sigma)\bigr|
\)
is invariant under generalized Reidemeister moves, and hence is an
invariant of the \(\Omega\)-colored link represented by
\((D,\sigma)\).
\end{theorem}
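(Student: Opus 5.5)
The plan is a case-by-case local analysis over the moves of Figure~\ref{fig-ormoves}, carried out entirely inside the move disk $B$. Since $C$ and any local extension $C'$ must agree on the strands meeting $\partial B$, it suffices to show two things for each move. First, the boundary colors of $C$ admit exactly one compatible assignment of colors to the interior semiarcs of $D'$. Second, the colors $C$ produces on the outgoing boundary strands coincide with those forced in $D'$. Uniqueness follows because every interior semiarc of $D'$ is obtained from boundary colors by the crossing rules of Figure~\ref{fig-dibiquandle-col}, or by their inverses. Once existence and uniqueness are proved for a move and for its reverse, $\Phi_X$ has the inverse given by the reverse move, so it is a bijection.

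The moves are treated in turn.

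\emph{R1.} Both strands at the crossing carry the same label $i$. The kink can be filled exactly when the incoming color $x$ produces outgoing colors $x\underline\triangleright_i x$ and $x\overline\triangleright_i x$ that agree and return the original arc's color. \textbf{BQ1} of \textbf{MBQ1} gives the equality. \textbf{BQ2}, together with the standard consequence of \textbf{BQ3} (the diagonal map $x\mapsto x\underline\triangleright_i x$ is a bijection), makes the interior color uniquely determined. I will verify this using the inverse of $H_{ii}$ restricted to the diagonal.

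\emph{R2.} With over-strand label $i$ and under-strand label $j$, the two crossings impose the conditions $H_{ij}(x,y)=(r,s)$ and its inverse. For the direct version, the uniqueness of the middle colors uses \textbf{BQ2}. For the reverse version, where the strands are oppositely oriented, it uses bijectivity of $H_{ij}$, which is \textbf{MBQ2}. The outgoing colors then equal the incoming ones.

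\emph{R3.} Labels $i,j,k$ are attached to the bottom, middle, and top strands. The three incoming colors determine all interior and outgoing colors on both sides by forward application of the operations. Agreement of the three outgoing colors is exactly the three identities of \textbf{MBQ3}, in the form of Lemma~\ref{lem:mbq3-reindexing} where convenient. Other orientations of R3 reduce to this one via R2 in the standard way, but each can also be checked directly with \textbf{MBQ2} and \textbf{BQ2}.

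Naturality is formal. The extension $\Phi_X(C)$ is built from the boundary colors by applying the indexed operations and their inverses, and a homomorphism $f$ commutes with all of these, including the inverse operations, because they are determined by uniqueness. Hence $f\circ\Phi_X(C)$ is a local extension of $f\circ C$. By uniqueness in $Y$ it equals $\Phi_Y(f\circ C)$, which gives $\Phi_Y\circ f_*^{D,\sigma}=f_*^{D',\sigma'}\circ\Phi_X$. Invariance of the cardinality then follows from Theorem~\ref{thm-equi-nchr}, by composing these bijections along a move sequence.

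I expect the main obstacle to be the bookkeeping of orientations and co-orientations across the many oriented variants, in particular matching the orientation of each R3 variant with the exact indexing of \textbf{MBQ3}. The R1 uniqueness step also needs the diagonal-bijectivity lemma, which has to be extracted from \textbf{BQ3} rather than assumed.
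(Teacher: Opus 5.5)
Your architecture matches the paper's: unique local extension inside the move disk, \textbf{MBQ1} for R1, \textbf{MBQ2} and \textbf{BQ2} for R2, the \textbf{MBQ3} identities for R3, and inverse moves for bijectivity. Your naturality argument is also the paper's: $f\circ\Phi_X(C)$ is a $Y$-valued local extension of $f\circ C$, so uniqueness identifies it with $\Phi_Y(f\circ C)$.

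The genuine gap is the R1 step you flagged. Bijectivity of the diagonal map $\pi_i(x)=x\mathbin{\underline{\triangleright}_i}x=x\mathbin{\overline{\triangleright}_i}x$ cannot be extracted from \textbf{BQ3}, even together with \textbf{BQ1} and \textbf{BQ2}. Since $H_{ii}(a,a)=(\pi_i(a),\pi_i(a))$, injectivity of $H_{ii}$ gives injectivity of $\pi_i$, which is uniqueness of the loop color. Existence needs surjectivity, however. Using ``the inverse of $H_{ii}$ restricted to the diagonal'' only works once you know that $H_{ii}^{-1}(y,y)$ is a diagonal pair, and that is equivalent to surjectivity of $\pi_i$. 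You cannot appeal to finiteness, because the theorem is later applied to arbitrary $A$-modules such as $M_D$. On infinite sets, \textbf{BQ1}--\textbf{BQ3} really do not suffice. For example, on $X=\mathbb Z$ take $x\mathbin{\underline{\triangleright}}y=x+y$ and $x\mathbin{\overline{\triangleright}}y=\psi(y,x+y)$, where $\psi$ is a Latin square on $\mathbb Z$ with $\psi(x,2x)=2x$; such a $\psi$ exists by back-and-forth completion. Then \textbf{BQ1}--\textbf{BQ3} hold but $\pi(x)=2x$. The same diagonal-preservation property is what forces the two exterior colors to agree when you restrict across a kink whose source quadrant lies between the exterior ends. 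So it is needed for every R1 variant, not just for insertion.

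The missing ingredient is the first exchange law. Given $y$, let $(a,b)=H_{ii}^{-1}(y,y)$, so that $a\mathbin{\underline{\triangleright}_i}b=b\mathbin{\overline{\triangleright}_i}a=y$. The first identity of \textbf{BQ4} at $(a,b,a)$ gives $\pi_i(y)=\pi_i(a)\mathbin{\underline{\triangleright}_i}(b\mathbin{\overline{\triangleright}_i}a)=\pi_i(a)\mathbin{\underline{\triangleright}_i}y$. Since also $\pi_i(y)=y\mathbin{\underline{\triangleright}_i}y$, injectivity of $w\mapsto w\mathbin{\underline{\triangleright}_i}y$ (\textbf{BQ2}) yields $\pi_i(a)=y$. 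This is exactly how the paper closes R1.

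A smaller correction concerns R3. Under the paper's source-region convention, at a positive braid crossing the colors on the left determine those on the right; the left colors are the under-strand's incoming end and the over-strand's outgoing end. So the three top colors do not propagate by ``forward application of the operations.'' You must solve $y\mathbin{\overline{\triangleright}_i}x=b$ using \textbf{BQ2}. The middle-strand boundary comparison then needs the second reindexed identity together with injectivity of $w\mapsto w\mathbin{\overline{\triangleright}_i}(x\mathbin{\underline{\triangleright}_k}z)$, as the paper does.
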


\begin{figure}[t!]
\centering
\includegraphics[width=.75\textwidth]{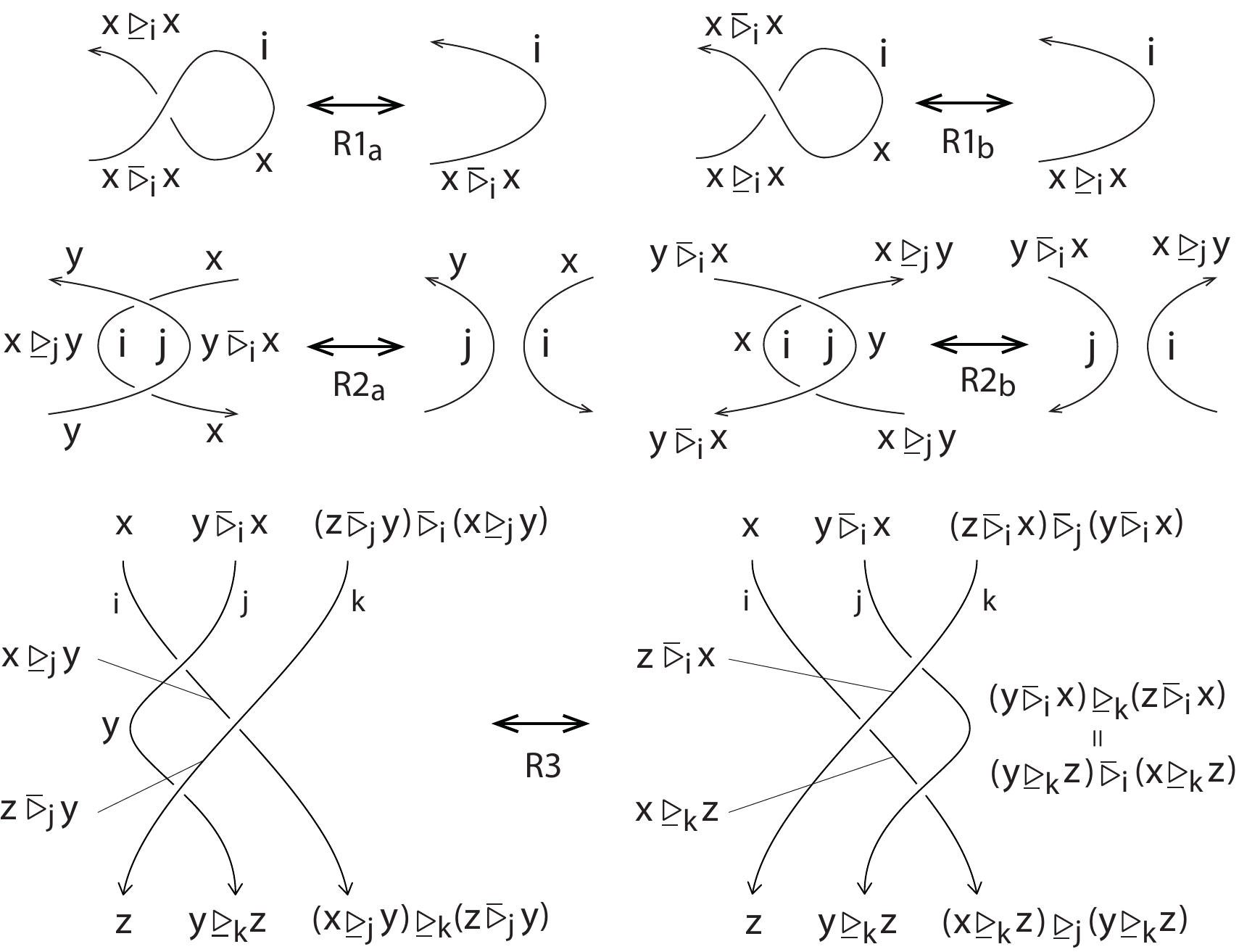}
\caption{$\Omega$-multibiquandle colored generalized Reidemeister moves
($i,j,k \in \Omega$).  The internal labels ${\bf R1_a},{\bf R1_b},
{\bf R2_a},{\bf R2_b},{\bf R3}$ correspond, up to the displayed rotations
and planar isotopy, to Polyak's
$\Omega_{1a},\Omega_{1c},\Omega_{2c},\Omega_{2d},\Omega_{3b}$,
respectively \cite{Polyak}.}\label{fig-dibiqdl-rm}
\end{figure}

\begin{proof}
Let \(B\) be the move disk introduced immediately before the theorem.
The diagrams \(D\) and \(D'\), as well as their component labels, are
identified outside \(B\).
Up to the rotations, planar isotopies, and internal notation recorded in
the caption of Figure~\ref{fig-dibiqdl-rm}, the displayed moves contain
Polyak's five-move generating set
\(
\{\Omega_{1a},\Omega_{1c},\Omega_{2c},\Omega_{2d},\Omega_{3b}\}
\)
\cite[Theorem~1.2]{Polyak}. Hence it suffices to establish the unique
local-extension property for these generators and their inverses.
Polyak's decompositions preserve the oriented strands, and assigning to
each strand the fixed label of its component makes every intermediate
move color-preserving.

\medskip
\noindent\emph{The R1 moves.}
Fix \(i\in\Omega\). By \textbf{MBQ1} of Definition~\ref{def-n-dibiqdle},
the fixed-index structure
\(
\bigl(X,\underline{\triangleright}_i,
        \overline{\triangleright}_i\bigr)
\)
is a biquandle. In particular, \textbf{BQ1} of Definition~\ref{def-biqdle} gives
\(
x\mathbin{\underline{\triangleright}_i}x
=
x\mathbin{\overline{\triangleright}_i}x
\, (x\in X).
\)
The common value therefore defines the \(i\)-th \emph{diagonal map}
\(
\pi_i\colon X\longrightarrow X\) given by \(\pi_i(x)
=
x\mathbin{\underline{\triangleright}_i}x
=
x\mathbin{\overline{\triangleright}_i}x.
\)
Notice that \textbf{BQ1} does not assert that \(\pi_i(x)=x\); it only
asserts that the two diagonal outputs are equal. We prove bijectivity
directly from the stated axioms. Write $U(a,b)=a\underline{\triangleright}_i b$
and $O(a,b)=a\overline{\triangleright}_i b$ for this paragraph.
Since $H_{ii}(a,a)=(\pi_i(a),\pi_i(a))$, the injectivity of $H_{ii}$
implies that of $\pi_i$. Given $y\in X$, choose $(a,b)$ with
$H_{ii}(a,b)=(y,y)$, so that $U(a,b)=O(b,a)=y$.
The first exchange law, applied to $(a,b,a)$, gives
\[
 \pi_i(y)=U(U(a,b),U(a,b))
 =U(U(a,a),O(b,a))=U(\pi_i(a),y).
\]
Since $\pi_i(y)=U(y,y)$ and $U(-,y)$ is injective by \textbf{BQ2},
we obtain $\pi_i(a)=y$. Thus $\pi_i$ is also surjective.
This proves the existence and uniqueness needed for an R1 extension,
without any finiteness assumption on $X$.

More explicitly, in each of the two R1 diagrams displayed in
Figure~\ref{fig-dibiqdl-rm}, let \(x\) be the color on the loop and let
\(y\) be the color on the corresponding exterior semiarc. Then the two
crossing relations give
\(
y
=
x\mathbin{\underline{\triangleright}_i}x
=
x\mathbin{\overline{\triangleright}_i}x
=
\pi_i(x).
\)
Since \(\pi_i\) is bijective, the prescribed exterior color
\(y\) determines the unique loop color \(x=\pi_i^{-1}(y).\)
Consequently, a coloring of the straight side extends uniquely to the
kinked side. In the reverse direction, deleting the kink is restriction,
and these two operations are mutually inverse.

\medskip
\noindent\emph{The R2 moves.}
For \(i,j\in\Omega\), consider the mixed switch
\(
H_{ij}\colon X\times X\longrightarrow X\times X\) defined by \(
H_{ij}(x,y)
=
\bigl(
y\mathbin{\overline{\triangleright}_i}x,\,
x\mathbin{\underline{\triangleright}_j}y
\bigr).
\)
By \textbf{MBQ2} of Definition~\ref{def-n-dibiqdle}, $H_{ij}$ is bijective.
Use the coordinate orders in Figure~\ref{fig-dibiqdl-rm}.
If the prescribed exterior pair is $(a,b)$, the internal pair in
${\bf R2_a}$ is $H_{ij}(a,b)$, and the second crossing returns
$H_{ij}^{-1}H_{ij}(a,b)=(a,b)$.
In ${\bf R2_b}$ the internal pair is $H_{ij}^{-1}(a,b)$, and the
second crossing returns $H_{ij}H_{ij}^{-1}(a,b)=(a,b)$.
Bijectivity makes both internal pairs unique, so restriction and
extension are inverse.

\medskip
\noindent\emph{The R3 move.}
Consider the displayed braid-type generator R3 in
Figure~\ref{fig-dibiqdl-rm}, and
let \(x,b,z\) be the prescribed colors at its upper left, upper
middle, and lower left boundary endpoints. By \textbf{BQ2}, there is a
unique \(y\in X\) such that
\(y\mathbin{\overline{\triangleright}_i}x=b\).
Starting from $x,b,z$, the crossing rules and \textbf{BQ2} determine
the internal colors on either side. The first and third identities
below compare the boundary values on the $i$- and $k$-strands.
For the $j$-strand, the second identity shows that the proposed lower
middle boundary value $y\mathbin{\underline{\triangleright}_k}z$
satisfies the remaining middle crossing relation. By \textbf{BQ2}, the
map $w\mapsto w\mathbin{\overline{\triangleright}_i}
(x\mathbin{\underline{\triangleright}_k}z)$ is bijective, making this value unique,
so that boundary value also agrees. The required identities are
\begin{align}
&(x\mathbin{\underline{\triangleright}_j}y)
 \mathbin{\underline{\triangleright}_k}
 (z\mathbin{\overline{\triangleright}_j}y)
 =
 (x\mathbin{\underline{\triangleright}_k}z)
 \mathbin{\underline{\triangleright}_j}
 (y\mathbin{\underline{\triangleright}_k}z),
 \label{eq:r3-boundary-i}\\
&(y\mathbin{\overline{\triangleright}_i}x)
 \mathbin{\underline{\triangleright}_k}
 (z\mathbin{\overline{\triangleright}_i}x)
 =
 (y\mathbin{\underline{\triangleright}_k}z)
 \mathbin{\overline{\triangleright}_i}
 (x\mathbin{\underline{\triangleright}_k}z),
 \label{eq:r3-boundary-j}\\
&(z\mathbin{\overline{\triangleright}_j}y)
 \mathbin{\overline{\triangleright}_i}
 (x\mathbin{\underline{\triangleright}_j}y)
 =
 (z\mathbin{\overline{\triangleright}_i}x)
 \mathbin{\overline{\triangleright}_j}
 (y\mathbin{\overline{\triangleright}_i}x).
 \label{eq:r3-boundary-k}
\end{align}
Equation~\eqref{eq:r3-boundary-i} is the first identity of
Lemma~\ref{lem:mbq3-reindexing} for the ordered pair \((j,k)\).
Equation~\eqref{eq:r3-boundary-j} is the second identity of that lemma
for \((i,k)\), after the substitution
\(
(x,y,z)\longmapsto(y,x,z).
\)
Finally, Equation~\eqref{eq:r3-boundary-k} is the third identity in
\textbf{MBQ3} for the ordered pair \((i,j)\), after the substitution
\(
(x,y,z)\longmapsto(z,x,y),
\)
read from right to left. Thus all corresponding boundary colors agree
on the two sides. Since all internal colors are already uniquely
determined by the crossing rules, every coloring on either side has a
unique local extension to the other side.

Reading the preceding constructions backward gives the corresponding
unique extensions for the inverse generators. Rotations and planar
isotopies merely change the positions of the local pictures, and every
other oriented Reidemeister move is generated by the displayed moves
and their inverses. Composing the corresponding local extensions
therefore gives, for every generalized Reidemeister move, a bijection
\(
\Phi_X\colon
{\rm Col}_X^\Omega(D,\sigma)
\longrightarrow
{\rm Col}_X^\Omega(D',\sigma').
\)

\medskip
It remains to verify naturality.  Let \(f\colon X\to Y\) be a
homomorphism of \(\Omega\)-multibiquandles.  Since \(f\) preserves every
indexed operation, for all \(i,j\in\Omega\) we have
\(f\circ\pi_i^X=\pi_i^Y\circ f\) and \((f\times f)\circ H_{ij}^X = H_{ij}^Y\circ(f\times f)\).
Since the diagonal maps and mixed switches are bijective, these identities
also imply
\(f\circ(\pi_i^X)^{-1} = (\pi_i^Y)^{-1}\circ f\) and \((f\times f)\circ(H_{ij}^X)^{-1} = (H_{ij}^Y)^{-1}\circ(f\times f)\).
Moreover, applying \(f\) to both sides of each of the three
\textbf{MBQ3} identities displayed above gives the corresponding
identity in \(Y\).

Now let \(B\) be the disk supporting the move, and let
\(C\in{\rm Col}_X^\Omega(D,\sigma).\)
For every semiarc \(s\) outside \(B\), the definition of the extension
gives \(\Phi_X(C)(s)=C'(s)=C(s)\).  Hence
\(\bigl(f_*^{D',\sigma'}(\Phi_X(C))\bigr)(s) = f\bigl(\Phi_X(C)(s)\bigr) = f(C(s)) = \bigl(f_*^{D,\sigma}(C)\bigr)(s)\).
Thus the two colorings agree outside the move disk.

Inside \(B\), the preceding commutation identities show, move by move,
that applying \(f\) commutes with the rule determining the local colors.
For an R1 move, the new color is obtained using
\(\pi_i^X\) or \((\pi_i^X)^{-1}\), and its image under \(f\) is the
color obtained using the corresponding map
\(\pi_i^Y\) or \((\pi_i^Y)^{-1}\).  For an R2 move, the ordered pair of
local colors is obtained using \(H_{ij}^X\) or
\((H_{ij}^X)^{-1}\), and applying \(f\times f\) gives precisely the
pair determined by \(H_{ij}^Y\) or \((H_{ij}^Y)^{-1}\).  Finally, for
an R3 move, applying \(f\) to each successive indexed-operation
expression gives the corresponding expression in \(Y\), while the
three \textbf{MBQ3} identities in \(Y\) ensure the required agreement
of the resulting boundary colors. Consequently,
\(f_*^{D',\sigma'}\bigl(\Phi_X(C)\bigr)\)
is a \(Y\)-coloring of \((D',\sigma')\) that extends
\(f_*^{D,\sigma}(C)\) across the move.  By the uniqueness of the local
\(Y\)-valued extension, it must equal
\(\Phi_Y\bigl(f_*^{D,\sigma}(C)\bigr).\)
Therefore
\(f_*^{D',\sigma'}\circ\Phi_X=\Phi_Y\circ f_*^{D,\sigma},\)
which proves naturality.

Finally, the bijection \(\Phi_X\) gives \(\bigl|{\rm Col}_X^\Omega(D,\sigma)\bigr|=\bigl|{\rm Col}_X^\Omega(D',\sigma')\bigr|.\)
Hence the cardinal number is invariant under generalized Reidemeister
moves and thus depends only on the \(\Omega\)-colored link represented
by \((D,\sigma)\). This completes the proof.
\end{proof}

\begin{corollary}\label{cor-n-mbqd-invariant}
For any finite $\Omega$-multibiquandle $X$, the coloring number
\(|{\rm Col}_X^\Omega(L,\sigma)|:=|{\rm Col}_X^\Omega(D,\sigma)|\)
is a well-defined invariant of the $\Omega$-colored link $L$.
\end{corollary}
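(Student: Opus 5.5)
The plan is to deduce Corollary~\ref{cor-n-mbqd-invariant} directly from Theorem~\ref{thm-n-mbqd-cinv} combined with the colored Reidemeister theorem, Theorem~\ref{thm-equi-nchr}. There are two things to check. First, the number $|{\rm Col}_X^\Omega(D,\sigma)|$ is finite. Second, it does not depend on the diagram chosen for the colored link.

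Finiteness is immediate. A diagram $D$ has finitely many crossings, so the set $\mathcal S(D)$ of semiarcs is finite. Crossingless components are counted as single semiarcs, as in the module construction described in the introduction. Hence ${\rm Col}_X^\Omega(D,\sigma)$ is a subset of the finite set of maps $\mathcal S(D)\to X$, and its cardinality is a nonnegative integer bounded by $|X|^{|\mathcal S(D)|}$.

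For independence, let $(D,\sigma)$ and $(D',\sigma')$ be two diagrams representing equivalent $\Omega$-colored links. By Theorem~\ref{thm-equi-nchr}, they are related by a finite sequence of color-preserving generalized Reidemeister moves and planar ambient isotopies. A planar isotopy induces a bijection of semiarcs that preserves crossings, signs, over/under data and labels. So it induces a bijection of coloring sets, namely composition with the inverse of the semiarc bijection. For each Reidemeister move, Theorem~\ref{thm-n-mbqd-cinv} supplies the bijection $\Phi_X$. Composing these finitely many bijections gives a bijection ${\rm Col}_X^\Omega(D,\sigma)\cong{\rm Col}_X^\Omega(D',\sigma')$, so the two cardinalities are equal. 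The count therefore depends only on the colored link, and $|{\rm Col}_X^\Omega(L,\sigma)|$ is well defined.

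No step is a real obstacle. The only point needing care is that Theorem~\ref{thm-n-mbqd-cinv} applies to every move in the sequence, including the non-generator moves and inverse moves. That theorem's proof already reduces arbitrary moves to Polyak's generating set and their inverses, so a single sentence noting this suffices. Finiteness of $X$ is used only to make the cardinal a finite integer; the invariance of the cardinal itself holds without it.
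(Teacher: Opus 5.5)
Your proposal is correct and follows essentially the same route as the paper, which simply invokes Theorem~\ref{thm-n-mbqd-cinv} to obtain a bijection between the coloring sets of any two diagrams of the colored link and then uses the finiteness of $X$. Your version only makes explicit what the paper leaves implicit: the appeal to Theorem~\ref{thm-equi-nchr} for a finite move sequence, the treatment of planar isotopies as relabelings of semiarcs, and the bound $|X|^{|\mathcal S(D)|}$ that guarantees the count is finite.
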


\begin{proof}
Choose a diagram $(D,\sigma)$ of the colored link.  If
$(D',\sigma')$ is another diagram, Theorem~\ref{thm-n-mbqd-cinv}
gives a bijection between the two coloring sets.  Since $X$ is finite,
the two sets have the same finite cardinality, so the displayed number
is independent of the chosen diagram.
\end{proof}

\begin{example}\label{exmp-odl631}
Let $L_1$ and $L_2$ be the two $3$-colored links with diagrams $D_1$
and $D_2$ shown in Figure~\ref{fig-ordl2}, and use the $3$-parallel
multibiquandle $X=\mathbb Z_3$ from Example~\ref{exa-3-pall}.  Let
$\Omega=\{1,2,3\}$ and let $a,b,c,d,e,f,g,h,p,q,r,s$ denote the
semiarcs indicated in the figure.  The crossingless diagram $D_1$ has
three freely colored semiarcs, and hence
\(
 |{\rm Col}_{X}^{\Omega}(L_1)|=3^3=27.
\)
\begin{figure}[ht]
\centering
\includegraphics[width=.46\textwidth]{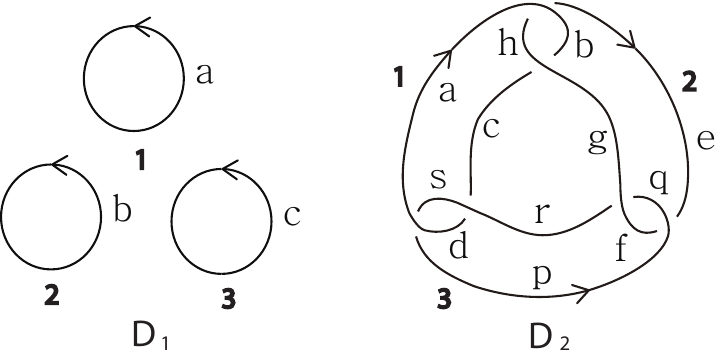}
\caption{$3$-colored links of three components}\label{fig-ordl2}
\end{figure}

For $D_2$, the twelve crossing equations are
\begin{align*}
e&=h\underline{\triangleright}_1b,&
a&=b\overline{\triangleright}_2h,&
c&=b\underline{\triangleright}_2h,&
g&=h\overline{\triangleright}_1b,\\
f&=e\underline{\triangleright}_3q,&
p&=q\overline{\triangleright}_2e,&
r&=q\underline{\triangleright}_2g,&
f&=g\overline{\triangleright}_3q,\\
p&=s\underline{\triangleright}_1a,&
d&=a\overline{\triangleright}_3s,&
d&=c\underline{\triangleright}_3s,&
r&=s\overline{\triangleright}_1c.
\end{align*}
Using $x\underline{\triangleright}_n y=x$ for all $n$,
$x\overline{\triangleright}_2y=x$, and
$x\overline{\triangleright}_1y=x\overline{\triangleright}_3y=2x-y$,
the twelve equations become
\[
\begin{aligned}
e&=h,&a&=b,&c&=b,&g&=2h-b,\\
f&=e,&p&=q,&r&=q,&f&=2g-q,\\
p&=s,&d&=2a-s,&d&=c,&r&=2s-c.
\end{aligned}
\]
First, $a=b=c$ and $d=c$ give $a=b=c=d=k$.
Then $d=2a-s$ forces $s=k$, and $p=s$, $p=q$, and $r=q$
give $p=q=r=s=k$. Also $e=h$ and $f=e$ give $e=f=h=\ell$,
and $g=2h-b$ gives $g=2\ell-k$. The remaining equations hold
for every $k,\ell\in\mathbb Z_3$, because
$2g-q=4\ell-3k=\ell=f$ and $2s-c=2k-k=k=r$.
Thus these equations have exactly the solutions
\(
 (a,b,c,d,e,f,g,h,p,q,r,s)
 =(k,k,k,k,\ell,\ell,2\ell-k,\ell,k,k,k,k)\) for each \(k,\ell\in\mathbb Z_3.
\) Therefore $|{\rm Col}_{X}^{\Omega}(L_2)|=3^2=9$.  In particular,
$L_1$ and $L_2$ are not equivalent as $3$-colored links.
\end{example}

The coloring bijection will be used below for the affine
$A$-module multibiquandle.  Passing from the set of colorings to its
representing module retains linear information that can subsequently be
specialized or decategorified by Fitting ideals.

\section{\texorpdfstring{$(G,u)$}{(G,u)}-deformed Alexander modules of
\texorpdfstring{$G$}{G}-colored links}\label{sect-abq}

In this section, we define a group $G$-multibiquandle module $M_u(D,\sigma;R[G])$ associated with a given $G$-colored link diagram $(D,\sigma)$, where $G$ is a commutative group and $R$ is a commutative ring with identity, and we present invariants of the $G$-colored link $(L,\sigma)$ in $S^3$ represented by $(D,\sigma)$ derived from this module. For convenience, we begin by recalling the presentation of a finitely generated $R$-module. 

Let $M$ be a finitely generated module over a commutative ring $R$ with identity $1$. A {\it finite presentation} of $M$ is an exact sequence
\[F \overset{\alpha}{\longrightarrow} E \overset{\phi}{\longrightarrow} M \longrightarrow 0,\]
where $F$ and $E$ are free $R$-modules with finite bases. 
Thus $M \cong {\rm coker}(\alpha) = E / {\rm Im}(\alpha)$. 

Let $B=\{f_1, f_2, \ldots, f_n\}$ and $B'=\{e_1, e_2, \ldots, e_m\}$ be bases of $F$ and $E$, respectively. 
Suppose that for each $j=1,\ldots,n$, 
\(\alpha(f_j) = \sum_{i=1}^m a_{ij} e_i\) and \(a_{ij}\in R\).
The $m\times n$ matrix
\[
A=[a_{ij}]_{m\times n}
= \begin{bmatrix}
a_{11} & a_{12} & \cdots & a_{1n} \\
a_{21} & a_{22} & \cdots & a_{2n} \\
\vdots & \vdots & \ddots & \vdots \\
a_{m1} & a_{m2} & \cdots & a_{mn}
\end{bmatrix}
\]
is called a {\it presentation matrix} (or {\it relation matrix}) for $M$ with respect to the bases $B$ and $B'$. 

Since $\phi$ is surjective, the elements $\phi(e_1),\ldots,\phi(e_m)$
generate $M$. The $j$-th column records the relation
$\sum_{i=1}^m a_{ij}\phi(e_i)=0$, and these columns generate all
relations among the chosen generators because
$\ker\phi=\operatorname{im}\alpha$. 
We now recall the following {\it Tietze moves for finite module presentations}.

\begin{theorem}\cite[Theorem 6.1]{Lic}
\label{thm-mod-pres}
Any two finite presentation matrices of isomorphic modules over a
commutative ring are related by a finite sequence of the following moves
and their inverses:
\begin{itemize}
\item[(1)] permutation of rows or columns;
\item[(2)] stabilization by an identity entry,
$A\mapsto\begin{bmatrix}A&0\\0&1\end{bmatrix}$;
\item[(3)] addition of a zero column;
\item[(4)] addition of a scalar multiple of one row (respectively column)
to another row (respectively column).
\end{itemize}
Each specified move induces an explicit isomorphism of cokernels.
\end{theorem}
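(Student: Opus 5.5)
The plan is the standard ``common refinement'' argument. First I will check the final sentence, since it is needed throughout. For a permutation, the induced map on cokernels is the relabelling of basis vectors. For a stabilization, the new generator $e_{m+1}$ is killed by the new relation, so $E\oplus R\,e_{m+1}\to E$ (projection) induces an isomorphism $\coker\begin{bmatrix}A&0\\0&1\end{bmatrix}\cong\coker A$. A zero column adds the trivial relation, so the cokernel is literally unchanged. Adding $c$ times column $k$ to column $l$ does not change the image of $\alpha$. Adding $c$ times row $k$ to row $l$ corresponds to the change of basis $e_k\mapsto e_k+c\,e_l$ of $E$; this is invertible, so it induces an isomorphism of cokernels. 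In each case the isomorphism is explicit, and the inverse moves induce the inverse maps.

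Now fix presentations $F\xrightarrow{\alpha}E\xrightarrow{\phi}M\to0$ and $F'\xrightarrow{\alpha'}E'\xrightarrow{\phi'}M'\to0$ with matrices $A$ ($m\times n$) and $A'$ ($m'\times n'$), and an isomorphism $\theta:M\to M'$. Replacing $\phi'$ by $\theta^{-1}\circ\phi'$, I may assume $M=M'$ and that both $\phi,\phi'$ map onto $M$. Since $\phi(e_i)$ and $\phi'(e'_j)$ generate $M$, I choose expressions
\[
\phi'(e'_j)=\sum_i w_{ij}\phi(e_i),\qquad \phi(e_i)=\sum_j v_{ji}\phi'(e'_j).
\]
The target is the combined presentation $P$. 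Its generators are $e_1,\dots,e_m,e'_1,\dots,e'_{m'}$. Its relations are the columns of $A$, the columns of $A'$, the relations $e'_j-\sum_i w_{ij}e_i$, and the relations $e_i-\sum_j v_{ji}e'_j$. Since this is symmetric in the two presentations, it suffices to show that $A$ can be carried to $P$ by moves (1)--(4). The reverse sequence then carries $P$ to $A'$.

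Starting from $A$, I apply stabilization $m'$ times to add generators $e'_j$ with relations $e'_j=0$. Row operations, adding $-w_{ij}$ times the new row $j$ to row $i$, then turn these new columns into $e'_j-\sum_i w_{ij}e_i$. In the new basis, the cokernel is still $M$, with $e'_j\mapsto\phi'(e'_j)$. Next I add zero columns, one for each column of $A'$ and one for each $i$. I fill each of them by column operations, that is, by adding combinations of existing columns. This is possible exactly when the desired relation is a consequence of the existing relations, meaning it lies in their column span. For the relations $e_i-\sum_j v_{ji}e'_j$, substituting $e'_j\equiv\sum_k w_{kj}e_k$ gives an element of $E$ that maps to $0$ under $\phi$. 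It therefore lies in $\im\alpha$, so it is a combination of columns of $A$ and of the columns $e'_j-\sum_i w_{ij}e_i$. For a column of $A'$, written $\sum_j a'_{jl}e'_j$, the same substitution gives an element of $\ker\phi=\im\alpha$, which is handled the same way.

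I expect the main obstacle to be this bookkeeping step. One must verify that ``consequence relation'' means membership in the $R$-span of the current columns, so that only moves (3) and (4) are used and no division in $R$ is needed. It also has to be arranged with an order of operations that keeps the row basis fixed while columns are being filled. I would first write the span membership explicitly as $\mathrm{col}=\sum_l c_l\,(\text{column } l)$ and then implement it by repeated column additions into the zero column. Commutativity of $R$ is used only so that row and column operations act by honest matrix multiplication on the appropriate side.
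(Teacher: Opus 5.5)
Your proposal is correct. The span-membership step works because, after stabilization and the row additions, the columns generate exactly the kernel of $E\oplus E'\to M$, $e_i\mapsto\phi(e_i)$, $e'_j\mapsto\phi'(e'_j)$, and the symmetric route from $A'$ produces the same columns up to permutation. The paper does not prove this statement itself; it only cites Lickorish, and your combined-presentation construction is essentially the argument of that cited theorem.
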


Let $A$ and $A'$ be matrices with entries in $R$. 
We say that $A$ is {\it equivalent} to $A'$, denoted $A \sim A'$, if there exists a finite sequence of matrices 
$
A = A_1, A_2, \ldots, A_n = A'
$
such that $A_{i+1}$ is obtained from $A_i$ by one of the matrix moves $(1)$–$(4)$ in Theorem~\ref{thm-mod-pres} or their inverses. 

\begin{proposition}\label{prop-eq-mat}
Let ${\bf a}$ and ${\bf b}$ be arbitrary row and column vectors, respectively, and let $\lambda$ be a unit in $R$. 
Then the following matrix equivalences hold:
\begin{itemize}
\item[(5)] $A \sim \left[\begin{array}{cc}
A & {\bf 0}\\
{\bf a} & 1
\end{array}\right].$\quad 
(6) $A \sim \left[\begin{array}{cc}
A & {\bf b}\\
{\bf 0} & 1
\end{array}\right].$\quad
(7) $\left[\begin{array}{cc}
A & {\bf b}
\end{array}\right] \sim 
\left[\begin{array}{cc}
A & \lambda{\bf b}
\end{array}\right].$\quad 
(8) $\left[\begin{array}{c}
A \\[2pt]
{\bf a}
\end{array}\right] \sim 
\left[\begin{array}{c}
A \\[2pt]
\lambda{\bf a}
\end{array}\right].$
\end{itemize}
\end{proposition}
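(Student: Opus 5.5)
Each of (5)--(8) can be realized by a short chain of the moves (1)--(4) of Theorem~\ref{thm-mod-pres}. The unit hypothesis on $\lambda$ is only needed for (7) and (8).

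For (5), I start from $A$ and apply move (2) to obtain $\begin{bmatrix}A&0\\0&1\end{bmatrix}$. I then use move (4) on columns: for each column $j$ of $A$, add $a_j$ times the last column to column $j$. The last column is $(\mathbf 0,1)^T$, so these additions change only the bottom row, which becomes $(\mathbf a,1)$, and leave $A$ unchanged. The result is $\begin{bmatrix}A&\mathbf 0\\ \mathbf a&1\end{bmatrix}$.

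For (6), I argue the same way with rows. The last row of the stabilized matrix is $(\mathbf 0,1)$. Adding $b_i$ times it to row $i$ fills the last column with $\mathbf b$ and leaves the rest untouched. This gives $\begin{bmatrix}A&\mathbf b\\ \mathbf 0&1\end{bmatrix}$.

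For (7), where the column $\mathbf b$ is the last column of $[A\ \mathbf b]$, I plan to use a standard trick that works with only additive moves:
\begin{enumerate}
\item Stabilize by (2) to get $\begin{bmatrix}A&\mathbf b&0\\0&0&1\end{bmatrix}$.
\item Apply column additions by multiples of $\lambda$ and $\lambda^{-1}$ to the $2\times2$ block of the last two columns, the one carrying $\mathbf b$ and the new unit entry. The goal is to realize $\operatorname{diag}(\lambda,\lambda^{-1})$ as a product of elementary matrices, which is Whitehead's lemma:
\[
\begin{bmatrix}\lambda&0\\0&\lambda^{-1}\end{bmatrix}
=\begin{bmatrix}1&\lambda\\0&1\end{bmatrix}
\begin{bmatrix}1&0\\-\lambda^{-1}&1\end{bmatrix}
\begin{bmatrix}1&\lambda\\0&1\end{bmatrix}
\begin{bmatrix}0&-1\\1&0\end{bmatrix},
\]
with the last factor itself being a product of three elementary matrices.
\item Right multiplication by elementary matrices is exactly move (4) on columns. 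Carrying out these operations turns the column $\mathbf b$ into $\lambda\mathbf b$ and turns the bottom corner into $\lambda^{-1}$, giving $\begin{bmatrix}A&\lambda\mathbf b&0\\0&0&\lambda^{-1}\end{bmatrix}$.
\item Multiplying the last row by $\lambda$ is not a move, so I instead add $(1-\lambda^{-1})$ times a suitable vector to restore the corner to $1$. If direct restoration is awkward, I use a second stabilization and repeat the Whitehead trick on rows to rescale that row.
\item Destabilize by (2) to recover $[A\ \lambda\mathbf b]$.
\end{enumerate}
Statement (8) is the transpose argument, using row additions.

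The main obstacle is step 4 of (7): cleanly cancelling the $\lambda^{-1}$ corner using only the listed moves. If this proves too fiddly, I will fall back on the observation that the inverse moves include \emph{destabilization} only when the pivot is $1$. In that case I will pass directly to cokernels: scaling a relation column by a unit, or a generator row by a unit, induces an explicit isomorphism of cokernels. I will then invoke Theorem~\ref{thm-mod-pres} itself, which says that presentations of isomorphic modules are related by moves (1)--(4). This yields $\sim$ abstractly, which is all the proposition claims.
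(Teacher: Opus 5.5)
Your arguments for (5) and (6) are the same as the paper's. The gap is in (7), exactly at the step you flag, and it cannot be closed with the moves you use. On square matrices, moves (1), (2), (4) and their inverses have three relevant properties. They keep square matrices square. They preserve the determinant up to sign: permutations change the sign, while stabilization and elementary additions preserve it. And inverse stabilization only deletes a corner entry equal to $1$.

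Your intermediate matrix $\left[\begin{smallmatrix}A&\lambda\mathbf{b}&0\\0&0&\lambda^{-1}\end{smallmatrix}\right]$ has the same determinant as $[A\ \mathbf{b}]$, whereas $[A\ \lambda\mathbf{b}]$ has $\lambda$ times it. So suppose $[A\ \mathbf{b}]$ is square with non-zero-divisor determinant and $\lambda\neq\pm1$; already $[1]$ versus $[t]$ over $\mathbb Z[t^{\pm1}]$ is such a case. Then no ``suitable vector'' exists. A second stabilization followed by a row Whitehead factorization only relocates the $\lambda^{-1}$. Whitehead's lemma produces $\operatorname{diag}(\lambda,\lambda^{-1})$, which has determinant $1$, while (7) must change the determinant by a factor of $\lambda$.

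The missing ingredient is move (3), the zero column, which is the only move that escapes this invariant. With it, no stabilization or Whitehead factorization is needed; the paper simply writes
\[ [A\ \mathbf{b}]\overset{(3)}{\sim}[A\ \mathbf{b}\ \mathbf{0}]\overset{(4)}{\sim}[A\ \mathbf{b}\ \lambda\mathbf{b}]\overset{(1)}{\sim}[A\ \lambda\mathbf{b}\ \mathbf{b}]\overset{(4)}{\sim}[A\ \lambda\mathbf{b}\ \mathbf{0}]\overset{(3)}{\sim}[A\ \lambda\mathbf{b}]. \]

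Note also that (8) is not the transpose of (7). The move list has a zero-column move but no zero-row move, since a zero row would add a free summand to the cokernel. The paper proves (8) as follows:
\begin{enumerate}
\item use (6) to adjoin a column with entries $-\lambda^{-1}$ and $1$;
\item use row additions to produce $\lambda\mathbf{a}$;
\item use (7) to rescale that pivot column to $1$, and then destabilize.
\end{enumerate}
Your row version of the Whitehead trick would also work, provided you finish by applying (7) to the last column.

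Your fallback to Theorem~\ref{thm-mod-pres} is logically valid, because the cokernels in (5)--(8) are isomorphic. However, it turns the proposition into a corollary of the cited theorem and drops the explicit move sequences the proposition is meant to provide.
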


\begin{proof} 
Let $A$ be an $m\times n$ matrix. 
Write ${\bf a}=(a_1,\ldots,a_n)$ and ${\bf b}=(b_1,\ldots,b_m)^{\mathsf T}$.
After stabilization by (2), apply
$C_j\leftarrow C_j+a_jC_{n+1}$ for $1\le j\le n$ to obtain (5),
and $R_i\leftarrow R_i+b_iR_{m+1}$ for $1\le i\le m$ to obtain (6).
These are respectively $n$ and $m$ applications of (4), so
\[A \overset{(2)}{\sim} \left[\begin{array}{cc}
A & {\bf 0}\\
{\bf 0} & 1
\end{array}\right] \overset{n\times (4)}{\sim} \left[\begin{array}{cc}
A & {\bf 0}\\
{\bf a} & 1
\end{array}\right],\qquad
A \overset{(2)}{\sim} \left[\begin{array}{cc}
A & {\bf 0}\\
{\bf 0} & 1
\end{array}\right] \overset{m\times (4)}{\sim} \left[\begin{array}{cc}
A & {\bf b}\\
{\bf 0} & 1
\end{array}\right].\]

The matrix equivalences in (7) and (8) are obtained as follows:
\begin{align*}
\begin{bmatrix}
A & {\bf b}
\end{bmatrix} 
&\;\overset{(3)}{\sim}\;
\begin{bmatrix}
A & {\bf b} & {\bf 0}
\end{bmatrix} 
\;\overset{(4)}{\sim}\;
\begin{bmatrix}
A & {\bf b} & \lambda{\bf b}
\end{bmatrix}
\;\overset{(1)}{\sim}\;
\begin{bmatrix}
A & \lambda{\bf b} & {\bf b}
\end{bmatrix}\\
&\;\overset{(4)}{\sim}\;
\begin{bmatrix}
A & \lambda{\bf b} & {\bf b}-\lambda^{-1}\lambda{\bf b}
\end{bmatrix} 
=\begin{bmatrix}
A & \lambda{\bf b} & {\bf 0}
\end{bmatrix} 
\;\overset{(3)}{\sim}\;
\begin{bmatrix}
A & \lambda{\bf b}
\end{bmatrix}.
\\[1.2ex]
\begin{bmatrix}
A \\[2pt]
{\bf a}
\end{bmatrix} 
&\;\overset{(6)}{\sim}\;
\begin{bmatrix}
A & {\bf 0}\\
{\bf a} & -\lambda^{-1}\\
{\bf 0} & 1
\end{bmatrix}
\;\overset{(4)}{\sim}\;
\begin{bmatrix}
A & {\bf 0}\\
{\bf a} & -\lambda^{-1}\\
\lambda{\bf a} & 0
\end{bmatrix}
\;\overset{(1)}{\sim}\;
\begin{bmatrix}
A & {\bf 0}\\
\lambda{\bf a} & 0\\
{\bf a} & -\lambda^{-1}
\end{bmatrix}\\
&\;\overset{(4)}{\sim}\;
\begin{bmatrix}
A & {\bf 0}\\
\lambda{\bf a} & 0\\
{\bf 0} & -\lambda^{-1}
\end{bmatrix}
\;\overset{(7)}{\sim}\;
\begin{bmatrix}
A & {\bf 0}\\
\lambda{\bf a} & 0\\
{\bf 0} & 1
\end{bmatrix}
\;\overset{(2)}{\sim}\;
\begin{bmatrix}
A \\[2pt]
\lambda{\bf a}
\end{bmatrix}.
\end{align*}
This completes the proof.
\end{proof}

Let $G$ be a multiplicative commutative group with identity $e$, $R$ a commutative ring with identity $1$, and $R[G]$ the group ring of $G$ over $R$. 
Note that $R[G]$ is a commutative ring with identity $1e$. 
Let $(D,\sigma)$ be a $G$-colored link diagram with $\sigma:\pi_0(D)\to G$, $\mathcal{S}(D)=\{x_1,\ldots,x_m\}$ the set of semiarcs of $D$, and $C(D)=\{c_1,\ldots,c_\ell\}$ the set of crossings of $D$. 
For each crossing $c_k$ ($1 \leq k \leq \ell$), let $R_1(c_k)$ and $R_2(c_k)$ denote the linear expressions defined in Figure~\ref{fig-qd-rels-0}, depending on whether $c_k$ is positive or negative, where $u$ is an invertible element of $R[G]$.

\begin{figure}[ht!]
\centerline{\xy 
(-10,10);(-2,2)**@{-} **@{-},(10,-10);(2,-2)**@{-} **@{-} ?<*\dir{<}, 
(10,10);(-10,-10) **@{-} **@{-} ?>*\dir{>},(12,-12.5) *{x_{r}},
(-13,-12) *{x_{q}},(-13,12) *{x_{p}}, (13,12) *{x_{s}},(-8.5,4) *{g},(7.5,3.5) *{h},
(0.5,-20) *{r_1(c_k):x_r=x_p~\underline{\triangleright}_h~x_q}, 
(0,-25) *{r_2(c_k):x_s=x_q~\overline{\triangleright}_g~x_p},
(0,-5) *{c_k},(-3,9);(-6,6)**@{-} ?<*\dir{<},(11,-3.5) *{},(-8.5,-4) *{},
(-3,-9);(-6,-6)**@{-} ?<*\dir{<},(-7,0)*{\circledS},
(57,0)*{\begin{cases}
R_1(c_k)=-x_{r} + hx_{p} + (u-h)x_{q}, \\
R_2(c_k)=-x_{s} + ux_{q}.
\end{cases}},
\endxy}
\vskip.5cm
\centerline{\xy 
(10,-10);(-10,10) **@{-} **@{-} ?<*\dir{<},(10,10);(2,2)**@{-} **@{-},(-2,-2);(-10,-10) **@{-} **@{-} ?>*\dir{>},(12,-13) *{x_{r}}, (-13,12) *{x_{p}},(-12,-12) *{x_{q}},(13,12) *{x_{s}},(-8.5,4) *{h},(8,3.5) *{g},(9,-4) *{},
(1,-20) *{r_1(c_k):x_s=x_q~\underline{\triangleright}_h~x_p}, 
(0.5,-25) *{r_2(c_k):x_r=x_p~\overline{\triangleright}_g~x_q},
(0,-5) *{c_k},(-3,9);(-6,6)**@{-} ?<*\dir{<}, (-8.5,-4) *{},
(-3,-9);(-6,-6)**@{-} ?<*\dir{<},(-7,0)*{\circledS},
(57,0)*{\begin{cases}
R_1(c_k)=-x_{s} + hx_{q} + (u-h)x_{p},\\
R_2(c_k)=-x_{r} + ux_{p}.
\end{cases}},
\endxy}
\vskip.1cm
\caption{Linear expressions $R_1(c_k)$ and $R_2(c_k)$ associated with a crossing $c_k$}\label{fig-qd-rels-0}
\end{figure}

Let $F_u^m(D,\sigma)$ be the free $R[G]$-module of rank $2\ell$ with basis 
$
B=\{c_{11},c_{21},c_{12},c_{22},\ldots,c_{1\ell},c_{2\ell}\},
$
and let $E_u^m(D,\sigma)$ be the free $R[G]$-module of rank $m$ with basis 
$
B'=\{x_1,\ldots,x_m\}.
$
Thus 
\begin{equation}\label{defn-modls-1}
F_u^m(D,\sigma)=\bigoplus_{i=1}^{\ell}\big(R[G]c_{1i}\oplus R[G]c_{2i}\big), 
\qquad 
E_u^m(D,\sigma)=\bigoplus_{i=1}^m R[G]x_i.
\end{equation}
Define an $R[G]$-module homomorphism $\alpha_{D,\sigma}: F_u^m(D,\sigma)\to E_u^m(D,\sigma)$ by
\(\alpha_{D,\sigma}(c_{1k})=R_1(c_k)\) and \(\alpha_{D,\sigma}(c_{2k})=R_2(c_k)\)
for $k=1,\ldots,\ell$,
and extend $\alpha_{D,\sigma}$ linearly to all of $F_u^m(D,\sigma)$.  
Then the sequence
\begin{equation}\label{modl-pres-1}
F_u^m(D,\sigma)\overset{\alpha_{D,\sigma}}{\longrightarrow} E_u^m(D,\sigma) \overset{\phi_{D,\sigma}}{\longrightarrow} \mathrm{coker}(\alpha_{D,\sigma}) \longrightarrow 0
\end{equation}
is exact, and hence gives a finite presentation of the $R[G]$-module
\(\mathrm{coker}(\alpha_{D,\sigma}) =E_u^m(D,\sigma)/\mathrm{Im}(\alpha_{D,\sigma})\),
where $\phi_{D,\sigma}$ is the canonical quotient map defined by $\phi_{D,\sigma}(x)=[x]$ for all $x\in E_u^m(D,\sigma)$. We write
\[
 M_u(D,\sigma;A):=\mathrm{coker}(\alpha_{D,\sigma}),
 \qquad A:=R[G].
\]

\medskip
Now, for each $k=1,\ldots,\ell$, we rewrite the images of $\alpha_{D,\sigma}$ as
\begin{align*}
&\alpha_{D,\sigma}(c_{1k})
= R_1(c_k)
= a_{1k}x_{1}+a_{2k}x_{2}+\cdots+ a_{mk}x_{m},\\ 
&\alpha_{D,\sigma}(c_{2k})
= R_2(c_k)
= b_{1k}x_{1}+b_{2k}x_{2}+\cdots+ b_{mk}x_{m},
\end{align*}
where $a_{ij}, b_{ij} \in R[G]$. Here, we use the \emph{relations-as-columns} convention: a matrix
$M\in M_{a\times b}(R[G])$ represents an $R[G]$--linear map $R[G]^b\to R[G]^a$, and the $j$-th column of $M$ is the image of the $j$-th basis vector of the domain, expressed in the chosen basis of the codomain. Hence, the homomorphism $\alpha_{D,\sigma}$ in~\eqref{modl-pres-1} is represented with respect to the bases $B$ and $B'$ by the $m\times 2\ell$ matrix
\begin{equation}\label{pres-mat-1}
\mathcal{A}_u(D,\sigma)=
\begin{bmatrix}
a_{11} & b_{11} & a_{12} & b_{12} & \cdots & a_{1\ell} & b_{1\ell} \\
a_{21} & b_{21} & a_{22} & b_{22} & \cdots & a_{2\ell} & b_{2\ell} \\
\vdots & \vdots & \vdots & \vdots & \ddots & \vdots & \vdots \\
a_{m1} & b_{m1} & a_{m2} & b_{m2} & \cdots & a_{m\ell} & b_{m\ell}
\end{bmatrix}.
\end{equation}
Thus, $\mathcal{A}_u(D,\sigma)$ is a \emph{presentation matrix} (or {\it relation matrix}) for the $R[G]$-module $M_u(D,\sigma;A)$, and is denoted simply by $\mathcal{A}_u(D)$ when $\sigma$ is clear from the context.

\begin{remark}
\label{rem-square-presentation}
Let $q(D)$ be the number of components of $D$ that contain no crossing.
The crossing graph has $2\ell$ edge-semiarcs, and each vertex-free
component contributes one further semiarc.  Hence \(m=2\ell+q(D).\)
The matrix $\mathcal A_u(D,\sigma)$ is square precisely when
$q(D)=0$; in general it has $q(D)$ additional zero rows, one for each
crossingless component, and those rows give a free summand $A^{q(D)}$ in
\(M_u(D,\sigma;A).\)
\end{remark}

\begin{theorem}[Hom--coloring correspondence]
\label{thm:representability}
Let \((D,\sigma)\) be a \(G\)-colored link diagram with semiarc set
\(\mathcal S(D)\) and let \(N\) be an \(A\)-module equipped with the affine operations in \eqref{eq1-mdl-gpring} for the fixed \(u\in A^\times\). Let \(\phi_{D,\sigma}\colon E_u^m(D,\sigma)\longrightarrow M_u(D,\sigma;A)\) be the canonical quotient map.  Then the assignment
\[
\Theta_{D,\sigma,N}\colon
\Hom_A\bigl(M_u(D,\sigma;A),N\bigr)
\longrightarrow
{\rm Col}_N^G(D,\sigma),
\qquad
f\longmapsto \Theta_{D,\sigma,N}(f)=C_f,
\]
is a bijection, where \(C_f:\mathcal S(D)\to N\) is the \(G\)-multibiquandle \(N\)-coloring of \((D,\sigma)\) defined by \(C_f(x):=f(\phi_{D,\sigma}(x))=f([x])\) for all \(x\in\mathcal S(D).\)
Moreover, the family of bijections
\(\{\Theta_{D,\sigma,N}\}_N\) is natural in \(N\). More precisely,
let \(\psi\colon N\to N'\) be an \(A\)-linear map, and define
\(\psi_{\#}\colon \Hom_A\bigl(M_u(D,\sigma;A),N\bigr) \longrightarrow \Hom_A\bigl(M_u(D,\sigma;A),N'\bigr)\), \(f\longmapsto\psi\circ f\).
Then \(\Theta_{D,\sigma,N'}\circ\psi_{\#}=\psi_*^{D,\sigma}\circ\Theta_{D,\sigma,N}.\)  That is, the following diagram commutes:
\[
\begin{tikzcd}[row sep=1.8em, column sep=3.2em]
\Hom_A\bigl(M_u(D,\sigma;A),N\bigr)
  \arrow[r, "{\Theta_{D,\sigma,N}}"]
  \arrow[d, "{\psi_{\#}}"']
& {\rm Col}_N^G(D,\sigma)
  \arrow[d, "{\psi_*^{D,\sigma}}"]
\\
\Hom_A\bigl(M_u(D,\sigma;A),N'\bigr)
  \arrow[r, "{\Theta_{D,\sigma,N'}}"]
& {\rm Col}_{N'}^G(D,\sigma).
\end{tikzcd}
\]
\end{theorem}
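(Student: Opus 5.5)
The plan is to use the universal property of the cokernel in the finite presentation \eqref{modl-pres-1}. Since $E_u^m(D,\sigma)$ is free on the semiarcs $x_1,\ldots,x_m$, an $A$-linear map $\tilde f\colon E_u^m(D,\sigma)\to N$ is the same thing as an arbitrary function $C\colon\mathcal S(D)\to N$, via $\tilde f(x_i)=C(x_i)$. By exactness, $\Hom_A(\mathrm{coker}\,\alpha_{D,\sigma},N)$ is identified through composition with $\phi_{D,\sigma}$ with the set of those $\tilde f$ satisfying $\tilde f\circ\alpha_{D,\sigma}=0$. It also suffices to test this vanishing on the basis $B$, i.e.\ to check $\tilde f(R_1(c_k))=\tilde f(R_2(c_k))=0$ for every $k$.

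The key step is therefore a crossingwise check. For each crossing $c_k$, the vanishing $\tilde f(R_1(c_k))=\tilde f(R_2(c_k))=0$ should be exactly the two coloring conditions of Definition~\ref{def-mbqd-coloring} at $c_k$, with respect to the affine operations \eqref{eq1-mdl-gpring}. For a positive crossing, Figure~\ref{fig-qd-rels-0} gives $\tilde f(R_1(c_k))=-C(x_r)+hC(x_p)+(u-h)C(x_q)$. This vanishes if and only if $C(x_r)=C(x_p)\mathbin{\underline\triangleright_h}C(x_q)$. Likewise $\tilde f(R_2(c_k))=0$ if and only if $C(x_s)=C(x_q)\mathbin{\overline\triangleright_g}C(x_p)$, and the negative crossing is handled the same way. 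I will compare Figure~\ref{fig-qd-rels-0} with Figure~\ref{fig-dibiquandle-col} to confirm that the labels $g,h$ and the positions $p,q,r,s$ match, including that the index at each crossing is the label of the over-component. This is the only step that requires care, and it is bookkeeping rather than a real obstacle. Crossingless components impose no relations, which matches the fact that their single semiarc may be colored freely.

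Combining these gives the bijection. Injectivity of $\Theta_{D,\sigma,N}$ holds because the classes $[x_i]$ generate $M_u(D,\sigma;A)$, so $f$ is determined by $C_f$. Surjectivity holds because any coloring $C$ defines $\tilde f$ with $\tilde f\circ\alpha_{D,\sigma}=0$, and $\tilde f$ then descends to $f$ with $f([x])=C(x)$. Well-definedness, meaning that $C_f$ really is a coloring, is the converse direction of the same crossingwise equivalence applied to $\tilde f=f\circ\phi_{D,\sigma}$.

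Naturality is a direct computation. For $\psi\colon N\to N'$ and each semiarc $x$, we have $\Theta_{D,\sigma,N'}(\psi\circ f)(x)=\psi(f([x]))=(\psi\circ C_f)(x)=\psi_*^{D,\sigma}(C_f)(x)$. We also note that an $A$-linear $\psi$ commutes with both affine operations, since $\psi(gx+(u-g)y)=g\psi(x)+(u-g)\psi(y)$ and $\psi(ux)=u\psi(x)$. Hence $\psi$ is a $G$-multibiquandle homomorphism and $\psi_*^{D,\sigma}$ is defined as in \eqref{eq:ind-color-map}.
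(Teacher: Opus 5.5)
Your proposal is correct and follows essentially the same route as the paper. Both arguments run through the same steps:
\begin{itemize}
\item extend a semiarc assignment freely to $E_u^m(D,\sigma)$;
\item match the vanishing of $R_1(c_k),R_2(c_k)$ crossing by crossing with the affine coloring conditions;
\item descend through the quotient to get surjectivity, and use surjectivity of $\phi_{D,\sigma}$ to get injectivity;
\item check naturality pointwise, after noting that $A$-linear maps preserve the affine operations.
\end{itemize}
The only difference is presentational: you treat well-definedness and surjectivity as the two directions of a single crossingwise equivalence, while the paper writes them out separately.
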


\begin{proof}
Let \(f\colon M_u(D,\sigma;A)\longrightarrow N\) be an \(A\)-linear map.
We verify that \(C_f\) is a \(G\)-multibiquandle \(N\)-coloring of
\((D,\sigma)\). The claim is that for every crossing \(c_k\) with semiarc labels \(x_p,x_q,x_r,x_s\in \mathcal S(D)\) and strand labels \(g,h\in G\) as shown in Figure~\ref{fig-qd-rels-0}, \(C_f(x_p),C_f(x_q),C_f(x_r),C_f(x_s)\in N\) satisfies the coloring conditions in Figure~\ref{fig-dibiquandle-col} with the operations in \eqref{eq1-mdl-gpring}.

For every crossing \(c_k\), the relators \(R_1(c_k)\) and \(R_2(c_k)\)
belong to \(\im\alpha_{D,\sigma}\) by definition of \(\alpha_{D,\sigma}\). Hence
\(\phi_{D,\sigma}\bigl(R_1(c_k)\bigr)=
\phi_{D,\sigma}\bigl(R_2(c_k)\bigr)=0\)
in \(M_u(D,\sigma;A)=\mathrm{coker}(\alpha_{D,\sigma})\). Applying \(f\) gives
\(f\!\left(\phi_{D,\sigma}\bigl(R_1(c_k)\bigr)\right)
=f\!\left(\phi_{D,\sigma}\bigl(R_2(c_k)\bigr)\right)=f(0)=0.\)

Suppose first that \(c_k\) is the positive crossing. By linearity,
\begin{align*}
-C_f(x_r)+hC_f(x_p)+(u-h)C_f(x_q)
&=f\bigl(-[x_r]+h[x_p]+(u-h)[x_q]\bigr)\\
&=f\bigl(\phi_{D,\sigma}(R_1(c_k))\bigr)=0,\\
-C_f(x_s)+uC_f(x_q)
&=f\bigl(-[x_s]+u[x_q]\bigr)\\
&=f\bigl(\phi_{D,\sigma}(R_2(c_k))\bigr)=0.
\end{align*}
Thus
\begin{align*}
C_f(x_r)&=hC_f(x_p)+(u-h)C_f(x_q)
=C_f(x_p)\mathbin{\underline{\triangleright}_h}C_f(x_q),\\
C_f(x_s)&=uC_f(x_q)
=C_f(x_q)\mathbin{\overline{\triangleright}_g}C_f(x_p).
\end{align*}
These are precisely the positive-crossing conditions in
Figure~\ref{fig-dibiquandle-col}. At a negative crossing, the same
linearity argument applied to its displayed relators gives
\begin{align*}
-C_f(x_s)+hC_f(x_q)+(u-h)C_f(x_p)
&=f\bigl(\phi_{D,\sigma}(R_1(c_k))\bigr)=0,\\
-C_f(x_r)+uC_f(x_p)
&=f\bigl(\phi_{D,\sigma}(R_2(c_k))\bigr)=0.
\end{align*}
Consequently,
\begin{align*}
C_f(x_s)&=hC_f(x_q)+(u-h)C_f(x_p)
=C_f(x_q)\mathbin{\underline{\triangleright}_h}C_f(x_p),\\
C_f(x_r)&=uC_f(x_p)
=C_f(x_p)\mathbin{\overline{\triangleright}_g}C_f(x_q).
\end{align*}
Hence \(C_f\) satisfies the required coloring conditions in Figure~\ref{fig-dibiquandle-col} at every crossing of \(D\), and thus
\(C_f\in{\rm Col}_N^G(D,\sigma).\)
Notice also that a crossingless component contributes one free semiarc
generator and no crossing relation, exactly as required for an arbitrary
color on that component. Therefore \(\Theta_{D,\sigma,N}\) is
well-defined.

Now let \(C\colon\mathcal S(D)\to N\) be an element of \({\rm Col}_N^G(D,\sigma).\)
Because \(E_u^m(D,\sigma)\) is the free \(A\)-module on the semiarc
set \(\mathcal S(D)\), there exists a unique \(A\)-linear map
\(
\widetilde C\colon E_u^m(D,\sigma)\longrightarrow N
\)
such that \(\widetilde C(x)=C(x)\) for all \(x\in\mathcal S(D).\)
Since \(C\) satisfies the affine coloring conditions at each crossing
\(c_k\), Figure~\ref{fig-qd-rels-0} gives
\(
\widetilde C\bigl(R_1(c_k)\bigr)=\widetilde C\bigl(R_2(c_k)\bigr)=0
\, (k=1,\ldots,\ell).
\)
As these relators generate \(\im\alpha_{D,\sigma}\), it follows that
\(\im\alpha_{D,\sigma}\subseteq\ker\widetilde C.\)
Consequently, the universal property of the quotient provides a unique
\(A\)-linear map
\(
f_C\colon M_u(D,\sigma;A)\longrightarrow N
\)
such that \(f_C\circ\phi_{D,\sigma}=\widetilde C.\)
Then for every semiarc \(x\in\mathcal S(D)\), we have
\(
\Theta_{D,\sigma,N}(f_C)(x)=f_C\bigl(\phi_{D,\sigma}(x)\bigr)=\widetilde C(x)=C(x).
\)
Hence \(\Theta_{D,\sigma,N}(f_C)=C.\)

Conversely, let \(f\in \Hom_A\bigl(M_u(D,\sigma;A),N\bigr)\), then the \(A\)-linear extension of \(C_f\,(=\Theta_{D,\sigma,N}(f)) : \mathcal S(D) \to N\) to \(E_u^m(D,\sigma)\) is
\(f\circ\phi_{D,\sigma}\). Since \(\phi_{D,\sigma}\) is surjective, the
uniqueness of the induced quotient map gives \(f_{C_f}=f.\)
Thus \(f\mapsto C_f\) and \(C\mapsto f_C\) are mutually inverse, proving
that \(\Theta_{D,\sigma,N}\) is a bijection.

\par\medskip
\noindent
\begin{minipage}[t]{0.52\linewidth}
\centering
\(C\longmapsto f_C\)
\[
\begin{tikzcd}[
  column sep=1.5em,
  row sep=2.5em,
  font=\small
]
\mathcal S(D)
  \arrow[r, "\iota"]
  \arrow[d, "C"']
&
E_u^m(D,\sigma)
  \arrow[d, "\phi_{D,\sigma}"]
  \arrow[dl, "\widetilde C" description]
\\
N
&
M_u(D,\sigma;A)
  \arrow[l, dashed, "f_C"']
\end{tikzcd}
\]
\(\Theta_{D,\sigma,N}(f_C)=C.\)
\end{minipage}
\begin{minipage}[t]{0.40\linewidth}
\centering
\(f\longmapsto C_f\)
\[
\begin{tikzcd}[
  column sep=1.5em,
  row sep=2.5em,
  font=\small
]
\mathcal S(D)
  \arrow[r, "\iota"]
  \arrow[d, "C_f"']
&
E_u^m(D,\sigma)
  \arrow[d, "\phi_{D,\sigma}"]
  \arrow[dl, "\widetilde{C_f}" description]
\\
N
&
M_u(D,\sigma;A)
  \arrow[l, "f"']
\end{tikzcd}
\]
\(\widetilde{C_f}=f\circ\phi_{D,\sigma},
  \qquad f_{C_f}=f.\)
\end{minipage}
\par\medskip

Finally, let \(\psi\colon N\to N'\) be an \(A\)-linear map. Such a
map preserves all the affine operations. Indeed, for every \(g\in G\),
\begin{align*}
\psi\bigl(x\mathbin{\underline{\triangleright}_g}y\bigr)
&=\psi\bigl(gx+(u-g)y\bigr)=g\psi(x)+(u-g)\psi(y)
=\psi(x)\mathbin{\underline{\triangleright}_g}\psi(y),
\\
\psi\bigl(x\mathbin{\overline{\triangleright}_g}y\bigr)
&=\psi(ux)=u\psi(x)=\psi(x)\mathbin{\overline{\triangleright}_g}\psi(y).
\end{align*}
Hence applying \(\psi\) to every semiarc color defines the map
\(
\psi_*^{D,\sigma}\colon
{\rm Col}_N^G(D,\sigma)
\longrightarrow
{\rm Col}_{N'}^G(D,\sigma).
\)

For every \(f\in\Hom_A(M_u(D,\sigma;A),N)\) and every semiarc \(x\in\mathcal S(D)\), we have
\begin{align*}
&\bigl(\Theta_{D,\sigma,N'}\circ\psi_{\#}\bigr)(f)(x)
=\Theta_{D,\sigma,N'}(\psi\circ f)(x)=(\psi\circ f)(\phi_{D,\sigma}(x))
=\psi\bigl(f(\phi_{D,\sigma}(x))\bigr),\\
&\bigl(\psi_*^{D,\sigma}\circ \Theta_{D,\sigma,N}\bigr)(f)(x)
=\psi_*^{D,\sigma}(C_f)(x) =\psi\bigl(C_f(x)\bigr)
=\psi\bigl(f(\phi_{D,\sigma}(x))\bigr).
\end{align*}
Hence the two colorings agree on every semiarc of \(D\). Therefore
\(
\Theta_{D,\sigma,N'}\circ\psi_{\#}
=\psi_*^{D,\sigma}\circ\Theta_{D,\sigma,N},
\)
so the displayed diagram commutes. This proves naturality in \(N\).
\end{proof}

The affine coloring set is naturally an $A$-module under pointwise
addition and scalar multiplication. Indeed, writing the semiarc colors
as a column $v\in N^m$, the crossing equations give
\[
 {\rm Col}_N^G(D,\sigma)
 =\ker\!\left(\mathcal A_u(D,\sigma)^{\mathsf T}
               :N^m\longrightarrow N^{2\ell}\right).
\]
Here the matrix acts by its coefficients on $N$; the transpose occurs
because the presentation records relations as columns.
For $a,b\in A$ and $f,g\in\Hom_A(M_u(D,\sigma;A),N)$, evaluation
on a semiarc $x$ gives
$$\Theta_{D,\sigma,N}(af+bg)(x)
=a\Theta_{D,\sigma,N}(f)(x)+b\Theta_{D,\sigma,N}(g)(x).$$
Thus the natural bijection $\Theta_{D,\sigma,N}$ is also an
$A$-module isomorphism.

\begin{definition}[Universal affine coloring]
\label{def:universal-affine-coloring}
Put \(M_D:=M_u(D,\sigma;A),\) for brevity, and let \(\phi_{D,\sigma}\colon E_u^m(D,\sigma)\longrightarrow M_D\) be the canonical quotient map.  The coloring
\[
U_{D,\sigma}
:=
\Theta_{D,\sigma,M_D}
\bigl(\operatorname{id}_{M_D}\bigr)
\in
{\rm Col}_{M_D}^G(D,\sigma)
\]
is called the \emph{universal affine coloring} of
\((D,\sigma)\).  Thus, for every
\(x\in\mathcal S(D)\),
\(U_{D,\sigma}(x)=\operatorname{id}_{M_D}(\phi_{D,\sigma}(x)) =\phi_{D,\sigma}(x)=[x]\).
\end{definition}

By Theorem~\ref{thm:representability}, \(U_{D,\sigma}\) is an affine coloring.  Concretely, this also follows from the fact that
\(\operatorname{Im}(\alpha_{D,\sigma})\) is generated by the defining
linear crossing relations: their images vanish in \(M_D\), and hence
the elements \(U_{D,\sigma}(x)=\phi_{D,\sigma}(x)\) satisfy all the coloring conditions.

\begin{theorem}[Universal property of \((M_D,U_{D,\sigma})\)]
\label{thm:universal-affine-coloring}
For every \(A\)-module \(N\) and every \(A\)-linear map
\(f\colon M_D\to N\), one has
\begin{equation}
\label{eq:theta-via-universal-coloring}
\Theta_{D,\sigma,N}(f)=f\circ U_{D,\sigma}.
\end{equation}
Consequently, \((M_D,U_{D,\sigma})\) has the following universal property: for every \(A\)-module \(N\)
and every affine coloring \(C\in{\rm Col}_N^G(D,\sigma),\)
there exists a unique \(A\)-linear map
\(f_C\colon M_D\longrightarrow N\)
such that
\begin{equation}
\label{eq:universal-coloring-factorization}
C=f_C\circ U_{D,\sigma}.
\end{equation}
Moreover, \(f_C=\Theta_{D,\sigma,N}^{-1}(C).\)
\end{theorem}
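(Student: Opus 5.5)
The plan is to derive everything directly from Theorem~\ref{thm:representability} and Definition~\ref{def:universal-affine-coloring}; no new diagrammatic input is needed.

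\emph{Identity \eqref{eq:theta-via-universal-coloring}.} Fix an $A$-module $N$ and $f\in\Hom_A(M_D,N)$. By the definition of $\Theta_{D,\sigma,N}$, for every semiarc $x\in\mathcal S(D)$ we have $\Theta_{D,\sigma,N}(f)(x)=f(\phi_{D,\sigma}(x))$. By Definition~\ref{def:universal-affine-coloring}, $U_{D,\sigma}(x)=\phi_{D,\sigma}(x)=[x]$. Hence $\Theta_{D,\sigma,N}(f)(x)=f(U_{D,\sigma}(x))$ for every semiarc $x$, which is \eqref{eq:theta-via-universal-coloring}. One could equally obtain this from naturality: apply the commutative square of Theorem~\ref{thm:representability} with $\psi=f\colon M_D\to N$ to the element $\id_{M_D}$. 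This gives $\Theta_{D,\sigma,N}(f\circ\id)=f_*^{D,\sigma}(\Theta_{D,\sigma,M_D}(\id))=f\circ U_{D,\sigma}$. It is a Yoneda-style check, and I will mention it as a remark.

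\emph{Universal property.} Let $C\in{\rm Col}_N^G(D,\sigma)$. Since $\Theta_{D,\sigma,N}$ is a bijection by Theorem~\ref{thm:representability}, set $f_C:=\Theta_{D,\sigma,N}^{-1}(C)$. Then \eqref{eq:theta-via-universal-coloring} gives $f_C\circ U_{D,\sigma}=\Theta_{D,\sigma,N}(f_C)=C$, so a factorization exists. For uniqueness, suppose $f\colon M_D\to N$ is $A$-linear with $f\circ U_{D,\sigma}=C$. Then $\Theta_{D,\sigma,N}(f)=C=\Theta_{D,\sigma,N}(f_C)$, and injectivity of $\Theta_{D,\sigma,N}$ forces $f=f_C$. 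Alternatively, the classes $[x]=U_{D,\sigma}(x)$ generate $M_D$ because $\phi_{D,\sigma}$ is surjective, so two linear maps that agree on them coincide. Either argument also gives the formula $f_C=\Theta^{-1}_{D,\sigma,N}(C)$.

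There is no real obstacle here. The only point needing care is to identify $U_{D,\sigma}$ with $\phi_{D,\sigma}$ restricted to semiarcs, so that evaluation of $\Theta$ and composition with $U_{D,\sigma}$ are literally the same map.
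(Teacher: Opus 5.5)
Your proposal is correct and follows the paper's own proof. As there, you check the identity pointwise on semiarcs using $U_{D,\sigma}(x)=\phi_{D,\sigma}(x)$, and you obtain existence and uniqueness of $f_C=\Theta_{D,\sigma,N}^{-1}(C)$ from the bijectivity established in Theorem~\ref{thm:representability}.
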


\begin{proof}
For each semiarc $x$, the definitions give
\(
\Theta_{D,\sigma,N}(f)(x)
=f(\phi_{D,\sigma}(x))=f(U_{D,\sigma}(x))=(f\circ U_{D,\sigma})(x).
\)
This proves \eqref{eq:theta-via-universal-coloring}. The bijectivity in
Theorem~\ref{thm:representability} supplies the unique map
$f_C=\Theta_{D,\sigma,N}^{-1}(C) \in \Hom_A\bigl(M_D,N\bigr)$, and 
$(f_C\circ U_{D,\sigma})(x)=f_C(U_{D,\sigma}(x))=f_C(\phi_{D,\sigma}(x))=\Theta_{D,\sigma,N}(f_C)(x)=C(x)$ for all $x\in \mathcal S(D)$. This
gives \eqref{eq:universal-coloring-factorization}.
\end{proof}

In what follows, we call $(M_D,U_{D,\sigma})$ the
\emph{universal representing pair}, or simply the \emph{representing pair},
for the assignment $N\mapsto{\rm Col}_N^G(D,\sigma)$.
Here \emph{universal} refers to colorings with values in $A$-modules
equipped with the fixed affine operations \eqref{eq1-mdl-gpring}.
The coloring functor formalized below records both these coloring sets
and their maps induced by $A$-linear homomorphisms of targets;
it does not range over arbitrary $G$-multibiquandle targets.

\bigskip
For two functors $F,G:\mathbf C\to\mathbf D$, recall that a {\it natural transformation $\eta:F\Rightarrow G$} means a family \(\{\eta_X\}_X\) of morphisms $\eta_X:F(X)\to G(X)$ satisfying $G(f)\circ\eta_X=\eta_Y\circ F(f)$ for every $f:X\to Y$.
A natural transformation $\eta:F\Rightarrow G$ is called a {\it natural isomorphism}, denoted by $\eta:F\xRightarrow{\;\cong\;} G$, if each $\eta_X$ is invertible; the inverses then form a natural transformation $\eta^{-1}:G\Rightarrow F$
\cite{MacLane}.

\medskip
Let \(\Omega\text{-}\mathbf{MBQ}\) denote the category of
\(\Omega\)-multibiquandles and their homomorphisms and let
\(\mathbf{Set}\) denote the category of sets.  For a fixed
\(\Omega\)-colored link diagram \((D,\sigma)\), define the
\emph{coloring functor}
\[\mathscr C_{D,\sigma}\colon \Omega\text{-}\mathbf{MBQ} \longrightarrow \mathbf{Set}\]
by
\(
\mathscr C_{D,\sigma}(X)
=
{\rm Col}_X^\Omega(D,\sigma)
\)
on objects and by
\(
\mathscr C_{D,\sigma}(f)
=
f_*^{D,\sigma}
\)
on a homomorphism \(f\colon X\to Y\).
For a generalized Reidemeister move
\(
r\colon(D,\sigma)\longrightarrow(D',\sigma'),
\)
Theorem~\ref{thm-n-mbqd-cinv} gives a natural isomorphism
\[\Phi^r\colon \mathscr C_{D,\sigma} \xRightarrow{\;\cong\;} \mathscr C_{D',\sigma'}.\]
Its component at an \(\Omega\)-multibiquandle \(X\) is the
Reidemeister extension bijection
\[
(\Phi^r)_X=\Phi_X^r\colon
{\rm Col}_X^\Omega(D,\sigma)
\xrightarrow{}
{\rm Col}_X^\Omega(D',\sigma'),
\]
whose inverse is \((\Phi_X^r)^{-1}\).  Naturality means that, for every
homomorphism \(f\colon X\to Y\),
\[
f_*^{D',\sigma'}\circ\Phi_X^r
=
\Phi_Y^r\circ f_*^{D,\sigma}.
\]

\medskip
We now specialize to \(\Omega=G\), a group.  Let \(R\) be a commutative ring
with identity, let \(G\) be an abelian group, put \(A=R[G]\), and fix
\(u\in A^\times\).  Let \(\mathbf{Mod}_A\) denote the category of
\(A\)-modules and their \(A\)-linear maps.  By
Theorem~\ref{thm-multibqdl-mdl}, every \(A\)-module \(N\) becomes a
\(G\)-multibiquandle \((N,\mathbin{\underline{\triangleright}_g},\mathbin{\overline{\triangleright}_g})\), where \(\mathbin{\underline{\triangleright}_g}\) and \(\mathbin{\overline{\triangleright}_g}\) are the affine operations in \eqref{eq1-mdl-gpring}, namely
\(x\mathbin{\underline{\triangleright}_g}y
=gx+(u-g)y\) and \(x\mathbin{\overline{\triangleright}_g}y=ux \,(g\in G).\)
Every \(A\)-linear map preserves these operations.  Hence
they define a functor
\(\mathfrak A_{A,u}\colon \mathbf{Mod}_A \longrightarrow G\text{-}\mathbf{MBQ}\)
which sends an \(A\)-module \(N\) to its associated affine
\(G\)-multibiquandle \((N,\mathbin{\underline{\triangleright}_g},\mathbin{\overline{\triangleright}_g})\), and acts as the identity on the underlying maps. Then, for a fixed \(G\)-colored diagram \((D,\sigma)\), define the \emph{affine coloring functor} to be the composite
\[\mathscr C^{\mathrm{aff}}_{D,\sigma;u} := \mathscr C_{D,\sigma}\circ\mathfrak A_{A,u} \colon \mathbf{Mod}_A \longrightarrow \mathbf{Set}.\]
Thus
\(\mathscr C^{\mathrm{aff}}_{D,\sigma;u}(N)
={\rm Col}_N^G(D,\sigma)\) and 
\(\mathscr C^{\mathrm{aff}}_{D,\sigma;u}(f)
=f_*^{D,\sigma}\) for every \(A\)-linear map \(f\colon N\to N'\), where
\(
f_*^{D,\sigma}\colon
{\rm Col}_N^G(D,\sigma)
\longrightarrow
{\rm Col}_{N'}^G(D,\sigma)\) is the map in (\ref{eq:ind-color-map}) defined by
\(f_*^{D,\sigma}(C)=f\circ C.\)
Here and below, the adjective \emph{affine} refers to the
\(G\)-multibiquandle structure and its coloring theory; the defining
relations and all morphisms in \(\mathbf{Mod}_A\) are \(A\)-linear.

We recall the standard notions of a representable functor and
a universal element (see \cite[Chapter~III, Sections~1--2]{MacLane}).

\begin{definition}[Representable functor and universal element]
\label{def:representable-functor}
A covariant functor
\(\mathscr F\colon \mathbf{Mod}_A\to\mathbf{Set}\)
is said to be \emph{representable} if there exist an \(A\)-module \(M\) and
a natural family of bijections
\(\Theta_N\colon \operatorname{Hom}_A(M,N)\to\mathscr F(N)\).
In this case, \(M\) is called a \emph{representing object} for the functor 
\(\mathscr F\).

An element \(U\in\mathscr F(M)\) is called a \emph{universal element}
if, for every \(A\)-module \(N\) and every \(C\in\mathscr F(N)\),
there exists a unique \(A\)-linear map \(f\colon M\to N\) such that
\(\mathscr F(f)(U)=C\).
For a representation \(\Theta\), the corresponding universal element
is \(U=\Theta_M(\operatorname{id}_M)\): naturality gives
\(\Theta_N(f)=\mathscr F(f)(U)\), and the bijectivity of
\(\Theta_N\) gives the required existence and uniqueness.
\end{definition}

\begin{corollary}[Representability of the affine coloring functor]
\label{cor:affine-coloring-representability}
The family \(\bigl\{\Theta_{D,\sigma,N}\bigr\}_N\) of the maps
\(
\Theta_{D,\sigma,N}\colon
\operatorname{Hom}_A(M_u(D,\sigma;A),N)
\longrightarrow
{\rm Col}_N^G(D,\sigma)\) given by 
\(\Theta_{D,\sigma,N}(f)=f\circ U_{D,\sigma}\)
forms a natural isomorphism
\[
\Theta_{D,\sigma}\colon
\operatorname{Hom}_A(M_u(D,\sigma;A),-)
\xRightarrow{\;\cong\;}
\mathscr C^{\mathrm{aff}}_{D,\sigma;u}.
\]
Thus \(M_u(D,\sigma;A)\) represents the affine coloring functor and
\(U_{D,\sigma}\) is the corresponding universal element.
\end{corollary}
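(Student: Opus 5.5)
The corollary is essentially a repackaging of Theorem~\ref{thm:representability} and Theorem~\ref{thm:universal-affine-coloring} in the language of Definition~\ref{def:representable-functor}, so I would proceed in three short steps.

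First, I would check that both sides are functors $\mathbf{Mod}_A\to\mathbf{Set}$ and that the components have the right form. The source $\operatorname{Hom}_A(M_u(D,\sigma;A),-)$ sends $\psi\colon N\to N'$ to $\psi_\#$. The target $\mathscr C^{\mathrm{aff}}_{D,\sigma;u}=\mathscr C_{D,\sigma}\circ\mathfrak A_{A,u}$ sends $\psi$ to $\psi_*^{D,\sigma}$. This is legitimate because every $A$-linear map preserves the affine operations \eqref{eq1-mdl-gpring}, as verified at the end of the proof of Theorem~\ref{thm:representability}; functoriality then follows from the identities $(\id)_*=\id$ and $(k\circ h)_*=k_*\circ h_*$ recorded after \eqref{eq:ind-color-map}. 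By \eqref{eq:theta-via-universal-coloring}, the component $\Theta_{D,\sigma,N}(f)=f\circ U_{D,\sigma}$ coincides with the map of Theorem~\ref{thm:representability}, $f\mapsto C_f$. It is therefore a bijection for each $N$.

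Second, I would verify naturality, i.e.\ $\psi_*^{D,\sigma}\circ\Theta_{D,\sigma,N}=\Theta_{D,\sigma,N'}\circ\psi_\#$. This is exactly the commuting square of Theorem~\ref{thm:representability}. It can also be seen directly by associativity of composition: $\psi\circ(f\circ U_{D,\sigma})=(\psi\circ f)\circ U_{D,\sigma}$. Since each component is bijective, the inverses automatically form a natural transformation, so $\Theta_{D,\sigma}$ is a natural isomorphism and $M_u(D,\sigma;A)$ is a representing object.

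Third, I would identify the universal element. By Definition~\ref{def:representable-functor}, the universal element is $\Theta_{D,\sigma,M_D}(\id_{M_D})$, and this is $U_{D,\sigma}$ by Definition~\ref{def:universal-affine-coloring}. The universal-element property, that $\mathscr C^{\mathrm{aff}}(f)(U_{D,\sigma})=f\circ U_{D,\sigma}=C$ has a unique solution $f$, is then precisely \eqref{eq:universal-coloring-factorization} of Theorem~\ref{thm:universal-affine-coloring}.

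There is no real obstacle here. The only point requiring care is bookkeeping: one must confirm that the morphism action of $\mathscr C^{\mathrm{aff}}_{D,\sigma;u}$ on an $A$-linear $\psi$ is literally $\psi_*^{D,\sigma}$, so that the square of Theorem~\ref{thm:representability} is the naturality square demanded here.
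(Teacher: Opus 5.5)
Your proposal is correct and follows the paper's proof essentially step by step: bijectivity and naturality come from Theorem~\ref{thm:representability}, the formula $\Theta_{D,\sigma,N}(f)=f\circ U_{D,\sigma}$ comes from Theorem~\ref{thm:universal-affine-coloring}, and $U_{D,\sigma}=\Theta_{D,\sigma,M_D}(\operatorname{id}_{M_D})$ is identified as the universal element via Definition~\ref{def:representable-functor}. Your extra check that $\mathscr C^{\mathrm{aff}}_{D,\sigma;u}$ acts on morphisms by $\psi_*^{D,\sigma}$, so that the square of Theorem~\ref{thm:representability} is the required naturality square, is left implicit in the paper and is a reasonable bookkeeping addition.
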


\begin{proof}
Put \(M=M_u(D,\sigma;A)\).
By Theorem~\ref{thm:representability}, the maps
\(\Theta_{D,\sigma,N}\) are bijective and natural in \(N\).
By Theorem~\ref{thm:universal-affine-coloring}, they are given by
\(\Theta_{D,\sigma,N}(f)=f\circ U_{D,\sigma}\).
Hence they form the asserted natural isomorphism.

Taking \(N=M\) and \(f=\operatorname{id}_M\), we obtain
\(\Theta_{D,\sigma,M}(\operatorname{id}_M)=U_{D,\sigma}\).
For every \(A\)-module \(N\) and every coloring
\(C\in{\rm Col}_N^G(D,\sigma)\), the bijectivity of
\(\Theta_{D,\sigma,N}\) gives a unique \(A\)-linear map
\(f_C\colon M\to N\) such that
\(\Theta_{D,\sigma,N}(f_C)=C\).
By the formula above and the definition of the coloring functor,
this equality is precisely
\(\mathscr C^{\mathrm{aff}}_{D,\sigma;u}(f_C)(U_{D,\sigma})
=f_C\circ U_{D,\sigma}=C\).
Thus, by Definition~\ref{def:representable-functor},
\(M\) represents \(\mathscr C^{\mathrm{aff}}_{D,\sigma;u}\),
and \(U_{D,\sigma}\) is the corresponding universal element.
\end{proof}

The preceding results identify both a representing module and its
universal affine coloring. The following remark records the
uniqueness of this representing pair.

\begin{remark}
\label{rem:universal-affine-coloring}
Recall that an object \(I\) of a category \(\mathcal C\) is
\emph{initial} if, for every object \(X\) of \(\mathcal C\),
there exists exactly one morphism \(I\to X\); see \cite{MacLane}.
Any two initial objects \(I\) and \(J\) are connected by a unique
isomorphism. Indeed, the unique morphisms
\(a\colon I\to J\) and \(b\colon J\to I\) satisfy
\(b\circ a=\operatorname{id}_I\) and
\(a\circ b=\operatorname{id}_J\), since each initial object has
exactly one endomorphism.

For the fixed colored diagram \((D,\sigma)\), consider the category
whose objects are pairs \((N,C)\), where \(N\) is an \(A\)-module
and \(C\in{\rm Col}_N^G(D,\sigma)\), and whose morphisms
\(\psi\colon(N,C)\to(N',C')\) are \(A\)-linear maps
\(\psi\colon N\to N'\) satisfying
\(C'=\psi\circ C\).
Here the coefficient ring \(A=R[G]\), the parameter \(u\), and
the affine operations remain fixed.

By Theorem~\ref{thm:universal-affine-coloring},
\((M_D,U_{D,\sigma})\) is an initial object of this category:
every coloring \(C\in{\rm Col}_N^G(D,\sigma)\) determines a unique
morphism \(f_C\colon(M_D,U_{D,\sigma})\to(N,C)\).
Thus the initial-object formulation expresses the same universal
property as the natural isomorphism
\(\Theta_{D,\sigma}\colon \Hom_A(M_D,-)\xRightarrow{\;\cong\;} \mathscr C^{\mathrm{aff}}_{D,\sigma;u}\)
in Corollary~\ref{cor:affine-coloring-representability}.

Consequently, if \((M',U')\) is another representing pair for
this same affine coloring functor, there exists a unique
\(A\)-module isomorphism
\(\eta\colon M_D\xrightarrow{\;\cong\;}M'\) such that \(\eta\circ U_{D,\sigma}=U'\).
This uniqueness concerns the representing module together with
its universal coloring. The Yoneda criterion below describes
how natural transformations between representable functors
induce maps between their representing modules.
\end{remark}

To pass from natural transformations between affine coloring functors
to homomorphisms between their representing modules, we use the
following standard covariant form of the Yoneda lemma.  More precisely,
a natural transformation
\(
\eta\colon
\operatorname{Hom}_A(M,-)
\Longrightarrow
\operatorname{Hom}_A(M',-)
\)
between the covariant representable functors associated with
\(A\)-modules \(M\) and \(M'\) induces an \(A\)-linear homomorphism in
the reversed direction \(M'\longrightarrow M.\)
We include the proof both to record the isomorphism criterion
needed below and to make this reversal of direction explicit; see
\cite[Chapter~III, Section~2]{MacLane}.
Here, \( A=R[G] \) denotes the fixed group ring introduced above, and \(\mathbf{Mod}_A\) denotes the category of unital left \(A\)-modules and \(A\)-linear maps. Although the lemma is stated for this fixed coefficient ring, it is noted that its statement and proof remain valid, without change, for modules over any unital ring. 

\begin{lemma}[Yoneda criterion in \(\mathbf{Mod}_A\)] 
\label{lem:yoneda-criterion} 
Let \(M\) and \(M'\) be objects of \(\mathbf{Mod}_A\).  For every natural
transformation \(\eta\colon\Hom_A(M,-)\Longrightarrow\Hom_A(M',-),\)
there is a unique \(A\)-linear map \(\mu_\eta\colon M'\longrightarrow M\)
such that \(\eta_N(f)=f\circ\mu_\eta\) for every object \(N\) of \(\mathbf{Mod}_A\) and every \(f\in\Hom_A(M,N)\). This map is given by \(\mu_\eta=\eta_M(\operatorname{id}_M).\)
Moreover, \(\eta\) is a natural isomorphism if and only if
\(\mu_\eta\) is an \(A\)-module isomorphism.
\end{lemma}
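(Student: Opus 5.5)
The plan is the standard covariant Yoneda argument, carried out directly in \(\mathbf{Mod}_A\).

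\emph{Existence and formula.} Put \(\mu_\eta:=\eta_M(\operatorname{id}_M)\in\Hom_A(M',M)\). Fix an \(A\)-module \(N\) and \(f\in\Hom_A(M,N)\), and apply the naturality square of \(\eta\) to the morphism \(f\colon M\to N\). Naturality says \(\Hom_A(M',f)\circ\eta_M=\eta_N\circ\Hom_A(M,f)\). Evaluate both sides at \(\operatorname{id}_M\). The left side gives \(f\circ\eta_M(\operatorname{id}_M)=f\circ\mu_\eta\). The right side gives \(\eta_N(f\circ\operatorname{id}_M)=\eta_N(f)\). Hence \(\eta_N(f)=f\circ\mu_\eta\) for all \(N\) and \(f\).

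\emph{Uniqueness.} Suppose \(\mu'\colon M'\to M\) also satisfies \(\eta_N(f)=f\circ\mu'\) for all \(N\) and \(f\). Taking \(N=M\) and \(f=\operatorname{id}_M\) gives \(\mu'=\eta_M(\operatorname{id}_M)=\mu_\eta\).

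\emph{Isomorphism criterion.} First suppose \(\mu_\eta\) is an isomorphism. Then each \(\eta_N=(-)\circ\mu_\eta\) has two-sided inverse \((-)\circ\mu_\eta^{-1}\colon\Hom_A(M',N)\to\Hom_A(M,N)\), so \(\eta\) is a natural isomorphism. Conversely, suppose \(\eta\) is a natural isomorphism. Its componentwise inverse \(\eta^{-1}\colon\Hom_A(M',-)\Rightarrow\Hom_A(M,-)\) is again natural, as recalled before Definition~\ref{def:representable-functor}. Applying the existence part to \(\eta^{-1}\) yields \(\nu:=\eta^{-1}_{M'}(\operatorname{id}_{M'})\colon M\to M'\) with \(\eta^{-1}_N(h)=h\circ\nu\).
\begin{itemize}
\item Taking \(N=M'\) and \(h=\operatorname{id}_{M'}\) in \(\eta_{M'}(\eta^{-1}_{M'}(h))=h\) gives \(\nu\circ\mu_\eta=\operatorname{id}_{M'}\).
\item Taking \(N=M\) and \(f=\operatorname{id}_M\) in \(\eta^{-1}_M(\eta_M(f))=f\) gives \(\mu_\eta\circ\nu=\operatorname{id}_M\).
\end{itemize}
So \(\mu_\eta\) is an isomorphism with inverse \(\nu\).

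\emph{Main obstacle.} The only delicate point is bookkeeping: keeping the variance straight, since the induced map goes \(M'\to M\), and confirming that both composites in the converse are computed in the correct order. No property of \(A=R[G]\) is used, only the category structure, which is consistent with the remark that the lemma holds over any unital ring.
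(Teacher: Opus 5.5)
Your proof is correct and follows the paper's argument essentially step for step. You obtain the formula \(\eta_N(f)=f\circ\mu_\eta\) by evaluating the naturality square at \(\operatorname{id}_M\), prove uniqueness by specializing to \(N=M\), and prove the converse using \(\nu=(\eta^{-1})_{M'}(\operatorname{id}_{M'})\) and the two identity evaluations. The one difference is in the direction ``\(\mu_\eta\) an isomorphism \(\Rightarrow\) \(\eta\) a natural isomorphism'': you only check that each component is invertible, which suffices under the paper's definition of natural isomorphism, whereas the paper also verifies explicitly that the inverse family \(g\mapsto g\circ\mu_\eta^{-1}\) is natural.
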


\begin{proof}
Define \(\mu_\eta:=\eta_M(\operatorname{id}_M)\in\Hom_A(M',M).\)
Let \(N\) be an \(A\)-module and let \(f\in\Hom_A(M,N).\)
We may regard \(f\) as a morphism \(f\colon M\to N\) in
\(\mathbf{Mod}_A\).  Naturality of \(\eta\) with respect to this
morphism means that the following diagram commutes:
\[
\begin{tikzcd}[row sep=1.8em, column sep=3.2em]
\Hom_A(M,M)
  \arrow[r, "{\eta_M}"]
  \arrow[d, "{\Hom_A(M,f)}"']
& \Hom_A(M',M)
  \arrow[d, "{\Hom_A(M',f)}"]
\\
\Hom_A(M,N)
  \arrow[r, "{\eta_N}"]
& \Hom_A(M',N).
\end{tikzcd}
\]
Equivalently, since both vertical maps are given by postcomposition
with \(f\), this diagram has the form
\[
\begin{tikzcd}[row sep=1.8em, column sep=3.2em]
\Hom_A(M,M)
  \arrow[r, "{\eta_M}"]
  \arrow[d, "{f\circ(-)}"']
& \Hom_A(M',M)
  \arrow[d, "{f\circ(-)}"]
\\
\Hom_A(M,N)
  \arrow[r, "{\eta_N}"]
& \Hom_A(M',N).
\end{tikzcd}
\]
Applying this commutative diagram to
\(\operatorname{id}_M\in\Hom_A(M,M)\), we have
\(
\eta_N\bigl(f\circ\operatorname{id}_M\bigr)
=f\circ\eta_M(\operatorname{id}_M).
\)
This gives that for every \(A\)-module \(N\) and every \(f\in\Hom_A(M,N)\),
\begin{equation}
\label{eq:yoneda-component-formula}
\eta_N(f)=f\circ\mu_\eta.
\end{equation}
This formula also proves uniqueness.  Indeed, suppose that
\(\mu\colon M'\to M\) is another \(A\)-linear map satisfying
\(\eta_N(f)=f\circ\mu\) for every \(N\) and every \(f\colon M\to N\).  Taking \(N=M\) and \(f=\operatorname{id}_M\), we obtain
\(\mu=\operatorname{id}_M\circ\mu=\eta_M(\operatorname{id}_M)=\mu_\eta.\)
Hence \(\mu_\eta\) is uniquely determined by \(\eta\).

\medskip
Now suppose that \(\eta\) is a natural isomorphism.  Its inverse is
a natural transformation
\(
\eta^{-1}\colon
\Hom_A(M',-)
\Longrightarrow
\Hom_A(M,-).
\)
Applying the construction above to \(\eta^{-1}\), define
\(\nu := (\eta^{-1})_{M'} (\operatorname{id}_{M'}) \in \Hom_A(M,M')\).
Naturality of \(\eta^{-1}\) with respect to an \(A\)-linear map
\(g\colon M'\to N\) gives the commutative diagram:
\[
\begin{tikzcd}[row sep=1.8em, column sep=3.2em]
\Hom_A(M',M')
  \arrow[r, "{(\eta^{-1})_{M'}}"]
  \arrow[d, "{g\circ(-)}"']
& \Hom_A(M,M')
  \arrow[d, "{g\circ(-)}"]
\\
\Hom_A(M',N)
  \arrow[r, "{(\eta^{-1})_N}"]
& \Hom_A(M,N).
\end{tikzcd}
\]
Applying this diagram to \(\operatorname{id}_{M'}\), we have
\(
(\eta^{-1})_N\bigl(g\circ\operatorname{id}_{M'}\bigr)
=g\circ(\eta^{-1})_{M'}(\operatorname{id}_{M'}).
\) This gives
\begin{equation}
\label{eq:yoneda-inverse-component-formula}
(\eta^{-1})_N(g)=g\circ\nu
\end{equation}
for every \(g\colon M'\to N\). We now use the identities of natural transformations \(\eta^{-1}\circ\eta=\operatorname{id}_{\Hom_A(M,-)}\)
and \(\eta\circ\eta^{-1}=\operatorname{id}_{\Hom_A(M',-)}.\)
Evaluating the first identity at the object \(M\), and then at
\(\operatorname{id}_M\in\Hom_A(M,M)\), we obtain
\(
\operatorname{id}_M=
(\eta^{-1})_M
\bigl(\eta_M(\operatorname{id}_M)\bigr)=
(\eta^{-1})_M(\mu_\eta)=
\mu_\eta\circ\nu,
\)
where the last equality follows from \eqref{eq:yoneda-inverse-component-formula} with \(N=M\) and \(g=\mu_\eta\).
Similarly, evaluating the second identity at the object \(M'\), and then at
\(\operatorname{id}_{M'}\in\Hom_A(M',M')\), we obtain
\(
\operatorname{id}_{M'}=\eta_{M'}\bigl((\eta^{-1})_{M'}
(\operatorname{id}_{M'})\bigr)=\eta_{M'}(\nu)=\nu\circ\mu_\eta,
\)
where the last equality follows from
\eqref{eq:yoneda-component-formula} with \(N=M'\) and \(f=\nu\).
Thus
\(\mu_\eta\circ\nu=\operatorname{id}_M\,\text{and}\,
\nu\circ\mu_\eta=\operatorname{id}_{M'}.\)
Therefore \(\mu_\eta\) is an \(A\)-module isomorphism with inverse
\(\nu\).

Conversely, suppose that \(\mu_\eta\colon M'\to M\) is an \(A\)-module isomorphism.  For every \(A\)-module \(N\), define
\(\theta_N\colon\Hom_A(M',N)\longrightarrow\Hom_A(M,N)\) by
\(\theta_N(g):=g\circ\mu_\eta^{-1}.\) Then the family \(\{\theta_N\}_N\) is natural.  Indeed, if \(h\colon N\to P\) is \(A\)-linear and
\(g\colon M'\to N\), then
\(\theta_P(h\circ g)=(h\circ g)\circ\mu_\eta^{-1}=
h\circ(g\circ\mu_\eta^{-1})=h\circ\theta_N(g).\)
Equivalently, the following diagram commutes:
\[
\begin{tikzcd}[row sep=1.8em, column sep=3.6em]
\Hom_A(M',N)
  \arrow[r, "{\theta_N}"]
  \arrow[d, "{h\circ(-)}"']
& \Hom_A(M,N)
  \arrow[d, "{h\circ(-)}"]
\\
\Hom_A(M',P)
  \arrow[r, "{\theta_P}"]
& \Hom_A(M,P).
\end{tikzcd}
\]
Finally, using \(\eta_N(f)=f\circ\mu_\eta\) in (\ref{eq:yoneda-component-formula}), we have, for every \(f\colon M\to N\),
\(\theta_N\bigl(\eta_N(f)\bigr)=(f\circ\mu_\eta)\circ\mu_\eta^{-1}=f,\)
and, for every \(g\colon M'\to N\), \(\eta_N\bigl(\theta_N(g)\bigr)
=(g\circ\mu_\eta^{-1})\circ\mu_\eta=g.\) Hence \(\theta_N=\eta_N^{-1}\)
for every \(N\), and the inverses form a natural transformation.
Therefore \(\eta\) is a natural isomorphism.
\end{proof}

We now apply Lemma~\ref{lem:yoneda-criterion} directly to the
natural coloring bijection induced by a generalized Reidemeister move.
Let \(r\colon(D,\sigma)\longrightarrow(D',\sigma')\)
be a generalized Reidemeister move.  For every \(A\)-module \(N\),
define
\(\Phi_N^{r,\mathrm{aff}} := \Phi_{\mathfrak A_{A,u}(N)}^r \colon {\rm Col}_N^G(D,\sigma) \longrightarrow {\rm Col}_N^G(D',\sigma')\).
Thus \(\Phi_N^{r,\mathrm{aff}}(C)\) is the unique \(N\)-valued affine
coloring extending the boundary colors across the move that agrees with \(C\) outside the disk supporting the move. Every \(A\)-linear map \(\psi\colon N\to N'\) is a homomorphism from the associated affine \(G\)-multibiquandle \(\mathfrak A_{A,u}(N)\) to \(\mathfrak A_{A,u}(N')\).  Hence the naturality of the Reidemeister extension bijections (see Theorem~\ref{thm-n-mbqd-cinv}) gives
\begin{equation}
\label{eq:affine-reidemeister-extension-naturality}
\psi_*^{D',\sigma'}\circ\Phi_N^{r,\mathrm{aff}}
=
\Phi_{N'}^{r,\mathrm{aff}}\circ\psi_*^{D,\sigma}.
\end{equation}
Equivalently, the following diagram commutes:
\[
\begin{tikzcd}[row sep=1.8em, column sep=3.2em]
{\rm Col}_N^G(D,\sigma)
  \arrow[r, "{\Phi_N^{r,\mathrm{aff}}}"]
  \arrow[d, "{\psi_*^{D,\sigma}}"']
& {\rm Col}_N^G(D',\sigma')
  \arrow[d, "{\psi_*^{D',\sigma'}}"]
\\
{\rm Col}_{N'}^G(D,\sigma)
  \arrow[r, "{\Phi_{N'}^{r,\mathrm{aff}}}"]
& {\rm Col}_{N'}^G(D',\sigma').
\end{tikzcd}
\]
Since each \(\Phi_N^{r,\mathrm{aff}}\) is a bijection, the family
\(\Phi^{r,\mathrm{aff}}:=\{\Phi_N^{r,\mathrm{aff}}\}_N\)
is a natural isomorphism
\begin{equation}
\label{eq:affine-reidemeister-natural-isomorphism}
\Phi^{r,\mathrm{aff}}\colon
\mathscr C^{\mathrm{aff}}_{D,\sigma;u}
\xRightarrow{\;\cong\;}
\mathscr C^{\mathrm{aff}}_{D',\sigma';u}.
\end{equation}

For the generalized Reidemeister move
\(r\colon(D,\sigma)\longrightarrow(D',\sigma')\) above,
the natural isomorphism \(\Phi^{r,\mathrm{aff}}\) transports
the universal coloring \(U_{D,\sigma}\) to
\(\Phi_{M_D}^{r,\mathrm{aff}}(U_{D,\sigma}) \in{\rm Col}_{M_D}^G(D',\sigma')\).
This transported coloring is a universal element for
\(\mathscr C^{\mathrm{aff}}_{D',\sigma';u}\).
Indeed, for every \(A\)-module \(N\) and every
\(A\)-linear map \(f\colon M_D\to N\), naturality gives
\(\Phi_N^{r,\mathrm{aff}} \bigl(\Theta_{D,\sigma,N}(f)\bigr) = \Phi_N^{r,\mathrm{aff}}(f\circ U_{D,\sigma}) = f\circ\Phi_{M_D}^{r,\mathrm{aff}}(U_{D,\sigma})\).
Since both \(\Theta_{D,\sigma,N}\) and
\(\Phi_N^{r,\mathrm{aff}}\) are bijective, every coloring
in \({\rm Col}_N^G(D',\sigma')\) factors uniquely through
the transported coloring.

Hence
\(\bigl(M_D,\Phi_{M_D}^{r,\mathrm{aff}}(U_{D,\sigma})\bigr)\) and \((M_{D'},U_{D',\sigma'})\)
are two initial objects of the coloring category associated with
\((D',\sigma')\) in Remark~\ref{rem:universal-affine-coloring}.
Their unique compatible isomorphism
\(\mu_r\colon M_{D'}\xrightarrow{\;\cong\;}M_D\)
is characterized by
\(\mu_r\circ U_{D',\sigma'} = \Phi_{M_D}^{r,\mathrm{aff}}(U_{D,\sigma})\).
The following theorem constructs this same isomorphism by the
Yoneda criterion and establishes its compatibility with
Reidemeister extension for every module target.

\begin{theorem}[Reidemeister invariance via representability]
\label{thm:reidemeister-representing-module}
Let \(R\) be a commutative ring with identity, let \(G\) be an abelian
group, put \(A=R[G]\), and fix \(u\in A^\times\). Let
\(r\colon(D,\sigma)\longrightarrow(D',\sigma')\) be a generalized
Reidemeister move, and write \(M_D:=M_u(D,\sigma;A)\) and
\(M_{D'}:=M_u(D',\sigma';A)\). Then there exists a unique
\(A\)-module isomorphism
\(\mu_r\colon M_{D'}\xrightarrow{\;\cong\;}M_D\)
compatible with the Reidemeister extension natural isomorphism, in the
sense that
\begin{equation}
\label{eq:mu-r-compatibility}
\Theta_{D',\sigma',N}(f\circ\mu_r)
=\Phi_N^{r,\mathrm{aff}}\bigl(\Theta_{D,\sigma,N}(f)\bigr)
\end{equation}
for every \(A\)-module \(N\) and every \(f\in\Hom_A(M_D,N)\).
Equivalently, for every \(N\), the following diagram commutes:
\[
\begin{tikzcd}[row sep=1.8em, column sep=3.2em]
\Hom_A(M_D,N)
  \arrow[r, "{\Theta_{D,\sigma,N}}"]
  \arrow[d, "{(-)\circ\mu_r}"']
& {\rm Col}_N^G(D,\sigma)
  \arrow[d, "{\Phi_N^{r,\mathrm{aff}}}"]
\\
\Hom_A(M_{D'},N)
  \arrow[r, "{\Theta_{D',\sigma',N}}"]
& {\rm Col}_N^G(D',\sigma').
\end{tikzcd}
\]
The map \(\mu_r\) is determined explicitly by
\begin{equation}
\label{eq:mu-r-yoneda-formula}
\mu_r
=\Theta_{D',\sigma',M_D}^{-1}
\left(\Phi_{M_D}^{r,\mathrm{aff}}(U_{D,\sigma})\right),
\end{equation}
or, equivalently, by the universal-coloring identity
\begin{equation}
\label{eq:mu-r-universal-coloring}
\mu_r\circ U_{D',\sigma'}
=\Phi_{M_D}^{r,\mathrm{aff}}(U_{D,\sigma}).
\end{equation}
If \(r^{-1}\colon(D',\sigma')\to(D,\sigma)\) is the inverse move,
then \(\mu_{r^{-1}}=\mu_r^{-1}\). Consequently,
\(M_u(D,\sigma;A)\cong M_u(D',\sigma';A)\) and
\(\mathcal A_u(D,\sigma)\sim\mathcal A_u(D',\sigma')\).
\end{theorem}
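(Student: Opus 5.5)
The plan is to assemble a natural isomorphism between representable functors and then apply Lemma~\ref{lem:yoneda-criterion}. By Corollary~\ref{cor:affine-coloring-representability} we have natural isomorphisms $\Theta_{D,\sigma}\colon\Hom_A(M_D,-)\Rightarrow\mathscr C^{\mathrm{aff}}_{D,\sigma;u}$ and $\Theta_{D',\sigma'}\colon\Hom_A(M_{D'},-)\Rightarrow\mathscr C^{\mathrm{aff}}_{D',\sigma';u}$. By \eqref{eq:affine-reidemeister-natural-isomorphism}, $\Phi^{r,\mathrm{aff}}$ is a natural isomorphism between the two coloring functors. Define
\[
\eta:=\Theta_{D',\sigma'}^{-1}\circ\Phi^{r,\mathrm{aff}}\circ\Theta_{D,\sigma}\colon\Hom_A(M_D,-)\Longrightarrow\Hom_A(M_{D'},-).
\]
It is a composite of natural isomorphisms, hence itself a natural isomorphism. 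Lemma~\ref{lem:yoneda-criterion} then gives a unique $A$-linear map $\mu_r:=\eta_{M_D}(\operatorname{id}_{M_D})\colon M_{D'}\to M_D$ with $\eta_N(f)=f\circ\mu_r$ for all $N$ and $f$, and $\mu_r$ is an isomorphism because $\eta$ is a natural isomorphism.

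Unwinding $\eta_N(f)=f\circ\mu_r$ and applying $\Theta_{D',\sigma',N}$ gives exactly \eqref{eq:mu-r-compatibility}. Evaluating at $N=M_D$, $f=\operatorname{id}_{M_D}$ gives \eqref{eq:mu-r-yoneda-formula}, since $\Theta_{D,\sigma,M_D}(\operatorname{id})=U_{D,\sigma}$. Applying Theorem~\ref{thm:universal-affine-coloring} on the $D'$ side, $\Theta_{D',\sigma',M_D}(\mu_r)=\mu_r\circ U_{D',\sigma'}$, which turns \eqref{eq:mu-r-yoneda-formula} into \eqref{eq:mu-r-universal-coloring}.

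Uniqueness is proved as follows. Any $\mu$ satisfying \eqref{eq:mu-r-compatibility} for all $N$ and $f$ satisfies it in particular at $N=M_D$, $f=\operatorname{id}$. There it forces $\Theta_{D',\sigma',M_D}(\mu)=\Phi^{r,\mathrm{aff}}_{M_D}(U_{D,\sigma})$, and injectivity of $\Theta_{D',\sigma',M_D}$ gives $\mu=\mu_r$. Equivalently, both $\mu$ and $\mu_r$ are morphisms between the two initial objects described in the discussion preceding the theorem.

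For the inverse move, I would use that the extension bijection for $r^{-1}$ is the inverse of that for $r$. This is how $\Phi_X$ for inverse generators is built in the proof of Theorem~\ref{thm-n-mbqd-cinv}: restriction and extension are mutually inverse. So $\Phi^{r^{-1},\mathrm{aff}}=(\Phi^{r,\mathrm{aff}})^{-1}$, and the associated $\eta$ for $r^{-1}$ is $\eta^{-1}$. The proof of Lemma~\ref{lem:yoneda-criterion} identifies the map attached to $\eta^{-1}$ as $\nu=\mu_r^{-1}$, so $\mu_{r^{-1}}=\mu_r^{-1}$. Alternatively, one checks directly that $\mu_r\circ\mu_{r^{-1}}$ preserves $U_{D,\sigma}$, so it is the identity by uniqueness in Remark~\ref{rem:universal-affine-coloring}.

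The last clause is immediate from this. $M_D\cong M_{D'}$ holds, and since both modules are finitely presented by $\mathcal A_u(D,\sigma)$ and $\mathcal A_u(D',\sigma')$, Theorem~\ref{thm-mod-pres} yields $\mathcal A_u(D,\sigma)\sim\mathcal A_u(D',\sigma')$.

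The only delicate point is the inverse-move identity $\Phi^{r^{-1}}=(\Phi^r)^{-1}$. This depends on the extension bijection being defined symmetrically, which the unique-local-extension characterization supplies. Everything else is formal Yoneda bookkeeping, together with care about the reversed direction of $\mu_r$.
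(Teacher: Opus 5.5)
Your proposal is correct and follows essentially the same route as the paper: you form the natural isomorphism \(\Theta_{D',\sigma'}^{-1}\circ\Phi^{r,\mathrm{aff}}\circ\Theta_{D,\sigma}\) between the Hom functors and apply Lemma~\ref{lem:yoneda-criterion} to obtain \(\mu_r=\eta_{M_D}(\operatorname{id}_{M_D})\). You then read off the formulas at \(N=M_D\), \(f=\operatorname{id}_{M_D}\), handle the inverse move via \(\Phi^{r^{-1},\mathrm{aff}}=(\Phi^{r,\mathrm{aff}})^{-1}\), and invoke Theorem~\ref{thm-mod-pres} for presentation equivalence; the differences are cosmetic (the paper checks naturality of the composite explicitly and phrases uniqueness through the Yoneda uniqueness clause rather than injectivity of \(\Theta_{D',\sigma',M_D}\), which amounts to the same thing).
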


\begin{proof}
For each \(A\)-module \(N\), define
\(\eta_N^r:=\Theta_{D',\sigma',N}^{-1}\circ
\Phi_N^{r,\mathrm{aff}}\circ\Theta_{D,\sigma,N}\).
Then \(\eta_N^r\colon\Hom_A(M_D,N)\to\Hom_A(M_{D'},N)\)
is a bijection because each of its three factors is a bijection.
We first verify that the family \(\eta^r:=\{\eta_N^r\}_N\)
is natural in \(N\).

Let \(\psi\colon N\to N'\) be \(A\)-linear and let
\(f\in\Hom_A(M_D,N)\). Naturality of \(\Theta_{D,\sigma}\)
in Theorem~\ref{thm:representability} gives
\begin{equation}
\label{eq:theta-D-naturality-in-reidemeister-proof}
\Theta_{D,\sigma,N'}(\psi\circ f)
=\psi_*^{D,\sigma}\bigl(\Theta_{D,\sigma,N}(f)\bigr).
\end{equation}
Naturality of \(\Theta_{D',\sigma'}\), together with the bijectivity
of its components, gives
\begin{equation}
\label{eq:inverse-theta-Dprime-naturality}
\begin{aligned}
\Theta_{D',\sigma',N'}^{-1}
\bigl(\psi_*^{D',\sigma'}(C')\bigr)
&=\psi_\#\bigl(\Theta_{D',\sigma',N}^{-1}(C')\bigr)
=\psi\circ\Theta_{D',\sigma',N}^{-1}(C')
\end{aligned}
\end{equation}
for every \(C'\in{\rm Col}_N^G(D',\sigma')\). Using
\eqref{eq:theta-D-naturality-in-reidemeister-proof},
\eqref{eq:affine-reidemeister-extension-naturality}, and
\eqref{eq:inverse-theta-Dprime-naturality}, we obtain
\begin{align*}
\eta_{N'}^r(\psi\circ f)
&=\Theta_{D',\sigma',N'}^{-1}
  \left(\Phi_{N'}^{r,\mathrm{aff}}
  \bigl(\Theta_{D,\sigma,N'}(\psi\circ f)\bigr)\right)
=\Theta_{D',\sigma',N'}^{-1}
  \left(\Phi_{N'}^{r,\mathrm{aff}}
  \bigl(\psi_*^{D,\sigma}(\Theta_{D,\sigma,N}(f))\bigr)\right)\\
&=\Theta_{D',\sigma',N'}^{-1}
  \left(\psi_*^{D',\sigma'}
  \bigl(\Phi_N^{r,\mathrm{aff}}(\Theta_{D,\sigma,N}(f))\bigr)\right)
=\psi\circ\Theta_{D',\sigma',N}^{-1}
  \left(\Phi_N^{r,\mathrm{aff}}
  \bigl(\Theta_{D,\sigma,N}(f)\bigr)\right)\\
&=\psi\circ\eta_N^r(f).
\end{align*}
Thus the following diagram commutes:
\[
\begin{tikzcd}[row sep=1.8em, column sep=3.2em]
\Hom_A(M_D,N)
  \arrow[r, "{\eta_N^r}"]
  \arrow[d, "{\psi\circ(-)}"']
& \Hom_A(M_{D'},N)
  \arrow[d, "{\psi\circ(-)}"]
\\
\Hom_A(M_D,N')
  \arrow[r, "{\eta_{N'}^r}"]
& \Hom_A(M_{D'},N').
\end{tikzcd}
\]
Hence \(\eta^r\colon\Hom_A(M_D,-)\xRightarrow{\;\cong\;}
\Hom_A(M_{D'},-)\) is a natural isomorphism.
By Lemma~\ref{lem:yoneda-criterion}, there exists a unique
\(A\)-module isomorphism \(\mu_r\colon M_{D'}\to M_D\) such that
\begin{equation}
\label{eq:eta-r-induced-by-mu-r}
\eta_N^r(f)=f\circ\mu_r
\end{equation}
for every \(A\)-module \(N\) and every \(f\colon M_D\to N\).
In particular, \(M_u(D,\sigma;A)\cong M_u(D',\sigma';A)\).

The evaluation formula in Lemma~\ref{lem:yoneda-criterion}, the
definition of \(\eta_{M_D}^r\), and
Definition~\ref{def:universal-affine-coloring} give
\begin{align*}
\mu_r
&=\eta_{M_D}^r(\operatorname{id}_{M_D})
=\Theta_{D',\sigma',M_D}^{-1}
  \left(\Phi_{M_D}^{r,\mathrm{aff}}
  \bigl(\Theta_{D,\sigma,M_D}(\operatorname{id}_{M_D})\bigr)\right)
=\Theta_{D',\sigma',M_D}^{-1}
  \left(\Phi_{M_D}^{r,\mathrm{aff}}(U_{D,\sigma})\right),
\end{align*}
which proves \eqref{eq:mu-r-yoneda-formula}.
Substituting the definition of \(\eta_N^r\) into
\eqref{eq:eta-r-induced-by-mu-r} gives
\(\Theta_{D',\sigma',N}^{-1}
(\Phi_N^{r,\mathrm{aff}}(\Theta_{D,\sigma,N}(f)))
=f\circ\mu_r\).
Applying \(\Theta_{D',\sigma',N}\) to this equality proves
\eqref{eq:mu-r-compatibility}.
Since \(\Theta_{D,\sigma,N}(f)=f\circ U_{D,\sigma}\) and
\(\Theta_{D',\sigma',N}(f\circ\mu_r)
=(f\circ\mu_r)\circ U_{D',\sigma'}\), the same equality becomes
\begin{equation}
\label{eq:mu-r-factorization-all-N}
\Phi_N^{r,\mathrm{aff}}\bigl(f\circ U_{D,\sigma}\bigr)
=(f\circ\mu_r)\circ U_{D',\sigma'}.
\end{equation}
Taking \(N=M_D\) and \(f=\operatorname{id}_{M_D}\) yields
\(\Phi_{M_D}^{r,\mathrm{aff}}(U_{D,\sigma})
=\mu_r\circ U_{D',\sigma'}\), proving
\eqref{eq:mu-r-universal-coloring}.

Conversely, suppose that an \(A\)-linear map
\(\mu\colon M_{D'}\to M_D\) satisfies
\(\mu\circ U_{D',\sigma'}
=\Phi_{M_D}^{r,\mathrm{aff}}(U_{D,\sigma})\).
For every \(A\)-module \(N\) and every \(f\colon M_D\to N\),
naturality of \(\Phi^{r,\mathrm{aff}}\) with respect to \(f\) gives
\begin{align*}
\Theta_{D',\sigma',N}(f\circ\mu)
&=(f\circ\mu)\circ U_{D',\sigma'}
 =f\circ\Phi_{M_D}^{r,\mathrm{aff}}(U_{D,\sigma})\\
&=\Phi_N^{r,\mathrm{aff}}\bigl(f\circ U_{D,\sigma}\bigr)
 =\Phi_N^{r,\mathrm{aff}}\bigl(\Theta_{D,\sigma,N}(f)\bigr).
\end{align*}
Thus \(\mu\) satisfies \eqref{eq:mu-r-compatibility} with
\(\mu\) in place of \(\mu_r\). This condition is equivalent to
\(\eta_N^r(f)=f\circ\mu\) for every \(N\) and \(f\).
The uniqueness assertion in Lemma~\ref{lem:yoneda-criterion}
therefore gives \(\mu=\mu_r\).
Consequently, \eqref{eq:mu-r-universal-coloring} and
\eqref{eq:mu-r-compatibility} are equivalent characterizations of
\(\mu_r\), even among all \(A\)-linear maps \(M_{D'}\to M_D\).

It remains to identify the map induced by the inverse move.
By uniqueness of local extension,
\(\Phi_N^{r^{-1},\mathrm{aff}}
=(\Phi_N^{r,\mathrm{aff}})^{-1}\) for every \(N\).
The definition of \(\eta^r\) therefore gives
\(\eta^{r^{-1}}=(\eta^r)^{-1}\).
Since \(\eta_N^r(f)=f\circ\mu_r\), its inverse sends
\(g\in\Hom_A(M_{D'},N)\) to
\((\eta_N^r)^{-1}(g)=g\circ\mu_r^{-1}\).
On the other hand, the map associated with \(r^{-1}\) satisfies
\(\eta_N^{r^{-1}}(g)=g\circ\mu_{r^{-1}}\).
Uniqueness in Lemma~\ref{lem:yoneda-criterion} now implies
\(\mu_{r^{-1}}=\mu_r^{-1}\).

Finally, \(\mathcal A_u(D,\sigma)\) and
\(\mathcal A_u(D',\sigma')\) present the isomorphic modules
\(M_D\) and \(M_{D'}\), respectively.
Theorem~\ref{thm-mod-pres} therefore gives
\(\mathcal A_u(D,\sigma)\sim\mathcal A_u(D',\sigma')\).
\end{proof}

Appendix~\ref{app:matrix-reductions} gives a direct reduction of the
presentation matrices for Reidemeister moves, establishing their
presentation equivalence and hence an isomorphism of their cokernels.
Theorem~\ref{thm:reidemeister-representing-module} also specifies how
the module isomorphism acts on affine colorings for every target.
The Reidemeister extension theorem supplies the natural isomorphism
\(\Phi^{r,\mathrm{aff}}\colon
\mathscr C^{\mathrm{aff}}_{D,\sigma;u}
\xRightarrow{\;\cong\;}
\mathscr C^{\mathrm{aff}}_{D',\sigma';u}\).
Representability transports it to the corresponding Hom functors,
and the Yoneda criterion determines the unique compatible
\(A\)-module isomorphism \(\mu_r\colon M_{D'}\to M_D\).
Its compatibility is exactly \eqref{eq:mu-r-compatibility}, for
every \(A\)-module \(N\) and every \(f\colon M_D\to N\).
A bijection of coloring sets for one fixed target alone would not
determine this module map.
By \eqref{eq:mu-r-compatibility} and the $A$-linearity of the
Hom--coloring correspondence, the affine Reidemeister extension
$\Phi_N^{r,\mathrm{aff}}$ is itself $A$-linear for every target $N$.

The invariance proof via representability does not require choosing
bases or carrying out a matrix reduction. Nevertheless, the universal-coloring identity
\eqref{eq:mu-r-universal-coloring} gives a concrete description of
\(\mu_r\) on semiarc generators for the specified move.
For \(x'\in\mathcal S(D')\), write
\(\overline{x'}=\phi_{D',\sigma'}(x')\in M_{D'}\).
Likewise, for \(x\in\mathcal S(D)\), write
\(\overline x=\phi_{D,\sigma}(x)\in M_D\).
Since \(U_{D',\sigma'}(x')=\overline{x'}\), one has
\begin{equation}
\label{eq:mu-r-on-semiarc-generators}
\mu_r(\overline{x'})
=\bigl(\Phi_{M_D}^{r,\mathrm{aff}}(U_{D,\sigma})\bigr)(x').
\end{equation}
Thus the image is obtained by applying the local Reidemeister
extension rule to the universal \(M_D\)-valued coloring.
If a portion of \(x'\) outside the move disk corresponds to a portion
of a semiarc \(x\) of \(D\), then
\(\mu_r(\overline{x'})=\overline{x}\).
The images of the remaining semiarcs are the unique affine
expressions determined by the boundary colors.

The well-definedness of this description can also be checked directly
from the presentation. Put
\(C_r:=\Phi_{M_D}^{r,\mathrm{aff}}(U_{D,\sigma})
\in{\rm Col}_{M_D}^G(D',\sigma')\).
The assignment \(x'\mapsto C_r(x')\) extends uniquely to an
\(A\)-linear map
\(\widetilde\mu_r\colon E_u^{m'}(D',\sigma')\to M_D\), where
\(m'=|\mathcal S(D')|\).
Since \(C_r\) satisfies every defining crossing relation of \(D'\),
one has \(\widetilde\mu_r\circ\alpha_{D',\sigma'}=0\), or
equivalently,
\(\operatorname{Im}(\alpha_{D',\sigma'})
\subseteq\ker\widetilde\mu_r\).
The map therefore descends uniquely through
\(M_{D'}=E_u^{m'}(D',\sigma')/
\operatorname{Im}(\alpha_{D',\sigma'})\) to an \(A\)-linear map
\(\widehat\mu_r\colon M_{D'}\to M_D\) satisfying
\(\widehat\mu_r\circ U_{D',\sigma'}=C_r\).
It follows that
\(\Theta_{D',\sigma',M_D}(\widehat\mu_r)
=C_r=\Theta_{D',\sigma',M_D}(\mu_r)\).
Injectivity of \(\Theta_{D',\sigma',M_D}\) gives
\(\widehat\mu_r=\mu_r\).
Thus the quotient construction realizes the same map as the
Yoneda criterion.

The local formulas make
\eqref{eq:mu-r-on-semiarc-generators} explicit.
For a component labeled by \(g\), the affine diagonal map is
\(\pi_g(x)=x\mathbin{\underline{\triangleright}_g}x
=x\mathbin{\overline{\triangleright}_g}x=ux\), with inverse
\(\pi_g^{-1}(x)=u^{-1}x\).
For insertion of either R1 kink displayed in
Figure~\ref{fig-dibiqdl-rm}, the convention used in the proof of
Theorem~\ref{thm-n-mbqd-cinv} gives exterior color
\(y=\pi_g(x)\), where \(x\) is the loop color.
Consequently, if \(x'\) is the added loop semiarc and \(y\) is the
corresponding exterior semiarc before insertion, then
\(\mu_r(\overline{x'})=u^{-1}\overline y\).
The exterior semiarcs retain their boundary colors, and the reverse
move induces \(\mu_{r^{-1}}=\mu_r^{-1}\).
Other oriented configurations are read with the corresponding
local ordering and diagonal-map convention.

For an R2 move, in the ordered switch coordinates used for the
local extension, the mixed switch is
\(H_{g,h}(x,y)=(uy,hx+(u-h)y)\).
Its inverse is
\(H_{g,h}^{-1}(a,b)
=(h^{-1}b+(u^{-1}-h^{-1})a,u^{-1}a)\).
The local values of \(\mu_r\) are obtained by applying these
formulas to the universal boundary colors, using the prescribed
order of the strands.
For an R3 move, the local values are obtained by composing the
affine operations along the two sides of the move.
The mixed exchange laws verified in
Theorem~\ref{thm-multibqdl-mdl} show that the two compositions give
the same boundary values for each choice of the three colors used
in the proof of Theorem~\ref{thm-n-mbqd-cinv}.
The quotient calculation above verifies that the assignment descends
to an \(A\)-linear map \(M_{D'}\to M_D\), and these local formulas
describe its values explicitly.
They therefore provide a presentation-level realization of the
unique compatible isomorphism obtained from representability.

The compatible maps also compose along chosen sequences of moves.
For a colored planar isotopy \(r\), let
\(\Phi_N^{r,\mathrm{aff}}\) transport colors along corresponding
semiarcs, and define \(\mu_r\) by
\(\overline{x'}\mapsto\overline x\).
Corresponding crossings have identical relators after relabeling,
so this is well defined; the inverse correspondence gives its inverse.
These maps satisfy \eqref{eq:mu-r-compatibility}, and the
presentation matrices differ only by row and column permutations.
We may therefore include planar isotopies among the steps below.

\begin{proposition}[Compatibility along chosen move sequences]
\label{prop:move-sequence-compatibility}
Fix \(R,G,A,u\) as above, and let
\(\gamma=(r_1,\ldots,r_m)\) be a chosen sequence of colored
Reidemeister moves and planar isotopies, where
\(r_j\colon(D_{j-1},\sigma_{j-1})\to(D_j,\sigma_j)\).
Put \(M_j:=M_u(D_j,\sigma_j;A)\), and define
\[
\Phi_N^{\gamma,\mathrm{aff}}
:=\Phi_N^{r_m,\mathrm{aff}}\circ\cdots\circ
\Phi_N^{r_1,\mathrm{aff}},
\qquad
\mu_\gamma:=\mu_{r_1}\circ\cdots\circ\mu_{r_m}
\colon M_m\longrightarrow M_0.
\]
For the empty sequence, both composites are understood to be
identity maps.
Then \(\mu_\gamma\) is the unique \(A\)-linear map satisfying
\begin{equation}
\label{eq:move-sequence-compatibility}
\Theta_{D_m,\sigma_m,N}(f\circ\mu_\gamma)
=\Phi_N^{\gamma,\mathrm{aff}}
\bigl(\Theta_{D_0,\sigma_0,N}(f)\bigr)
\end{equation}
for every \(A\)-module \(N\) and every \(f\colon M_0\to N\).
It is an isomorphism and satisfies
\begin{equation}
\label{eq:move-sequence-universal-coloring}
\mu_\gamma\circ U_{D_m,\sigma_m}
=\Phi_{M_0}^{\gamma,\mathrm{aff}}(U_{D_0,\sigma_0}).
\end{equation}
If \(\delta\) is a chosen sequence starting at \((D_m,\sigma_m)\),
write \(\delta*\gamma\) for the sequence obtained by following
\(\gamma\) and then \(\delta\). Then
\begin{equation}
\label{eq:move-sequence-composition}
\mu_{\delta*\gamma}=\mu_\gamma\circ\mu_\delta,
\qquad
\mu_{\varnothing}=\operatorname{id},
\qquad
\mu_{\gamma^{-1}}=\mu_\gamma^{-1},
\end{equation}
where \(\varnothing\) is the empty sequence at a diagram and
\(\gamma^{-1}\) is the reversed sequence of inverse steps.
\end{proposition}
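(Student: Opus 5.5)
The plan is to argue by induction on the length \(m\) of \(\gamma\), using Theorem~\ref{thm:reidemeister-representing-module} for each single step and the stated compatibility for planar isotopies. For the empty sequence, both sides of \eqref{eq:move-sequence-compatibility} reduce to \(\Theta_{D_0,\sigma_0,N}(f)\), so \(\mu_\varnothing=\operatorname{id}\) satisfies it.

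For the inductive step, write \(\gamma=(r_m)*\gamma'\) with \(\gamma'=(r_1,\ldots,r_{m-1})\), so that \(\mu_\gamma=\mu_{\gamma'}\circ\mu_{r_m}\) and \(\Phi^{\gamma,\mathrm{aff}}_N=\Phi^{r_m,\mathrm{aff}}_N\circ\Phi^{\gamma',\mathrm{aff}}_N\). Given \(f\colon M_0\to N\), apply \eqref{eq:mu-r-compatibility} for \(r_m\) to the map \(f\circ\mu_{\gamma'}\colon M_{m-1}\to N\):
\[
\Theta_{D_m,\sigma_m,N}(f\circ\mu_{\gamma'}\circ\mu_{r_m})
=\Phi_N^{r_m,\mathrm{aff}}\bigl(\Theta_{D_{m-1},\sigma_{m-1},N}(f\circ\mu_{\gamma'})\bigr).
\]
The inductive hypothesis then rewrites the inner term as \(\Phi_N^{\gamma',\mathrm{aff}}(\Theta_{D_0,\sigma_0,N}(f))\), which gives \eqref{eq:move-sequence-compatibility}. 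Since \(\mu_\gamma\) is a composite of isomorphisms, it is an isomorphism. Setting \(N=M_0\) and \(f=\operatorname{id}\), and using \(\Theta_{D,\sigma,N}(f)=f\circ U_{D,\sigma}\) from Theorem~\ref{thm:universal-affine-coloring}, yields \eqref{eq:move-sequence-universal-coloring}.

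Uniqueness follows the argument in the proof of Theorem~\ref{thm:reidemeister-representing-module}. Suppose \(\mu\colon M_m\to M_0\) satisfies \eqref{eq:move-sequence-compatibility}. Take \(N=M_0\) and \(f=\operatorname{id}_{M_0}\); then
\[
\Theta_{D_m,\sigma_m,M_0}(\mu)=\Phi^{\gamma,\mathrm{aff}}_{M_0}(U_{D_0,\sigma_0})=\Theta_{D_m,\sigma_m,M_0}(\mu_\gamma).
\]
Injectivity of \(\Theta_{D_m,\sigma_m,M_0}\) (Theorem~\ref{thm:representability}) gives \(\mu=\mu_\gamma\). This also shows that \eqref{eq:move-sequence-universal-coloring} alone characterizes \(\mu_\gamma\).

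For \eqref{eq:move-sequence-composition}, the first identity is immediate from the definition: \(\mu_{\delta*\gamma}=\mu_{r_1}\circ\cdots\circ\mu_{r_m}\circ\mu_{s_1}\circ\cdots\circ\mu_{s_k}=\mu_\gamma\circ\mu_\delta\). A more robust route is to check that \(\mu_\gamma\circ\mu_\delta\) satisfies the compatibility condition for the concatenation, since \(\Phi^{\delta*\gamma,\mathrm{aff}}_N=\Phi^{\delta,\mathrm{aff}}_N\circ\Phi^{\gamma,\mathrm{aff}}_N\), and then invoke uniqueness. For the inverse, \(\gamma^{-1}=(r_m^{-1},\ldots,r_1^{-1})\), so
\[
\mu_{\gamma^{-1}}=\mu_{r_m^{-1}}\circ\cdots\circ\mu_{r_1^{-1}}=\mu_{r_m}^{-1}\circ\cdots\circ\mu_{r_1}^{-1}=\mu_\gamma^{-1},
\]
using \(\mu_{r^{-1}}=\mu_r^{-1}\) from Theorem~\ref{thm:reidemeister-representing-module}.

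The planar-isotopy steps need the same identity \(\mu_{r^{-1}}=\mu_r^{-1}\), which follows from the inverse semiarc correspondence noted before the proposition. There is no real obstacle here. The only point requiring care is the order of composition: the \(\mu\)'s compose contravariantly, while the \(\Phi\)'s compose covariantly, and the inductive step must track this consistently.
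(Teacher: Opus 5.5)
Your proposal is correct and follows essentially the same route as the paper. It uses induction on the length via \eqref{eq:mu-r-compatibility} applied to $f\circ\mu_{\gamma'}$, and the specialization $N=M_0$, $f=\operatorname{id}_{M_0}$ for both the universal-coloring identity and uniqueness. The composition and inverse identities then follow directly from the definition together with $\mu_{r^{-1}}=\mu_r^{-1}$.
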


\begin{proof}
Every factor in \(\Phi_N^{\gamma,\mathrm{aff}}\) is a bijection
natural in \(N\), so their composite has the same properties.
Every \(\mu_{r_j}\) is an isomorphism, and hence so is
\(\mu_\gamma\).
We prove \eqref{eq:move-sequence-compatibility} by induction on
the length of the sequence.
For the empty sequence both sides equal
\(\Theta_{D_0,\sigma_0,N}(f)\).
For \(j\ge1\), write \(\gamma_j=(r_1,\ldots,r_j)\).
Applying \eqref{eq:mu-r-compatibility} to \(r_j\) and the map
\(f\circ\mu_{\gamma_{j-1}}\colon M_{j-1}\to N\), and then
using the induction hypothesis, gives
\begin{align*}
\Theta_{D_j,\sigma_j,N}(f\circ\mu_{\gamma_j})
&=\Phi_N^{r_j,\mathrm{aff}}
  \bigl(\Theta_{D_{j-1},\sigma_{j-1},N}
  (f\circ\mu_{\gamma_{j-1}})\bigr)\\
&=\Phi_N^{r_j,\mathrm{aff}}
  \left(\Phi_N^{\gamma_{j-1},\mathrm{aff}}
  \bigl(\Theta_{D_0,\sigma_0,N}(f)\bigr)\right)\\
&=\Phi_N^{\gamma_j,\mathrm{aff}}
  \bigl(\Theta_{D_0,\sigma_0,N}(f)\bigr).
\end{align*}
This proves \eqref{eq:move-sequence-compatibility}.
Taking \(N=M_0\) and \(f=\operatorname{id}_{M_0}\) gives
\eqref{eq:move-sequence-universal-coloring}.
If another \(A\)-linear map \(\nu\colon M_m\to M_0\)
satisfies \eqref{eq:move-sequence-compatibility}, the same
specialization gives
\(\nu\circ U_{D_m,\sigma_m}
=\Phi_{M_0}^{\gamma,\mathrm{aff}}(U_{D_0,\sigma_0})
=\mu_\gamma\circ U_{D_m,\sigma_m}\).
The uniqueness assertion in
Theorem~\ref{thm:universal-affine-coloring}, applied to
\((D_m,\sigma_m)\), yields \(\nu=\mu_\gamma\).

For concatenation, the coloring maps satisfy
\(\Phi_N^{\delta*\gamma,\mathrm{aff}}
=\Phi_N^{\delta,\mathrm{aff}}\circ
\Phi_N^{\gamma,\mathrm{aff}}\).
The defining module-map compositions occur in the reverse order,
so \(\mu_{\delta*\gamma}=\mu_\gamma\circ\mu_\delta\).
The empty composition is the identity.
Finally, if \(\gamma=(r_1,\ldots,r_m)\), then
\(\gamma^{-1}=(r_m^{-1},\ldots,r_1^{-1})\), and
\(\mu_{\gamma^{-1}}
=\mu_{r_m}^{-1}\circ\cdots\circ\mu_{r_1}^{-1}
=(\mu_{r_1}\circ\cdots\circ\mu_{r_m})^{-1}\).
This proves \eqref{eq:move-sequence-composition}.
\end{proof}

The preceding proposition concerns prescribed sequences and their
concatenations. It does not assert that two different sequences
with the same endpoints induce the same module isomorphism.
For invariance of the module isomorphism class, the existence of
one such sequence is sufficient.

\begin{theorem}
\label{thm:Gu-invariance}
Let \(R\) be a commutative ring with identity, let \(G\) be an abelian
group, put \(A=R[G]\), and fix \(u\in A^\times\).
If \((D,\sigma)\) and \((D',\sigma')\) represent the same
\(G\)-colored oriented link, then
\(M_u(D,\sigma;A)\cong M_u(D',\sigma';A)\) and
\(\mathcal A_u(D,\sigma)\sim\mathcal A_u(D',\sigma')\).
\end{theorem}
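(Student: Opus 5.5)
The argument is a short assembly of results already proved. By the colored Reidemeister theorem (Theorem~\ref{thm-equi-nchr}), since \((D,\sigma)\) and \((D',\sigma')\) represent the same \(G\)-colored oriented link, they are joined by a finite sequence \(\gamma=(r_1,\ldots,r_m)\) of color-preserving generalized Reidemeister moves and planar isotopies of \(\mathbb R^2\). Every strand keeps the label of its component, so each intermediate diagram \((D_j,\sigma_j)\) is a \(G\)-colored diagram. Each \(r_j\) is therefore either a generalized Reidemeister move, covered by Theorem~\ref{thm:reidemeister-representing-module}, or a colored planar isotopy, for which the discussion preceding Proposition~\ref{prop:move-sequence-compatibility} defines \(\mu_{r_j}\).

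Proposition~\ref{prop:move-sequence-compatibility} applied to \(\gamma\) gives the composite \(\mu_\gamma=\mu_{r_1}\circ\cdots\circ\mu_{r_m}\colon M_u(D',\sigma';A)\to M_u(D,\sigma;A)\), and this is an \(A\)-module isomorphism. This proves the first assertion. It does not matter whether different sequences induce different isomorphisms: one sequence suffices for the isomorphism class.

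For the matrix statement, I will use that \(\mathcal A_u(D,\sigma)\) and \(\mathcal A_u(D',\sigma')\) are finite presentation matrices of isomorphic modules. Theorem~\ref{thm-mod-pres} then relates them by a finite sequence of the moves (1)--(4) and their inverses, so \(\mathcal A_u(D,\sigma)\sim\mathcal A_u(D',\sigma')\). Alternatively, one can chain the single-move equivalences from Theorem~\ref{thm:reidemeister-representing-module}. For planar isotopies, the presentation matrices differ only by row and column permutations, which are move (1). Transitivity of \(\sim\) then finishes the proof.

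The only point needing care is the first step. Ambient isotopy of \(\mathbb R^2\) may relabel semiarcs and crossings, so we must confirm that it is legitimately included among the steps. The remark before Proposition~\ref{prop:move-sequence-compatibility} already settles this, so I expect no real obstacle.
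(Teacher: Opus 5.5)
Your proposal is correct and follows essentially the same route as the paper. You choose a colored Reidemeister sequence via Theorem~\ref{thm-equi-nchr}, compose the step isomorphisms $\mu_{r_j}$ (with planar isotopies handled by the semiarc correspondence) into the isomorphism $\mu_\gamma$, and obtain the matrix equivalence by chaining the single-step equivalences and row/column permutations through transitivity. Your alternative of applying Theorem~\ref{thm-mod-pres} directly to the endpoint modules is also valid; it is exactly how Theorem~\ref{thm:reidemeister-representing-module} obtains each single-step matrix equivalence.
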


\begin{proof}
By Theorem~\ref{thm-equi-nchr}, choose a finite sequence
\(\gamma=(r_1,\ldots,r_m)\) of colored Reidemeister moves and
planar isotopies from \((D_0,\sigma_0)=(D,\sigma)\) to
\((D_m,\sigma_m)=(D',\sigma')\).
For every Reidemeister step
\(r_j\colon(D_{j-1},\sigma_{j-1})\to(D_j,\sigma_j)\),
Theorem~\ref{thm:reidemeister-representing-module} supplies an
\(A\)-module isomorphism
\(\mu_{r_j}\colon M_u(D_j,\sigma_j;A)
\to M_u(D_{j-1},\sigma_{j-1};A)\).
For a planar-isotopy step, corresponding semiarcs define the map
\(\overline{x'}\mapsto\overline x\).
The defining relators correspond under this assignment, and the
reverse correspondence gives its inverse.
The presentation matrices differ only by row and column permutations.
Thus \(\mu_\gamma=\mu_{r_1}\circ\cdots\circ\mu_{r_m}\)
is an \(A\)-module isomorphism from \(M_u(D',\sigma';A)\) to
\(M_u(D,\sigma;A)\), with the compatibility established in
Proposition~\ref{prop:move-sequence-compatibility}.
Each Reidemeister step gives presentation equivalence by
Theorem~\ref{thm:reidemeister-representing-module}, and each
planar-isotopy step gives it by the row and column permutations
just described. Transitivity proves equivalence of the endpoint
presentation matrices.
\end{proof}

\begin{corollary}
\label{cor-invariant-module}
For a \(G\)-colored oriented link \((L,\sigma)\), the isomorphism
class of \(M_u(D,\sigma;A)\), and equivalently the
presentation-equivalence class of \(\mathcal A_u(D,\sigma)\),
is independent of the chosen diagram \(D\).
\end{corollary}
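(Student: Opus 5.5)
The corollary is a reformulation of Theorem~\ref{thm:Gu-invariance}. Fix a \(G\)-colored oriented link \((L,\sigma)\) and let \((D_1,\sigma_1)\) and \((D_2,\sigma_2)\) be any two \(G\)-colored diagrams representing it. Here \(\sigma_i\) is the labeling induced on \(\pi_0(D_i)\) through the identification of components with those of \(L\). By Definition~\ref{def-colored-equivalence} and Theorem~\ref{thm-equi-nchr}, the two colored diagrams are related by a finite sequence of color-preserving generalized Reidemeister moves and planar isotopies. Theorem~\ref{thm:Gu-invariance} then gives \(M_u(D_1,\sigma_1;A)\cong M_u(D_2,\sigma_2;A)\). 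Concretely, the isomorphism is \(\mu_\gamma\) of Proposition~\ref{prop:move-sequence-compatibility} for a chosen sequence \(\gamma\). Hence the isomorphism class of \(M_u(D,\sigma;A)\) does not depend on which diagram is chosen.

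For the equivalent formulation, I use that \(\mathcal A_u(D,\sigma)\) is a presentation matrix of \(M_u(D,\sigma;A)\) by \eqref{modl-pres-1}. Theorem~\ref{thm-mod-pres} then says that presentation-equivalence of two such matrices is the same as isomorphism of their cokernels. One direction holds because each Tietze move induces an isomorphism of cokernels. The other direction is the content of the cited theorem. Therefore the presentation-equivalence class of \(\mathcal A_u(D,\sigma)\) is also independent of \(D\), and it determines the module isomorphism class and is determined by it. The direct statement \(\mathcal A_u(D_1,\sigma_1)\sim\mathcal A_u(D_2,\sigma_2)\) is already part of Theorem~\ref{thm:Gu-invariance}.

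The only point needing care is the bookkeeping of labels. The diagram labeling \(\sigma\) must be read through the component correspondence, so that each move in the chosen sequence preserves the label of every strand. This is exactly the color-preserving condition in Theorem~\ref{thm-equi-nchr}. With that in place there is no real obstacle; the corollary justifies writing \(M_u(L,\sigma;A)\) for the common isomorphism class.
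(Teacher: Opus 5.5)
Your proposal is correct and follows essentially the same route as the paper. The paper likewise applies Theorem~\ref{thm:Gu-invariance} to any two diagrams of \((L,\sigma)\), and it uses Theorem~\ref{thm-mod-pres}, together with the fact that each presentation move preserves the cokernel up to isomorphism, to see that the module isomorphism class and the presentation-equivalence class determine each other.
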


\begin{proof}
Any two diagrams of \((L,\sigma)\) satisfy the hypotheses of
Theorem~\ref{thm:Gu-invariance}. That theorem identifies their
module isomorphism classes and their presentation-equivalence classes.
The equivalence of these two descriptions follows from
Theorem~\ref{thm-mod-pres} and the fact that each presentation
move preserves the cokernel up to isomorphism.
\end{proof}

\begin{definition}[\((G,u)\)-deformed Alexander module]
\label{def:Gu-module}
Let \(G\) be an abelian group, let \(R\) be a commutative ring with
identity, put \(A=R[G]\), and fix \(u\in A^\times\).
For a \(G\)-colored oriented link \((L,\sigma)\), choose a diagram
\((D,\sigma)\). The module \(M_u(D,\sigma;A)\), considered up to
\(A\)-module isomorphism, is denoted by \(M_u(L,\sigma;A)\) and
called the \emph{\((G,u)\)-deformed Alexander module} of
\((L,\sigma)\).
Any presentation matrix \(\mathcal A_u(D,\sigma)\) is called a
\emph{\((G,u)\)-deformed Alexander matrix} of \((L,\sigma)\).
\end{definition}

When applying \(\Hom_A(-,N)\), scalar extension, or other module
constructions to \(M_u(L,\sigma;A)\), we use a representative
of this isomorphism class. For a specified diagram we retain the
actual presentation module \(M_D\) and its universal coloring.
For a specified move \(r\), compatibility with the local extension
\(\Phi^{r,\mathrm{aff}}\) uniquely determines \(\mu_r\).
More generally, a specified sequence \(\gamma\) determines
\(\Phi^{\gamma,\mathrm{aff}}\) and its unique compatible map
\(\mu_\gamma\) by Proposition~\ref{prop:move-sequence-compatibility}.
The notation \(M_u(L,\sigma;A)\) records the invariant isomorphism
class; it does not select identifications independent of the chosen
sequence between all diagram presentations.

The construction so far concerns closed links.
To encode tangle composition one must also retain the boundary
semiarcs and their maps into the tangle module.
For a fixed diagram of a \(G\)-colored oriented tangle
\(T\colon w\to w'\), define $M_u(T;A)$ by the same semiarc generators
and crossing relations, retaining all boundary semiarcs. This gives
a module cospan
\[A^w\longrightarrow M_u(T;A)\longleftarrow A^{w'}.\]
Here \(A^w\) and \(A^{w'}\) are the free \(A\)-modules on the
incoming and outgoing boundary semiarcs, and the two maps record
their images in the tangle module.
For vertically composable tangles, the module of the composite is
the pushout of the corresponding cospans.
Applying \(\Hom_A(-,N)\) converts this pushout into the pullback
gluing law for affine colorings.
Horizontal juxtaposition corresponds to direct sum of module
cospans, while closure identifies the appropriate boundary legs.
These diagram-level gluing statements follow directly from the
presentations. If $b_1,b_2:A^{w'}\to M_1,M_2$ are the two maps on
a common boundary, the glued module is
$$(M_1\oplus M_2)/\langle(b_1(e),-b_2(e)):e\in A^{w'}\rangle.$$
A map from this quotient to $N$ is exactly a pair of maps
$f_i:M_i\to N$ satisfying $f_1b_1=f_2b_2$, which is the stated
pullback condition. The same presentation argument gives direct sum
for juxtaposition and identification of boundary generators for closure.

The natural isomorphism
\(\Hom_A(M_u(D,\sigma;A),N)\cong{\rm Col}_N^G(D,\sigma)\)
is a degree-zero representability statement.
The Yoneda lemma explains how natural Reidemeister extension
determines a compatible isomorphism of representing modules in the
reverse direction.
Its application rests on the explicit module construction, the
verification of the affine multibiquandle axioms, and the local
extension bijections proved above.
These supply the algebraic and diagrammatic inputs to the argument.
As explained in the Introduction, the universal-target formulation
also differs from fixing a \(G\)-Alexander biquandle and then
forming a cocycle enhancement~\cite{GAB}.

The degree-zero statement also has an elementary lifting consequence.
For an exact sequence $$0\to K\xrightarrow{\iota}E\xrightarrow{p}N\to0$$ of $A$-modules
and a coloring represented by $f_C:M_D\to N$, form the pullback
$$P_C=\{(m,e)\in M_D\oplus E:f_C(m)=p(e)\}.$$
The exact sequence $$0\to K\to P_C\to M_D\to0,$$ with maps
$k\mapsto(0,\iota(k))$ and $(m,e)\mapsto m$, splits if and only if
$f_C$ lifts to $\widetilde f_C:M_D\to E$: a lift gives the section
$m\mapsto(m,\widetilde f_C(m))$, and a section recovers the lift
by projection to $E$. Equivalently, its class in
$\Ext_A^1(M_D,K)$ vanishes precisely when $C$ lifts to an affine
$E$-coloring. The higher target-derived groups are
$\Ext_A^i(M_D,N)$ for $i\geq0$; no computation of those groups,
relation syzygies, factor-set defects, or Bockstein filtrations is
needed for the invariance and profile results proved here.
This target-variable lifting observation is distinct from the
extension associated with the incidence assembly sequence in Part~II.

The preceding construction also accommodates an arbitrary palette
\(\Omega\) equipped with a map to the abelian group \(G\).
No injectivity of this map is required, since distinct palette labels
are allowed to specify the same coefficient action.

\begin{theorem}
\label{thm-adbq-mod-2}
Let \(\Omega\) be a set, and let \((D,\sigma)\) and
\((D',\sigma')\) be diagrams of the same \(\Omega\)-colored
oriented link. Let \(G\) be an abelian group, let \(R\) be a
commutative ring with identity, and let \(\tau\colon\Omega\to G\)
be a map. Put \(A=R[G]\) and fix \(u\in A^\times\).
Then \(M_u(D,\tau\circ\sigma;A)
\cong M_u(D',\tau\circ\sigma';A)\) and
\(\mathcal A_u(D,\tau\circ\sigma)
\sim\mathcal A_u(D',\tau\circ\sigma')\).
\end{theorem}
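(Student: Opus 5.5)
The plan is to reduce the statement to Theorem~\ref{thm:Gu-invariance} by pushing the component labels forward along \(\tau\). Since \((D,\sigma)\) and \((D',\sigma')\) represent the same \(\Omega\)-colored oriented link, Theorem~\ref{thm-equi-nchr} supplies a finite sequence of color-preserving generalized Reidemeister moves and planar isotopies
\[
(D,\sigma)=(D_0,\sigma_0)\to(D_1,\sigma_1)\to\cdots\to(D_m,\sigma_m)=(D',\sigma').
\]
Each step identifies the components of its source and target diagrams and preserves the \(\Omega\)-label of each component. Composing with \(\tau\), the labelings \(\tau\circ\sigma_j\colon\pi_0(D_j)\to G\) are therefore preserved by every step. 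Thus the same sequence is a sequence of color-preserving moves between the \(G\)-colored diagrams \((D,\tau\circ\sigma)\) and \((D',\tau\circ\sigma')\).

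I will state this explicitly: a local move that is color-preserving for \(\Omega\)-labels stays color-preserving after applying any map \(\tau\). This holds because the labels on the strands in the move disk are only ever compared for equality along components, and equal \(\Omega\)-labels have equal images. Injectivity of \(\tau\) is not needed. Two strands with distinct \(\Omega\)-labels but equal \(\tau\)-images simply give a move of the figure with, for example, \(i\ne j\) but \(\tau(i)=\tau(j)\). The moves in Figure~\ref{fig-ormoves} allow arbitrary labels \(i,j,k\), including coincident ones, so such a move is still covered.

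By Theorem~\ref{thm-equi-nchr} in the converse direction, with palette \(G\), the diagrams \((D,\tau\circ\sigma)\) and \((D',\tau\circ\sigma')\) then represent the same \(G\)-colored oriented link. Alternatively, I can avoid that theorem and invoke Theorem~\ref{thm:reidemeister-representing-module} step by step, together with the planar-isotopy identification, exactly as in the proof of Theorem~\ref{thm:Gu-invariance}. Either way, Theorem~\ref{thm:Gu-invariance} yields \(M_u(D,\tau\circ\sigma;A)\cong M_u(D',\tau\circ\sigma';A)\) and \(\mathcal A_u(D,\tau\circ\sigma)\sim\mathcal A_u(D',\tau\circ\sigma')\). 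Concretely, the isomorphism is \(\mu_\gamma\) from Proposition~\ref{prop:move-sequence-compatibility}, and the matrix equivalence follows from Theorem~\ref{thm-mod-pres}.

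The only step needing care is the bookkeeping that the pushforward of labels along \(\tau\) commutes with every move. The ambient isotopy of Definition~\ref{def-colored-equivalence} gives \(\sigma(K)=\sigma'(H_1(K))\), hence \(\tau\sigma(K)=\tau\sigma'(H_1(K))\). This is the \(G\)-colored equivalence condition, so it holds directly.
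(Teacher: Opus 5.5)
Your proposal is correct and matches the paper's proof. Like the paper, you take a color-preserving Reidemeister sequence from Theorem~\ref{thm-equi-nchr}, observe that composing every labeling with $\tau$ keeps each step label-preserving (with no injectivity needed), and then apply Theorem~\ref{thm:Gu-invariance}; your remark that $\tau\circ\sigma(K)=\tau\circ\sigma'(H_1(K))$ follows directly from Definition~\ref{def-colored-equivalence} is an equally valid shortcut to the same $G$-colored equivalence.
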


\begin{proof}
By Theorem~\ref{thm-equi-nchr}, the two diagrams are related by a
finite sequence of Reidemeister moves and planar isotopies preserving
the \(\Omega\)-label of every component.
Applying \(\tau\) to all component labels turns the same sequence
into a sequence of \(G\)-colored moves and planar isotopies.
Every step still preserves the resulting component labels, even
when \(\tau\) identifies distinct elements of \(\Omega\).
Theorem~\ref{thm:Gu-invariance} therefore gives the module
isomorphism and the presentation equivalence.
\end{proof}

\begin{corollary}
\label{cor-invariant-omega}
Under the hypotheses of Theorem~\ref{thm-adbq-mod-2}, the module
\(M_u(L,\tau\circ\sigma;A)\), considered up to isomorphism, is an
invariant of the \(\Omega\)-colored oriented link \((L,\sigma)\).
Every matrix \(\mathcal A_u(D,\tau\circ\sigma)\) obtained from a
diagram \(D\) presents a representative of this invariant module.
\end{corollary}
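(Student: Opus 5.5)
The corollary follows almost directly from Theorem~\ref{thm-adbq-mod-2}, in the same way that Corollary~\ref{cor-invariant-module} followed from Theorem~\ref{thm:Gu-invariance}. First I would make precise what is being claimed. Fix the data $R$, $G$, $A=R[G]$, $u\in A^\times$ and $\tau\colon\Omega\to G$. For an $\Omega$-colored oriented link $(L,\sigma)$, choose any diagram $(D,\sigma)$. The composite $\tau\circ\sigma$ is then a $G$-labeling of the same diagram, and Definition~\ref{def:Gu-module} produces the module $M_u(D,\tau\circ\sigma;A)$. The claim is that its isomorphism class depends only on $(L,\sigma)$, and not on the chosen diagram.

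To prove this, take two diagrams $(D,\sigma)$ and $(D',\sigma')$ of the same $\Omega$-colored link. Theorem~\ref{thm-adbq-mod-2} applies to them directly and gives
$M_u(D,\tau\circ\sigma;A)\cong M_u(D',\tau\circ\sigma';A)$. Hence the isomorphism class is well defined, and the notation $M_u(L,\tau\circ\sigma;A)$ is legitimate. In the language of Definition~\ref{def:Gu-module}, this is exactly the $(G,u)$-deformed Alexander module of the $G$-colored link $(L,\tau\circ\sigma)$. The point that needs a comment is that pushing the labels forward along $\tau$ sends equivalent $\Omega$-colored links to equivalent $G$-colored links, even when $\tau$ is not injective. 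This is precisely the step handled inside the proof of Theorem~\ref{thm-adbq-mod-2}: a label-preserving move sequence stays label-preserving after applying $\tau$.

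For the second sentence of the corollary, observe that by construction \eqref{modl-pres-1}, the matrix $\mathcal A_u(D,\tau\circ\sigma)$ presents $M_u(D,\tau\circ\sigma;A)$. That module is a representative of the invariant class just established. In addition, Theorem~\ref{thm-adbq-mod-2} shows that any two such matrices are equivalent under the moves of Theorem~\ref{thm-mod-pres}.

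No step here is a real obstacle. The only care needed is to keep the distinction, stressed after Definition~\ref{def:Gu-module}, between the specific presentation module attached to a diagram and the isomorphism class that serves as the invariant.
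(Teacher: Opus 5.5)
Your proposal is correct and follows the paper's proof: you apply Theorem~\ref{thm-adbq-mod-2} to any two diagrams of $(L,\sigma)$ to get well-definedness of the isomorphism class. You then use Definition~\ref{def:Gu-module} and the presentation \eqref{modl-pres-1} to identify each matrix $\mathcal A_u(D,\tau\circ\sigma)$ as a presentation of a representative. One small slip: the diagram-level module $M_u(D,\tau\circ\sigma;A)$ comes from the cokernel construction in \eqref{modl-pres-1}, whereas Definition~\ref{def:Gu-module} introduces the link-level notation $M_u(L,\tau\circ\sigma;A)$ for its isomorphism class.
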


\begin{proof}
Theorem~\ref{thm-adbq-mod-2} identifies the modules and presentation
matrices obtained from any two diagrams after applying \(\tau\).
Definition~\ref{def:Gu-module} then gives the invariant module and
identifies each displayed matrix as one of its presentations.
\end{proof}


\raggedbottom
\section{Braid presentations, scalar extension, and normalization}
\label{sec:parameters}

The preceding sections construct the invariant module from local affine
relations and identify the maps induced by Reidemeister moves. We now
explain how this construction behaves under changes of coefficients.
We first record an explicit closed-braid presentation, then establish
scalar extension and the universal Laurent parameter, and finally remove
the common deformation unit. These results supply the coefficient
framework for the Alexander comparisons in Part~II.

Throughout this section, $A=R[G]$, where $R$ is commutative and $G$ is
abelian, and $L$ is nonempty. All ring homomorphisms preserve the identity.
If $\varphi:A\to S$ is a homomorphism to a commutative ring, every
color $g\in G$ acts on an $S$-module through the unit $\varphi(g)$.
For a $G$-labeling $\sigma$, write
$\sigma_S=\varphi\circ\sigma:\pi_0(D)\to S^\times$.
More generally, for a unit-valued component labeling
$\lambda:\pi_0(D)\to S^\times$ and $v\in S^\times$, let
$M_v(D,\lambda;S)$ be the cokernel of the presentation assembled from
the local blocks $B_{\pm}(\lambda(K_{\mathrm{over}});v)$, where
$K_{\mathrm{over}}$ denotes the over-component at the crossing.
As before, each crossingless component contributes one free generator.
The diagram independence of this notation is justified below.

We also fix two algebraic conventions used later in this section.
For a commutative ring $B$, $\Spec B$ denotes its set of prime ideals,
$V(I)=\{\mathfrak p\in\Spec B:I\subseteq\mathfrak p\}$, and
$D(f)=\{\mathfrak p\in\Spec B:f\notin\mathfrak p\}$.
If $B^q\xrightarrow{C}B^p\to N\to0$ is a finite presentation and
$k\geq0$, the Fitting ideal $\Fitt_k^B(N)$ is generated by the
$(p-k)$-minors of $C$. It is $B$ when $p-k\leq0$ and is the zero
ideal when $p-k>\min\{p,q\}$; the determinant of the empty matrix
is $1$. These ideals are independent of the chosen finite presentation.
The notation will be used for the zeroth Fitting ideal here and for
the exact comparison locus in Part~II. Part~II develops the base-change and fiber criteria for these ideals
and computes the associated rank-jump subschemes.

\subsection{Local slots and braid closure}
For the braid and matrix calculations we use the following explicit
local slots. The four symbols at each crossing are distinct formal
occurrences; identifying occurrences belonging to the same semiarc
recovers the relators in Section~\ref{sect-abq}.
\begin{figure}[ht!]
\centerline{\xy 
(-10,10);(-2,2)**@{-} **@{-},(10,-10);(2,-2)**@{-} **@{-} ?<*\dir{<}, 
(10,10);(-10,-10) **@{-} **@{-} ?>*\dir{>},(12,-12.5) *{x_{k3}},
(-13,-12) *{x_{k2}},(-13,12) *{x_{k1}}, (13,12) *{x_{k4}},(-8.5,4) *{g_k},(7.5,3.5) *{h_k},
(0,-5) *{c_k},(-3,9);(-6,6)**@{-} ?<*\dir{<},(11,-3.5) *{},(-8.5,-4) *{},
(-3,-9);(-6,-6)**@{-} ?<*\dir{<},(-7,0)*{\circledS},
(0.5,-20)*{\begin{cases}
\overline R_1(c_k)=-x_{k3} + h_kx_{k1} + (u-h_k)x_{k2}, \\
\overline R_2(c_k)=-x_{k4} + ux_{k2}.
\end{cases}},
\endxy\qquad\xy 
(10,-10);(-10,10) **@{-} **@{-} ?<*\dir{<},(10,10);(2,2)**@{-} **@{-},(-2,-2);(-10,-10) **@{-} **@{-} ?>*\dir{>},(12,-13) *{x_{k3}}, (-13,12) *{x_{k1}},(-12,-12) *{x_{k2}},(13,12) *{x_{k4}},(-8.5,4) *{h_k},(8,3.5) *{g_k},(9,-4) *{},
(0,-5) *{c_k},(-3,9);(-6,6)**@{-} ?<*\dir{<}, (-8.5,-4) *{},
(-3,-9);(-6,-6)**@{-} ?<*\dir{<},(-7,0)*{\circledS},
(1,-20)*{\begin{cases}
\overline R_1(c_k)=-x_{k4} + h_kx_{k2} + (u-h_k)x_{k1},\\
\overline R_2(c_k)=-x_{k3} + ux_{k1}.
\end{cases}},
\endxy}
\vskip.1cm
\caption{Linear local relators $\overline R_1(c_k)$ and $\overline R_2(c_k)$ associated with $c_k$}\label{fig-qd-rels-1}
\end{figure}

The local coefficient matrices are
\begin{equation}\label{loc-blk-mat1}
B_+(h_k;u)=
\begin{bmatrix}
h_k&0\\
u-h_k&u\\
-1&0\\
0&-1
\end{bmatrix},
\qquad
B_-(h_k;u)=
\begin{bmatrix}
u-h_k&u\\
h_k&0\\
0&-1\\
-1&0
\end{bmatrix}.
\end{equation}
Their rows are ordered as $(x_{k1},x_{k2},x_{k3},x_{k4})$ and
their columns as $(\overline R_1(c_k),\overline R_2(c_k))$.

\begin{lemma}[Braid transitions and the closure presentation]
\label{lem:braid-closure}
Let $A$ be a commutative ring, let $u\in A^\times$, and let
$\beta=\sigma_{i_1}^{\epsilon_1}\cdots
\sigma_{i_m}^{\epsilon_m}$ be a downward $n$-strand braid,
read from top to bottom. Label each component of its closure by a
unit of $A$, and transport these labels along the strands. Use the
crossing convention of Figure~\ref{fig-qd-rels-1}: at a positive
crossing the upper-right strand passes over, whereas at a negative
crossing the upper-left strand passes over. The adjacent transition
blocks, with upper and lower colors both ordered from left to right,
are
\begin{equation}\label{eq:braid-transitions}
T_h^+(u)=
\begin{bmatrix}0&u^{-1}\\ h&1-hu^{-1}\end{bmatrix},
\qquad
T_h^-(u)=
\begin{bmatrix}1-uh^{-1}&h^{-1}\\ u&0\end{bmatrix}
=\bigl(T_h^+(u)\bigr)^{-1},
\end{equation}
where $h$ is the current label of the overstrand. Embed each block
in positions $i_r,i_r+1$ of $A^n$, acting as the identity on the
other coordinates, and denote the resulting matrix by $T_r$.
Put $T_\beta=T_m\cdots T_1$; the rightmost factor acts first.
Then the closed-braid module has the presentation
\begin{equation}\label{eq:braid-closure-presentation}
M_u(\widehat\beta,\sigma;A)
\cong\coker\bigl((T_\beta-I_n)^{\mathsf T}\bigr).
\end{equation}
Here the module is defined by the same local relations as above,
with the component labels interpreted as units of $A$; in
particular, this applies to $A=R[G]$ and $G$-colored links.
For any $A$-module $N$, restriction to the top strands identifies
the affine coloring set with
$\ker(T_\beta-I_n:N^n\to N^n)$. In particular, over a field $k$
the module dimension equals the nullity of $T_\beta-I_n$, and over
$k=\mathbb F_q$ the number of colorings is $q$ raised to this
dimension.
\end{lemma}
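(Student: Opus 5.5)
The plan is to use the crossing relations to express the colors just below each crossing in terms of the colors just above it. This turns the relation module into a "transfer along the braid" presentation. The closure relations then become $(T_\beta-I_n)x=0$. The statement about colorings follows from Theorem~\ref{thm:representability} together with the presentation. I would work throughout with unit-valued labels. The definitions of Section~\ref{sect-abq} use $A$-linearity and invertibility of $u$ and of the labels, so they go through verbatim; this is exactly how $M_v(D,\lambda;S)$ was introduced above.

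\emph{Step 1 (local transitions).} Fix a positive crossing of the downward braid. In the slot convention of Figure~\ref{fig-qd-rels-1}, the upper endpoints are $x_{k1}$ (left) and $x_{k4}$ (right), and the lower endpoints are $x_{k2}$ (left) and $x_{k3}$ (right). I still need to check this orientation against the picture. The relator $\overline R_2$ gives $x_{k4}=ux_{k2}$, so $x_{k2}=u^{-1}x_{k4}$. Substituting into $\overline R_1$ gives
\[
x_{k3}=h x_{k1}+(u-h)u^{-1}x_{k4}=hx_{k1}+(1-hu^{-1})x_{k4}.
\]
This is the column $(x_{k2},x_{k3})^{\mathsf T}=T_h^+(u)(x_{k1},x_{k4})^{\mathsf T}$. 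The negative crossing is handled the same way with the slots of the right-hand picture. The identity $T_h^-(u)=T_h^+(u)^{-1}$ is a direct $2\times 2$ multiplication. The substitution uses only the unit pivots $-1$ and $u$, so each crossing's pair of relators is equivalent, over $A$, to the pair ``lower $=T_h^\pm\cdot$ upper''.

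\emph{Step 2 (elimination and closure).} Slice the braid into levels $0,\dots,m$, one crossing between consecutive levels. Introduce one formal generator per strand per level; on strands not involved in a crossing, the levels are identified by the identity relation. By Step~1, the level-$r$ generators are given in terms of the level-$(r-1)$ generators by $T_r$. The closure identifies level $m$ with level $0$. The levels are distinct occurrences of the same semiarc generators, and identifying occurrences recovers the relators of Section~\ref{sect-abq}. The relation matrix is block triangular in the lower-level variables, with unit diagonal pivots. I would therefore apply the moves of Theorem~\ref{thm-mod-pres} and Proposition~\ref{prop-eq-mat} (operations (4), (5)--(8) with unit pivots) level by level. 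This eliminates every generator except the $n$ top ones and leaves the relations $T_\beta x - x=0$. In the relations-as-columns convention, this is $\coker((T_\beta-I_n)^{\mathsf T})$.

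A strand with no crossing that closes up by itself gives a zero row and column of $T_\beta-I_n$, hence one free generator. This matches Remark~\ref{rem-square-presentation}. The labels are transported along strands, so $h$ is always the label of the current overstrand's component.

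\emph{Step 3 (colorings and counts).} By Theorem~\ref{thm:representability}, $\mathrm{Col}_N^G$ is naturally $\Hom_A(M,N)$. For $M=\coker((T_\beta-I)^{\mathsf T})$, a homomorphism is determined by the images of the $n$ top generators, subject to $(T_\beta-I)$ killing them. So $\Hom_A(M,N)\cong\ker(T_\beta-I_n:N^n\to N^n)$, and this map is restriction to the top strands. Over a field $k$, $\dim_k\coker=n-\operatorname{rank}(T_\beta-I_n)$, which is the nullity. The count $q^{\text{nullity}}$ for $k=\mathbb F_q$ follows.

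I expect the main obstacle to be the bookkeeping in Step~2: showing that the level-by-level elimination is a legitimate chain of presentation moves, not just an equivalence of solution sets. The tricky parts are the identification of occurrences belonging to one semiarc and the orientation conventions in Step~1. I would settle both by writing the relation matrix in level order, checking that it is unitriangular in the eliminated variables, and using Proposition~\ref{prop-eq-mat} to remove those variables one at a time.
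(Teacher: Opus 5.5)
Your proposal is correct and follows essentially the paper's own route. Both normalize each crossing's two relators by unit column operations into the form $\text{lower}=T_h^{\pm}(u)\cdot\text{upper}$, subdivide the braid into levels, and eliminate the intermediate generators by a unitriangular change of relations. Closure then leaves $(T_\beta-I_n)\mathbf{x}^{(0)}=0$, transposed by the relations-as-columns convention, and the coloring and dimension statements are read off from the kernel. One small correction: at a negative crossing, solving for the lower-left color uses the label $h$ as pivot (not $u$), so that step relies on the component labels being units, as the paper's proof notes.
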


\begin{proof}
At one crossing, write $(X,Y)$ for the upper generators and
$(P,Q)$ for the lower generators, in left-to-right order. The
positive local relators are
$r_1=-Q+hX+(u-h)P$ and $r_2=-Y+uP$.
Put $c_2=-u^{-1}r_2=-P+u^{-1}Y$ and
$c_1=r_1+(u-h)c_2=-Q+hX+(1-hu^{-1})Y$.
The resulting two relations are
\(P=u^{-1}Y\) and \(Q=hX+(1-hu^{-1})Y\).
They give $T_h^+(u)$. At a negative crossing, the local relators
are $r_1=-Y+hP+(u-h)X$ and $r_2=-Q+uX$.
Multiplying $r_1$ by $-h^{-1}$ gives
$-P+(1-uh^{-1})X+h^{-1}Y$. Thus the relations become
\(P=(1-uh^{-1})X+h^{-1}Y\) and \(Q=uX\),
which give $T_h^-(u)$. These replacements are invertible column
operations and use only the units $u$ and $h$. Explicitly,
\[
\begin{aligned}
T_h^+(u)T_h^-(u)
&=\begin{bmatrix}
1&0\\
h-u+(1-hu^{-1})u&1
\end{bmatrix}=I_2,\\
T_h^-(u)T_h^+(u)
&=\begin{bmatrix}
1&(1-uh^{-1})u^{-1}+h^{-1}(1-hu^{-1})\\
0&1
\end{bmatrix}=I_2.
\end{aligned}
\]

At either crossing the ordered component labels are interchanged:
if the labels above it are $(p,q)$, those below it are $(q,p)$.
Consequently $h=q$ at a positive crossing and $h=p$ at a negative
crossing. Updating this ordered label list after each crossing
determines every factor $T_r$. Because the labeling comes from the
closed link, its top and bottom lists agree under the usual
positionwise closure, even when the braid permutation is nontrivial.
The permutation is already included in the transition blocks, so
no additional permutation matrix occurs in
\eqref{eq:braid-closure-presentation}.

For completeness, subdivide the strands at horizontal levels
between consecutive crossings. This only adds a generator and a
relation equating it to the preceding generator at each subdivision;
the coefficient of the new generator is a unit. Let
$\mathbf x^{(r)}=(x_1^{(r)},\ldots,x_n^{(r)})^{\mathsf T}$
be the generators at the $r$th level, with $r=0$ at the top.
The normalized crossing and straight-strand relations are the
entries of
$\mathbf r^{(r)}=\mathbf x^{(r)}-T_r\mathbf x^{(r-1)}$.
Put $\mathbf s^{(1)}=\mathbf r^{(1)}$ and, for $r\geq2$,
replace this block of relators by
\(\mathbf s^{(r)} =\mathbf r^{(r)}+T_r\mathbf s^{(r-1)} =\mathbf x^{(r)}-T_r\cdots T_1\mathbf x^{(0)}\).
This is an invertible block-triangular change of relation
generators: each new block equals the old block plus a combination
of earlier blocks. The coefficient of $x_j^{(r)}$ in its own
relation is $1$, so it can be eliminated together with that
relation. After eliminating all intermediate generators, the
closure equations $\mathbf x^{(m)}=\mathbf x^{(0)}$ leave exactly
$(T_\beta-I_n)\mathbf x^{(0)}=0$, up to a simultaneous change of
relation signs. The coefficients of each scalar relator form a
column of the presentation matrix, which explains the transpose
in \eqref{eq:braid-closure-presentation}. If $m=0$, take
$T_\beta=I_n$; the closure is the $n$-component unlink and its
module is $A^n$, as the same formula states.

The same calculation in $N$ assigns all intermediate colors
uniquely to any top vector $v\in N^n$, and the assignment closes
if and only if $(T_\beta-I_n)v=0$. Over a field, therefore,
\[
\begin{aligned}
\dim_k M_u(\widehat\beta,\sigma;k)
&=n-\operatorname{rank}_k\bigl((T_\beta-I_n)^{\mathsf T}\bigr)\\
&=n-\operatorname{rank}_k(T_\beta-I_n)
=\dim_k\ker(T_\beta-I_n).
\end{aligned}
\]
If $k=\mathbb F_q$, a vector space of this dimension has precisely
$q^{\dim_k\ker(T_\beta-I_n)}$ elements, giving the asserted
coloring count.
\end{proof}

\subsection{Scalar extension and the universal parameter}
\label{subsec:base-change}

\begin{theorem}[Base change]
\label{thm:base-change}
Let $(D,\sigma)$ be a $G$-colored diagram, let $u\in A^\times$, and
let $\varphi:A\to S$ be a homomorphism to a commutative ring. There
are canonical isomorphisms
\begin{equation}
\label{eq:base-change-presentation}
M_u(D,\sigma;A)\otimes_A S
\cong
\coker\bigl(\varphi(\mathcal A_u(D,\sigma))\bigr)
\cong
M_{\varphi(u)}(D,\sigma_S;S).
\end{equation}
Their composite $b_D^\varphi$ is characterized by
$b_D^\varphi([x]\otimes s)=s[x]_S$ for each semiarc $x$, where
$[x]_S$ is its class in the specialized module. In particular, it
carries the coloring $x\mapsto[x]\otimes1$ to the universal affine
coloring over $S$. No flatness assumption on $S$ over $A$ is required.
Consequently, scalar extension of the invariant module is again a
colored-link invariant over $S$.
\end{theorem}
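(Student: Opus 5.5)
The plan is to prove the first isomorphism by right exactness of the tensor product, and the second by comparing presentations entrywise. Tensoring the exact sequence \eqref{modl-pres-1} with $S$ over $A$ gives an exact sequence
\[
F_u^m(D,\sigma)\otimes_A S\xrightarrow{\alpha_{D,\sigma}\otimes 1}E_u^m(D,\sigma)\otimes_A S\longrightarrow M_u(D,\sigma;A)\otimes_A S\longrightarrow 0 .
\]
Right exactness holds for every $S$, so no flatness is used. The two free modules become $S^{2\ell}$ and $S^m$ with the induced bases $c_{ik}\otimes1$ and $x\otimes1$. In these bases $\alpha_{D,\sigma}\otimes1$ is represented by the entrywise image $\varphi(\mathcal A_u(D,\sigma))$. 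This gives the first canonical isomorphism, sending $[x]\otimes s$ to $s$ times the class of the basis vector $x\otimes1$.

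For the second isomorphism, I would check that $\varphi(\mathcal A_u(D,\sigma))$ is literally the presentation matrix $\mathcal A_{\varphi(u)}(D,\sigma_S)$, built from the same semiarcs and crossings. The entries of $R_1(c_k)$ are $-1$, $h$ and $u-h$, and those of $R_2(c_k)$ are $-1$ and $u$. Since $\varphi$ is a unital ring homomorphism, these map to $-1$, $\varphi(h)$, $\varphi(u)-\varphi(h)$ and $\varphi(u)$. These are exactly the local entries for the over-label $\sigma_S(K)=\varphi(h)$ and the parameter $\varphi(u)$, which is a unit of $S$. Crossingless components contribute zero rows on both sides. The two cokernels therefore coincide with the identity on generators. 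The composite $b_D^\varphi$ is then $[x]\otimes s\mapsto s[x]_S$, and the coloring $x\mapsto[x]\otimes1$ goes to $x\mapsto[x]_S=U^S_{D,\sigma_S}(x)$.

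I would also give a coordinate-free check through Theorem~\ref{thm:representability}. For any $S$-module $N$, tensor--hom adjunction identifies $\Hom_S(M_u(D,\sigma;A)\otimes_AS,N)$ with $\Hom_A(M_u(D,\sigma;A),N_\varphi)$, where $N_\varphi$ is $N$ with its $A$-module structure restricted along $\varphi$. This is $\operatorname{Col}^G_{N_\varphi}(D,\sigma)$. The affine operations on $N_\varphi$ with parameter $u$ are exactly the operations on $N$ with labels $\varphi(g)$ and parameter $\varphi(u)$. So this set equals the coloring set represented by $M_{\varphi(u)}(D,\sigma_S;S)$, naturally in $N$. Yoneda (Lemma~\ref{lem:yoneda-criterion}, over $S$) then recovers the same isomorphism, characterized by its action on the universal colorings.

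The last claim is that scalar extension is again a colored-link invariant over $S$. If $(D,\sigma)$ and $(D',\sigma')$ represent the same colored link, Theorem~\ref{thm:Gu-invariance} gives $M_u(D,\sigma;A)\cong M_u(D',\sigma';A)$. Applying $-\otimes_AS$, which preserves isomorphisms, gives the corresponding isomorphism over $S$. Transported through $b^\varphi$, this also justifies the diagram independence of $M_v(D,\lambda;S)$ whenever $\lambda$ and $v$ arise by specialization.

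The only real subtlety is bookkeeping rather than mathematics. One has to confirm that the relations-as-columns convention and the sign and slot conventions of Figure~\ref{fig-qd-rels-0} are reproduced exactly over $S$. In particular, the over-component label used at each crossing must be the one fed through $\varphi$, so that the entrywise identification is an equality and not merely an equivalence.
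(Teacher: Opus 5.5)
Your proposal is correct and follows essentially the same route as the paper. You use right exactness of $-\otimes_A S$ for the first isomorphism, entrywise specialization of the local crossing coefficients to identify $\varphi(\mathcal A_u(D,\sigma))$ with $\mathcal A_{\varphi(u)}(D,\sigma_S)$, the tensor--Hom adjunction as a representability check, and tensoring the isomorphisms of Theorem~\ref{thm:Gu-invariance} for diagram independence; the paper does the same (when a semiarc occupies two slots of one crossing its entry is a sum of local coefficients, which additivity of $\varphi$ handles automatically).
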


\begin{proof}
Let $\ell$ be the number of crossings and let $m$ be the number of
semiarc generators, including one generator for each crossingless
component. Tensor the right-exact presentation
$$A^{2\ell}\xrightarrow{\mathcal A_u(D,\sigma)}A^m
\longrightarrow M_u(D,\sigma;A)\longrightarrow 0$$ with $S$.
The resulting sequence
$$S^{2\ell}\xrightarrow{\varphi(\mathcal A_u(D,\sigma))}S^m
\longrightarrow M_u(D,\sigma;A)\otimes_A S\longrightarrow 0$$
is right exact. This proves the first isomorphism in
\eqref{eq:base-change-presentation}, without requiring tensor product
to preserve an injection.

Applying $\varphi$ entrywise to the positive and negative local blocks
gives, respectively,
\[
\begin{bmatrix}
\varphi(h)&0\\
\varphi(u)-\varphi(h)&\varphi(u)\\
-1&0\\
0&-1
\end{bmatrix},
\qquad
\begin{bmatrix}
\varphi(u)-\varphi(h)&\varphi(u)\\
\varphi(h)&0\\
0&-1\\
-1&0
\end{bmatrix}.
\]
The incidence entries $0,1$ are unchanged. Hence the specialized
blocks assemble to $\mathcal A_{\varphi(u)}(D,\sigma_S)$, proving
the second isomorphism. The induced map on the cokernels is precisely
$b_D^\varphi([x]\otimes s)=s[x]_S$. Since the semiarc classes define
the universal affine colorings, this also proves the asserted
compatibility with those colorings.

For an $S$-module $N$, regarded as an $A$-module through $\varphi$,
the original affine relations act by the same coefficients as the
specialized relations. Thus the tensor--Hom adjunction
$\Hom_S(M_u(D,\sigma;A)\otimes_A S,N)
\cong\Hom_A(M_u(D,\sigma;A),N)$ identifies both modules in
\eqref{eq:base-change-presentation} as representing the same affine
coloring functor on $S$-modules. Explicitly, an $S$-linear map $f$
on the tensor product corresponds to the $A$-linear map
$m\mapsto f(m\otimes1)$, and both give the same coloring of $D$.
This identification is natural in $N$.

Finally, the isomorphisms of Theorem~\ref{thm:Gu-invariance} remain
isomorphisms after tensoring with $S$: the tensor of an inverse is an
inverse of the tensor map. Combining them with $b_D^\varphi$ proves
diagram independence after scalar extension.
\end{proof}

The same argument establishes diagram independence for arbitrary
unit-valued labelings. Indeed, put $H=S^\times$ and
$\widetilde A=\mathbb Z[H]$, and denote by $[s]$ the group-ring basis
element corresponding to $s\in S^\times$. The homomorphism
$\epsilon_S:\widetilde A\to S$ determined by
$\epsilon_S([s])=s$ sends the unit $[v]$ to $v$ and the component
label $[\lambda(K)]$ to $\lambda(K)$. Theorem~\ref{thm:base-change}
therefore identifies $M_v(D,\lambda;S)$ with the scalar extension
of the group-ring module with these labels and parameter $[v]$.
Its diagram independence follows from Theorem~\ref{thm:Gu-invariance},
so we may write $M_v(L,\lambda;S)$ for the resulting module,
understood up to isomorphism.
For every $S$-module $N$, the same quotient presentation identifies
$S$-linear maps $M_v(D,\lambda;S)\to N$ with affine colorings of
$(D,\lambda)$ with values in $N$.

The base-change isomorphisms also respect successive coefficient
homomorphisms. If $\psi:S\to T$, the canonical identification
$(M\otimes_A S)\otimes_S T\cong M\otimes_A T$ sends
$(m\otimes s)\otimes t$ to $m\otimes\psi(s)t$.
Under this identification, specialization first along $\varphi$ and
then along $\psi$ agrees with specialization along $\psi\circ\varphi$:
both send $([x]\otimes s)\otimes t$ to $\psi(s)t[x]_T$.

\begin{proposition}[Compatibility with Reidemeister transport]
\label{prop:base-change-reidemeister}
Let $r:(D,\sigma)\to(D',\sigma')$ be a generalized Reidemeister
move. Write $M_D=M_u(D,\sigma;A)$ and
$M_D^S=M_{\varphi(u)}(D,\sigma_S;S)$, and use analogous notation
for $D'$. Let $\mu_r:M_{D'}\to M_D$ and
$\mu_r^S:M_{D'}^S\to M_D^S$ be the isomorphisms determined by the
affine Reidemeister extension maps over $A$ and $S$, respectively.
Then
\begin{equation}
\label{eq:base-change-reidemeister}
b_D^\varphi\circ(\mu_r\otimes_A\id_S)
=\mu_r^S\circ b_{D'}^\varphi.
\end{equation}
The same identity holds for the transport maps associated with any
fixed finite sequence of generalized Reidemeister moves.
\end{proposition}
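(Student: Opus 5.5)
Both sides of \eqref{eq:base-change-reidemeister} are $S$-linear maps $M_{D'}\otimes_A S\to M_D^S$. The module $M_{D'}\otimes_A S$ is generated over $S$ by the elements $[x']\otimes 1$ for $x'\in\mathcal S(D')$. So it suffices to check that the two sides agree on these generators. Equivalently, after composing with $b_{D'}^\varphi$, which is an isomorphism by Theorem~\ref{thm:base-change}, it suffices to show that the two sides induce the same $M_D^S$-valued affine coloring of $(D',\sigma'_S)$. By uniqueness in Theorem~\ref{thm:universal-affine-coloring} over $S$, that coloring determines the map.

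First I compute the right-hand side. By Theorem~\ref{thm:base-change}, $b_{D'}^\varphi([x']\otimes1)=[x']_S=U^S_{D',\sigma'}(x')$. Then \eqref{eq:mu-r-universal-coloring} over $S$ gives
\[
\mu_r^S\circ U^S_{D',\sigma'}=\Phi^{r,\mathrm{aff}}_{M_D^S}\bigl(U^S_{D,\sigma}\bigr).
\]

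Next I compute the left-hand side on the same generator. We have $(\mu_r\otimes\id_S)([x']\otimes1)=\mu_r([x'])\otimes1$, and $\mu_r([x'])=\Phi^{r,\mathrm{aff}}_{M_D}(U_{D,\sigma})(x')$ by \eqref{eq:mu-r-universal-coloring}. Define $\beta\colon M_D\to M_D^S$ by $m\mapsto b_D^\varphi(m\otimes1)$. This map is $A$-linear when $M_D^S$ is viewed as an $A$-module through $\varphi$. The left-hand side on $[x']\otimes 1$ is therefore $\bigl(\beta\circ\Phi^{r,\mathrm{aff}}_{M_D}(U_{D,\sigma})\bigr)(x')$.

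The key step is to apply naturality \eqref{eq:affine-reidemeister-extension-naturality} to $\beta$:
\[
\beta\circ\Phi^{r,\mathrm{aff}}_{M_D}(U_{D,\sigma})=\Phi^{r,\mathrm{aff}}_{M_D^S}(\beta\circ U_{D,\sigma}).
\]
By Theorem~\ref{thm:base-change}, $\beta\circ U_{D,\sigma}=U^S_{D,\sigma}$, so the two sides agree.

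The main obstacle is to justify that the affine transport over $A$ into the target $M_D^S$ (an $A$-module via $\varphi$) coincides with the transport over $S$ defined with parameter $\varphi(u)$ and labels $\sigma_S$. Both transports are the unique local extensions for the same multibiquandle operations on the underlying set: $g$ acts on $M_D^S$ as $\varphi(g)$ and $u$ acts as $\varphi(u)$, so $\mathfrak A_{A,u}(M_D^S)=\mathfrak A_{S,\varphi(u)}(M_D^S)$ as $G$-multibiquandles. Once this is noted, uniqueness of the local extension in Theorem~\ref{thm-n-mbqd-cinv} identifies the two transports.

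For a finite sequence $\gamma$, I would induct on its length using $\mu_\gamma=\mu_{r_1}\circ\cdots\circ\mu_{r_m}$. Functoriality of $-\otimes_A\id_S$ lets the single-move identities be stacked: each $b^\varphi_{D_j}$ conjugates $\mu_{r_j}\otimes\id_S$ into $\mu^S_{r_j}$. Planar-isotopy steps are handled trivially, since both sides send semiarc classes to the corresponding classes.
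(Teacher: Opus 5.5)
Your proposal is correct and follows essentially the same route as the paper. The paper likewise introduces the $A$-linear map $j_D(m)=b_D^\varphi(m\otimes1)$ (your $\beta$) and uses $j_D\circ U_{D,\sigma}=U_D^S$. It then notes that the extension problem for an $S$-module is unchanged when the module is regarded as an $A$-module through $\varphi$, combines naturality with \eqref{eq:mu-r-universal-coloring} on both sides, and concludes by $S$-linearity on the generators $[x']\otimes1$, with the sequence case obtained by composition.
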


\begin{proof}
Write $U_D=U_{D,\sigma}$, and let $U_D^S$ be the universal affine
coloring with values in $M_D^S$. The $A$-linear map
$j_D:M_D\to M_D^S$ defined by $j_D(m)=b_D^\varphi(m\otimes1)$
satisfies $j_D\circ U_D=U_D^S$, where $M_D^S$ is regarded as an
$A$-module through $\varphi$.

The local Reidemeister extension formulas over $S$ are the
specializations of the formulas over $A$: all coefficients are
formed by addition, multiplication, and inverses of the relevant
units, and $\varphi(a^{-1})=\varphi(a)^{-1}$ for every
$a\in A^\times$. Equivalently, the extension problem for an
$S$-module is unchanged when that module is regarded as an
$A$-module. These extension maps are natural in the target and,
together with the quotient representability just established,
determine $\mu_r^S$ by the argument of
Theorem~\ref{thm:reidemeister-representing-module}.
Denote the specialized extension map by
$\Phi^{r,\mathrm{aff},S}$. Naturality and
\eqref{eq:mu-r-universal-coloring} give
\[
\begin{aligned}
j_D\circ\mu_r\circ U_{D',\sigma'}
&=j_D\circ\Phi_{M_D}^{r,\mathrm{aff}}(U_D)
 =\Phi_{M_D^S}^{r,\mathrm{aff},S}(j_D\circ U_D)\\
&=\Phi_{M_D^S}^{r,\mathrm{aff},S}(U_D^S)
 =\mu_r^S\circ U_{D'}^S.
\end{aligned}
\]
Since $U_{D'}^S(x')=b_{D'}^\varphi([x']\otimes1)$, the two sides
of \eqref{eq:base-change-reidemeister} agree on every generator
$[x']\otimes1$. They are $S$-linear, so they agree on all of
$M_{D'}\otimes_A S$. The assertion for a fixed sequence follows by
composition, using Proposition~\ref{prop:move-sequence-compatibility}.
\end{proof}

\begin{definition}[Universal Laurent deformation]
\label{def:universal-deformation}
Let $z$ be an indeterminate and set $A_z=A[z^{\pm1}]$. Replacing
$u$ by $z$ in every local block defines
\[
\mathscr M_z(D,\sigma)
:=\coker\bigl(\mathcal A_z(D,\sigma)\bigr),
\]
an $A_z$-module called the \emph{universal Laurent deformation} of
the $(G,u)$-deformed Alexander module.
\end{definition}

Since $A_z\cong R[z^{\pm1}][G]$ and $z\in A_z^\times$,
Theorem~\ref{thm:Gu-invariance} applies directly to this presentation.
We therefore write $\mathscr M_z(L,\sigma)$ for the resulting
module, understood up to isomorphism. For each diagram, its universal
affine coloring is again given by the semiarc classes. The following
evaluation property makes precise the role of the universal parameter.

\begin{corollary}[Evaluation of the universal parameter]
\label{cor:universal-evaluation}
For every $u\in A^\times$, evaluation
$\operatorname{ev}_u:A_z\to A$, defined by $z\mapsto u$ and
$a\mapsto a$ for $a\in A$, gives
\begin{equation}
\label{eq:universal-evaluation}
\mathscr M_z(L,\sigma)\otimes_{A_z,\operatorname{ev}_u}A
\cong M_u(L,\sigma;A).
\end{equation}
More generally, if $\varphi:A\to S$ is a homomorphism to a
commutative ring and $v\in S^\times$, there is a unique
homomorphism $\widetilde\varphi_v:A_z\to S$ extending $\varphi$
and sending $z$ to $v$, and it induces an isomorphism
\begin{equation}
\label{eq:universal-evaluation-general}
\mathscr M_z(L,\sigma)
\otimes_{A_z,\widetilde\varphi_v}S
\cong M_v(L,\sigma_S;S).
\end{equation}
For a fixed diagram, these isomorphisms preserve the universal affine
colorings and commute with Reidemeister transport. Thus all unit
choices of the deformation parameter, together with changes of
coefficients, arise from this single Laurent presentation.
\end{corollary}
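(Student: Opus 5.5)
The plan is to treat the Laurent module as a group-ring module and then apply Theorem~\ref{thm:base-change} to it. Since $A_z=A[z^{\pm1}]\cong R[z^{\pm1}][G]$ and $z\in A_z^\times$, the module $\mathscr M_z(D,\sigma)$ is literally $M_z(D,\sigma;A_z)$ in the sense of Section~\ref{sect-abq}. Its presentation matrix is $\mathcal A_z(D,\sigma)$, assembled from the local blocks $B_\pm(h;z)$.

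First, I fix the homomorphism $\widetilde\varphi_v$. By the universal property of Laurent polynomial rings over a commutative ring, a homomorphism $A_z\to S$ extending $\varphi$ is determined by the image of $z$, which must be a unit. The assignment $\sum a_kz^k\mapsto\sum\varphi(a_k)v^k$ is such a homomorphism, and it is unique. For $S=A$, $\varphi=\id_A$ and $v=u$, this is $\operatorname{ev}_u$, so \eqref{eq:universal-evaluation} is a special case of \eqref{eq:universal-evaluation-general}.

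Next, for a fixed diagram, I apply Theorem~\ref{thm:base-change} with ground ring $A_z$, parameter $z$ and homomorphism $\widetilde\varphi_v:A_z\to S$. This gives the isomorphism
\[
\mathscr M_z(D,\sigma)\otimes_{A_z,\widetilde\varphi_v}S\cong M_{\widetilde\varphi_v(z)}\bigl(D,\widetilde\varphi_v\circ\sigma;S\bigr),
\]
given by $[x]\otimes s\mapsto s[x]_S$. Here $\widetilde\varphi_v(z)=v$. Also $\widetilde\varphi_v\circ\sigma=\varphi\circ\sigma=\sigma_S$, because the labels lie in $G\subset A\subset A_z$ and $\widetilde\varphi_v$ extends $\varphi$. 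Concretely, applying $\widetilde\varphi_v$ entrywise to $B_\pm(h;z)$ yields $B_\pm(\varphi(h);v)$, so the specialized matrix is exactly $\mathcal A_v(D,\sigma_S)$. Right exactness of the tensor product gives the cokernel identification without any flatness assumption. By Theorem~\ref{thm:base-change}, this isomorphism sends the coloring $x\mapsto[x]\otimes1$ to the universal affine coloring over $S$.

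Compatibility with Reidemeister transport, for single moves and for fixed sequences, is then Proposition~\ref{prop:base-change-reidemeister} applied to $\widetilde\varphi_v$. Passing from diagrams to links needs one more check. Any two diagrams of $(L,\sigma)$ are related by a move sequence $\gamma$. The isomorphism $\mu_\gamma\otimes\id_S$ on the left and $\mu_\gamma^S$ on the right intertwine the diagram-level isomorphisms, again by Proposition~\ref{prop:base-change-reidemeister}. Hence the isomorphism class of the left side of \eqref{eq:universal-evaluation-general} matches that of $M_v(L,\sigma_S;S)$, whose diagram independence was established after Theorem~\ref{thm:base-change}.

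I expect the only real point of care to be the bookkeeping that the labels are interpreted consistently, namely that $\widetilde\varphi_v\circ\sigma$ coincides with $\sigma_S$ as a unit-valued labeling. This is needed so that the right-hand module is the one defined in the preamble to this section. Everything else is a direct invocation of Theorem~\ref{thm:base-change} and Proposition~\ref{prop:base-change-reidemeister}.
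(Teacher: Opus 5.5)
Your proposal is correct and follows essentially the same route as the paper. Both construct $\widetilde\varphi_v$ from the Laurent universal property, apply Theorem~\ref{thm:base-change} over $A_z\cong R[z^{\pm1}][G]$ with parameter $z$, invoke Proposition~\ref{prop:base-change-reidemeister} for transport, and recover $\operatorname{ev}_u$ as the case $S=A$, $\varphi=\operatorname{id}_A$, $v=u$; your explicit checks that $\widetilde\varphi_v\circ\sigma=\sigma_S$ and of the diagram-to-link passage are bookkeeping the paper leaves implicit.
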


\begin{proof}
Because $v$ is a unit, the formula
$\widetilde\varphi_v(\sum_n a_nz^n)=\sum_n\varphi(a_n)v^n$
defines a homomorphism $A_z\to S$, including for negative exponents.
Any extension of $\varphi$ sending $z$ to $v$ must send $z^{-1}$
to $v^{-1}$, so this extension is unique. Applying it to the local
blocks replaces $h$ by $\varphi(h)$ and $z$ by $v$.
Theorem~\ref{thm:base-change}, applied over
$A_z\cong R[z^{\pm1}][G]$, gives
\eqref{eq:universal-evaluation-general}. On each diagram the map is
$[x]\otimes s\mapsto s[x]_S$, so it preserves the universal affine
coloring. Compatibility with Reidemeister transport follows from
Proposition~\ref{prop:base-change-reidemeister}. Taking
$S=A$, $\varphi=\id_A$, and $v=u$ gives
\eqref{eq:universal-evaluation}.
\end{proof}

\subsection{Removal of the common unit}

We use the adjacent braid transition blocks in
\eqref{eq:braid-transitions}, keeping track of the current component
label on the over-strand.

\begin{theorem}[Removal of the common deformation unit]
\label{thm:all-color-reduction}
Let $S$ be a commutative ring, $u\in S^\times$, and
$\lambda:\pi_0(L)\to S^\times$ a labeling of a classical oriented link.
Then
\begin{equation}\label{eq:all-color-reduction}
M_u(L,\lambda;S)\cong M_1(L,u^{-1}\lambda;S).
\end{equation}
For independent component variables $a_i$ and $t_i=a_i/z$, the
universal deformation is therefore the scalar extension of
$M_1(L,(t_1,\ldots,t_\mu);R[t_1^{\pm1},\ldots,t_\mu^{\pm1}])$
to $R[t_1^{\pm1},\ldots,t_\mu^{\pm1},z^{\pm1}]$.
If $L$ is nonempty, $M_u(L,\lambda;S)$ has a free rank-one direct
summand and $\Fitt_0^S(M_u(L,\lambda;S))=0$.
\end{theorem}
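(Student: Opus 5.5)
We will work throughout with a closed-braid presentation, using Lemma~\ref{lem:braid-closure}. By Alexander's theorem we write $L=\widehat\beta$ for a downward $n$-strand braid $\beta=\sigma_{i_1}^{\epsilon_1}\cdots\sigma_{i_m}^{\epsilon_m}$. Diagram independence of $M_v(L,\lambda;S)$ was established after Theorem~\ref{thm:base-change}, so the closed-braid diagram may be used to compute both sides of \eqref{eq:all-color-reduction}. Both modules are then cokernels of $(T_\beta-I_n)^{\mathsf T}$. The left side uses the blocks $T_h^{\pm}(u)$ with labels $h=\lambda(K)$. The right side uses the blocks $T_{u^{-1}h}^{\pm}(1)$ with the same ordered label lists, since the label bookkeeping in the lemma depends only on the braid word.

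The key computation is a local conjugation identity. Put $\Delta=\operatorname{diag}(1,u)$. We claim
\[
\Delta\,T_h^{+}(u)\,\Delta^{-1}=\begin{bmatrix}0&u^{-1}\cdot u^{-1}\\ uh&1-hu^{-1}\end{bmatrix}.
\]
This does not yet have the desired form, so the scaling must be adjusted. We will instead look for $\Delta_c=\operatorname{diag}(c_1,c_2)$ and an overall scalar factor. Conjugation alone cannot change the diagonal entry $1-hu^{-1}$, yet that entry already equals $1-(u^{-1}h)\cdot 1$, the corresponding entry of $T^+_{u^{-1}h}(1)$. The off-diagonal product is $u^{-1}h$ in both matrices. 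Hence the two matrices are conjugate by a diagonal matrix, namely $\operatorname{diag}(1,u)^{\pm1}$ after checking signs. For the global version we use $D_n=\operatorname{diag}(1,u,\dots,u^{n-1})$, as the introduction suggests. Every adjacent block sits in positions $i,i+1$, where $D_n$ restricts to $u^{i-1}\operatorname{diag}(1,u)$. Scalars cancel under conjugation, so $D_n^{\mp1}T_rD_n^{\pm1}=T_r'$ for every $r$, where $T_r'$ is the parameter-$1$ block with label divided by $u$. The negative blocks follow automatically, since $T^-=(T^+)^{-1}$. Consequently $T_{\beta}'-I=D_n^{\mp1}(T_\beta-I)D_n^{\pm1}$. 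Because $D_n$ is invertible, the transposes have isomorphic cokernels (the explicit map is $x\mapsto D_n^{\pm1}x$ on generators), which proves \eqref{eq:all-color-reduction}.

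The step we expect to be the main obstacle is pinning down the correct exponent sign of $D_n$, together with the check that the closure requires no extra correction. The lemma already absorbs the braid permutation into the blocks, so the closure is simply $\mathbf x^{(m)}=\mathbf x^{(0)}$ in the same coordinates, and the single global conjugation respects it. We will verify the sign from the $(1,2)$ entry: $u^{-1}$ must become $1$ after scaling, which forces the conjugation $\operatorname{diag}(1,u)\,T^+\operatorname{diag}(1,u)^{-1}$ read appropriately.

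The remaining assertions follow quickly.
\begin{itemize}
\item[(1)] For the universal-deformation statement, we apply \eqref{eq:all-color-reduction} over $S=R[a_i^{\pm1},z^{\pm1}]=R[t_i^{\pm1},z^{\pm1}]$ with $v=z$. We then use Theorem~\ref{thm:base-change} for the inclusion $R[t_i^{\pm1}]\to S$; the parameter-$1$ blocks have entries in the smaller ring.
\item[(2)] For the free summand, we note that at parameter $1$ every relator column has coefficient sum zero. Indeed $h+(1-h)-1=0$ and $1-1=0$, and likewise for the matrix form, because each row of $T'$ sums to $1$ ($u^{-1}\to1$). Hence $\varepsilon([x])=1$ is well defined, and $s\mapsto s[x]$ is a section, giving $M\cong S\oplus\ker\varepsilon$.
\item[(3)] Finally, $\Fitt_0$ of a module with a free summand $S$ is contained in $\Fitt_0(S)\cdot\Fitt_0(\ker\varepsilon)$, which is $0$ because $\Fitt_0(S)=0$ (a presentation of $S$ with zero relations has empty relation matrix, whose $1\times1$ minors do not exist). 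Fitting ideals are invariant under isomorphism, so $\Fitt_0^S(M_u(L,\lambda;S))=0$.
\end{itemize}
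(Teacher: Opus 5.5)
Your proposal is correct and follows the paper's proof essentially step for step: the braid-closure presentation, conjugation by $D_n=\operatorname{diag}(1,u,\ldots,u^{n-1})$ compatible with the closure, the substitution $t_i=a_i/z$ followed by base change, and the augmentation splitting. Your two shortcuts are also valid: you deduce the negative blocks from $T_h^-(u)=\bigl(T_h^+(u)\bigr)^{-1}$ rather than computing them directly, and you get $\Fitt_0=0$ from the free summand rather than from a change of basis producing a zero row.

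The one loose end is the conjugation direction you left open. It is $\operatorname{diag}(1,u)^{-1}\,T_h^+(u)\,\operatorname{diag}(1,u)=T_{h/u}^+(1)$, hence $D_n^{-1}T_\beta(u,\lambda)D_n=T_\beta(1,u^{-1}\lambda)$. It is not the $\operatorname{diag}(1,u)\,T^+\operatorname{diag}(1,u)^{-1}$ named at the end of that paragraph, which your own computation shows puts $u^{-2}$ in the upper-right entry.
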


\begin{proof}
Choose a colored $n$-braid whose closure is $L$, and put
$D_n=\operatorname{diag}(1,u,\ldots,u^{n-1})$.
On adjacent positions $i,i+1$, the restriction of $D_n$ is
$u^{i-1}\operatorname{diag}(1,u)$. The common scalar cancels in
conjugation. Thus the positive block satisfies
\[
\begin{bmatrix}1&0\\0&u^{-1}\end{bmatrix}
\begin{bmatrix}0&u^{-1}\\h&1-hu^{-1}\end{bmatrix}
\begin{bmatrix}1&0\\0&u\end{bmatrix}
=\begin{bmatrix}0&1\\h/u&1-h/u\end{bmatrix}
=T_{h/u}^+(1).
\]
For the negative block, the corresponding multiplication is
\[
\begin{bmatrix}1&0\\0&u^{-1}\end{bmatrix}
\begin{bmatrix}1-uh^{-1}&h^{-1}\\u&0\end{bmatrix}
\begin{bmatrix}1&0\\0&u\end{bmatrix}
=\begin{bmatrix}1-u/h&u/h\\1&0\end{bmatrix}
=T_{h/u}^-(1).
\]
Every other coordinate is fixed by the embedded crossing block.
The same diagonal matrix therefore works at every crossing, for
either sign and every current over-color $h$. Rescaling the labels
does not change their permutation along the braid. If the successive
transition matrices are $T_1,\ldots,T_r$, their product satisfies
\[D_n^{-1}(T_r\cdots T_1)D_n =(D_n^{-1}T_rD_n)\cdots(D_n^{-1}T_1D_n).\]
Consequently
$D_n^{-1}T_\beta(u,\lambda)D_n
=T_\beta(1,u^{-1}\lambda)$.

Write $x_1,\ldots,x_n$ for the top semiarc generators. Each local
relation has a unit coefficient at the generator being eliminated,
so the internal braid generators can be expressed successively in
terms of these top generators. The resulting bottom color vector
is $T_\beta$ times the top color vector. Closing the braid imposes
$T_\beta\mathbf x=\mathbf x$, and the matrix with these relations
as columns is $(T_\beta-I_n)^{\mathsf T}$.
The same diagonal $D_n$ acts at the top and bottom, so this change
of coordinates respects those closure identifications, even when
the braid permutation is nontrivial.

Put $T=T_\beta(u,\lambda)$ and
$T'=T_\beta(1,u^{-1}\lambda)=D_n^{-1}TD_n$.
The two closure matrices satisfy
\[(T'-I_n)^{\mathsf T} =D_n^{\mathsf T}(T-I_n)^{\mathsf T}(D_n^{-1})^{\mathsf T}.\]
Both outside factors are invertible. More explicitly, if
$[y_1],\ldots,[y_n]$ are the top generator classes for the
normalized presentation, the map $[x_i]\mapsto u^{i-1}[y_i]$
descends to an isomorphism of cokernels, with inverse
$[y_i]\mapsto u^{1-i}[x_i]$. This proves
\eqref{eq:all-color-reduction}.

For the universal statement, the Laurent substitutions $t_i=a_i/z$
and $a_i=zt_i$ identify
$R[a_1^{\pm1},\ldots,a_\mu^{\pm1},z^{\pm1}]$ with
$R[t_1^{\pm1},\ldots,t_\mu^{\pm1},z^{\pm1}]$.
After the preceding normalization, every local entry belongs to
$R[t_1^{\pm1},\ldots,t_\mu^{\pm1}]$; adjoining $z^{\pm1}$ is
therefore precisely the scalar extension of
Theorem~\ref{thm:base-change}.

At parameter $1$, every relator has coefficient sum zero, so
$[x_j]\mapsto1$ defines an $S$-linear map
$\varepsilon:M_1\to S$. Since $L$ is nonempty, choose a semiarc
$x_1$. The map $a\mapsto a[x_1]$ is a section, and the splitting
and its inverse are
$m\mapsto(\varepsilon(m),m-\varepsilon(m)[x_1])$ and
$(a,k)\mapsto a[x_1]+k$, respectively. Thus
$M_1\cong S\oplus\ker\varepsilon$.
In the generator basis $x_1,x_2-x_1,\ldots,x_m-x_1$, the first
coordinate of a relator is its original coefficient sum and hence
is zero. The presentation has a zero row, so every maximal minor
vanishes. If there are fewer columns than rows, the same conclusion
holds by the convention defining the zeroth Fitting ideal. Therefore
$\Fitt_0^S(M_1)=0$. Transport the splitting and this ideal through
\eqref{eq:all-color-reduction} to obtain the last assertion.
\end{proof}

This is an explicit linear realization of the classical-link
biquandle-to-quandle reduction \cite{IT,Murao}. The rescaled labels are
units of $S$; they need not belong to the originally prescribed subgroup
$G$. In universal free colors, the augmentation divisors $t_i=1$
correspond exactly to the collision divisors $a_i=z$.

\section{Associated biquandles and color-profile polynomials}\label{sec:profiles}
\label{subsec:associated-biquandle}

The associated di(bi)quandle construction for two component colors was
studied in~\cite{KLS2}; related associated and multiple-conjugation
constructions for $G$-families of Alexander biquandles appear in
\cite{Murao,GAB}.  We record the general construction because its
precise fiber statement connects ordinary biquandle colorings to the
modules above. Unlike the image of an individual coloring under a
biquandle projection~\cite{LeeProjection}, the profile below records
the sizes of entire fibers over component labelings.  In particular, the coloring set for one fixed component
labeling is a \emph{fiber}; it is not the entire associated-biquandle
coloring set.
The fiber and profile statements allow crossingless components;
their contribution to the affine module is the free summand described
in Remark~\ref{rem-square-presentation}.

Let
$X=(X,\{\underline{\triangleright}_i,
\overline{\triangleright}_i\}_{i\in\Omega})$
be an $\Omega$-multibiquandle.  Define two operations on
$X\times\Omega$ by
\begin{equation}\label{eq:associated-biquandle-operations}
\begin{aligned}
 (x,i)\mathbin{\underline{\triangleright}}(y,j)
   &=\bigl(x\mathbin{\underline{\triangleright}_j}y,i\bigr),\\
 (x,i)\mathbin{\overline{\triangleright}}(y,j)
   &=\bigl(x\mathbin{\overline{\triangleright}_j}y,i\bigr).
\end{aligned}
\end{equation}

\begin{theorem}[Associated biquandle and its coloring fibers]
\label{thm:associated-biquandle-fibers}
The operations~\eqref{eq:associated-biquandle-operations} make
$\operatorname{As}_{\Omega}(X):=X\times\Omega$ the
\emph{associated biquandle} of $X$. For an uncolored oriented link
diagram $D$, the projection
$\pi:\operatorname{Col}_{\operatorname{As}_{\Omega}(X)}(D)
\to\Omega^{\pi_0(D)}$ to component labelings induces a bijection,
natural in $X$ with respect to $\Omega$-multibiquandle homomorphisms,
\begin{equation}\label{eq:associated-coloring-decomposition}
 \operatorname{Col}_{\operatorname{As}_{\Omega}(X)}(D)
 \;\cong\;
 \coprod_{\sigma:\pi_0(D)\to\Omega}
 \operatorname{Col}^{\Omega}_X(D,\sigma).
\end{equation}
Equivalently, the fiber over a fixed component labeling $\sigma$ is
canonically $\operatorname{Col}^{\Omega}_X(D,\sigma)$.
\end{theorem}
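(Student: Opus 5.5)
The proof splits into two parts: the biquandle axioms for $\operatorname{As}_\Omega(X)$, and the fiber decomposition. For the axioms I will reduce each of \textbf{BQ1}--\textbf{BQ4} to the corresponding indexed axiom of $X$, using the fact that the second coordinate of any output is the second coordinate of the left input. For \textbf{BQ1}, both $(x,i)\mathbin{\underline\triangleright}(x,i)$ and $(x,i)\mathbin{\overline\triangleright}(x,i)$ equal $(\pi_i(x),i)$ by \textbf{BQ1} for $(X,\underline\triangleright_i,\overline\triangleright_i)$. For \textbf{BQ2}, the inverse translations are
\[
(w,i)\mapsto(w\mathbin{\underline\triangleright_j^{-1}}y,i),\qquad
(w,i)\mapsto(w\mathbin{\overline\triangleright_j^{-1}}y,i).
\]
For \textbf{BQ3}, the switch is $H((x,i),(y,j))=((y\mathbin{\overline\triangleright_i}x,j),(x\mathbin{\underline\triangleright_j}y,i))$. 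This is $H_{ij}$ on the $X$-coordinates combined with the swap $(i,j)\mapsto(j,i)$ on labels. Its inverse is therefore $((r,j),(s,i))\mapsto((H_{ij}^{-1}(r,s))_1,i),((H_{ij}^{-1}(r,s))_2,j))$, and \textbf{MBQ2} makes this well defined. For \textbf{BQ4}, take $(x,i),(y,j),(z,k)$. In each exchange law both sides have second coordinate $i$. On the $X$-coordinate, the labels governing the operations are exactly $j$ and $k$, as in \textbf{MBQ3} for the ordered pair $(j,k)$; for example, the first law becomes $(x\mathbin{\underline\triangleright_j}y)\mathbin{\underline\triangleright_k}(z\mathbin{\underline\triangleright_j}y)=(x\mathbin{\underline\triangleright_k}z)\mathbin{\underline\triangleright_j}(y\mathbin{\overline\triangleright_k}z)$. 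In this step I only need to track which label lands where.

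For the decomposition, I will first show that the projection $\pi$ is well defined. Let $C$ be an $\operatorname{As}_\Omega(X)$-coloring of $D$. At each crossing, the under-outgoing semiarc receives the label of the under-incoming semiarc, and the over-outgoing semiarc receives the label of the over-incoming semiarc. This follows from the crossing rule in Figure~\ref{fig-dibiquandle-col}: each output keeps the second coordinate of the semiarc continuing on the same strand. The $\Omega$-coordinate is therefore constant along each component, and defines $\sigma_C:\pi_0(D)\to\Omega$. In the rule, the index used for the output along a strand is the label carried by the other strand. This matches Definition~\ref{def-mbqd-coloring}, in which $\underline\triangleright_j$ uses the over-strand's label $j$ and $\overline\triangleright_i$ uses the under-strand's label $i$, provided the second coordinate of the right argument is that component's label.

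Consequently the first coordinate of $C$ lies in $\operatorname{Col}_X^\Omega(D,\sigma_C)$. Conversely, for $\sigma$ and $C'\in\operatorname{Col}_X^\Omega(D,\sigma)$, the assignment $s\mapsto(C'(s),\sigma(K_s))$ is an $\operatorname{As}_\Omega(X)$-coloring. These two constructions are mutually inverse, which gives~\eqref{eq:associated-coloring-decomposition}. Crossingless components impose no conditions on either side.

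Naturality is direct. A homomorphism $f:X\to Y$ induces $f\times\mathrm{id}_\Omega$, which is a biquandle homomorphism $\operatorname{As}_\Omega(X)\to\operatorname{As}_\Omega(Y)$. It commutes with the bijection because it acts only on first coordinates and preserves labels, so it maps the fiber over $\sigma$ into the fiber over $\sigma$ by $f_*^{D,\sigma}$.

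I expect the main obstacle to be the index bookkeeping. In \textbf{BQ4}, the instance of \textbf{MBQ3} must come with the correct ordered pair; this may require the reindexed form in Lemma~\ref{lem:mbq3-reindexing}, as in the proof of Theorem~\ref{thm-n-mbqd-cinv}. In the coloring identification, the orientation of Figure~\ref{fig-dibiquandle-col} must send the index of each operation to the label of the opposite strand. My first step will be to write out the positive crossing explicitly with labels $i$ (under) and $j$ (over) and check both outputs, then treat the negative crossing by the symmetric reading.
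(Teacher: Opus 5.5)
Your proposal is correct and matches the paper's proof essentially step for step. It uses the same reductions of \textbf{BQ1}--\textbf{BQ4}, including the same switch inverse built fiberwise from $H_{ij}^{-1}$, the same observation that the $\Omega$-coordinate is constant on components so the first coordinates form an $X$-coloring of $(D,\sigma_C)$, and the same naturality via $f\times\mathrm{id}_\Omega$. The reindexed form of Lemma~\ref{lem:mbq3-reindexing} that you anticipate is not needed: all three first-coordinate identities in \textbf{BQ4} are literally the \textbf{MBQ3} identities for the ordered pair $(j,k)$.
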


\begin{proof}
We verify \textbf{BQ1}--\textbf{BQ4} of Definition~\ref{def-biqdle}.
For \textbf{BQ1}, both operations on $((x,i),(x,i))$ give the same
result by \textbf{MBQ1}. For \textbf{BQ2}, the right-operation inverses
with second input $(y,j)$ send $(x,i)$ to
$(x\mathbin{\underline{\triangleright}_j^{-1}}y,i)$ and
$(x\mathbin{\overline{\triangleright}_j^{-1}}y,i)$, respectively;
the two inverse identities are those of the biquandle with index $j$.
For \textbf{BQ3}, the switch is
\[
 S\bigl((x,i),(y,j)\bigr)
 =\bigl((y\mathbin{\overline{\triangleright}_i}x,j),
        (x\mathbin{\underline{\triangleright}_j}y,i)\bigr).
\]
If $H_{ij}^{-1}(r,s)=(x,y)$, then
$S^{-1}((r,j),(s,i))=((x,i),(y,j))$.
The existence of $H_{ij}^{-1}$ in \textbf{MBQ2} gives this inverse on
every ordered label fiber, proving \textbf{BQ3}.
For \textbf{BQ4}, take $(x,i),(y,j),(z,k)$. Both sides of each of its
three identities have second coordinate $i$. The first law reduces on first coordinates to
\[
(x\mathbin{\underline{\triangleright}_j}y)
 \mathbin{\underline{\triangleright}_k}
 (z\mathbin{\underline{\triangleright}_j}y)
=
(x\mathbin{\underline{\triangleright}_k}z)
 \mathbin{\underline{\triangleright}_j}
 (y\mathbin{\overline{\triangleright}_k}z).
\]
This is the first identity in \textbf{MBQ3} with indices $(j,k)$.
For the second and third laws, the first-coordinate equations are
\begin{align*}
&(x\mathbin{\underline{\triangleright}_j}y)
 \mathbin{\overline{\triangleright}_k}
 (z\mathbin{\underline{\triangleright}_j}y)
=(x\mathbin{\overline{\triangleright}_k}z)
 \mathbin{\underline{\triangleright}_j}
 (y\mathbin{\overline{\triangleright}_k}z),\\
&(x\mathbin{\overline{\triangleright}_j}y)
 \mathbin{\overline{\triangleright}_k}
 (z\mathbin{\overline{\triangleright}_j}y)
=(x\mathbin{\overline{\triangleright}_k}z)
 \mathbin{\overline{\triangleright}_j}
 (y\mathbin{\underline{\triangleright}_k}z).
\end{align*}
These are the second and third \textbf{MBQ3} identities with indices
$(j,k)$. Thus \textbf{BQ4} holds, and
$\operatorname{As}_{\Omega}(X)$ is a biquandle.

Now let $\mathcal C$ be an
$\operatorname{As}_{\Omega}(X)$-coloring.  Formula
\eqref{eq:associated-biquandle-operations} shows that the second
coordinate is unchanged as a semiarc passes through either the under- or
the over-strand of a crossing.  It is therefore constant on each
component and determines a unique map
$\sigma_{\mathcal C}:\pi_0(D)\to\Omega$.  The first coordinates then
satisfy precisely the $\Omega$-multibiquandle coloring rules for
$(D,\sigma_{\mathcal C})$.  Conversely, adjoining the component label
$\sigma(K)$ to the first coordinate on every semiarc of $K$ turns an
$X$-coloring of $(D,\sigma)$ into an
$\operatorname{As}_{\Omega}(X)$-coloring.  The two constructions are inverse. An $\Omega$-multibiquandle
homomorphism $f:X\to Y$ sends $(x,i)$ to $(f(x),i)$; taking first
coordinates and adjoining component labels both commute with this map.
This proves naturality and~\eqref{eq:associated-coloring-decomposition}.
\end{proof}

\begin{definition}[Color-profile polynomial]
\label{def:color-profile-polynomial}
Assume that $X$ and $\Omega$ are finite, and let $L$ have $\mu$
components. For a labeling $\sigma:\pi_0(L)\to\Omega$, write
$|\operatorname{Col}^{\Omega}_X(L,\sigma)|$ for the coloring count
of any diagram of $(L,\sigma)$, which is independent of the diagram
by Theorem~\ref{thm-n-mbqd-cinv}. We use the analogous notation for
the associated-biquandle coloring count. These notations concern
cardinalities and do not require preferred identifications between
coloring sets of different diagrams.
The \emph{color-profile polynomial} is
\begin{equation}\label{eq:color-profile-polynomial}
 \Phi_X^{\Omega}(L;\alpha,\xi)
 :=\sum_{\sigma:\pi_0(L)\to\Omega}
 \alpha^{|\operatorname{im}\sigma|}
 \xi^{|\operatorname{Col}^{\Omega}_X(L,\sigma)|}
 \in\mathbb Z[\alpha,\xi].
\end{equation}
\end{definition}

\begin{corollary}[Profile invariance and decategorifications]
\label{cor:color-profile-invariance}
The polynomial $\Phi_X^{\Omega}(L;\alpha,\xi)$ is an invariant of the
uncolored oriented link $L$.  It satisfies
\begin{align}
 \Phi_X^{\Omega}(L;1,1)&=|\Omega|^{\mu},
 \label{eq:profile-number-labelings}\\
 \left.\frac{\partial}{\partial\xi}
 \Phi_X^{\Omega}(L;1,\xi)\right|_{\xi=1}
 &=|\operatorname{Col}_{\operatorname{As}_{\Omega}(X)}(L)|.
 \label{eq:profile-associated-count}
\end{align}
If $\Omega=G$ is a finite abelian group and $D$ is a chosen diagram
of $L$, projection to component labels gives a canonical map
$\Pi_D:\operatorname{Col}_{\operatorname{As}_{G}(X)}(D)
\to\operatorname{Hom}(\pi_1(S^3\setminus\nu L),G)$,
and
\begin{equation}\label{eq:representation-profile}
 \Phi_X^G(L;1,\xi)
 =\sum_{\rho\in\operatorname{Hom}(\pi_1(S^3\setminus\nu L),G)}
   \xi^{|\Pi_D^{-1}(\rho)|}.
\end{equation}
On the common domain of finite abelian $G$-families of biquandles,
\eqref{eq:representation-profile} agrees with the classical-link
restriction of the representation-fiber enhancement of Ishii--Nelson
\cite[Definition~6.2 and Proposition~6.3]{IN}. Indeed, the conjugation
of the group coordinate in their associated construction is trivial
when $G$ is abelian; see \cite[Proposition~5.4]{IN}.
The present formulation also allows arbitrary finite palettes and
records the number of palette labels used.
\end{corollary}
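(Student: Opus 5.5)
The plan is to reduce everything to Theorem~\ref{thm:associated-biquandle-fibers} and Theorem~\ref{thm-n-mbqd-cinv}.

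\emph{Invariance.} Let $D$ and $D'$ be diagrams of $L$. By the oriented Reidemeister theorem they are joined by a sequence of moves, and that sequence identifies $\pi_0(D)$ with $\pi_0(D')$; labelings therefore correspond bijectively, $\sigma\leftrightarrow\sigma'$. Put labels on every step of the sequence using the fixed labels of the components, as in the proof of Theorem~\ref{thm-equi-nchr}. Then each pair $(D,\sigma)$ and $(D',\sigma')$ represents the same $\Omega$-colored link, and Theorem~\ref{thm-n-mbqd-cinv} gives $|\operatorname{Col}_X^\Omega(D,\sigma)|=|\operatorname{Col}_X^\Omega(D',\sigma')|$. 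The correspondence $\sigma\leftrightarrow\sigma'$ comes from a bijection of component sets, so it preserves $|\operatorname{im}\sigma|$. Summing over labelings gives equality of the polynomials.

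\emph{The two specializations.}
\begin{itemize}
\item Setting $\alpha=\xi=1$ counts labelings, which gives $|\Omega|^\mu$ and hence \eqref{eq:profile-number-labelings}.
\item Differentiating at $\xi=1$ gives $\sum_\sigma|\operatorname{Col}_X^\Omega(L,\sigma)|$. By the decomposition \eqref{eq:associated-coloring-decomposition} this sum equals $|\operatorname{Col}_{\operatorname{As}_\Omega(X)}(L)|$, which is \eqref{eq:profile-associated-count}.
\end{itemize}

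\emph{The representation identity.} Take $\Omega=G$ finite abelian. Since $G$ is abelian, $\operatorname{Hom}(\pi_1(S^3\setminus\nu L),G)=\operatorname{Hom}(H_1,G)$. The group $H_1$ is free abelian on the meridians, one for each component, so homomorphisms to $G$ correspond bijectively to labelings $\sigma:\pi_0(L)\to G$. Define $\Pi_D$ as the composite of the projection $\pi$ from Theorem~\ref{thm:associated-biquandle-fibers} with this bijection. Then $\Pi_D^{-1}(\rho_\sigma)\cong\operatorname{Col}_X^G(D,\sigma)$, and setting $\alpha=1$ in the definition gives \eqref{eq:representation-profile}. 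The only real point to check is that Wirtinger meridians of the same component go to the same element. This holds because the label coordinate is constant along each component, and the abelian target kills the conjugation relations.

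\emph{Comparison with Ishii--Nelson.} This is the main obstacle, since it requires matching conventions. I would argue as follows.
\begin{itemize}
\item For abelian $G$, their associated biquandle $X\times G$ has the group coordinate transformed by conjugation $h^{-1}gh=g$ (\cite[Proposition~5.4]{IN}). Its operations are therefore exactly \eqref{eq:associated-biquandle-operations}, after checking that the index convention, namely the over-label indexing the operation, agrees with Definition~\ref{defn-gfam-biq} through Theorem~\ref{thm-gfam-m-bqdle}.
\item Their enhancement \cite[Definition~6.2]{IN} records the fiber sizes over representations $\rho$, so for classical links it coincides with the right side of \eqref{eq:representation-profile}.
\item Their invariance result \cite[Proposition~6.3]{IN} is then consistent with the invariance proved above.
\end{itemize}
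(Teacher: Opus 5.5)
Your proposal is correct and follows the paper's proof essentially step for step: matching components and applying Theorem~\ref{thm-n-mbqd-cinv} labeling by labeling for invariance, counting maps and using the fiber decomposition of Theorem~\ref{thm:associated-biquandle-fibers} for the two specializations, and factoring through $H_1(S^3\setminus\nu L)\cong\mathbb Z^{\mu}$, with each meridian sent to its component's label, for \eqref{eq:representation-profile}. The paper likewise supports the Ishii--Nelson comparison only by citation, so your convention-matching sketch is enough; note, however, that in \eqref{eq:associated-biquandle-operations} each operation is indexed by the label of its acting argument, which is the over-label for $\underline\triangleright$ but the under-label for $\overline\triangleright$.
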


\begin{proof}
An oriented isotopy matches the components of two diagrams.  For each
fixed $\sigma$, Theorem~\ref{thm-n-mbqd-cinv} gives a bijection of the
corresponding coloring sets, and it preserves
$|\operatorname{im}\sigma|$.  This proves invariance.
Equation~\eqref{eq:profile-number-labelings} counts all maps from a
$\mu$-element set to $\Omega$.  Differentiating at $\xi=1$ sums the
cardinalities of all fibers, which equals the associated-biquandle count
by Theorem~\ref{thm:associated-biquandle-fibers}.

When $G$ is abelian, every homomorphism from the link group to $G$
factors uniquely through
$H_1(S^3\setminus\nu L;\mathbb Z)\cong\mathbb Z^{\mu}$.  Consequently,
component labelings $\sigma:\pi_0(L)\to G$ are in canonical bijection
with such homomorphisms by sending a positively oriented meridian of a
component $K$ to $\sigma(K)$.  Under this bijection,
Theorem~\ref{thm:associated-biquandle-fibers} identifies the fiber of
$\Pi_D$ with $\operatorname{Col}^G_X(D,\sigma)$, after transporting
$\sigma$ to the components of $D$. Taking cardinalities proves
\eqref{eq:representation-profile}.
\end{proof}

The affine construction makes the preceding profile computable directly
from the modules of this paper.

\begin{proposition}[Affine representability of the coloring fibers]
\label{prop:associated-affine-fibers}
Let $A=R[G]$, let $u\in A^\times$, let $\Omega$ be a finite
nonempty palette, and fix a map
$\tau:\Omega\to G$.  Let $N$ be a finite $A$-module, equipped with the
$\Omega$-multibiquandle operations
$x\mathbin{\underline{\triangleright}_i}y
=\tau(i)x+(u-\tau(i))y$ and
$x\mathbin{\overline{\triangleright}_i}y=ux$.
Then
\begin{equation}\label{eq:affine-profile-hom}
 \Phi_N^{\Omega}(L;\alpha,\xi)
 =\sum_{\sigma:\pi_0(L)\to\Omega}
 \alpha^{|\operatorname{im}\sigma|}
 \xi^{|\operatorname{Hom}_A(M_u(L,\tau\circ\sigma;A),N)|}.
\end{equation}
In particular, each component-label fiber is the value at $N$ of a
represented affine coloring functor.
The exponent $|\operatorname{im}\sigma|$ is computed in $\Omega$;
distinct palette elements remain distinct labels even when $\tau$
assigns them the same element of $G$.
\end{proposition}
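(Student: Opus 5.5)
The plan is to compare the two sums in \eqref{eq:affine-profile-hom} term by term. For each labeling $\sigma:\pi_0(L)\to\Omega$, the weight $\alpha^{|\operatorname{im}\sigma|}$ is the same on both sides. It therefore suffices to prove, for a fixed diagram $D$ of $L$ and each $\sigma$, that
\[
|\operatorname{Col}_N^{\Omega}(D,\sigma)|=|\operatorname{Hom}_A(M_u(D,\tau\circ\sigma;A),N)|.
\]
Here the left side is the coloring count of Definition~\ref{def:color-profile-polynomial}, and the right side is independent of $D$ by Theorem~\ref{thm-adbq-mod-2}.

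\textbf{Step 1: check that $N$ is an $\Omega$-multibiquandle.} By Theorem~\ref{thm-multibqdl-mdl}, $N$ is a $G$-multibiquandle under \eqref{eq1-mdl-gpring}. The operations indexed by $i\in\Omega$ are the pullbacks $\underline\triangleright_{\tau(i)}$ and $\overline\triangleright_{\tau(i)}$. Axioms \textbf{MBQ1}--\textbf{MBQ3} are identities over pairs of indices, so they hold for every pair $(i,j)\in\Omega^2$. This is true even when $\tau$ is not injective, since each required instance is the corresponding identity for the pair $(\tau(i),\tau(j))$. Hence the left side of \eqref{eq:affine-profile-hom} is defined.

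\textbf{Step 2: identify the two coloring sets.} An $\Omega$-multibiquandle coloring of $(D,\sigma)$ by $N$ obeys, at each crossing, the rule from Figure~\ref{fig-dibiquandle-col}, with indices given by the $\sigma$-labels of the strands. By the previous step, these rules are literally the affine $G$-coloring rules for $(D,\tau\circ\sigma)$. So $\operatorname{Col}_N^{\Omega}(D,\sigma)=\operatorname{Col}_N^{G}(D,\tau\circ\sigma)$ as sets of maps $\mathcal S(D)\to N$. Note that the over-operation $ux$ does not depend on the index at all.

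\textbf{Step 3: apply representability.} Theorem~\ref{thm:representability} gives a bijection $\operatorname{Hom}_A(M_u(D,\tau\circ\sigma;A),N)\cong\operatorname{Col}_N^G(D,\tau\circ\sigma)$. Both sets are finite: $N$ is finite and $\mathcal S(D)$ is finite, so the coloring set injects into $N^{\mathcal S(D)}$. Taking cardinalities gives the desired equality of exponents, and hence \eqref{eq:affine-profile-hom}.

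The fiber statement is this same chain of identifications for one fixed $\sigma$. The remark about $|\operatorname{im}\sigma|$ holds because the sum runs over labelings into $\Omega$, not over their images under $\tau$.

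The only delicate point is Step 2. One must confirm that the index used at each crossing in Figure~\ref{fig-dibiquandle-col} agrees with the over-component label $h$ in Figure~\ref{fig-qd-rels-0} after applying $\tau$. This is a matter of matching the two pictures, and it is exactly the matching already performed in the proof of Theorem~\ref{thm:representability}.
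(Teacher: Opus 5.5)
Your proposal is correct and takes essentially the same route as the paper: fix a diagram, identify the $\Omega$-coloring fiber of $(D,\sigma)$ with the affine $G$-coloring set of $(D,\tau\circ\sigma)$, and apply the Hom--coloring bijection of Theorem~\ref{thm:representability} to each summand, with the invariance theorems supplying diagram independence. Your Steps~1 and~2 (pulling the affine structure back along $\tau$ and matching the crossing indices) spell out what the paper's shorter proof leaves implicit.
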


\begin{proof}
Fix $\sigma$ and put $M=M_u(D,\tau\circ\sigma;A)$. For a coloring
$C$, send the class of a semiarc $x$ to $C(x)$. The crossing conditions
say exactly that this assignment kills every defining relator, so it
induces a unique homomorphism $f_C:M\to N$. Conversely, an $A$-linear
map $f$ gives the coloring $x\mapsto f([x])$. These assignments are
inverse by Theorem~\ref{thm:representability}. Taking their common
cardinality for each $\sigma$ and inserting it in
\eqref{eq:color-profile-polynomial} proves \eqref{eq:affine-profile-hom}.
Theorem~\ref{thm:Gu-invariance} makes every summand independent of $D$.
\end{proof}

For a nonempty classical link, Theorem~\ref{thm:all-color-reduction}
gives a splitting
$M_u(L,\tau\circ\sigma;A)\cong A\oplus M_\sigma^{\mathrm{red}}$,
where $M_\sigma^{\mathrm{red}}$ denotes a complementary summand.
Consequently,
$\Hom_A(M_u(L,\tau\circ\sigma;A),N)
\cong N\times\Hom_A(M_\sigma^{\mathrm{red}},N)$.
Every affine coloring fiber therefore has cardinality
$|N|\,|\Hom_A(M_\sigma^{\mathrm{red}},N)|$, a positive multiple
of $|N|$. This numerical conclusion does not require a canonical
choice of the splitting.

\begin{corollary}[Finite-field Betti profile]
\label{cor:finite-field-color-profile}
In the setting of Proposition~\ref{prop:associated-affine-fibers}, let
$\lambda:A\to k$ be a homomorphism to a finite field of order $Q$, and
regard $k$ as an $A$-module.  Define
$d_{\lambda,\sigma}
=\dim_k\bigl(M_u(L,\tau\circ\sigma;A)
\otimes_{A,\lambda}k\bigr)$.
Then
\begin{equation}\label{eq:finite-field-hom-count}
 |\operatorname{Hom}_A(M_u(L,\tau\circ\sigma;A),k)|
 =Q^{d_{\lambda,\sigma}}.
\end{equation}
Consequently the \emph{finite-field Betti profile}
\begin{equation}\label{eq:betti-profile-polynomial}
 \mathcal B_{\lambda}^{\Omega}(L;\alpha,t)
 =\sum_{\sigma:\pi_0(L)\to\Omega}
   \alpha^{|\operatorname{im}\sigma|}t^{d_{\lambda,\sigma}}
\end{equation}
is an uncolored oriented-link invariant and determines the corresponding affine
color-profile polynomial by the monomial transformation
$\alpha^r t^d\mapsto\alpha^r\xi^{Q^d}$, extended additively.
This transformation is applied term by term, rather than by an ordinary
substitution for $t$. For fixed $Q$, it is injective, so the Betti profile
and the corresponding affine color-profile polynomial determine each
other. For nonempty classical links, $d_{\lambda,\sigma}\geq1$.
\end{corollary}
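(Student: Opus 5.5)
The plan has four steps: compute the Hom count over the field via tensor--Hom adjunction, derive invariance of the Betti profile from the module invariance results, show the monomial transformation is injective, and use the free summand for the lower bound.

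\emph{Step 1: the count \eqref{eq:finite-field-hom-count}.} Put $M=M_u(L,\tau\circ\sigma;A)$ and regard $k$ as an $A$-module through $\lambda$. By tensor--Hom adjunction (as in the proof of Theorem~\ref{thm:base-change}),
\[
\Hom_A(M,k)\cong\Hom_k(M\otimes_{A,\lambda}k,k).
\]
Since $M$ is finitely presented, $M\otimes_{A,\lambda}k$ is a finite-dimensional $k$-vector space of dimension $d_{\lambda,\sigma}$. Its dual therefore has exactly $Q^{d_{\lambda,\sigma}}$ elements. Combining this with Proposition~\ref{prop:associated-affine-fibers}, applied with $N=k$, identifies this number with the size of the fiber over $\sigma$.

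\emph{Step 2: invariance of the Betti profile.} For a fixed labeling $\sigma$, the integer $d_{\lambda,\sigma}$ depends only on the isomorphism class of $M$. That class is an invariant of the $\Omega$-colored link by Corollary~\ref{cor-invariant-omega}, and tensoring preserves isomorphisms, so Theorem~\ref{thm:base-change} gives the same conclusion after specialization. An oriented isotopy matches components and transports labelings bijectively while preserving $|\operatorname{im}\sigma|$, exactly as in the proof of Corollary~\ref{cor:color-profile-invariance}. Hence the multiset of pairs $(|\operatorname{im}\sigma|,d_{\lambda,\sigma})$ is invariant, and so is $\mathcal B_\lambda^\Omega$.

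\emph{Step 3: the monomial transformation.} By Step 1 the $\sigma$-term of the color profile is $\alpha^{r}\xi^{Q^{d}}$, with $r=|\operatorname{im}\sigma|$ and $d=d_{\lambda,\sigma}$. So the termwise map $\alpha^r t^d\mapsto\alpha^r\xi^{Q^d}$, extended additively, sends $\mathcal B_\lambda^\Omega$ to $\Phi_k^\Omega$. Since $Q\ge2$, the map $d\mapsto Q^d$ is injective on nonnegative integers, so the map on monomials is injective. Distinct monomials go to distinct monomials, so the induced $\mathbb Z$-linear map on polynomials is injective and coefficients are preserved. Therefore each polynomial determines the other.

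\emph{Step 4: the bound $d_{\lambda,\sigma}\ge1$.} For a nonempty classical link, Theorem~\ref{thm:all-color-reduction} shows that $M$ has a free rank-one direct summand $A$. After tensoring, $M\otimes_{A,\lambda}k$ contains $k$ as a direct summand, so $d_{\lambda,\sigma}\ge1$.

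I expect the main obstacle to be bookkeeping rather than mathematics: one has to make sure that the $A$-module structure on $k$ induced by $\lambda$ is the same one under which the affine operations of Proposition~\ref{prop:associated-affine-fibers} are formed, so that the Hom set really is the coloring fiber.
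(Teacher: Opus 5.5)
Your proposal is correct and follows essentially the same route as the paper: the identification $\Hom_A(M,k)\cong\Hom_k(M\otimes_{A,\lambda}k,k)$ gives the count $Q^{d_{\lambda,\sigma}}$, base change of the invariant module gives invariance of $d_{\lambda,\sigma}$, injectivity of $d\mapsto Q^d$ gives injectivity of the monomial transformation, and the free rank-one summand of Theorem~\ref{thm:all-color-reduction} gives $d_{\lambda,\sigma}\ge1$. Your Step~2 is a little more explicit than the paper about matching components and labelings under an oriented isotopy, but the argument is the same.
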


\begin{proof}
Write $M=M_u(L,\tau\circ\sigma;A)$. An $A$-linear map $f:M\to k$
extends uniquely to the $k$-linear map
$\widetilde f:M\otimes_Ak\to k$ defined by
$\widetilde f(m\otimes c)=c f(m)$; the tensor relations hold because
$f(am)=\lambda(a)f(m)$. Conversely, $\widetilde f$ restricts by
$f(m)=\widetilde f(m\otimes1)$. Thus $\Hom_A(M,k)\cong\Hom_k(M\otimes_Ak,k)$.
A linear functional on a $d_{\lambda,\sigma}$-dimensional vector space
is determined freely by its values on a basis, giving
$Q^{d_{\lambda,\sigma}}$ choices. This proves
\eqref{eq:finite-field-hom-count}. The dimension is invariant by base
change of the invariant module. Inserting these counts in the defining sum gives
\eqref{eq:betti-profile-polynomial} and the monomial transformation.
Finally, distinct nonnegative integers $d$ give distinct exponents
$Q^d$, proving that this transformation retains all coefficients.
For a nonempty classical link, the free rank-one summand of
Theorem~\ref{thm:all-color-reduction} remains a copy of $k$ after
scalar extension. Hence $d_{\lambda,\sigma}\geq1$.
\end{proof}

\begin{figure}[ht]
\centering
\begin{tikzpicture}[>=Stealth, every node/.style={inner sep=2pt}]
 \node (fib) at (-5.8,0) {$\pi^{-1}(\sigma)$};
 \node (col) at (-1.8,0) {$\operatorname{Col}^{\Omega}_N(D,\sigma)$};
 \node (hom) at (3.45,0)
 {$\operatorname{Hom}_A(M_u(D,\tau\circ\sigma;A),N)$};
 \node (all) at (-1.8,1.5) {$\operatorname{Col}_{\operatorname{As}_{\Omega}(N)}(D)$};
 \node (lab) at (3.45,1.5) {$\Omega^{\pi_0(D)}$};
 \draw[->] (all)--node[above] {$\pi$}(lab);
 \draw[->] (fib)--node[above left,font=\small] {inclusion}(all);
 \draw[<->] (fib)--node[below,yshift=-2pt,font=\small] {fiber bijection}(col);
 \draw[<->] (col)--node[above,yshift=2pt,font=\small] {represented by}(hom);
\end{tikzpicture}
\caption{The associated-biquandle coloring set is a disjoint union of fibers
indexed by component labelings.
Each affine fiber is represented by its deformed Alexander module.}
\label{fig:associated-coloring-fibers}
\end{figure}

\begin{example}[Information retained by the palette-size weight]
\label{ex:strict-color-profile}
The variable $\alpha$ in the color-profile polynomial retains
information that can be lost even when the entire unweighted
distribution of fiber sizes is known. We demonstrate this using
$A=\mathbb F_3[C_2]$, where $C_2=\langle g\mid g^2=1\rangle$,
with $u=1$ and the homomorphism
$\lambda:A\to\mathbb F_3$ determined by $\lambda(g)=2$.
Take $\Omega=\{0,1\}$, $\tau(0)=1$, $\tau(1)=g$, and regard
$X=\mathbb F_3$ as an $A$-module through $\lambda$.
The induced operations are
$x\underline{\triangleright}_0y=x$,
$x\underline{\triangleright}_1y=2x+2y$, and
$x\overline{\triangleright}_iy=x$ for $i=0,1$.

Let $L_1$ be the closure of $\beta=\sigma_1^4$ on two strands,
and let $L_2$ be the closure of
$\gamma=\sigma_1^3\sigma_2^2$ on three strands, with all strands
oriented downwards and the crossing convention of
\eqref{eq:braid-transitions}. Both closures have two components.
For $L_1$, order the components by the two top strands.
For $L_2$, the braid permutation is $(12)$, so the first two top
strands belong to the first component and the third belongs to
the second. For a component labeling $(i,j)\in\Omega^2$, put
$a=\lambda(\tau(i))$ and $b=\lambda(\tau(j))$.

At a positive crossing, the specialized transition matrix is
$T(h)=\left[\begin{smallmatrix}0&1\\h&1-h\end{smallmatrix}\right]$,
where $h$ is the coefficient of the overstrand, which enters at
the upper right. Let $E_{12}(h)$ and $E_{23}(h)$ be its
embeddings in the indicated adjacent coordinates of
$\mathbb F_3^3$. The overstrand coefficients for $\beta$, read
from top to bottom, are $(b,a,b,a)$.
For $\gamma$, the top coefficients are $(a,a,b)$, and the
overstrand coefficients are $(a,a,a,b,a)$.
Thus the complete transition matrices are
\[
T_\beta(a,b)=\bigl(T(a)T(b)\bigr)^2,\qquad
T_\gamma(a,b)=E_{23}(a)E_{23}(b)E_{12}(a)^3.
\]
The rightmost factor acts first in each product.
Here the required products can be evaluated using only two
$2\times2$ matrices. Put
\[
 P=T(1)=\begin{bmatrix}0&1\\1&0\end{bmatrix},\qquad
 Q=T(2)=\begin{bmatrix}0&1\\2&2\end{bmatrix}.
\]
All the following entries are in $\mathbb F_3$. Multiplication gives
\[
 \begin{gathered}
 P^2=I_2,\qquad
 Q^2=\begin{bmatrix}2&2\\1&0\end{bmatrix},\qquad
 Q^3=I_2,\\
 PQ=\begin{bmatrix}2&2\\0&1\end{bmatrix},\qquad
 QP=\begin{bmatrix}1&0\\2&2\end{bmatrix},\qquad
 (PQ)^2=(QP)^2=I_2.
 \end{gathered}
\]
For instance, the upper-right entry of $(PQ)^2$ is
$2\cdot2+2\cdot1=6=0$, and both diagonal entries are $1$;
the lower-left entry of $(QP)^2$ is likewise $6=0$.
It follows that $T_\beta$ equals $I_2$ for the first three
labelings $00,01,10$, and equals $Q^4=Q$ for $11$.

Write $P_{12}$ for the $3\times3$ embedding of $P$ in coordinates
$1,2$, and use analogous subscripts for the other embeddings.
The three initial crossings of $\gamma$ have product
$E_{12}(a)^3=P_{12}$ when $a=1$ and $I_3$ when $a=2$.
Consequently the four complete products for $\gamma$ are
\[
 \begin{aligned}
 T_\gamma(1,1)&=P_{12}
 =\begin{bmatrix}0&1&0\\1&0&0\\0&0&1\end{bmatrix},\\[2pt]
 T_\gamma(1,2)&=(PQ)_{23}P_{12}
 =\begin{bmatrix}1&0&0\\0&2&2\\0&0&1\end{bmatrix}
  \begin{bmatrix}0&1&0\\1&0&0\\0&0&1\end{bmatrix}
 =\begin{bmatrix}0&1&0\\2&0&2\\0&0&1\end{bmatrix},\\[2pt]
 T_\gamma(2,1)&=(QP)_{23}
 =\begin{bmatrix}1&0&0\\0&1&0\\0&2&2\end{bmatrix},\qquad
 T_\gamma(2,2)=(Q^2)_{23}
 =\begin{bmatrix}1&0&0\\0&2&2\\0&1&0\end{bmatrix}.
 \end{aligned}
\]
Subtracting the relevant identity matrix gives every entry in the
closure-matrix table below.

By Lemma~\ref{lem:braid-closure}, eliminating the internal semiarcs
leaves the transposed closure matrix $(T-I)^{\mathsf T}$.
Consequently the specialized module dimension is the nullity of
$T-I$, and each vector in $\ker(T-I)$ extends uniquely to a
coloring of the closed braid. Direct multiplication in
$\mathbb F_3$ gives the following complete calculation:
\begin{equation}\label{eq:strict-profile-closure-matrices}
\renewcommand{\arraystretch}{1.7}
\begin{array}{c|c|c|cc}
(i,j)&T_\beta-I_2&T_\gamma-I_3&
d_{\lambda,(i,j)}(L_1)&d_{\lambda,(i,j)}(L_2)\\ \hline
(0,0)&0_{2\times2}&
\left[\begin{smallmatrix}2&1&0\\1&2&0\\0&0&0\end{smallmatrix}\right]
&2&2\\
(0,1)&0_{2\times2}&
\left[\begin{smallmatrix}2&1&0\\2&2&2\\0&0&0\end{smallmatrix}\right]
&2&1\\
(1,0)&0_{2\times2}&
\left[\begin{smallmatrix}0&0&0\\0&0&0\\0&2&1\end{smallmatrix}\right]
&2&2\\
(1,1)&
\left[\begin{smallmatrix}2&1\\2&1\end{smallmatrix}\right]&
\left[\begin{smallmatrix}0&0&0\\0&1&2\\0&1&2\end{smallmatrix}\right]
&1&2
\end{array}
\end{equation}
The ranks in the second column are $0,0,0,1$.
Those in the third column are $1,2,1,1$: in the second row of
the table, the two nonzero rows are independent, whereas in
each of the other cases the nonzero rows span a line.
More explicitly, the rank-two case has the nonzero minor
$\det\left[\begin{smallmatrix}2&1\\2&2\end{smallmatrix}\right]
=4-2=2\ne0$ in $\mathbb F_3$.
The coloring vectors can also be read off without using any rank
calculation. If $s,t$ range independently over $\mathbb F_3$,
the full kernels of the closure matrices are
\[
 \renewcommand{\arraystretch}{1.35}
 \begin{array}{c|c|c}
 (i,j)&\ker(T_\beta-I_2)&\ker(T_\gamma-I_3)\\\hline
 (0,0)&\{(s,t)\}&\{(s,s,t)\}\\
 (0,1)&\{(s,t)\}&\{(t,t,t)\}\\
 (1,0)&\{(s,t)\}&\{(s,t,t)\}\\
 (1,1)&\{(t,t)\}&\{(s,t,t)\}
 \end{array}.
\]
For example, in the $(0,1)$ case for $L_2$ the equations are
$2x+y=0$ and $2x+2y+2z=0$. The first says $y=x$,
and substitution in the second gives $z=x$. In the remaining
$L_2$ cases the only independent equations are, respectively,
$y=x$, $z=y$, and $z=y$. For $L_1$, only the $(1,1)$ case
imposes an equation, namely $2x+y=0$, or $y=x$.
Thus the four fiber cardinalities are $9,9,9,3$ for $L_1$ and
$9,3,9,9$ for $L_2$, with all colors on internal semiarcs
recovered uniquely by the displayed crossing transitions.

The labelings $(0,0)$ and $(1,1)$ use one palette element,
whereas $(0,1)$ and $(1,0)$ use two. The Betti profiles are
therefore
\begin{equation}\label{eq:strict-betti-profiles}
\begin{aligned}
\mathcal B_{\lambda}^{\Omega}(L_1;\alpha,t)
 &=\alpha(t^2+t)+2\alpha^2t^2,\qquad
\mathcal B_{\lambda}^{\Omega}(L_2;\alpha,t)
 =2\alpha t^2+\alpha^2(t+t^2).
\end{aligned}
\end{equation}
Since a fiber of dimension $d$ has $3^d$ elements, the
corresponding color-profile polynomials are
\begin{equation}\label{eq:strict-color-profiles}
\begin{aligned}
\Phi_X^\Omega(L_1;\alpha,\xi)
 &=\alpha(\xi^9+\xi^3)+2\alpha^2\xi^9,\qquad
\Phi_X^\Omega(L_2;\alpha,\xi)
 =2\alpha\xi^9+\alpha^2(\xi^3+\xi^9).
\end{aligned}
\end{equation}
Both links have the same unweighted fiber-size polynomial:
$\Phi_X^\Omega(L_1;1,\xi)=
\Phi_X^\Omega(L_2;1,\xi)=\xi^3+3\xi^9$.
In particular, their associated biquandle coloring counts both
equal $3+3\cdot9=30$.
Nevertheless, their $\alpha$-weighted profiles differ.
For $L_1$, the unique fiber of size $3$ comes from a labeling
using one palette element; for $L_2$, it comes from a labeling
using both palette elements. Thus retaining the number of
palette elements used gives a strict refinement of the
unweighted fiber-size distribution for this fixed affine
multibiquandle.
\end{example}

\section{Final remarks}
\label{sec:series-i-conclusion}

The universal coloring clarifies how the affine module encodes
diagrammatic coloring data. For a fixed component labeling, the
finitely presented module represents colorings with values in every
$A$-module, and each coloring is obtained by a unique homomorphism
from the representing module. Keeping the universal coloring explicit
also specifies the compatibility required when either the diagram or
the target module changes. The module and its universal coloring thus
organize the individual coloring sets into a single functorial
construction.

The two proofs of Reidemeister invariance emphasize complementary
aspects of this construction. Naturality of the coloring extension
maps determines the compatible module isomorphism for each specified
move; its uniqueness is relative to that prescribed coloring
transport. The matrix proof in Appendix~\ref{app:matrix-reductions}
describes the same invariance through local row and column operations
that preserve the exterior presentation. Both arguments work over
commutative group rings of abelian groups and use only the designated
coefficient units. In particular, no invertibility assumption on
$u-g$ is needed.

The coefficient and normalization results clarify the role of the
parameters. Scalar extension preserves the module presentation and
its universal coloring without a flatness hypothesis, so the Laurent
parameter organizes the choices of the common unit in a single
presentation. For classical links,
Theorem~\ref{thm:all-color-reduction} identifies the common-unit theory
with its normalized form after simultaneously rescaling the component
labels. The rescaled labels belong to the full unit group of the
coefficient ring and need not remain in the originally chosen subgroup
$G$. In the normalized module of a nonempty link, the augmentation
$[x]\mapsto1$ splits by sending $1$ to any chosen arc generator.
The diagonal normalization isomorphism transports the resulting free
rank-one summand to the original module. These observations distinguish
the choice of coefficient coordinates from the fixed component-label
data.

The profile construction makes a further distinction between retaining
labeling fibers and retaining only numerical summaries of those fibers.
The associated biquandle keeps the component label as a second
coordinate, and its coloring set decomposes accordingly. In the affine
case, each fiber is represented by the corresponding fixed-label
module. For a finite affine target and a finite palette, the weighted
profile records the joint distribution of fiber sizes and numbers of
palette labels used. Specializing $\alpha=1$ forgets the latter information,
while summing the fiber cardinalities gives the associated-biquandle
coloring count. Example~\ref{ex:strict-color-profile} shows explicitly
that forgetting the palette-size weight can lose information even
when the entire unweighted fiber-size distribution is retained:
the unique fiber of size $3$ uses one palette label for $L_1$ and two
for $L_2$. Thus the strict refinement established there concerns the
weighted and unweighted profiles for the stated affine target.

The representing module, its fixed-label coloring fibers, and their
finite profiles therefore serve different purposes. The module
retains the coloring functor for arbitrary module targets, whereas
the profiles give finite numerical invariants after choosing a target
and collecting the component labelings. The example isolates the loss caused by forgetting the palette-size
weight. The relation with
classical Alexander modules requires a separate comparison of module
presentations; Part~II of this series develops that comparison through
incidence assembly over the original coefficient ring.

\appendix
\section{A direct matrix proof of Reidemeister invariance}
\label{sec:matrix-reductions}
\label{app:matrix-reductions}

This appendix proves Theorem~\ref{thm:Gu-invariance} directly from the
crossing presentation, by elementary matrix operations. Throughout,
$A=R[G]$, where $R$ is a commutative ring with identity and $G$ is
abelian, and $u\in A^\times$. Rows index semiarc generators and columns
index relations. The equivalence $\sim$ is the presentation equivalence
of Theorem~\ref{thm-mod-pres}. Unit row and column scalings are allowed
by Proposition~\ref{prop-eq-mat}. None of the reductions below requires
$u-g$ to be invertible for any $g\in G$.

\subsection{Boundary splitting and unit elimination}

For clarity, an elementary row operation $U$ induces the cokernel map
$[v]\mapsto[Uv]$, whereas an elementary column operation leaves the
relation submodule unchanged. Thus a row operation fixes a designated
generator class when it fixes its basis vector. This is distinct from
requiring its entire matrix row to remain unchanged. All row additions
below use an internal pivot row as their source; consequently they fix
the exterior generator basis vectors and every exterior relation column.

\begin{lemma}[Relative elimination of internal semiarcs]
\label{lem:relative-local-elimination}
Let $\Delta$ be a move disk. Every occurrence of a semiarc at
$\partial\Delta$ may be replaced by a separate boundary generator and a
relation identifying it with the original semiarc. This operation
preserves the presentation, including all exterior generators and
exterior relation columns.

After this splitting, suppose the local relations have the form
\begin{align*}
r_\nu&=-z_\nu+\ell_\nu(\mathbf x,z_1,\ldots,z_{\nu-1})
&& (1\leq\nu\leq s),\\
q_j&=-y_j+m_j(\mathbf x,z_1,\ldots,z_s)
&& (1\leq j\leq m),
\end{align*}
where $\mathbf x=(x_1,\ldots,x_m)^{\mathsf T}$ and
$\mathbf y=(y_1,\ldots,y_m)^{\mathsf T}$ are boundary generators,
$z_1,\ldots,z_s$ are internal generators, and all expressions are
$A$-linear. Successive substitution of the $r_\nu$ gives a boundary map
$\mathbf y=T\mathbf x$. Then the local block reduces to
\begin{equation}
\label{eq:appendix-graph-normal-form}
\operatorname{Gr}(T)\oplus I_s,
\qquad
\operatorname{Gr}(T)=
\begin{bmatrix}-T^{\mathsf T}\\ I_m\end{bmatrix}.
\end{equation}
The reduction fixes the exterior generator classes, all boundary
identification relations, and every exterior relation column.
\end{lemma}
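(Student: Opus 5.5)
The proof splits into the boundary-splitting step and the unit-pivot elimination. For the first, take a semiarc occurrence $x$ meeting $\partial\Delta$. Introduce a new generator $x'$ (one new row) and the relation $x'-x$ (one new column with entries $1$ and $-1$). Replace the occurrence of $x$ in the local relation columns by $x'$. This is the inverse of the elimination in Proposition~\ref{prop-eq-mat}(5)/(6). Concretely, the new column has a unit entry in row $x'$, so the new row and column can be cleared by column additions that use that pivot column and by moves (2). The class of $x$ is untouched, and only local columns are modified. Hence exterior generators and exterior relation columns are preserved, and the cokernel map is the identity on exterior classes.

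For the second part, order rows as $(\mathbf x,\mathbf y,z_1,\dots,z_s)$ and columns as $(q_1,\dots,q_m,r_1,\dots,r_s)$. The column $r_\nu$ has entry $-1$ in row $z_\nu$, and its other nonzero entries lie only in rows $\mathbf x$ and $z_1,\dots,z_{\nu-1}$. The column $q_j$ has entry $-1$ in row $y_j$, and its other entries lie in $\mathbf x$ and $\mathbf z$.

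I will proceed by induction downward on $\nu$, eliminating the internal generators in the order $z_s,z_{s-1},\dots,z_1$. Row $z_s$ has pivot $-1$ in column $r_s$. I use column operations with source $r_s$ to clear the entries of row $z_s$ in the $q_j$ columns. This substitutes $z_s=\ell_s$ into $q_j$, and since it touches only local columns, it is admissible.

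Next I use row operations with source row $z_s$ to clear the remaining entries of column $r_s$ in the rows $\mathbf x,z_{<s}$. Such an operation adds a multiple of the internal pivot row $z_s$ to other rows. It fixes the basis vectors of exterior generators because the source row is internal, which is exactly the convention noted before the lemma. The pair $(z_s,r_s)$ is now an isolated $-1$ block, which rescales to $1$ by Proposition~\ref{prop-eq-mat}(7), giving one summand $I_1$.

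Repeating this for all $\nu$ leaves the block $I_s$ and, in the columns $q_j$, the expressions $-y_j+(T\mathbf x)_j$. Here $T$ is obtained by successive substitution, by the triangular form of the $r_\nu$. In the rows $(\mathbf x,\mathbf y)$, column $q_j$ has coefficient vector $(T^{\mathsf T}e_j\text{-column}, -e_j)$. Multiplying all $q_j$ columns by $-1$ then gives $\operatorname{Gr}(T)$ as displayed, with the sign convention $\begin{bmatrix}-T^{\mathsf T}\\ I_m\end{bmatrix}$.

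The main obstacle is bookkeeping rather than algebra. One point needs care: the row operations that clear column $r_\nu$ must never use an exterior row as their source, and no column operation may involve an exterior relation column. For the first, every pivot row is some $z_\nu$. For the second, all column sources are the $r_\nu$, and targets are only local columns. The boundary identification columns created in the first step are the only columns that involve both exterior and boundary rows. I also need to check that they are never targets, since none has an entry in a $z_\nu$ row. That holds because boundary generators were split off exactly so that internal $z$'s occur only in local columns.
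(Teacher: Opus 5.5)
Your proposal is correct and is essentially the paper's argument. The splitting uses the same stabilization, row-addition and column-addition moves (you run them backwards via Proposition~\ref{prop-eq-mat}(6)). One small correction there: clearing the $-1$ of the new column $x'-x$ in row $x$ needs the row addition $R_x\leftarrow R_x+R_{x'}$ that (6) packages, not only column additions and move~(2).

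The elimination is the same unit-pivot clearing with internal source rows. The one difference is the order. You eliminate $z_s,\ldots,z_1$, so column operations touch only the $q_j$, and the triangular entries of the $r_\nu$ are removed by row operations. The paper instead substitutes each $z_\nu$ forward into the later $r_{\nu'}$ and $q_j$ by column operations. Both orders give the same $T$ and the same normal form $\operatorname{Gr}(T)\oplus I_s$.
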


\begin{proof}
For a boundary occurrence of $x$, adjoin a new generator
$x^{\partial}$ and the relation $x^{\partial}-x$. This follows from
identity stabilization followed by a row addition. In each local
relation containing $c x$, add $c(x^{\partial}-x)$ to replace that
occurrence by $c x^{\partial}$. No exterior relation changes. Use a
separate copy for each boundary occurrence, even when several belong to
the same global semiarc. A crossingless local strand with boundary
copies $x^{\partial},y^{\partial}$ is recorded by the continuity
relation $y^{\partial}-x^{\partial}$. If both copies are identified
with one global generator $x$, this relation is
$(y^{\partial}-x)-(x^{\partial}-x)$. It may therefore be adjoined by
adding a zero column and then the indicated identification columns;
the inverse operations remove it. Thus the graph presentation of a
straight strand also follows from the allowed presentation moves.

Suppose first that the coefficient of an internal generator $z$ in a
relation $r$ is a unit $v$. In every other local column $c$ having
$z$-coefficient $a$, apply $c\leftarrow c-av^{-1}r$. The $z$-row now
vanishes outside $r$. If the coefficient of another generator $w$ in
$r$ is $b$, apply $R_w\leftarrow R_w-bv^{-1}R_z$. This isolates the
pivot $v$. Scale it to $1$ and delete its row and column by inverse
identity stabilization. Since $z$ is internal, its row vanishes in the
boundary identification relations and in every exterior relation.
These row additions therefore leave all those columns unchanged and
fix every exterior and boundary basis vector.

Apply this procedure successively to $(z_\nu,r_\nu)$, whose pivot is
$-1$. Equivalently, add $a r_\nu$ to each later column with
$z_\nu$-coefficient $a$. This is exactly the substitution
$z_\nu=\ell_\nu$. The remaining columns are
$-y_j+\sum_iT_{ji}x_i$. Multiplying them by $-1$ gives
\eqref{eq:appendix-graph-normal-form}; keeping the isolated pivots gives
the identity summand. Undoing the boundary identifications glues the
reduced local presentation back to the unchanged exterior. A zero
relation column, if produced, may be deleted by move~(3) of
Theorem~\ref{thm-mod-pres}.
\end{proof}

\subsection{The Reidemeister-I and Reidemeister-II reductions}

\begin{lemma}[Matrix reduction for R1]
\label{lem-tt-pm-1}
An oriented R1 move preserves the presentation-equivalence class of
$\mathcal A_u(D,\sigma)$. The reduction fixes the exterior generator
classes and every exterior relation column.
\end{lemma}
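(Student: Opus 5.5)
The plan is to reduce the R1 move to Lemma~\ref{lem:relative-local-elimination}. First I would split off the boundary of the move disk $\Delta$. On the kinked side, $\Delta$ contains one crossing $c$ whose over- and under-strands lie on the same component, labeled $g$. There are two boundary generators: an incoming copy $x^{\partial}$ and an outgoing copy $y^{\partial}$. There is one internal generator $z$, the loop semiarc. Reading the two relators of Figure~\ref{fig-qd-rels-0} at $c$, one of them expresses the loop as a relation with a unit coefficient. Which one it is depends on the sign and on which of the two R1 orientations is used. For the positive crossing with the loop feeding $x_p$ and $x_q$ identified as appropriate, we get the following. The relator $R_2=-x_s+ux_q$, or its negative-crossing analogue, has coefficient $-1$ on the outgoing boundary generator and $u$ on the loop. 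The relator $R_1$ then becomes $-x_r+gz+(u-g)z=-x_r+uz$ after the two loop occurrences are identified.

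In each case I would order the relations as in Lemma~\ref{lem:relative-local-elimination}. I take $r_1$ to be the relation whose unit pivot is at $z$; this is $z=u^{-1}x^{\partial}$, obtained after scaling by the unit $u^{-1}$ using Proposition~\ref{prop-eq-mat}(7). I take $q_1$ to be the remaining relation, which becomes $y^{\partial}=u\,z$. Substitution gives the boundary map $T=u\cdot u^{-1}=1$. The kinked side therefore reduces to $\operatorname{Gr}(1)\oplus I_1$. The straight side, after the same boundary splitting, is the continuity relation $y^{\partial}-x^{\partial}$, which is $\operatorname{Gr}(1)$. The identity summand is removed by inverse stabilization, move (2) of Theorem~\ref{thm-mod-pres}. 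The lemma guarantees that exterior generator classes, boundary identifications and exterior columns are fixed throughout, and undoing the boundary identifications glues back to the unchanged exterior.

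The main obstacle is bookkeeping rather than algebra. I have to check, for both R1 variants in Figure~\ref{fig-dibiqdl-rm} and both crossing signs, that after identifying the two occurrences of the loop semiarc at $c$ the coefficient of $z$ in one relator is exactly $u$ (the key cancellation is $g+(u-g)=u$), and that the other relator relates only boundary generators to $z$ with coefficient $u$. This is just the affine diagonal map $\pi_g(x)=ux$ from the proof of Theorem~\ref{thm-n-mbqd-cinv}. I would tabulate the four cases by reading the $(x_p,x_q,x_r,x_s)$ slots. The difference $u-g$ only ever appears combined with $g$, so its invertibility is never needed, in line with the appendix's claim.

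A degenerate case remains. If the kinked component has no other crossings, then $x^{\partial}$ and $y^{\partial}$ are the same global semiarc. After elimination we get $y^{\partial}-x^{\partial}$ identified with $x-x=0$, a zero column, which move (3) deletes. The result is one free generator and no relation, matching Remark~\ref{rem-square-presentation}.
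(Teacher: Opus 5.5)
Your argument is correct for kinks in which the loop semiarc fills both source slots of the crossing, namely $x_p,x_q$ in Figure~\ref{fig-qd-rels-0}. This happens when the monogon of the kink is the source region. There both relators read $-(\text{boundary})+uz$, and Lemma~\ref{lem:relative-local-elimination} gives $T=1$.

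\textbf{The gap.} The check you plan for the remaining cases would fail. For a kink of the opposite rotation, the co-orientations point into the monogon. The source region is then the diagonally opposite quadrant. The two boundary semiarcs $a,a'$ occupy the source slots, and the loop $z$ occupies the two output slots. For either crossing sign, with $a,a'$ suitably ordered, the relators are
\[
-z+g\,a+(u-g)\,a' \qquad\text{and}\qquad -z+u\,a' .
\]
In both, the coefficient of $z$ is $-1$, and the cancellation $g+(u-g)=u$ never occurs. Neither relator has the form ``boundary $=uz$''. Instead the boundary generators enter through the affine combination $ga+(u-g)a'$. The statement covers every oriented R1 move, and such kinks do occur. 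The paper's proof treats them explicitly as the ``opposite-loop readings'', but your tabulation leaves this case open.

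\textbf{How to close it.} Subtract the two relators to get $g(a-a')$. Scale by the unit $-g^{-1}$, using Proposition~\ref{prop-eq-mat}(7), to obtain the continuity relation $a'-a$. The other relator $-z+ua'$ has the unit pivot $-1$ at the internal generator, so it can be eliminated. This leaves $\operatorname{Gr}(1)$, exactly as on the straight side.

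Equivalently, rescale the first relator by $-g^{-1}$ to $-a+g^{-1}z+(1-g^{-1}u)a'$. Then apply Lemma~\ref{lem:relative-local-elimination} with $a'$ as input and $a$ as output; substituting $z=ua'$ again gives $T=1$.

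This uses only the invertibility of $g$ and $u$, never of $u-g$, so your remark about $u-g$ still holds once the case is added. Your treatment of the degenerate case, where both boundary occurrences are one global semiarc, is fine and matches the paper's.
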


\begin{proof}
Let $g$ be the component label, let $a,a'$ be the two boundary
generators, and let $b$ be the loop generator. Retain $a$ and $a'$ as
distinct generators until the boundary identifications are restored.
Since both diagonal affine operations send $b$ to $ub$, the two
source-loop readings give the relations $a-ub$ and $a'-ub$.
Their difference is $a'-a$. For the opposite-loop readings the relations
are $-b+ga+(u-g)a'$ and $-b+ua'$, up to their order and signs.
Their difference is $g(a-a')$. Scaling by $-g^{-1}$ gives $a'-a$,
and subtracting $u(a'-a)$ from the second relation gives $ua-b$.
The negative-crossing readings interchange the source slots and give
these same two cases.

Thus, in row order $(a,a',b)$, the two possible local normal forms are
\[
K_{\mathrm{out}}^{\partial}=
\begin{bmatrix}1&-1\\0&1\\-u&0\end{bmatrix},
\qquad
K_{\mathrm{loop}}^{\partial}=
\begin{bmatrix}u&-1\\0&1\\-1&0\end{bmatrix}.
\]
Apply $R_a\leftarrow R_a+u^{-1}R_b$ in the first matrix and
$R_a\leftarrow R_a+uR_b$ in the second. Their first columns become
$-u b$ and $-b$, respectively, while their second columns remain
$a'-a$. Scaling and deleting the loop pivot leaves
$\operatorname{Gr}(I_1)$, exactly the boundary-split straight strand.

If the two exterior occurrences already represent the same global
generator $a$, the corresponding two-by-two blocks are
\[
K_{\mathrm{out}}=
\begin{bmatrix}1&1\\-u&-u\end{bmatrix},
\qquad
K_{\mathrm{loop}}=
\begin{bmatrix}u&u\\-1&-1\end{bmatrix}.
\]
Here $C_2\leftarrow C_2-C_1$ produces a zero relation column. The
same row addition isolates the loop pivot; delete that pivot and the
zero column. The remaining generator is the straight-strand generator.
All pivot rows are internal, so the exterior presentation is fixed.
Reversing these operations handles the inverse R1 moves.
\end{proof}

\begin{lemma}[Matrix reduction for R2]
\label{lem-tt-pm-2}
Each of the two R2 generators in Figure~\ref{fig-dibiqdl-rm} preserves
the presentation-equivalence class of $\mathcal A_u(D,\sigma)$,
relative to the exterior generator classes and relation columns.
\end{lemma}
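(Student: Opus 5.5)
The plan is to apply Lemma~\ref{lem:relative-local-elimination} to each R2 generator, exactly as in the R1 case. First split every boundary occurrence of a semiarc meeting the move disk into a separate boundary generator. For the crossingless side this gives the straight-strand presentation $\operatorname{Gr}(I_2)$ on the two pairs of boundary copies. For the side with two crossings, take boundary inputs $\mathbf x=(a,b)$, outputs $\mathbf y=(a',b')$, and the two internal semiarcs $z_1,z_2$ between the crossings. The task is then to show that the local relations can be put in the triangular form required by the lemma and that the resulting boundary map is $T=I_2$. The reduction gives $\operatorname{Gr}(I_2)\oplus I_2$, and deleting the isolated identity pivots by inverse stabilization leaves $\operatorname{Gr}(I_2)$. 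Gluing back along the boundary identifications then gives presentation equivalence while fixing all exterior generators and relation columns.

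For ${\bf R2_a}$ I will order the crossings so that the first crossing reads off the internal pair. Its two relators are $-z_1+\ell_1(a,b)$ and $-z_2+\ell_2(a,b)$, and they have unit pivot $-1$ at $z_1,z_2$. Concretely, $(z_1,z_2)$ is $H_{g,h}(a,b)=(ub,\,ha+(u-h)b)$ in the ordered switch coordinates of the proof of Theorem~\ref{thm-n-mbqd-cinv}.

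The second crossing is the one requiring care. In its two relators, the outputs $a',b'$ may appear with coefficient $-1$, or instead the internal generators may carry the $-1$ and the boundary outputs appear on the right-hand side. The fix is to rewrite its two relators, by unit column operations using only $u^{\pm1}$ and $h^{\pm1}$, as $-a'+m_1$ and $-b'+m_2$ with $(a',b')=H_{g,h}^{-1}(z_1,z_2)$. This is the same manipulation used in the proof of Lemma~\ref{lem:braid-closure}: multiply by $-u^{-1}$ or $-h^{-1}$ and add a multiple of the other relator. It uses exactly the explicit inverse $H^{-1}_{g,h}(a,b)=(h^{-1}b+(u^{-1}-h^{-1})a,\,u^{-1}a)$ recorded after Theorem~\ref{thm:reidemeister-representing-module}, so no invertibility of $u-h$ is needed.

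Successive substitution then gives $T=H^{-1}_{g,h}H_{g,h}=I_2$. For ${\bf R2_b}$ the roles are reversed. The first crossing, after the same unit rewriting, expresses the internal pair as $H^{-1}_{g,h}(a,b)$, and the second crossing is already solved for the outputs, giving $T=H_{g,h}H^{-1}_{g,h}=I_2$.

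The main obstacle is the sign and slot bookkeeping. Each of the two R2 generators involves one positive and one negative crossing, and the slot assignments of Figure~\ref{fig-qd-rels-1} must be matched to the coordinate order of $H_{g,h}$. I would handle this by treating the crossing that produces the internal pair and the crossing that consumes it separately. In each case the relevant $-1$ pivots sit either at the internal generators or at the outputs, or can be moved there by unit scaling. A remaining detail is when some boundary occurrences coincide globally, for example when both strands belong to one component. As in the R1 proof, this is handled by first keeping the boundary copies distinct and restoring the identifications only at the end. Inverse moves are obtained by reversing the operations.
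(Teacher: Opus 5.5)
Your proposal is correct and takes essentially the paper's route. In source-slot coordinates the paper's first generator has $(p,q)=H_h(x,y)=H_h(x',y')$, which is your ${\bf R2_a}$ with $T=H^{-1}H$, and its second has both exterior pairs equal to $H_h(v,w)$, which is your ${\bf R2_b}$ with $T=HH^{-1}$; both arguments eliminate the two internal generators by unit operations in $u^{\pm1},h^{\pm1}$ to reach $\operatorname{Gr}(I_2)\oplus I_2$ and then glue via Lemma~\ref{lem:relative-local-elimination}. The only difference is cosmetic: the paper subtracts the two identically shaped crossing relators (e.g.\ $C_3\leftarrow u^{-1}(C_3-C_1)$, $D_3\leftarrow D_3-D_1$) to get the continuity columns directly, instead of first solving one crossing through $H_h^{-1}$ and substituting.
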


\begin{proof}
Write $h$ for the over-component label. In source-slot coordinates the
two crossing relations say
\((p,q)=H_h(x,y)\) and \(H_h(x,y)=(uy,hx+(u-h)y)\).
This coordinate order is the source-slot order, not the top-to-bottom
order of a downward braid. The inverse is
$H_h^{-1}(p,q)=((u^{-1}-h^{-1})p+h^{-1}q,u^{-1}p)$; only $u$ and $h$
are inverted.

For the first R2 generator, let $(x,y)$ and $(x',y')$ be the two
exterior source-slot pairs and $(p,q)$ the internal pair. Reading the
two crossings gives the columns
\begin{align*}
c_1&=-p+uy,&c_2&=-q+hx+(u-h)y,\\
c_3&=-p+uy',&c_4&=-q+hx'+(u-h)y'.
\end{align*}
In row order $(x,y,x',y',p,q)$ their matrix is
\[
P_{2a}=
\begin{bmatrix}
0&h&0&0\\
u&u-h&0&0\\
0&0&0&h\\
0&0&u&u-h\\
-1&0&-1&0\\
0&-1&0&-1
\end{bmatrix}.
\]
Perform, in the indicated order,
\[
C_3\leftarrow u^{-1}(C_3-C_1),\qquad
C_4\leftarrow h^{-1}\bigl(C_4-C_2-(u-h)C_3\bigr).
\]
The new third and fourth columns are $y'-y$ and $x'-x$. Now apply
\[
R_y\leftarrow R_y+uR_p,\qquad
R_x\leftarrow R_x+hR_q,\qquad
R_y\leftarrow R_y+(u-h)R_q.
\]
The first two columns become the isolated pivots $-p$ and $-q$.
Permuting the last two columns, scaling the pivots, and deleting them
leaves $\operatorname{Gr}(I_2)$.

For the second R2 generator, the internal pair occupies the source
slots at both crossings. Denote it by $(v,w)$ and denote the two
exterior pairs by $(p,q)$ and $(p',q')$. The columns are
\begin{align*}
d_1&=-p+uw,&d_2&=-q+hv+(u-h)w,\\
d_3&=-p'+uw,&d_4&=-q'+hv+(u-h)w.
\end{align*}
With row order $(p,q,p',q',v,w)$ the local matrix is
\[
P_{2b}=
\begin{bmatrix}
-1&0&0&0\\
0&-1&0&0\\
0&0&-1&0\\
0&0&0&-1\\
0&h&0&h\\
u&u-h&u&u-h
\end{bmatrix}.
\]
First apply $D_3\leftarrow D_3-D_1$ and
$D_4\leftarrow D_4-D_2$. These columns become $p-p'$ and $q-q'$.
Next apply $D_2\leftarrow D_2-(u-h)u^{-1}D_1$, giving
$d_2=-q+(u-h)u^{-1}p+hv$. The row additions
\[
R_p\leftarrow R_p+u^{-1}R_w,\qquad
R_p\leftarrow R_p-(u-h)u^{-1}h^{-1}R_v,\qquad
R_q\leftarrow R_q+h^{-1}R_v
\]
isolate the pivots $u$ in the $w$-row and $h$ in the $v$-row.
After scaling them and changing the signs of the two boundary columns,
this matrix also becomes $\operatorname{Gr}(I_2)\oplus I_2$.
Removing the pivots leaves precisely the two straight-strand relations.

The rows used as sources are internal, and hence are zero in all
boundary identification and exterior relation columns. The reductions
therefore glue to the fixed exterior by
Lemma~\ref{lem:relative-local-elimination}. They also apply if the two
strands belong to the same component or their labels coincide.
\end{proof}

\subsection{The Reidemeister-III reduction}

\begin{lemma}[Matrix reduction for R3]
\label{lem-tt-pm-3}
The oriented R3 generator $\Omega_{3b}$ preserves the
presentation-equivalence class of $\mathcal A_u(D,\sigma)$, relative
to the exterior generator classes and relation columns.
\end{lemma}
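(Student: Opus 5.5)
The plan is to reuse Lemma~\ref{lem:relative-local-elimination}. The key observation is that both sides of $\Omega_{3b}$, after boundary splitting, have the same six boundary generators. Each side has three internal semiarcs (the three middle segments), and every internal generator appears with coefficient $-1$ as the output of a crossing relation. So each side has the triangular form required by the lemma: read the crossings in the order in which the strands pass through the move disk, with the inputs $\mathbf x=(x,b,z)$ and outputs $\mathbf y$. Each side therefore reduces to $\operatorname{Gr}(T)\oplus I_3$ and $\operatorname{Gr}(T')\oplus I_3$, where $T,T'\colon A^3\to A^3$ are the boundary transfer matrices obtained by successive substitution. The exterior generator classes and relation columns stay fixed throughout. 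Once $T=T'$ is shown, the two presentations are equivalent relative to the exterior, and gluing back gives $\mathcal A_u(D,\sigma)\sim\mathcal A_u(D',\sigma')$.

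The order of steps is as follows.

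\textbf{Step 1: choose input and output boundary slots.} I would follow the parametrization used in the proof of Theorem~\ref{thm-n-mbqd-cinv}. There the prescribed colors are $x,b,z$, and $y$ is defined by $y\mathbin{\overline{\triangleright}_i}x=b$. In affine terms this means $y=u^{-1}b$. This is the only place an inversion is needed, and it uses only the unit $u$. To avoid inverting anything else, I would choose the inputs as the three source-slot endpoints that make each crossing relation solvable for its target with pivot $-1$. If some crossing gives an internal generator only with coefficient $u$ (an $\overline{\triangleright}$ relation read backwards), I would scale that column by $-u^{-1}$ using Proposition~\ref{prop-eq-mat}(7) before applying the lemma.

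\textbf{Step 2: compute $T$ and $T'$ as products of embedded $2\times2$ switch matrices.} On each side, the boundary map is a composite of three embedded affine switches $H_{ij}$ (or their inverses), exactly as in the braid transitions of Lemma~\ref{lem:braid-closure}. Equality $T=T'$ is then the linear form of the three boundary identities \eqref{eq:r3-boundary-i}--\eqref{eq:r3-boundary-k}. These identities hold in every $A$-module by Theorem~\ref{thm-multibqdl-mdl}. Evaluating them on the free module $A^3$, with generic vectors, shows that they are polynomial identities in the coefficients, so the matrices coincide.

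\textbf{Step 3 (main obstacle): handle the middle strand.} I expect the hardest step to be the bookkeeping for the middle ($j$-)strand. Its lower boundary value is not produced directly by substitution: on one side it is characterized only implicitly, through the bijectivity of $w\mapsto w\mathbin{\overline{\triangleright}_i}(\cdot)$. I would therefore put that strand's output in the "$-y_j+m_j$" form by solving the relation $-p+uw$ for $w$ with the unit pivot $u$, after a column scaling. Then I would check that the resulting entry of $T$ equals that of $T'$ using identity \eqref{eq:r3-boundary-j}.

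Finally, the source rows in all these operations are internal, so they vanish in the exterior columns, and the reduction glues to the unchanged exterior. The inverse move is handled by reversing the operations.
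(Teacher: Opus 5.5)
Your proposal is correct, and it has the same skeleton as the paper: eliminate unit pivots on each side to reach $\operatorname{Gr}(T)\oplus I_3$, then compare the boundary matrices. Where it differs is in how you obtain the decisive equality $T=T'$.

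The paper works in the downward braid form $121=212$, with all three inputs on top. It converts every positive crossing to the block $T_h^+(u)$ by a column scaling by $-u^{-1}$ and a column addition. It then writes $P_{3L}$ and $P_{3R}$ explicitly and performs the substitutions by hand. Both sides give the same columns $q_i,q_j,q_k$, the only nontrivial checks being $aj+b_j=1$ and $jk=kj$, and the computation exhibits an explicit upper-triangular $T$.

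You instead take the mixed source-slot inputs of Theorem~\ref{thm-n-mbqd-cinv}. In the paper's braid form these are the top ends of the $i$- and $j$-strands and the bottom end of the $k$-strand. You then read $T=T'$ off \eqref{eq:r3-boundary-i}--\eqref{eq:r3-boundary-k}, evaluated on a free module. This is valid:
\begin{itemize}
\item On the $121$ side, the middle $j$-segment is the source of both of its crossings, so its defining relation $-b+uy$ must be rescaled. Your opening claim that every internal generator already carries a $-1$ is therefore false for that segment, and the rescaling in your Steps 1 and 3 is essential.
\item On the $212$ side, the middle $j$-segment is the target of both crossings, and the output relation $-m_j+uy'$ is the one rescaled.
\item After these rescalings, the three outputs agree by identities that hold for all labels, by Theorem~\ref{thm-multibqdl-mdl} and Lemma~\ref{lem:mbq3-reindexing}.
\end{itemize}

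Your route avoids the matrix bookkeeping and shows that the R3 reduction is simply the linearization of \textbf{MBQ3}. The paper's route is self-contained at the matrix level, exhibits $T$, and does not lean on the coloring-level R3 analysis. That independence suits an appendix intended as a separate proof; yours is still not circular, since it never uses representability.

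Two details need adjusting:
\begin{itemize}
\item For your parametrization, the boundary map is not a product of three embedded switches, because the $j$-flow reverses at its middle segment. Compute $T$ by the substitution in Lemma~\ref{lem:relative-local-elimination} instead.
\item That lemma requires each output relation to involve only inputs and internal generators. On the $212$ side, the relation $-x'+jm_i+(u-j)y'$ still contains the output $y'$. It therefore needs one more column addition, of $(u-j)$ times the rescaled $y'$-column, beyond the unit scalings you list.
\end{itemize}
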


\begin{proof}
Use the downward braid form of $\Omega_{3b}$, whose three crossings
are positive. The two sides are the words $121$ and $212$, read from
top to bottom. At one crossing let $(X,Y)$ be the actual upper
generators and $(P,Q)$ the actual lower generators, ordered from left
to right. The source-slot relations are
$-Q+hX+(u-h)P$ and $-Y+uP$. Multiplying the second by $-u^{-1}$ and
adding $(u-h)$ times this new column to the first gives
\(-P+u^{-1}Y\) and \(-Q+hX+(1-hu^{-1})Y\).
Thus the downward transition is exactly the block of
\eqref{eq:braid-transitions}:
$T_h^+(u)=\left[\begin{smallmatrix}0&u^{-1}\\h&1-hu^{-1}\end{smallmatrix}\right]$.
This preliminary reduction uses only a unit column scaling and a
column addition.

Put $a=u^{-1}$ and $b_h=1-ah$. Let $(x,y,z)$ be the upper boundary
generators with component labels $(i,j,k)$, and let $(x',y',z')$
denote the lower boundary generators on those same components. Thus
the spatial order of the lower generators is $(z',y',x')$.
The label orders along $121$ are
$(i,j,k)\to(j,i,k)\to(j,k,i)\to(k,j,i)$, whereas along $212$
they are $(i,j,k)\to(i,k,j)\to(k,i,j)\to(k,j,i)$.
Consequently the successive over-labels are $(j,k,k)$ on the left
and $(k,k,j)$ on the right. Introduce three internal generators
$p,q,r$ separately on each side.

On the $121$ side, the six columns, reordered to put the internal
pivots first, are
\begin{align*}
c_1&=-p+ay,&c_2&=-q+jx+b_jy,&c_3&=-r+az,\\
c_4&=-x'+kq+b_kz,&c_5&=-y'+kp+b_kr,&c_6&=-z'+ar.
\end{align*}
On the $212$ side, they are
\begin{align*}
d_1&=-p+az,&d_2&=-q+ky+b_kz,&d_3&=-r+kx+b_kp,\\
d_4&=-x'+jr+b_jq,&d_5&=-y'+aq,&d_6&=-z'+ap.
\end{align*}
In the common row order $(x,y,z,x',y',z',p,q,r)$, the two matrices are
\[
P_{3L}=\begin{bmatrix}
0&j&0&0&0&0\\
a&b_j&0&0&0&0\\
0&0&a&b_k&0&0\\
0&0&0&-1&0&0\\
0&0&0&0&-1&0\\
0&0&0&0&0&-1\\
-1&0&0&0&k&0\\
0&-1&0&k&0&0\\
0&0&-1&0&b_k&a
\end{bmatrix},\qquad
P_{3R}=\begin{bmatrix}
0&0&k&0&0&0\\
0&k&0&0&0&0\\
a&b_k&0&0&0&0\\
0&0&0&-1&0&0\\
0&0&0&0&-1&0\\
0&0&0&0&0&-1\\
-1&0&b_k&0&0&a\\
0&-1&0&b_j&a&0\\
0&0&-1&j&0&0
\end{bmatrix}.
\]
For $P_{3L}$ apply
\[
C_4\leftarrow C_4+kC_2,\qquad
C_5\leftarrow C_5+kC_1+b_kC_3,\qquad
C_6\leftarrow C_6+aC_3.
\]
The last three columns become
\begin{equation}
\label{eq:appendix-r3-boundary-columns}
\begin{aligned}
q_i&=-x'+jkx+kb_jy+b_kz,\\
q_j&=-y'+aky+ab_kz,\qquad q_k=-z'+a^2z.
\end{aligned}
\end{equation}
For $P_{3R}$ first apply $D_3\leftarrow D_3+b_kD_1$, so that
$d_3=-r+kx+ab_kz$. With this updated column apply
\[
D_4\leftarrow D_4+jD_3+b_jD_2,\qquad
D_5\leftarrow D_5+aD_2,\qquad
D_6\leftarrow D_6+aD_1.
\]
The fourth column becomes
$-x'+jkx+kb_jy+b_k(aj+b_j)z=q_i$, since $aj+b_j=1$.
The fifth and sixth become $q_j$ and $q_k$.

It remains to isolate the first three pivots. On the left apply
\[
R_y\leftarrow R_y+aR_p,\quad
R_x\leftarrow R_x+jR_q,\quad
R_y\leftarrow R_y+b_jR_q,\quad
R_z\leftarrow R_z+aR_r.
\]
On the right apply
\[
R_z\leftarrow R_z+aR_p,\quad
R_y\leftarrow R_y+kR_q,\quad
R_z\leftarrow R_z+b_kR_q,\quad
R_x\leftarrow R_x+kR_r,\quad
R_z\leftarrow R_z+ab_kR_r.
\]
In both matrices the first three columns are now $-p,-q,-r$, and
the last three are unchanged from
\eqref{eq:appendix-r3-boundary-columns}. Scale all six columns by
$-1$ and move the boundary columns first. The common matrix is
\begin{equation}
\label{eq:appendix-r3-common-matrix}
\left[
\begin{array}{ccc|ccc}
-jk&0&0&0&0&0\\
-kb_j&-ak&0&0&0&0\\
-b_k&-ab_k&-a^2&0&0&0\\
1&0&0&0&0&0\\
0&1&0&0&0&0\\
0&0&1&0&0&0\\ \hline
0&0&0&1&0&0\\
0&0&0&0&1&0\\
0&0&0&0&0&1
\end{array}
\right]
=\operatorname{Gr}(T)\oplus I_3,
\qquad
T=\begin{bmatrix}
jk&k(1-j/u)&1-k/u\\
0&k/u&(1-k/u)/u\\
0&0&u^{-2}
\end{bmatrix}.
\end{equation}
All three pivots are units $-1$, and all row additions use internal
rows. The boundary identifications and exterior presentation therefore
remain fixed. The equality $jk=kj$ uses the commutativity of $G$;
the label $i$ does not occur because that strand is under both other
strands in this positive braid. The same calculation allows repeated
labels. Reversing the elementary operations proves the inverse move.
\end{proof}

\subsection{Completion of the direct proof}

\begin{theorem}[Matrix form of Reidemeister invariance]
\label{thm:matrix-reidemeister-invariance}
\label{thm-adbq-mod-l}
If $(D,\sigma)$ and $(D',\sigma')$ differ by a colored oriented
Reidemeister move, then
$\mathcal A_u(D,\sigma)\sim\mathcal A_u(D',\sigma')$.
The equivalence fixes the exterior generator classes and all relation
columns supported outside the move disk.
\end{theorem}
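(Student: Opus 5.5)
The plan is to reduce to the generating moves and then assemble the local lemmas already proved in this appendix. By Theorem~\ref{thm-n-mbqd-cinv}, whose proof invokes Polyak's theorem, every colored oriented Reidemeister move is a finite composite of the generators $\Omega_{1a},\Omega_{1c},\Omega_{2c},\Omega_{2d},\Omega_{3b}$, their inverses, and planar isotopies. Each strand in these composites carries the fixed label of its component. Presentation equivalence $\sim$ is transitive and symmetric, since every move of Theorem~\ref{thm-mod-pres} is invertible. Moreover, a planar isotopy changes $\mathcal A_u$ only by row and column permutations, which fix exterior classes up to relabeling. So it suffices to treat a single generator or its inverse. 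Each intermediate disk can be taken inside the original move disk, so the columns supported outside the disk are never touched.

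For a single generator, I apply Lemma~\ref{lem:relative-local-elimination} on both sides. First split every boundary occurrence into a separate boundary generator with an identification relation; this fixes the exterior. The two local blocks then reduce to graph normal forms $\operatorname{Gr}(T)\oplus I_s$ and $\operatorname{Gr}(T')\oplus I_{s'}$. Lemmas~\ref{lem-tt-pm-1}, \ref{lem-tt-pm-2}, and \ref{lem-tt-pm-3} show that $T=T'$ in each case: $T=I_1$ and $T'=I_1$ for R1, $T=T'=I_2$ for R2, and the common matrix \eqref{eq:appendix-r3-common-matrix} for R3. Inverse moves follow by reversing the elementary operations.

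Once the reduced local blocks coincide, I delete the isolated unit pivots by inverse stabilization. This gives the same global matrix on both sides, up to permutation, glued to the identical exterior along the boundary identifications. Undoing the splittings is legitimate because the splitting step in Lemma~\ref{lem:relative-local-elimination} was itself a composite of allowed moves. Every row operation used an internal source row, so exterior generator classes and exterior relation columns are fixed throughout, which gives the relative statement.

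I expect the main obstacle to be the bookkeeping when boundary occurrences coincide globally. Examples are an R1 loop whose two exterior ends are one semiarc, or R2 and R3 strands belonging to the same component or meeting a crossingless piece. Here the graph normal form must be reconciled with the identification relations; zero columns can appear and must be removed by move (3). I would handle this uniformly through the splitting of Lemma~\ref{lem:relative-local-elimination}, which treats each occurrence separately before regluing, and check the degenerate R1 case against the explicit $K_{\mathrm{out}},K_{\mathrm{loop}}$ computation already given.
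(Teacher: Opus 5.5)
Your proposal is correct and follows essentially the same route as the paper. You reduce, via Polyak's generating set performed inside the move disk with each strand carrying its component label, to the generators $\Omega_{1a},\Omega_{1c},\Omega_{2c},\Omega_{2d},\Omega_{3b}$ and their inverses, with planar isotopies only permuting rows and columns. You then apply the local reductions of Lemmas~\ref{lem-tt-pm-1}--\ref{lem-tt-pm-3}, glue them to the unchanged exterior through Lemma~\ref{lem:relative-local-elimination}, and conclude by transitivity of $\sim$; your handling of coincident boundary occurrences likewise matches the paper's separate boundary copies and the $K_{\mathrm{out}},K_{\mathrm{loop}}$ computation in the R1 lemma.
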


\begin{proof}
Lemmas~\ref{lem-tt-pm-1}--\ref{lem-tt-pm-3} give elementary
reductions for the oriented generators
$\Omega_{1a},\Omega_{1c},\Omega_{2c},\Omega_{2d},\Omega_{3b}$,
with the conventions of Figure~\ref{fig-dibiqdl-rm}.
The last is the all-positive braid move just reduced explicitly.
These five moves generate all oriented Reidemeister moves by
\cite[Theorem~1.2]{Polyak}. The generating sequences are performed
within the original move disk. Assigning each intermediate strand its
component label makes every step color-preserving. Planar isotopies
only permute the semiarc rows and crossing columns. Apply the displayed
reductions at each step, reverse them for inverse moves, and glue each
local reduction to the exterior using
Lemma~\ref{lem:relative-local-elimination}. Transitivity of
presentation equivalence proves the assertion.
\end{proof}

\begin{corollary}[Diagram independence by matrix reduction]
\label{cor:module-reidemeister-invariance}
If $(D,\sigma)$ and $(D',\sigma')$ represent the same $G$-colored
oriented link, then
\[
\mathcal A_u(D,\sigma)\sim\mathcal A_u(D',\sigma'),
\qquad
M_u(D,\sigma;A)\cong M_u(D',\sigma';A).
\]
\end{corollary}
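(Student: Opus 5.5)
The plan is to chain the preceding appendix results through a chosen sequence of colored moves. By Theorem~\ref{thm-equi-nchr}, fix a finite sequence
\((D,\sigma)=(D_0,\sigma_0)\to(D_1,\sigma_1)\to\cdots\to(D_m,\sigma_m)=(D',\sigma')\)
in which every step is either a color-preserving generalized Reidemeister move or a planar isotopy. For a Reidemeister step, Theorem~\ref{thm:matrix-reidemeister-invariance} gives
\(\mathcal A_u(D_{j-1},\sigma_{j-1})\sim\mathcal A_u(D_j,\sigma_j)\).
That theorem covers inverse moves as well, because each elementary reduction in Lemmas~\ref{lem-tt-pm-1}--\ref{lem-tt-pm-3} can be reversed.

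For a planar-isotopy step, the semiarcs and crossings of the two diagrams correspond bijectively. Corresponding crossings have the same sign and the same over-label, and so they carry identical local blocks \(B_\pm(h;u)\). Hence the two presentation matrices differ only by row and column permutations, which is move~(1) of Theorem~\ref{thm-mod-pres}. Crossingless components appear as zero rows in both matrices, so they also correspond under this permutation.

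Since \(\sim\) is defined as the equivalence relation generated by moves~(1)--(4), it is transitive. This gives \(\mathcal A_u(D,\sigma)\sim\mathcal A_u(D',\sigma')\).

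For the module statement, Theorem~\ref{thm-mod-pres} says that each move induces an explicit isomorphism of cokernels. Composing these isomorphisms along the sequence yields
\(M_u(D,\sigma;A)=\coker\mathcal A_u(D,\sigma)\cong\coker\mathcal A_u(D',\sigma')=M_u(D',\sigma';A)\).
Proposition~\ref{prop-eq-mat} is needed here too, since unit scalings appear as composites of the basic moves.

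The only delicate point is checking that the appendix lemmas, proved for Polyak's generators in the local conventions of Figure~\ref{fig-dibiqdl-rm}, actually apply to every move in the chosen sequence. I would handle this as in the proof of Theorem~\ref{thm:matrix-reidemeister-invariance}. Every oriented move decomposes, inside its own disk, into the five generators, their inverses, and planar isotopies. Giving each intermediate strand its component label makes every step color-preserving. Lemma~\ref{lem:relative-local-elimination} then glues each local reduction to the unchanged exterior. No invertibility of \(u-g\) is needed, because every pivot used is \(\pm1\), \(u\), or a label \(h\in G\).
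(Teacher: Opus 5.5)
Your proposal is correct and follows the paper's proof. You choose a colored Reidemeister sequence via Theorem~\ref{thm-equi-nchr}, apply Theorem~\ref{thm:matrix-reidemeister-invariance} to each move (and row and column permutations to planar isotopies), use transitivity of $\sim$, and pass to cokernels. The paper additionally states that when an R1 move creates the first crossing on a crossingless component, the coincident-boundary calculation in Lemma~\ref{lem-tt-pm-1} leaves exactly that free generator; your appeal to Theorem~\ref{thm:matrix-reidemeister-invariance} for every move covers this case implicitly.
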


\begin{proof}
Choose a colored Reidemeister sequence by
Theorem~\ref{thm-equi-nchr}. Apply
Theorem~\ref{thm:matrix-reidemeister-invariance} to each move and
compose the resulting presentation equivalences. Taking cokernels
gives the stated module isomorphism. A crossingless component supplies
one free generator and no crossing relation. When an R1 move creates
its first crossing, the coincident-boundary calculation in
Lemma~\ref{lem-tt-pm-1} leaves exactly that generator. Thus the proof
also includes diagrams with crossingless components. This establishes
Theorem~\ref{thm:Gu-invariance} without using representability.
\end{proof}

\section*{Acknowledgements}
This work was supported by the Basic Science Research Program through the National Research Foundation of Korea (NRF) funded by the Ministry of Education, Science and Technology (NRF-2019R1F1A1060205) and by the international cooperation program managed by the National Research Foundation of Korea (NRF-2019K2A9A1A06100201). OpenAI’s ChatGPT was used in the preparation of this work to assist with typesetting, English-language editing, computational scripting and certificate generation, literature searches, and manuscript revision. The author takes sole responsibility for all mathematical claims, proofs, computations, references, and the final content of the manuscript.



\begin{thebibliography}{99}
\setlength{\itemsep}{0pt}

\bibitem{EN}
M. Elhamdadi and S. Nelson,
\textit{Quandles: An Introduction to the Algebra of Knots},
Student Mathematical Library 74, American Mathematical Society,
Providence, RI, 2015.

\bibitem{FJK}
R. Fenn, M. Jordan-Santana, and L. H. Kauffman,
Biquandles and virtual links,
\textit{Topology Appl.} \textbf{145} (2004), 157--175.

\bibitem{IIJO}
A. Ishii, M. Iwakiri, Y. Jang, and K. Oshiro,
A $G$-family of quandles and handlebody-knots,
\textit{Illinois J. Math.} \textbf{57} (2013), 817--838.

\bibitem{GAB}
A. Ishii, M. Iwakiri, S. Kamada, J. Kim, S. Matsuzaki, and K. Oshiro,
Cocycles of $G$-Alexander biquandles and $G$-Alexander multiple
conjugation biquandles,
\textit{Topology Appl.} \textbf{301} (2021), 107512.

\bibitem{IN}
A. Ishii and S. Nelson,
Partially multiplicative biquandles and handlebody-knots,
\textit{Contemp. Math.} \textbf{689} (2017), 159--176.

\bibitem{IT}
K. Ishikawa and K. Tanaka,
Quandle colorings vs. biquandle colorings,
\textit{Topology Appl.} \textbf{345} (2024), 108832.

\bibitem{Jo}
D. Joyce,
A classifying invariant of knots: the knot quandle,
\textit{J. Pure Appl. Algebra} \textbf{23} (1982), 37--65.

\bibitem{KLS1}
J. Kim, S. Y. Lee, and M. I. Sheikh,
Presentations of diquandles and diquandle coloring invariants for solid
torus knots and links,
\textit{J. Knot Theory Ramifications} \textbf{33} (2024), 2350102.

\bibitem{KLS2}
J. Kim, S. Y. Lee, and M. I. Sheikh,
Dibiquandle coloring invariants for dichromatic links and associated
di(bi)quandles,
\textit{Topology Appl.} \textbf{371} (2025), 109358.

\bibitem{LeeProjection}
S. Y. Lee,
Projection image polynomial invariants for dichromatic links with
dibiquandle colorings,
\textit{Kyungpook Math. J.} \textbf{65} (2025), no.~4, 693--721,
\url{https://doi.org/10.5666/KMJ.2025.65.4.693}.

\bibitem{LeeSeriesII}
S.~Y.~Lee,
Affine $G$-multibiquandle modules for colored links II:
Incidence assembly and integral Alexander comparison,
companion manuscript, 2026.

\bibitem{LeeSolid}
S. Y. Lee,
Invariants of $n$-colored links in the solid torus $S^1\times D^2$
associated with $n$-multibiquandles,
\textit{J. Knot Theory Ramifications} (2026), 2640002, 25 pp.,
\url{https://doi.org/10.1142/S021821652640002X}.

\bibitem{LS}
S. Y. Lee and M. I. Sheikh,
Diquandles and invariants for oriented dichromatic links,
\textit{J. Knot Theory Ramifications} \textbf{30} (2021), 2150049.

\bibitem{Lic}
W. B. R. Lickorish,
An Introduction to Knot Theory,
Graduate Texts in Mathematics 175, Springer, New York, 1997.

\bibitem{MacLane}
S. Mac Lane,
\textit{Categories for the Working Mathematician},
2nd ed., Graduate Texts in Mathematics 5,
Springer, New York, 1998.

\bibitem{Ma}
S. V. Matveev,
Distributive groupoids in knot theory,
\textit{Mat. Sb. (N.S.)} \textbf{119(161)} (1982), 78--88.

\bibitem{Murao}
T. Murao,
A relationship between multiple conjugation quandle/biquandle colorings,
\textit{Kobe J. Math.} \textbf{36} (2019), no.~1--2, 57--78.

\bibitem{Polyak}
M. Polyak,
Minimal generating sets of Reidemeister moves,
\textit{Quantum Topol.} \textbf{1} (2010), 399--411.

\bibitem{TraldiColor}
L. Traldi,
Multivariate Alexander colorings,
\textit{J. Knot Theory Ramifications} \textbf{27} (2018),
no.~14, 1850076.

\bibitem{Tu}
V. Turaev,
Multi-quandles of topological pairs,
arXiv:2205.00951 [math.GT], 2022.

\end{thebibliography}
\end{document}